\documentclass[11 pt,reqno]{amsart}

\usepackage[latin1]{inputenc}
\usepackage{amsmath,amsthm,amssymb,amscd,mathrsfs,mathtools,xcolor,accents}
\usepackage[shortlabels]{enumitem}
\usepackage[perpage]{footmisc}
\usepackage[normalem]{ulem}
\usepackage{graphicx}
\definecolor{darkgreen}{rgb}{0.0, 0.6, 0.13}

\usepackage{bbm} 
\usepackage{pifont} 
\usepackage{hyperref}

\newtheorem{thm}{Theorem}[section]
 \newtheorem{cor}[thm]{Corollary}
  
 \newtheorem{lem}[thm]{Lemma}
 \newtheorem{prop}[thm]{Proposition}
\newtheorem{claim}[thm]{Claim}
 \theoremstyle{definition}
 \newtheorem{df}[thm]{Definition}
 \theoremstyle{remark}
 \newtheorem{rem}[thm]{Remark}
 
 \numberwithin{equation}{section}
\def\sqw{\hbox{\rlap{\leavevmode\raise.3ex\hbox{$\sqcap$}}$
\sqcup$}}
\def\findem{\ifmmode\sqw\else{\ifhmode\unskip\fi\nobreak\hfil
\penalty50\hskip1em\null\nobreak\hfil\sqw
\parfillskip=0pt\finalhyphendemerits=0\endgraf}\fi}

\newcommand{\mb}{\medskip\noindent}
\newcommand{\gb}{\bigskip\noindent}
\newcommand{\noi}{\noindent}
\newcommand{\R}{\mathbb R}
\newcommand{\M}{\mathcal M}

\newcommand{\N}{\mathbb N}
\newcommand{\Z}{\mathbb Z}
\newcommand{\C}{\mathbb C}

\newcommand{\F}{\mathcal F}

\newcommand{\T}{{\mathbb T}}

\newcommand{\Nc}{\mathcal{N}}

\newcommand{\Rc}{\mathcal{R}}
\newcommand{\Qc}{\mathcal{Q}}
\newcommand{\ind}{\mathbbm 1}

\newcommand{\jb}[1]
{\langle #1 \rangle}
\newcommand{\jbb}[1]
{[\hspace{-0.6mm}[ #1 ]\hspace{-0.6mm}]}
\DeclareMathOperator{\med}{med}

\allowdisplaybreaks[4]

\begin{document}
\title[Invariant Gibbs measures and  global strong solutions for 2D fNLS]{Invariant Gibbs measures and global dynamics for fractional cubic Schr\"{o}dinger equations on the torus}

\author[Y. Wang]{Yuzhao Wang}
	\address{School of Mathematics, University of Birmingham,  Edgbaston,
Birmingham, B15 2TT, United Kingdom}
	\email{y.wang.14@bham.ac.uk}
\author[H. Yue]{Haitian Yue}
	\address{Institute of Mathematical Sciences, ShanghaiTech University, Pudong, Shanghai,  China}
	\email{yuehaitian@shanghaitech.edu.cn}
	\author[C. Zhang]{Chenyuan Zhang}
	\address{School of Mathematical Sciences,
		University of Science and Technology of China, Hefei 230026, Anhui, China}
	\email{roundround@mail.ustc.edu.cn}
	\author[L. Zhao]{Lifeng Zhao}
	\address{School of Mathematical Sciences,
		University of Science and Technology of China, Hefei 230026, Anhui, China}
	\email{zhaolf@ustc.edu.cn}

\keywords{Fractional nonlinear Schr\"odinger equation, invariance of Gibbs measures, probabilistic well-posedness, lattice counting}

\dedicatory{}

\begin{abstract}
We consider the defocusing Wick-ordered cubic fractional nonlinear
Schr\"odinger equation on the two-dimensional torus with dispersion relation
\(\omega(k)=|k|^\alpha\). In the weakly dispersive regime
\(\frac{29}{15}<\alpha<2\), we construct global dynamics for almost
every initial datum with respect to the associated Gibbs measure as the limit of the finite-dimensional truncated flows and
prove the invariance of the Gibbs measure. The core of the proof is an
almost sure local theory based on the method of random averaging operators
\cite{DNY2024}. The main new ingredients are fractional lattice counting
estimates and localized random tensor bounds, which exploit the geometric
structure of the fractional phase in place of the classical number-theoretic
tools available for quadratic dispersion.
\end{abstract}

\maketitle

\section{Introduction} 

\subsection{Overview}

We study the defocusing fractional nonlinear Schr\"{o}dinger equation
\begin{equation}
    \label{fNLS}
\begin{cases}
    (i\partial_t- {\rm D}^\alpha)u=|u|^2u,\\
    u(0)=u_{\rm in},
\end{cases}
\qquad (t,x)\in \R\times \T^2,
\end{equation}
where $\T^2=(\R/2\pi\Z)^2$ and $\alpha\in(1,2]$. The operator ${\rm D}^\alpha$ is defined by $\F_x({\rm D}^\alpha f)(k)=|k|^\alpha \F_x f(k)$ for $k\in\Z^2$.
Here and below, $\F_x$ denotes the spatial Fourier transform on the torus. 
The equation \eqref{fNLS} is Hamiltonian and may be viewed as a weakly dispersive variant of the standard cubic NLS. The parameter $\alpha$ measures the strength of dispersion: the larger $\alpha$ is, the stronger the dispersive effect becomes. 
When $\alpha= 2$, fNLS  \eqref{fNLS} reduces to the well-known cubic nonlinear Schr\"odinger equation (NLS), which is a canonical
model for the envelope dynamics of weakly nonlinear dispersive wave propagation; see \cite{SS99} for more details. 
When $\alpha\neq 2$, fNLS \eqref{fNLS} arises in fractional quantum mechanics {\cite{FQM}}. 
Fractional quantum mechanics is a generalization of quantum mechanics by replacing the Brownian motions with L\'evy processes \cite{Laskin00}. 
Moreover, it is shown in {\cite{KLS2012limit}} that fNLS \eqref{fNLS} describes the continuum limit of lattice interactions when $\alpha\in (1, 2]$. Specifically, the case when $1 < \alpha < 2$ corresponds to the long-range lattice interactions, while the case when $\alpha= 2$ corresponds to the short-range or quick-decaying interactions. 

In this paper, we focus on the fractional nonlinear Schr\"odinger equation
\eqref{fNLS} with \(\alpha\in(1,2)\), for which the dispersion is weaker than
that of the standard cubic nonlinear Schr\"odinger equation, corresponding to
the endpoint case \(\alpha=2\) in \eqref{fNLS}. We are particularly interested
in probabilistic well-posedness on the support of the associated Gibbs measure,
as well as the invariance of this Gibbs measure under the resulting dynamics.

\subsubsection{Deterministic well-posedness}
For smooth solutions, \eqref{fNLS} formally conserves both the Hamiltonian
\begin{equation}
    \label{Hamiltonian}
    H(u)=\frac12\int_{\T^2}\big| {\rm D}^{\frac{\alpha}{2}}u\big|^2dx
    +\frac14\int_{\T^2}|u|^4dx
\end{equation}
and the mass
\begin{equation}
    \label{Mass}
    M(u)=\int_{\T^2}|u|^2dx.
\end{equation}
These conservation laws suggest global control of solutions to \eqref{fNLS}, if exists, at the energy regularity level $H^{\frac{\alpha}{2}}$ and $L^2$, which are relatively low when $\alpha$ gets small. 

At lower regularity, however, the well-posedness theory of \eqref{fNLS} is much more delicate. 
The scaling symmetry
$u(t,x)\mapsto u_\lambda(t,x):=\lambda^{\frac{\alpha}{2}}u(\lambda^\alpha t,\lambda x)$
leaves \eqref{fNLS} invariant on $\R^2$ and gives the scaling critical Sobolev index $s_c=\frac{2-\alpha}{2}$.
Thus, deterministic ill-posedness is expected below the scaling threshold, while
deterministic well-posedness above, or near, this threshold depends on the
availability of suitable dispersive estimates. On the torus \(\mathbb T^2\),
although there is no exact scaling symmetry, the scaling heuristic remains a
useful guide.
However, due to the difficulty of obtaining sufficiently strong Strichartz
estimates in the fractional setting, the well-posedness theory for \eqref{fNLS} on
\(\mathbb T^2\) remains far from satisfactory. 
In a recent work \cite{Wang2026fNLSsobolev},
by establishing new Strichartz estimates,
the author proved deterministic local
well-posedness for \eqref{fNLS} with initial data in \(H^s(\mathbb T^2)\), with $s > 1-\frac{\alpha}{4}$ for $1<\alpha<\frac43$, and $s > \frac{\alpha}{2}$ for $ \frac43\le \alpha<2$.
This result improves on the elementary energy/continuity threshold \(H^{1+}(\mathbb T^2)\) in the weakly dispersive regime but is still far from the threshold predicted by the scaling heuristic.

\subsubsection{Probabilistic well-posedness}
The main purpose of this work is to study \eqref{fNLS} from a probabilistic
point of view, with particular emphasis on almost sure global well-posedness
and invariance of the associated Gibbs measure. We defer the precise formulation
of the problem and the construction of the relevant objects to
Subsection~\ref{SUB:setup}.

The study of invariant Gibbs dynamics for nonlinear dispersive equations has attracted considerable attention in recent years.
This line of
research was initiated by the pioneering works of Lebowitz-Rose-Speer
\cite{LRS}, and Bourgain \cite{B94,B96}, which established the foundational
framework for the construction of Gibbs measures and the globalization argument
in probabilistic settings. 
Following these classical paradigms, a substantial
body of work has developed both the structural understanding of such measures
and the construction of their associated dynamics  \cite{B97,BBNLS2D,BBNLS3D,BBNLW3D,DNY2022,DNY2024,OhPL21,Oh2015APA,OTW2020,Sun2019GibbsMD,Tzvetkov2006InvariantMF,Tzvetkov2008InvariantMF,Wang2021,Wang2024}. 
We do not attempt to give a comprehensive account of the literature here.

Since Bourgain's re-centering method \cite{B94,B96}, several powerful strategies have emerged. Prominent examples include the theory of regularity structures by Hairer \cite{Hairer13,Hairer14,Hairer14reg,Hairer15}, the para-controlled calculus by Gubinelli-Imkeller-Perkowski \cite{GIP15,GIP16}, and the Wilsonian renormalization-group method of
Kupiainen \cite{Kup16}. These approaches have achieved remarkable success
for parabolic singular SPDEs, but do not directly transfer to dispersive
problems, where the available smoothing mechanisms are fundamentally
different. In a recent breakthrough, Deng-Nahmod-Yue \cite{DNY2024}
introduced the random averaging operator framework and resolved almost sure
global well-posedness and uniqueness for the defocusing NLS on
\(\mathbb T^2\) with arbitrary polynomial nonlinearities. This framework
was further developed into the random tensor theory \cite{DNY2022}, which
captures more intricate multilinear structures and provides a refined
mechanism for the propagation of randomness along the dynamics.


For the fractional model \eqref{fNLS}, motivated in part by its role in
fractional quantum mechanics \cite{SS99,Laskin00,KLS2012limit}, invariant
measure problems have attracted increasing attention in recent years. In
particular, Oh-Tzvetkov-Wang \cite{OTW2020} proved invariance of the white
noise measure for the fourth-order NLS. The Gibbs measure for fractional NLS
has been constructed in \cite{LO2023fwave,LTW23,Wang2021,Sun2021}, and in the one-dimensional
weakly dispersive regime \(\alpha\in(1,2)\), the invariance of the Gibbs
measure is now relatively well understood; see, for example,
\cite{Sun2021,Wang2024} and the references therein.
In higher dimensions, however, the corresponding dynamical theory and the
invariance of the Gibbs measure remain poorly understood. To the best of our
knowledge, in the weakly dispersive regime \(\alpha\in(1,2)\), no higher
dimensional invariance result is currently available. This is due to several
additional difficulties, which we discuss in Subsection~\ref{SUB:ideas}. The
main purpose of this work is to fill this gap, at least for some range of \(\alpha\).

\subsection{Setup and main results}
\label{SUB:setup}

In this subsection, we set up the main objects associated with the fractional
NLS \eqref{fNLS}, including the Gibbs measure, the Wick ordering, and the gauge transform. 
We then state our main result, namely the invariance of the Gibbs
measure under the dynamics of the Wick ordered equation \eqref{fNLS}.

\subsubsection{Gibbs measure}
The Gibbs measure problem starts from the finite-dimensional Hamiltonian analogy. For a Hamiltonian ODE, Liouville's theorem and the conservation of the Hamiltonian imply the invariance of measures of the form $Z^{-1}e^{-H}dpdq$. Applying this principle formally to the Hamiltonian PDE \eqref{fNLS}, as in the pioneering works \cite{LRS,B94,B96}, one is led to the expression
\begin{equation}\label{formalGibbs}
d\rho_\alpha=Z^{-1}e^{-H(u)-\frac12M(u)}du,
\end{equation}
where $H$ and $M$ are the Hamiltonian and mass defined in \eqref{Hamiltonian} and \eqref{Mass}, respectively.
The additional mass term is harmless for the dynamics, since $M(u)$ is conserved, and it replaces the massless Gaussian free field by the massive Gaussian free field associated with the quadratic energy; see also \cite{Oh2015APA} for this standard massive formulation in the two-dimensional NLS setting.
This motivates one to define the Gibbs measure as an absolutely continuous probability measure with respect to the massive Gaussian free field
\begin{equation}\label{massiveGFFform}
    ``d\mu_\alpha= Z_\alpha^{-1}
    e^{-\frac12\int_{\T^2} |\jbb{\nabla}^{\frac{\alpha}{2}}u|^2dx}du\text{''},
\end{equation}
where $ \jbb{\cdot}=(1+|\cdot|^{\alpha})^{\frac1\alpha}$.
In view of the conservation of mass, we still expect the resulting Gibbs measure to be invariant once it is properly constructed.
Equivalently, this Gaussian field is represented by the random Fourier series
\begin{equation}\label{GFFintro}
    u^\omega (x) =\sum_{k\in\Z^2}\frac{g_k(\omega)}{\jbb{k}^{\frac{\alpha}{2}}}e^{ik\cdot x}, \end{equation}
where $(g_k)_{k\in\Z^2}$ are independent standard complex Gaussian random variables.
Namely,
\[
    \mu_\alpha=\mathbb P\circ (u^{\omega})^{-1}.
\]
The random function in \eqref{GFFintro} belongs to $H^{\frac{\alpha-2}{2}-}(\T^2)$ and not to $H^{\frac{\alpha-2}{2}}(\T^2)$ almost surely. In particular, throughout the weakly dispersive range $1<\alpha<2$, the Gaussian field is supported below $L^2(\T^2)$ as well as the critical Sobolev space $H^{\frac{2-\alpha}2} (\T^2)$.

The next step is to make sense of the quartic part of the Gibbs density. Let us first recall the one-dimensional situation. In dimension one, the analogue of \eqref{GFFintro} has finite $L^2$ mass almost surely when $\alpha>1$, and the defocusing Gibbs measure can be constructed as a weighted Gaussian measure without Wick renormalization. The corresponding fractional NLS Gibbs dynamics in the full range $\alpha\in(1,2)$ were studied in \cite{Sun2019GibbsMD,Sun2021,Wang2024}.
The situation in dimension two is entirely different. If $\Pi_{ N}$ denotes the Fourier projection to $\{\jb{k}\leq N\}$ with $\jb{k} = (1+|k|^2)^{\frac12}$, and $u_N=\Pi_{ N} u$, then
\begin{equation}\label{L2expectation}
 \sigma_N : =    \frac{1}{(2\pi)^2}\mathbb{E}\|u_N\|_{L^2(\T^2)}^2
    =\sum_{\jb{k}\leq N}\frac{1}{\jbb{k}^{\alpha}}
    \sim N^{2-\alpha}
\end{equation}
for $1<\alpha<2$. Hence the $L^2$ mass diverges as $N\to\infty$. Consequently,
\[
    \int_{\T^2}|u^\omega (x) |^4dx=\infty
\]
almost surely in the unrenormalized sense, and one cannot directly construct a probability measure from the formal expression \eqref{formalGibbs}.
Thus, as in the two-dimensional NLS setting \cite{B96,Oh2015APA}, one is required to perform a Wick renormalization on the nonlinear part of the Hamiltonian.

\subsubsection{Wick ordering}

The preceding divergence is precisely the reason for Wick ordering. The role of the renormalization is to subtract the divergent self-contractions created by the Gaussian field before passing to the limit. This renormalization is classical in Euclidean quantum field theory and stochastic PDE; see \cite{Simon.B,GlimmJaffe,PLWick,Oh2015APA}. 

Given $N \in \N$, we define the Wick ordered quartic power $:|u_N|^4:$ by
\begin{align}
\label{Wick_q}
:|u_N|^4\!:  =  |u_N|^4-4\sigma_N|u_N|^2+2\sigma_N^2,
\end{align}
where $\sigma_N$ is given by \eqref{L2expectation}.
Let 
\begin{align}\label{Wick_poten}
G_N (u) = \frac14 \int_{\T^2} :|u_N|^4\!:  dx.
\end{align}
The usual Wiener-chaos computation (see, for instance,
\cite[Proposition 1.1]{Oh2015APA}) shows that $\{G_N (u)\}_{N \in \N}$ is Cauchy in $L^p (\mu_\alpha)$ for every $1 \leq p < \infty$. Therefore, we can define the limit $G(u)$ as
\[
G(u) =  \frac14 \int_{\T^2} :|u|^4\!:  dx =  \lim_{N \to \infty} G_N(u) = \frac14  \lim_{N \to \infty} \int_{\T^2} :|u_N|^4\!:  dx,
\]
in $L^p(\mu_\alpha)$ for any finite $p \ge 2$.
This identifies the formal Wick ordered Hamiltonian:
\begin{align}
\label{Wick_Hamil}
H_{\rm Wick} (u) = \frac12\int_{\T^2} |{\rm D}^{\frac{\alpha}{2}}u|^2dx +  \frac14 \int_{\T^2} :|u|^4\!:  dx.
\end{align}
The quartic term is infinite on the support of $\mu_\alpha$, so
\eqref{Wick_Hamil} is not itself a finite random variable. The rigorous
Gibbs measure is instead defined below by taking the renormalized potential
as a density with respect to the Gaussian measure, whose covariance already
encodes the quadratic part.

Let us then define 
\begin{align}\label{RN}
R_N (u) = e^{-G_N(u)} = e^{-\frac14 \int_{\T^2} :|u_N|^4\!:  dx}
\end{align}

\begin{prop}[Theorem 1.2 in \cite{LO2023fwave}]
\label{PROP:Gibbs}
    Let $\alpha\in(\frac{3}{2},2)$ and $R_N(u)$ be as in \eqref{RN}. 
    Then $R_N(u)$ converges to a limit $R(u)$ in $L^p(\mu_\alpha)$ for every $1\leq p<\infty$. 
\end{prop}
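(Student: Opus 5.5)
The plan is the classical Nelson-type argument, as in \cite{Oh2015APA} for $\alpha=2$. It has three ingredients: (i) $G_N\to G$ in $L^p(\mu_\alpha)$; (ii) a deterministic lower bound $G_N(u)\ge -C\sigma_N^2$; (iii) a tail estimate for $\mu_\alpha(G_N<-\lambda)$. Together these give uniform exponential integrability $\sup_N\|e^{-G_N}\|_{L^p(\mu_\alpha)}<\infty$ for every $p<\infty$. Once we have that, convergence in measure of $R_N=e^{-G_N}$ (inherited from (i) by continuity of the exponential) plus uniform $L^{2p}$ bounds give convergence in $L^p(\mu_\alpha)$ via Vitali / H\"older. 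The limit is $R(u)=e^{-G(u)}$.

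\textbf{Step 1 (convergence of $G_N$).} Write $G_N$ as an element of the fourth homogeneous Wiener chaos. The Wick ordering \eqref{Wick_q} kills exactly the self-contractions. Computing $\mathbb E|G_N-G_M|^2$ via Wick's theorem gives a sum over $k_1-k_2+k_3-k_4=0$ of $\prod_j\jbb{k_j}^{-\alpha}$, restricted to tuples with some $|k_j|>M$. Using $\sum_{k_1-k_2+k_3=k}\prod_{j\le 3}\jbb{k_j}^{-\alpha}\lesssim \jb{k}^{4-3\alpha+}$, the sum is controlled by $\sum_k \jb{k}^{4-4\alpha+}$. This converges since $4\alpha-4>2$, i.e.\ $\alpha>3/2$. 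The tail restricted to large frequencies then decays like $M^{-\delta}$ for some $\delta=\delta(\alpha)>0$. Wiener chaos hypercontractivity upgrades $L^2$ convergence to all $L^p$, giving $\|G_N-G_M\|_{L^p}\lesssim p^2 M^{-\delta}$ for $N\ge M$.

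\textbf{Step 2 (lower bound).} By \eqref{Wick_q}, $:|u_N|^4: = (|u_N|^2-2\sigma_N)^2-2\sigma_N^2$. Hence $G_N(u)\ge -\frac{(2\pi)^2}{2}\sigma_N^2\gtrsim -N^{2(2-\alpha)}$, using \eqref{L2expectation}.

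\textbf{Step 3 (Nelson's estimate), the main obstacle.} We must bound $\mu_\alpha(e^{-G_N}>e^{\lambda})=\mu_\alpha(G_N<-\lambda)$ uniformly in $N$, by something integrable against $e^{p\lambda}$. Given $\lambda\gg1$, choose $M$ with $\frac{(2\pi)^2}{2}\sigma_M^2\approx \lambda/2$, i.e.\ $M\sim\lambda^{\frac{1}{2(2-\alpha)}}$. For $N\le M$ the event is empty by Step 2. For $N>M$, Step 2 applied at level $M$ gives $\{G_N<-\lambda\}\subset\{|G_N-G_M|>\lambda/2\}$. By Step 1 and Chebyshev with optimized $p$ (fourth-chaos tails),
\[
\mu_\alpha(G_N<-\lambda)\le \exp\bigl(-c\,(\lambda M^{\delta})^{1/2}\bigr)=\exp\bigl(-c\,\lambda^{\frac12+\frac{\delta}{4(2-\alpha)}}\bigr).
\]
This beats $e^{p\lambda}$ only if the exponent exceeds one, i.e.\ $\delta>2(2-\alpha)$. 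This is where the restriction $\alpha>3/2$ must be checked carefully, since the decay rate $\delta$ from Step 1 depends on $\alpha$. If the naive rate is insufficient, I would instead use the sharper conditional argument of Lebowitz--Rose--Speer/Bourgain: decompose $u=u_M+(u_N-u_M)$ and use the positivity $(|u_N|^2-2\sigma_N)^2\ge0$ more cleverly. Alternatively, I would use the Bou\'e--Dupuis variational formula, as in \cite{LO2023fwave}, to bound $\log\mathbb E e^{-pG_N}$ directly. In that approach the drift term is controlled by the $H^{\alpha/2}$ cost and the quartic positivity, and the restriction $\alpha>3/2$ enters through bounding the cross terms, such as $\int :u_N^2:\,\theta^2$, in negative Sobolev norms.

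Combining the uniform bound $\sup_N\|R_N\|_{L^{p}(\mu_\alpha)}<\infty$ with convergence in measure concludes the proof.
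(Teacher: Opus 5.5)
Your Steps 1 and 2 are correct. Step 3 is the real content of the proposition, and it does not close on the whole range $\alpha\in(\frac32,2)$; you leave exactly the decisive check undone. (The paper does not reprove the statement; it quotes \cite{LO2023fwave}.)

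The rate in Step 1 is explicit. Summing your three-fold convolution bound over $|k|\gtrsim M$ gives
\[
\|G_N-G_M\|_{L^2(\mu_\alpha)}^2\lesssim\sum_{|k|\gtrsim M}\jb{k}^{4-4\alpha+}\sim M^{6-4\alpha+},
\]
so $\delta=2\alpha-3-$. This $L^2$ rate is sharp, as the nonnegative contributions from tuples with all $|k_j|\sim M$ show. Inserting it into your tail bound with $M\sim\lambda^{\frac{1}{2(2-\alpha)}}$ gives
\[
\mu_\alpha(G_N<-\lambda)\le\exp\bigl(-c\,\lambda^{\frac12+\frac{2\alpha-3}{4(2-\alpha)}-}\bigr)=\exp\bigl(-c\,\lambda^{\frac{1}{4(2-\alpha)}-}\bigr).
\]
The exponent exceeds $1$ only if $\alpha>\frac74$. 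So the Nelson scheme as written proves the statement for $\alpha\in(\frac74,2)$ only. For $\alpha\in(\frac32,\frac74]$ you have no uniform bound on $\|e^{-G_N}\|_{L^p(\mu_\alpha)}$, and the concluding convergence-in-measure/Vitali step has nothing to rest on. The alternatives you list are only named, and the ``conditional'' Lebowitz--Rose--Speer/Bourgain idea is too vague to check.

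To fill the gap you must actually carry out the variational bound. By the Bou\'e--Dupuis formula, $-\log\mathbb E_{\mu_\alpha}[e^{-pG_N}]$ is an infimum over drifts $\theta$ of
\[
\mathbb E\bigl[pG_N(u+\Theta)+\tfrac12\textstyle\int_0^1\|\theta(t)\|_{L^2}^2dt\bigr],
\qquad \|\Theta\|_{H^{\alpha/2}}^2\le\textstyle\int_0^1\|\theta(t)\|_{L^2}^2dt .
\]
Expand $:|u_N+\Theta_N|^4:$. The term $\frac14\int|\Theta_N|^4$ and the drift cost are coercive. You must bound every cross term by $\varepsilon\bigl(\|\Theta_N\|_{H^{\alpha/2}}^2+\|\Theta_N\|_{L^4}^4\bigr)$ plus random quantities with $N$-uniform moments.

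The decisive cross term is $\int:|u_N|^2u_N:\,\overline{\Theta_N}\,dx$. Since $:|u_N|^2u_N:$ is uniformly bounded in $L^p(\mu_\alpha;H^{\frac{3\alpha}{2}-3-})$, pairing it with $H^{\frac\alpha2}$ requires $\frac{3\alpha}{2}-3+\frac\alpha2>0$, which is exactly $\alpha>\frac32$. This is where the hypothesis really enters. The terms quadratic and cubic in $\Theta_N$ follow from the fractional Leibniz rule, interpolation between $L^2$ and $H^{\alpha/2}$, and Young's inequality; these only need $\alpha>\frac43$.

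This yields $\sup_N\|R_N\|_{L^p(\mu_\alpha)}<\infty$ for every $p<\infty$. Your Steps 1--2 and the final convergence argument then finish the proof.
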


In the following, we denote the limit in Proposition \ref{PROP:Gibbs} as
\begin{align}
    \label{G}
    R(u) = e^{-\frac14 \int_{\T^2} :|u|^4\!:  dx}.
\end{align}
Then, as a consequence of Proposition \ref{PROP:Gibbs}, we can rigorously define the Gibbs measure $d \rho_\alpha$ associated with the Wick ordered fractional Hamiltonian \eqref{Wick_Hamil} by
\begin{align}
\label{Gibbs_rig}
d \rho_\alpha = Z^{-1} R(u) d\mu_\alpha = Z^{-1} e^{-\frac14 \int_{\T^2} :|u|^4\!:  dx} d\mu_\alpha. 
\end{align}
Furthermore, $d\rho_{\alpha}$ is absolutely continuous with respect to $d\mu_{\alpha}$.

\begin{rem}
In \cite{LO2023fwave}, Proposition~\ref{PROP:Gibbs} is proved for the
real-valued fractional Gibbs measure. We remark that the complex-valued version
can be constructed in the same way.
Indeed, recall that the Wick ordered quartic power can be written as
\[
\begin{aligned}
:|u_N|^4:
&=
:\big( (\operatorname{Re} u_N)^2+(\operatorname{Im} u_N)^2 \big)^2:  \\
&=
:(\operatorname{Re} u_N)^4:
+
2:(\operatorname{Re} u_N)^2:\,:(\operatorname{Im} u_N)^2:
+
:(\operatorname{Im} u_N)^4: .
\end{aligned}
\]
The convergence of the left-hand side can be justified rigorously by the same
argument as in \cite{LO2023fwave}; the same applies to $R_N$ in
\eqref{RN}. This yields the construction of the complex-valued Gibbs measure
$\rho_\alpha$ in \eqref{Gibbs_rig}, and hence proves Proposition~\ref{PROP:Gibbs}.

We also point out that, although the measure can be constructed in the range
$\alpha>\frac32$, the corresponding dynamical problem is solved here only in
the more restrictive range $\alpha>\frac{29}{15}$. We refer to
Theorem~\ref{LWP} and Theorem \ref{GWP} for the precise statement.
\end{rem}

To define the Wick ordered dynamics associated with \eqref{fNLS}, we first
introduce the truncated Wick ordered Hamiltonian
\begin{equation}\label{Hamil_wick^N}
    H_{\rm Wick}^N(u)
    =
    \frac12\int_{\mathbb T^2}|{\rm D}^{\frac{\alpha}{2}}u|^2\,dx
    +
    \frac14\int_{\mathbb T^2}:|u_N|^4:\,dx,
\end{equation}
where $u_N=\Pi_Nu$. The (nonlinear part of the) corresponding truncated Wick ordered dynamics is
\begin{equation}\label{truncfNLS}
\begin{cases}
 (i\partial_t-{\rm D}^\alpha)u_N
    =
    \Pi_N\bigl(:|u_N|^2u_N:\bigr),\\
    u_N(0)=\Pi_Nu^\omega,
\end{cases}
\end{equation}
where the truncated Wick ordered cubic nonlinearity is given by
\begin{equation}\label{Wick_cubic_N}
    \Pi_N\bigl(:|u_N|^2u_N:\bigr)
    =
    \Pi_N\bigl(|u_N|^2u_N-2\sigma_Nu_N\bigr).
\end{equation}
Here $\sigma_N$ is defined in \eqref{L2expectation}.

It can be shown (see \cite[Proposition 1.3]{Oh2015APA} for $\alpha = 2$ case)  that for general $\alpha $, the sequence
    $\Pi_N\bigl(:|u_N|^2u_N:\bigr)$
is Cauchy in $L^p(\mu_\alpha;H^s(\mathbb T^2))$, for every finite $p\ge1$
and every $s<\frac{3\alpha}2-3$. Hence it admits a limit, denoted by $:|u|^2u:$.
Passing formally to the limit $N\to\infty$, the Wick ordered version of the fractional NLS \eqref{fNLS} takes the form
\begin{equation}\label{WfNLS}
\begin{cases}
    (i\partial_t-{\rm D}^\alpha)u
    =  \ :|u|^2u: \ \ ,\\
    u(0)=u^\omega.
\end{cases}  
\end{equation}

\begin{rem}\label{truncatedGibbs}
Consider the truncated Gibbs measure
\begin{equation}\label{truncateGibbsmeas}
    d\rho_{\alpha,N}^{\circ}
    =
    Z_{\alpha,N}^{-1}
    \exp\left(
        -\frac14\int_{\mathbb T^2}:|u_N|^4:\,dx
    \right)d\mu_{\alpha,N},
\end{equation}
associated with the truncated Wick ordered Hamiltonian
\eqref{Hamil_wick^N}, where $\mu_{\alpha,N} = \mathbb{P}\circ \left(\Pi_N u^{\omega}\right)^{-1}$ is the truncated Gaussian measure. Then $d\rho_{\alpha,N}^{\circ}$ is invariant under the finite-dimensional system \eqref{truncfNLS}. Furthermore, if we define $ d\rho_{\alpha,N} =  d\rho_{\alpha,N}^{\circ}\times d\mu_{\alpha, N}^{\perp}$ with $\mu_{\alpha,N}^{\perp} = \mathbb{P}\circ \left(\Pi_N^{\perp} u^{\omega}\right)^{-1}$, it is invariant under the product flow
\[
    u^N(t)=u_N(t)+u_N^\perp(t),
\]
where $u_N(t)$ solves the finite-dimensional Wick ordered equation
\eqref{truncfNLS}, while $u_N^\perp(t)$ evolves according to the linear flow
\[
(i\partial_t-{\rm D}^\alpha)u_N^\perp=0.
\]
Moreover, as $N\to\infty$, the measures $\rho_{\alpha,N}$ converge weakly
to the Gibbs measure $\rho_\alpha$.
\end{rem}

\subsubsection{Gauge transform}

We next introduce a gauge transform which removes the remaining resonant
mass-type contribution after Wick ordering. Recall that the Wick ordering nonlinearity \eqref{Wick_cubic_N}
subtracts the Gaussian contraction $\sigma_N$. However, in the
Fourier expansion of the cubic term $|u_N|^2 u_N$, there remains a diagonal contribution
involving the actual truncated mass. As in the one-dimensional fractional
problem \cite{Wang2024} and the two-dimensional NLS setting
\cite{DNY2024}, this contribution can be removed by a phase rotation.

Let us first explain this process at the truncated level. 
For the truncated system \eqref{truncfNLS}, define
\[
    v_N(t)
    =
    e^{2it(m_N-\sigma_N)}u_N(t),
\]
where
\begin{equation}
\label{eq:truncated-mass}
    m_N
    =
    \frac{1}{(2\pi)^2}\|u_N\|_{L^2(\mathbb T^2)}^2
    =
    \sum_{\langle k\rangle\le N}|(u_N)_k|^2
    =
    \sum_{\langle k\rangle\le N}|(u_{\rm in})_k|^2
\end{equation}
is conserved under the flow of \eqref{truncfNLS}. Here $\sigma_N$ is given in \eqref{L2expectation}, which denotes the
Wick contraction appearing in the definition of the Wick ordered nonlinearity.  Then $v_N$ solves
\begin{equation}
\label{truncfNLSgauge}
\begin{cases}
(i\partial_t- {\rm D}^{\alpha})v_N
=
\Pi_N\bigl(|v_N|^2u_N-2m_Nv_N\bigr),
\\
v_N(0)=\Pi_N u^\omega .
\end{cases}
\end{equation}
Note that the explicit Wick constant $\sigma_N$ no longer appears in the gauged
equation, although Wick ordering remains essential in the construction of the
Gibbs measure and in identifying the correct renormalized dynamics.

By \cite[Proposition~2.4]{DNY2024}, the truncated Gibbs measure
$d\rho_{\alpha,N}^{\circ}$ in \eqref{truncateGibbsmeas} is also invariant under the
flow of \eqref{truncfNLSgauge}. The advantage of \eqref{truncfNLSgauge} is that
the gauged nonlinearity admits a multilinear decomposition in which the most
singular diagonal pairings have been removed.
In particular,
\[
\begin{split}
\mathcal F_x & \bigg(
|v|^2v-\frac{2}{(2\pi)^2}\|v\|_{L^2(\mathbb T^2)}^2v
\bigg)(k) =
\sum_{\substack{k=k_1-k_2+k_3\\ k_2\neq k_1,k_3}}
v_{k_1}\overline{v_{k_2}}v_{k_3}-|v_k|^2v_k .
\end{split}
\]
Thus the nonlinearity in \eqref{truncfNLSgauge} can be written as
\begin{equation}\label{multilinearW^3}
 |v_N|^2v_N-2m_Nv_N
 =\mathcal N_3(v_N,v_N,v_N)+\mathcal Q_3(v_N,v_N,v_N),
\end{equation}
where
\begin{equation}\label{trilinearforms}
\begin{aligned}
 \big(\mathcal N_3(u,v,w)\big)_k
 &=
 \sum_{\substack{k=k_1-k_2+k_3\\ k_2\neq k_1,k_3}}
 u_{k_1}\overline{v_{k_2}}w_{k_3},\\
 \big(\mathcal Q_3(u,v,w)\big)_k
 &=-u_k\overline{v_k}w_k.
\end{aligned}
\end{equation}
We write $\mathcal N_3(v)=\mathcal N_3(v,v,v)$ and similarly for
$\mathcal Q_3$. This is the form used throughout the proof.

\subsubsection{Main results}

We are now ready to state our main results. We first address almost sure local well-posedness for the Wick ordered fractional NLS \eqref{WfNLS} with Gaussian
random initial data given by \eqref{GFFintro}.

\begin{thm}\label{LWP}
    Let $\alpha\in(\frac{29}{15},2)$ and fix sufficiently small
    $0<\varepsilon<\min\{15\alpha-29,2-\alpha\}$. There exist
    $\tau_0=\tau_0(\alpha,\varepsilon)>0$ and
    $\theta=\theta(\alpha,\varepsilon)>0$ such that, for every
    $0<\tau\leq\tau_0$, there is a measurable set
    $\Omega_{\tau,\varepsilon}\subset\Omega$ satisfying
    $\mathbb P(\Omega_{\tau,\varepsilon}^c)
    \leq C_{\alpha,\varepsilon,\theta}e^{-c_{\alpha,\varepsilon}
    \tau^{-\theta}}$.
    For every $\omega\in\Omega_{\tau,\varepsilon}$, the solutions $v_N$
    of \eqref{truncfNLSgauge} converge in
    $C([-\tau,\tau];H^{\frac{\alpha-2}{2}-\varepsilon}(\mathbb T^2))$
    to a distributional solution $v$ of
    \begin{equation}\label{Eqn:gaugedlimit}
       (i\partial_t-{\rm D}^{\alpha})v
       =\mathcal N_3(v)+\mathcal Q_3(v)
    \end{equation}
    with initial data $u^\omega$. Moreover, the solutions $u_N$ of the Wick ordered equation
    \eqref{truncfNLS} converge in the same
    space to
    \[
        u(t)=e^{-2itX(u^{\omega})}v(t),
        \qquad
        X(u^\omega)=\lim_{N\to\infty}
        \left(\frac{1}{(2\pi)^2}\|\Pi_Nu^\omega\|_{L^2}^2-\sigma_N\right),
    \]
    where the displayed limit defining $X(u^\omega)$ exists. 
    $u$ solves \eqref{WfNLS} with $u(0)=u^\omega$ in the distributional sense. 

\end{thm}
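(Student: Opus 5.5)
We follow the random averaging operator scheme of \cite{DNY2024}, adapted to the dispersion $|k|^\alpha$. The key object is the difference $y_N=v_N-v_{N/2}$ of the gauged truncated solutions \eqref{truncfNLSgauge} along dyadic $N$. We decompose
\[
y_N=\psi_N+z_N,\qquad \psi_N(t)=\mathcal H^{N}(t)\bigl(\Delta_N u^\omega\bigr),
\]
where $\Delta_N u^\omega=(\Pi_N-\Pi_{N/2})u^\omega$. The colored part $\psi_N$ is obtained by applying to the frequency-$N$ Gaussian block the random linear operator $\mathcal H^N$, which solves the linearization of \eqref{truncfNLSgauge} around the low-frequency solution $v_{N/2}$. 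The operator is kept only in the high--low--low interactions in which the high-frequency input is linear. Because $\mathcal H^N$ depends only on $\{g_k\}_{\jb{k}\le N/2}$, it is independent of $\Delta_N u^\omega$. The remainder $z_N$ is expected to be smoother, lying in $X^{s_1,b}$ at frequency $N$ with $s_1$ strictly above $\frac{\alpha-2}{2}$.

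The induction hypothesis on $N$, holding on a set of probability $\ge 1-C e^{-\tau^{-\theta}}$, consists of three bounds:
\begin{enumerate}[(i)]
\item $\mathcal H^N$ admits a kernel $h^N_{kk'}(t)$ with operator bounds in $X^b$ of size $N^{-\delta}$ away from the identity, together with weighted Hilbert--Schmidt bounds in $k-k'$ that encode the localization of $k$ near $k'$;
\item $\|z_N\|_{X^{s_1,b}}\le N^{-\frac{\alpha-2}{2}-\delta'}$-type bounds;
\item time-localized versions of these, with the gain $\tau^\theta$ coming from short-time $X^{s,b}$ estimates.
\end{enumerate}

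To close the induction, we expand $\mathcal N_3(v_N)$ into terms according to which inputs are $\psi$'s and which are $z$'s, and at which dyadic scales. Terms with at least two independent high Gaussian inputs are handled by multilinear Gaussian hypercontractivity and random tensor (moment/$TT^*$) bounds, conditioning on the low-frequency $\sigma$-algebra. Deterministic terms are handled through the $X^{s,b}$ Duhamel estimate, with counting bounds for the sets
\[
\{(k_1,k_2,k_3): k=k_1-k_2+k_3,\ \bigl||k_1|^\alpha-|k_2|^\alpha+|k_3|^\alpha-|k|^\alpha-m\bigr|\le 1,\ |k_j|\sim N_j\}.
\]
Since Gauss-sum divisor bounds are unavailable for $\alpha<2$, we instead derive these counts geometrically. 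For fixed $k,k_1$, the resonance set is a thin neighborhood of a curve whose curvature is nondegenerate away from the collinear configurations. We count lattice points in this $O(N^{1-\alpha})$-width annular neighborhood using volume plus a boundary term, losing only powers such as $N^{2-\alpha}$. The $\mathcal Q_3$ term and the paired contributions are handled by the gauge: the $k_2=k_1,k_3$ pairings are excluded by definition, and the remaining diagonal terms are bounded directly.

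The main obstacle is the localized random tensor bound for the high--low--low term that defines $\mathcal H^N$, together with the corresponding counting estimate in the near-resonant regime $|k_1-k|\ll N$. There the fractional phase $|k_1|^\alpha-|k|^\alpha\approx \alpha|k|^{\alpha-2}k\cdot(k_1-k)$ is almost linear, so the available counting gain is weaker than for $\alpha=2$. The accumulated losses $N^{O(2-\alpha)}$ must be beaten by the gains $N^{-\delta}$, and balancing the exponents is what should produce the threshold $\alpha>\frac{29}{15}$ and the constraint $\varepsilon<15\alpha-29$. We would first try to exploit the second-order (curvature) term of the phase, splitting into the cases $|k_1-k|\le N^{\gamma}$ and $|k_1-k|>N^{\gamma}$ and optimizing over $\gamma$.

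Once the uniform estimates hold, summing the bounds on $y_N$ over dyadic $N$ shows that $v_N$ is Cauchy in $C([-\tau,\tau];H^{\frac{\alpha-2}{2}-\varepsilon})$. The limit $v$ solves \eqref{Eqn:gaugedlimit} distributionally, because $\mathcal N_3(v_N)\to\mathcal N_3(v)$ in $X^{s,b-1}$ by the same multilinear bounds. The limit $X(u^\omega)$ exists almost surely as a second-chaos Cauchy sequence whose variance is $\sum_N\sum_{\jb{k}\sim N}\jbb{k}^{-2\alpha}<\infty$. Finally, $u_N=e^{-2it(m_N-\sigma_N)}v_N$ converges to $e^{-2itX}v$, and the equation \eqref{WfNLS} follows by undoing the gauge on the truncated level and passing to the limit.
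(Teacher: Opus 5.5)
Your outline has the right architecture (random averaging operator, random tensor bounds, fractional counting, gauge, second-chaos limit for $X(u^\omega)$). However, the two steps that are genuinely new in the fractional setting would not go through as you describe them.

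\emph{First, the counting, which is where the threshold actually comes from.}

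For fixed $(k,k_1)$ the constraint is $|k_2+b|^\alpha-|k_2|^\alpha\in J_0$ with $b=k-k_1$. This is a difference. For $|x|\gg|b|$ its gradient is only of size $|b|\,|x|^{\alpha-2}$, not $N^{\alpha-1}$. Its level curves are lines at $\alpha=2$ and nearly straight for $\alpha<2$.
\begin{itemize}
\item There is therefore no curvature gain.
\item The region has width $\sim|x|^{2-\alpha}/|b|$, not $N^{1-\alpha}$.
\item For $|b|\sim1$ and a suitable window it really contains $\sim N^{3-\alpha}$ lattice points (all $x$ with $|x\cdot\hat b|\lesssim|x|^{2-\alpha}$).
\end{itemize}
The paper's bound here is $\frac{N_j^{3-\alpha}}{\langle b\rangle}+N_j$, obtained by coordinate slicing (Lemmas \ref{LEM:2d}, \ref{2vec1}).

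Curvature is available only for $S_{kk_2}$ and $S_{k_1k_3}$, where the level sets of $|x|^\alpha+|x-b|^\alpha$ are convex. Even there, area is not a lattice-point bound for a region of width $N^{1-\alpha}<1$. Curvature information alone cannot beat $\sim N^{2/3}$ points on a strip of width $\lesssim N^{-1}$ (Jarn\'ik), and an area-plus-perimeter bound gives only $\sim N$. The paper instead:
\begin{itemize}
\item cuts the modulation window into $\mathcal O(M^{2-\alpha})$ sub-windows whose normal width matches the inverse curvature radius;
\item applies the Howard--Trifonov bound $1+L_{\mathcal C}R^{-1/3}$ (Lemma \ref{LEM:strip}) to each;
\item treats neighbourhoods of $0$, $b$, $\frac b2$ separately;
\end{itemize}
and obtains $M^{\frac83-\alpha}$ (Lemma \ref{2vec2}).

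These exponents fix the numerology that your hypothesis (ii) leaves open. With all $N_j\sim N$, $|S|\lesssim N^{\frac{20}{3}-\alpha}$ gives the remainder size $N^{-\frac{3\alpha}{2}}|S|^{\frac12}=N^{\frac{10}{3}-2\alpha}$. Combining this, and the Hilbert--Schmidt size $L^{\frac{5-2\alpha}{2}}$ of $h^{L,R}$, with the two-vector counts produces the exponent $\frac{29}{6}-\frac{5\alpha}{2}$. It appears in the type (C,D), (D,D) and Gaussian-pairing contributions to $\|\mathcal P^{\pm}\|_{Y^{b,b}}$, and its negativity is exactly $\alpha>\frac{29}{15}$. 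With an area-plus-perimeter count the remainder exponent would be $\frac{5-3\alpha}{2}$, and these bounds would not close for any $\alpha<2$ in this scheme. So the threshold is a counting phenomenon, not a near-resonant curvature effect in the high--low--low term.

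\emph{Second, the random averaging operator.}

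Your hypothesis (i) is false as stated. The lowest-frequency inputs already contribute to $\mathcal H^N-e^{-it{\rm D}^\alpha}$ an amount that is $O(\tau^\theta)$ but does not decay in $N$, because on near-resonant configurations $k_1\cdot(k_3-k_2)\approx0$ there is no oscillation gain. What is provable is $\|h^{N,L}\|_{Y^b}\le L^{-\delta_0}$ for the dyadic increments in the low scale. This requires $L<N^{1-\delta}$ rather than linearizing around $v_{N/2}$, so that $h^{N,L}$ is essentially supported in $|k-k'|\lesssim L$.

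Because the gain is only $L^{-\delta_0}$ with $L\ll N$, the $N^{\theta}$ loss in the generic random tensor bound (Proposition \ref{Prop:tensor}) cannot be absorbed. This probabilistic loss---not a counting deficit---is what the localized estimate must remove. Lemma \ref{HLLtensor} does so as follows:
\begin{itemize}
\item it restricts to $|b-f|,|c-f|\lesssim L$;
\item it expands $|x+f|^\alpha-|y+f|^\alpha=\alpha|f|^{\alpha-2}f\cdot(x-y)+\mathcal O(L^{2-C_2})$ for large $|f|$;
\item it shows that the pairs (linear phase, admissible domain) fall into $L^{\mathcal O(1)}$ classes, so the union bound costs only powers of $L$.
\end{itemize}
Splitting $|k_1-k|\lessgtr N^{\gamma}$ to exploit curvature does not touch this loss.

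Finally, the term $\Delta_N\mathcal N_3(v_{N/2})$ contains high--low--low interactions with the high input just below the cutoff and the output just above it. These are not covered by $\mathcal H^N$ and have no smoothing in general. The paper controls them through the $\Gamma$-condition: it turns $\bigl||k|^\alpha-\Gamma^\alpha\bigr|\lesssim N_{\med}^\alpha$ into $\bigl||k|^2-\Gamma^2\bigr|\lesssim N_{\med}^\alpha N^{2-\alpha}$ so that divisor counting applies (Lemmas \ref{Gammacounting}, \ref{Gammacounting2}), and treats large modulations separately. Your plan has no mechanism for these terms.
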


\begin{rem}
    By slightly shrinking the admissible set, which keeps the exceptional set exponentially small, the gauged limit $v$ with initial data $u^\omega$ belongs to and is unique in the structured solution class $\mathcal{U}_{u^\omega}([-\tau,\tau])$ defined in
    Definition~\ref{structured-class}, and $u$ is unique in its inverse image under the gauge transformation above. See more discussion in Remark \ref{unique2}.
\end{rem}

    We call Theorem \ref{LWP} almost sure local well-posedness in the
    structured class. Taking a sequence $\tau_j\downarrow0$ sufficiently
    rapidly and applying the Borel-Cantelli lemma shows that the solution
    exists on a nontrivial time interval $[-\tau_\omega,\tau_\omega]$ and is unique in $\mathcal{U}_{u^\omega}([-\tau_\omega,\tau_\omega])$ for
    $\mathbb P$-almost every $\omega$, and hence for
    $\mu_\alpha$- and $\rho_\alpha$-almost every initial datum.

\smallskip

\begin{rem}
    In fact, the local result (Theorem \ref{LWP}) also holds for the focusing case. However, a standard invariant argument to globalize the local solution needs the construction of the corresponding Gibbs measure. So we only consider the defocusing case here.
\end{rem}

\begin{rem}
In \cite{MX22}, probabilistic well-posedness for the fractional NLS on
\(\mathbb T^d\), \(d\ge1\), was established for initial data in
\(H^s(\mathbb T^d)\), with either \(s>\frac d2\), or
\(s\in[\frac{\alpha}{2},1]\) when \(d\le3\). These results, however, all concern
data of positive regularity. In contrast, in our setting the Gibbsian initial
data \eqref{GFFintro} almost surely belongs to
$H^{\frac{\alpha-2}{2}-}(\mathbb T^2)$,
which is of negative regularity for every \(\alpha<2\), and becomes increasingly
rough as \(\alpha\) moves away from \(2\). While such a difficulty already arises
for the classical NLS in dimensions \(d\ge3\), it appears in two dimensions for
the fractional NLS due to the weaker dispersion.
\end{rem}

\begin{rem}
We do not expect the range of \(\alpha\) in Theorem~\ref{LWP} to be optimal.
The main purpose of this work is instead to develop a flexible framework for
fractional NLS in higher dimensions, including fractional counting estimates,
fractional tensor localization, and fractional \(\Gamma\)-conditions. We
therefore do not pursue the optimal range of \(\alpha\) in this paper.
The main difficulty in extending the range of \(\alpha\) comes from the
fractional counting estimates; see Subsection~\ref{SUB:ideas} for further
discussion. In the spirit of the probabilistic scaling heuristic argument from
\cite{DNY2022}, the counting estimates in Section~\ref{count} yield the
probabilistic regularity threshold
$s_{\mathrm{pr}}
=
\frac16-\frac{\alpha}{4}$.
This suggests that, in principle, Theorem~\ref{LWP} may be extendable to the
range
$\alpha>\frac{14}{9}$,
for instance by adapting the random tensor argument of \cite{DNY2022}. Further
improvement of the range of \(\alpha\) would likely require sharper fractional
counting estimates. For this reason, we leave the question of the optimal range
open.
\end{rem}

\begin{rem}
In the one-dimensional setting, the full weakly dispersive range
\(\alpha\in(1,2)\) for the fractional NLS \eqref{fNLS} has been settled \cite{Sun2019GibbsMD,Sun2021,Wang2024}. 
In that case, the
Gibbsian initial data have positive Sobolev regularity, and hence Wick ordering
is not needed. Moreover, the relevant Strichartz estimates and fractional
counting estimates are better understood; see, for example, \cite{Sun2021}.
By contrast, in the two-dimensional problem considered in this paper, the
initial data are substantially more singular. In addition, both the relevant
Strichartz estimates and the fractional counting estimates are much less
developed. These issues constitute some of the main new difficulties in the
two-dimensional fractional setting.
\end{rem}

\begin{rem}
Compared with the standard NLS on \(\mathbb T^2\), where Bourgain
\cite{B96} reduced the relevant counting problems to counting intersections
between lattice points and algebraic curves, together with divisor-counting
arguments, the presence of a non-integer dispersion exponent \(\alpha<2\) makes
the counting problems in our setting considerably more delicate. In particular,
one is led to count lattice points in thickened regions rather than on exact
algebraic curves.

This feature is reminiscent of the irrational-torus setting; see
\cite{FOSW2021}. However, in \cite{FOSW2021}, the dispersion relation remains
quadratic, and in certain cases algebraic divisor-counting arguments are still
available. In the present fractional setting, such algebraic structure is no
longer available because the resonance function involves non-integer powers.
As a result, classical number-theoretic counting arguments are not directly
applicable. We therefore develop a new framework based on the geometric
structure of the associated thickened resonance domains.
We refer to Subsection \ref{SUB:ideas} for further discussion.


\end{rem}

\smallskip

\gb

Based on the almost sure local well-posedness, we can extend the solution to arbitrary long time and get the invariance of the Gibbs measure.

\begin{thm}\label{GWP}
    Let $\alpha\in(\frac{29}{15},2)$ and let $\rho_{\alpha}$ be the Gibbs
    measure defined in \eqref{Gibbs_rig}. There exists a rotation-invariant
    Borel set
    $\Sigma\subset\bigcap_{\varepsilon>0}
    H^{\frac{\alpha-2}{2}-\varepsilon}(\mathbb T^2)$ with
    $\rho_\alpha(\Sigma^c)=0$ such that the following holds. For every
    $u_{\mathrm{in}}\in\Sigma$,
    let $u_N$ solve \eqref{truncfNLS}. Then, for every $T>0$ and every
    $\varepsilon>0$, $u_N$ converges in
    $C([-T,T];H^{\frac{\alpha-2}{2}-\varepsilon}(\mathbb T^2))$ to a unique
    limit $u$, which solves \eqref{WfNLS} in the distributional sense. The flow map
    $\Phi_tu_{\mathrm{in}}:=u(t)$ is a Borel map from $\Sigma$ to itself,
    satisfying the usual group property,
    $\Phi_0=\mathrm{Id}$ and
    $\Phi_{t_2}\circ\Phi_{t_1}=\Phi_{t_1+t_2}$. Moreover, the Gibbs measure $\rho_\alpha$ is invariant in the sense that
    $(\Phi_t)_\#\rho_\alpha=\rho_\alpha$ for every $t\in\mathbb R$.
\end{thm}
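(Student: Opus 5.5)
The plan is to globalize Theorem~\ref{LWP} by Bourgain's invariant measure argument, carried out at the level of the truncated flows, whose invariance is given in Remark~\ref{truncatedGibbs}. Two points need care. First, all estimates should be proved for the gauged equation \eqref{truncfNLSgauge}. Second, the gauge is undone at the end, using that $X(u^\omega)$ is conserved along the flow.

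\textbf{Step 1 (uniform local theory with data-dependent sets).} I will reformulate Theorem~\ref{LWP} as a statement about the initial datum $u_{\rm in}$ rather than $\omega$. For each $\tau\le\tau_0$ and $A\ge1$ there should be a Borel set $E_{\tau}\subset H^{\frac{\alpha-2}{2}-\varepsilon}$ with the following three properties. (i) $\mu_\alpha(E_\tau^c)\lesssim e^{-c\tau^{-\theta}}$. (ii) For $u_{\rm in}\in E_\tau$, the truncated solutions converge on $[-\tau,\tau]$, and in addition $\sup_N\sup_{|t|\le\tau}\|v_N(t)-v(t)\|\lesssim N^{-\delta}$. (iii) The set $E_\tau$ is stable under the truncated flows, in the sense that $\Phi^N_t(\Pi_N u_{\rm in}+\Pi_N^\perp u_{\rm in})$ is close to the corresponding object for the limit.

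Since $\rho_\alpha\ll\mu_\alpha$ with density in every $L^p$ by Proposition~\ref{PROP:Gibbs}, Hölder's inequality gives $\rho_\alpha(E_\tau^c)\lesssim e^{-c'\tau^{-\theta}}$. The same bound holds for $\rho_{\alpha,N}$ uniformly in $N$, because $R_N\to R$ in $L^p$.

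\textbf{Step 2 (Bourgain's iteration for the truncated flows).} Fix $T$ and $\eta>0$, and take $\tau$ small. I define
\[
G_N=\bigcap_{|j|\le T/\tau}\bigl(\Phi^N_{j\tau}\bigr)^{-1}(E_\tau),
\]
where $\Phi^N_t$ is the product flow from Remark~\ref{truncatedGibbs}. Invariance of $\rho_{\alpha,N}$ under $\Phi^N_t$ gives $\rho_{\alpha,N}(G_N^c)\le (2T/\tau)e^{-c\tau^{-\theta}}<\eta$. On $G_N$, iterating the local estimates controls the truncated solution on $[-T,T]$ in the structured norms. The main obstacle is to pass from the truncated measures $\rho_{\alpha,N}$ to $\rho_\alpha$, since $G_N$ depends on $N$.

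I would follow the approach of \cite{DNY2024}, which requires the following stability statement. Suppose $\Pi_M u_N(j\tau)$ lies in the good set for the limit dynamics. Then the difference $u_N-u_M$ on the next step is $\lesssim M^{-\delta}$ in $C_tH^{\frac{\alpha-2}{2}-\varepsilon}$, and the random structure, namely the random averaging operators and the remainders, is propagated to the next interval. Propagating the structure is the most delicate part. At time $j\tau$ the datum is no longer Gaussian, so one uses invariance to replace it by a fresh Gibbs sample, together with the weak convergence $\rho_{\alpha,N}\to\rho_\alpha$ and the fact that good sets are defined by finitely many frequency-localized conditions, which are open/closed cylinder conditions. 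Summing the errors over $N=2^k$ gives a set $\Sigma_{T,\eta}$ with $\rho_\alpha(\Sigma_{T,\eta}^c)<\eta$ on which $u_N$ is Cauchy in $C([-T,T];H^{\frac{\alpha-2}{2}-\varepsilon})$. I then set
\[
\Sigma=\bigcup_{k}\bigcap_{T\in\mathbb N}\Sigma_{T,2^{-k}T^{-2}},
\]
further intersected over rational $\varepsilon$ and made rotation invariant by taking the union of its images under the gauge rotations $e^{i\phi}$.

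\textbf{Step 3 (gauge, uniqueness, group property, invariance).} The limit $v$ solves \eqref{Eqn:gaugedlimit}. Since $m_N-\sigma_N\to X(u_{\rm in})$ and this quantity is conserved, $u=e^{-2itX}v$ solves \eqref{WfNLS}, as in Theorem~\ref{LWP}. Uniqueness of the limit is immediate because it is defined as the limit of $u_N$. The flow is Borel because it is a pointwise limit of continuous maps. The group property follows from uniqueness in the structured class, and $\Phi_t\Sigma\subset\Sigma$ holds after replacing $\Sigma$ by $\bigcap_{t\in\mathbb Q}\Phi_t^{-1}\Sigma$, which still has full measure by the invariance proved next, followed by a continuity-in-$t$ argument.

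For invariance, take $F$ bounded and Lipschitz on $H^{\frac{\alpha-2}{2}-\varepsilon}$. Then
\[
\int F(\Phi_t u)\,d\rho_\alpha=\lim_N\int F(\Phi^N_t u)\,d\rho_{\alpha,N}=\lim_N\int F\,d\rho_{\alpha,N}=\int F\,d\rho_\alpha.
\]
The first equality uses the convergence from Step 2 on sets of measure at least $1-\eta$, uniformly in $N$, together with $\rho_{\alpha,N}\to\rho_\alpha$ in total variation on cylinder sets. This last convergence follows from $R_N\to R$ in $L^1(\mu_\alpha)$. The second equality is Remark~\ref{truncatedGibbs}.
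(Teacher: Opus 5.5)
Your Step 2 has a genuine gap, located at exactly the obstacle you flag. For fixed $\tau$, the bound $\rho_{\alpha,N}(G_N^c)\lesssim (T/\tau)e^{-c\tau^{-\theta}}$ is small in $\tau$ but not summable in $N$. So ``summing the errors over $N=2^k$'' cannot produce one set on which all $u_N$ are controlled simultaneously. Moreover, $G_N$ constrains only the $N$-th flow, and nothing in your sketch links $u_M$ to $u_N$ on it.

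The missing idea is that a good event for a \emph{single} top cutoff $\bar N$ already controls every lower cutoff on all of $[-T,T]$. This rests on two ingredients.

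(a) The local good sets are compatible across cutoffs. Proposition \ref{local2} is one event for all dyadic scales, so $\Psi^{\bar N}_{jT/K}v\in E^{\bar N}_{T/K}$ gives the structured bounds for every $\Psi^M_t\Pi_M\Psi^{\bar N}_{jT/K}v$ with $M\le\bar N$.

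(b) The commutator estimate $\|v_N-\Pi_Nv_{N'}\|_{X^b(I)}\le CN^{-\beta}$, with $\beta=2\alpha-\frac{10}{3}-\gamma$, from Proposition \ref{prop:complete-stability}(i). Its proof needs the $\Gamma$-condition estimates, because the modes between $N$ and $N'$ feed back into $|k|\le N$.

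Iterating (b) together with forced stability over the $2K+1$ grid intervals gives $\sup_{|t|\le T}\|\Pi_N\Psi^{\bar N}_tv-\Psi^N_t\Pi_Nv\|_{L^2}\le C_{T,K}N^{-\beta}$ for all $N\le\bar N$. One then works with $\limsup_N\Pi_N^{-1}G^N$. Total-variation convergence $\rho_{\alpha,N}\to\rho_\alpha$ and reverse Fatou give it $\rho_\alpha$-measure at least $1-\eta$; weak convergence would not suffice here, since these Borel sets are not finite cylinder conditions. For $u$ in this set and any $N\le N'$, pick an admissible $\bar N\ge N'$ and compare both $u_N$ and $u_{N'}$ with the projections of $\Psi^{\bar N}_t$.

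Relatedly, your stability statement is in the wrong norm. An error $u_N-u_M=O(M^{-\delta})$ in $H^{\frac{\alpha-2}{2}-\varepsilon}$ cannot be fed into the next grid interval. The local theory is perturbative around the random structure and accepts only data perturbations that are small in $L^2$. What must be propagated is $\Pi_Mu_N(t)-u_M(t)$ in $L^2$ at rate $M^{-\beta}$.

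That same $L^2$ comparison, with $\beta>\frac{2-\alpha}{2}$, is what gives conservation of $X$ along the limit flow and the group property. Your device $\bigcap_{t\in\mathbb Q}\Phi_t^{-1}\Sigma$ does not by itself yield invariance of the new set. For rational $s$ it already presupposes the group property. For irrational $s$, continuity in $t$ does not help, because $\Sigma$ is not closed. The paper instead builds a time shift $t_0\in[-D,D]$ and an $L^2$-small perturbation $v''$, with $\|v''\|_{L^2}\le AN^{-\beta}$, into $G^N_{T,K,A,D}$. This makes $\Sigma_0$ invariant under every $\Psi_{t_1}$.

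On the positive side, your invariance argument is correct once $\rho_\alpha$-a.e.\ convergence of the product flows $\Phi^N_t$ is established. It uses bounded Lipschitz $F$, dominated convergence, $\|\rho_{\alpha,N}-\rho_\alpha\|_{\rm TV}\to0$, and invariance of $\rho_{\alpha,N}$. It is simpler than the paper's compact-set argument and needs neither the group property nor $\Phi_t\Sigma=\Sigma$.
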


  The globalization argument originates from Bourgain \cite{B94,B96}. We also
 refer to \cite[Section 6]{DNY2024} and \cite[Section 3]{Bring23}. The details
 needed in the present fractional setting are given in
 Section~\ref{MainProof}.

\begin{rem}\label{unique2}
The limit $u$ is unique in the propagated structured class in Section \ref{MainProof}. The uniqueness here is not an unconditional uniqueness statement among all distributional solutions. It is uniqueness in the admissible structured class, which encodes the high-frequency tail stability inherited from the canonical truncations, aligns with the uniqueness notion used in \cite[Remark 1.6(3)]{DNY2024}. In particular, any limit of solutions to canonical truncated Wick-ordered fNLS with initial data (truncated in the same way) converges to the solution in the propagated structured class.
\end{rem}

\begin{rem}
     There have been many results for the fractional nonlinear wave equation (fNLW) for the two-dimensional case. In \cite{weakuniversality}, they study the so-called weak universality result and obtain the convergence of both the measure and dynamics. \cite{LO2023fwave} consider the fNLW Gibbs dynamics with a general polynomial nonlinearity. Compared to these cases, the main challenge for fNLS is that our Duhamel formula has no smoothing effect. However, by carefully discussing the fNLS dynamics with higher-order nonlinearity, we may also consider the weak universality result under suitable assumptions.
\end{rem}

\subsection{Main difficulties and ideas}
\label{SUB:ideas}

We now discuss the main points of the proof. The two basic difficulties are
the roughness of the Gibbsian initial data and the loss of the quadratic
arithmetic structure when \(\alpha<2\). The latter affects the argument in
three related places: the lattice counting estimates, the localization of
random tensor bounds, and the \(\Gamma\)-condition used in the high--low
analysis.

\subsubsection{Singular data}

The Gaussian field \(u^\omega\) in \eqref{GFFintro} belongs almost surely to
\(H^{\frac{\alpha-2}{2}-}(\mathbb T^2)\). Thus, for every \(\alpha<2\), the
initial data lie below \(L^2\) and well below the regularity covered by the
available deterministic theory; see \cite{Wang2026fNLSsobolev}. For the
standard equation \(\alpha=2\), the Gibbs measure is supported only
\(\varepsilon\)-below the critical space \(L^2\); see \cite{B96}. Here the gap
becomes larger as \(\alpha\) decreases.

We use the random averaging operator method introduced in \cite{DNY2024}; see
also \cite{Wang2024} for the one-dimensional fractional NLS. The worst
high-low-low term is regarded as a random operator, built from the low
frequencies, acting on the high-frequency linear solution. This leads to the
schematic ansatz
\begin{equation}
    u=u_{\rm lin}+\mathcal P(u_{\rm lin})+\mathcal Z,
\end{equation}
\noi
where \(u_{\rm lin}=e^{-it{\rm D}^\alpha}u_0\), \(\mathcal P\) denotes the
random averaging term, and \(\mathcal Z\) is the remainder. The dyadic version
of this decomposition and its a priori bounds are given in
Section~\ref{structuresol}. The term \(\mathcal P(u_{\rm lin})\) is controlled
through operator and Hilbert--Schmidt norms, whereas \(\mathcal Z\) is treated
by contraction. We refer to \cite{DNY2022,DNY2024} for the general framework.

\subsubsection{Fractional counting}

The multilinear estimates require lattice counts adapted to the fractional
phase. After localizing the frequencies and the modulation, we are led to
subsets of the set $S$ defined in \eqref{Eqn:S} in
Subsection~\ref{count}. The key restriction is the localized phase condition
\[
 |k_1|^\alpha-|k_2|^\alpha+|k_3|^\alpha-|k|^\alpha
 =m+\mathcal O(1),
\]
where \(m\) is the localized modulation parameter.

For \(\alpha=2\), Bourgain's argument \cite{B96} uses the arithmetic structure
of the quadratic resonance. The modulation is integer-valued, and identities
such as
\[
    |k_1|^2-|k_2|^2=(k_1-k_2)\cdot(k_1+k_2)
\]
lead to divisor bounds and lattice-point estimates on circles. For
\(1<\alpha<2\), the modulation is no longer connected to integer arithmetic
and there is no corresponding factorization. The same distinction arises when
one compares our problem with the irrational-torus setting of
\cite{FOSW2021}: although part of the lattice structure is lost there, the
dispersion is still quadratic. In our case the counting problem is genuinely
geometric. The available one-dimensional fractional estimates, used directly,
are not strong enough at the Gibbsian regularity.

After fixing two frequencies and using momentum conservation, the remaining
problem becomes a geometric lattice-point count in a unit-thickened level set
of one of the functions
\[
    \phi_{b,\pm}(x)=|x|^\alpha\pm|x-b|^\alpha.
\]
The shape of these level sets changes with the frequency scales, the vector
\(b\), and the height of the level. Their thickness and curvature also behave
differently near the singular points \(0\) and \(b\). For
\(\phi_{b,-}\), we use coordinate slices together with a lower bound on one
directional derivative; this gives Lemma~\ref{2vec1}. The level sets of
\(\phi_{b,+}\) are convex away from their minimum. After separating the region
near the minimum, we apply the convex-arc estimate in
Lemma~\ref{LEM:strip} and obtain Lemma~\ref{2vec2}. The remaining counting
bounds are built from these two arguments.

\subsubsection{Localized random tensors}

We also use the random tensor estimates of \cite{DNY2022}, which reduce the
probabilistic multilinear estimates to deterministic tensor bounds. These
bounds are then estimated by the lattice counts above.

The ordinary random tensor estimate
\cite[Proposition~4.14]{DNY2022} carries a small loss at the largest frequency.
This loss cannot be afforded in some of the high-low-low interactions. For quadratic
dispersion, the localized estimate \cite[Proposition~4.15]{DNY2022} replaces
the largest-frequency scale in the tensor windows by a lower-frequency scale.
This relies on the identity
\[
    |b_0+f|^2-|c_0+f|^2
    =2f\cdot(b_0-c_0)+|b_0|^2-|c_0|^2.
\]
After the other variables are localized, the dependence on \(f\) is therefore
linear.

For fractional dispersion, however,
\[
    |b_0+f|^\alpha-|c_0+f|^\alpha
\]
is nonlinear in \(f\). Its leading behavior varies with the scale and the
relative positions of the frequencies. To recover a localized tensor bound
with the same effective lower-frequency window,
we divide the domain according to these configurations. In the most delicate
high-low-low case, this amounts to classifying the possible types of
\[
    |x+f|^\alpha-|y+f|^\alpha,\qquad f\in\mathbb Z^2.
\]
Taylor expansion determines the leading term for each piece, and the type and
the corresponding region are counted together.
This is the content of Lemma~\ref{HLLtensor}.

\subsubsection{Fractional \(\Gamma\)-condition}

The last point concerns the \(\Gamma\)-condition. An exact frequency
projection may place the output and a largest input frequency on opposite
sides of a radial threshold. In the high--low analysis, this forces the
largest frequencies to lie in a thin annular region. For the quadratic problem in two dimensions,
this gives an arithmetic gain because \(|k|^2\) is integer-valued; see
\cite[Proposition~4.5]{DNY2024}. In the one-dimensional fractional problem,
the same role is played by the integer-valued quantity \(|k|\); see
\cite[Lemma~2.11]{Wang2024}.

In two dimensions, \(|k|\) is not integer-valued, while
\(|k|^\alpha\) is neither integer-valued nor quadratic. We pass instead from
the fractional annulus to a quadratic one. If \(|k|\sim N\) and the
fractional width is \(N_{\med}^{\alpha}\), then
\[
    \bigl||k|^2-\Gamma^2\bigr|
    \lesssim N_{\med}^{\alpha}N^{2-\alpha}.
\]
Since \(|k|^2\) is integer-valued, divisor counting becomes available again,
at the cost of \(N^{2-\alpha+}\). This loss contributes to the lower bound on
\(\alpha\).

These estimates give the local convergence of the truncated gauged solutions
for \(\alpha\in(\frac{29}{15},2)\), and hence Theorem~\ref{LWP}. The inverse
gauge transform gives the Wick ordered dynamics. Theorem~\ref{GWP} then follows
from invariance of the finite-dimensional Gibbs measures and the commutator
and stability estimates in Section~\ref{MainProof}.

\subsection{Notations and choice of parameters}\label{notations} 
In this section, we collect some notations and conventions that will be followed in the proof. For example, we always assume that $N$ or $M$ is a dyadic number. And we use $N_{\max}$, $N_{\med}$, $N_{\min}$ to denote the largest, second largest and smallest number among $N_1$, $N_2$, $N_3$. Let $k_i\in\Z^2$ satisfy  $\jb{k_i}\leq N_i$ or $\frac{N_i}{2}<\jb{k_i}\leq N_i$, $i=1,2,3$, we denote $k_{\max}$, $k_{\med}$, $k_{\min}$ to be the integer vectors that correspond to $N_{\max}$, $N_{\med}$, $N_{\min}$ respectively. We also define $a\wedge b = \min{\{a,b\}}$ and $a\vee b = \max{\{a,b\}}$ for any $a,b\in\R$.

Most of the time, we are working in the spatial Fourier space so we will simply write

\begin{equation}
    u_k = \left(\mathcal{F}_x u\right)(k) = \frac{1}{(2\pi)^2}\int_{\mathbb{T}^2}e^{-ik\cdot x}u(x)dx.
\end{equation}

As for the temporal Fourier transform, we write

\begin{equation}
    \widehat{u}(\lambda) = \left(\mathcal{F}_t u\right)(\lambda) = \frac{1}{2\pi}\int_{\mathbb{R}}e^{-i\lambda t}u(t)dt.
\end{equation}

In particular, $\widehat{u}_k(\lambda) = \left(\mathcal{F}_{t,x}u\right)(k,\lambda)$. Recall that $\jbb{k}:= (1 + | k |^{\alpha})^{\frac{1}{\alpha}}$, then $\jbb{k} \sim \jb{k}$ for fixed $\alpha$. $\ind_{E}$ is the indicator function of a set $E$. We will use smooth cutoff function $\eta(t)$ that equals 1 for $|t|\leq 1$ and equals 0 for $|t|\geq2$. For any Schwartz function $\chi$, we write $\chi_{\tau}(t) = \chi(\frac{t}{\tau})$ for any $0<\tau\ll1$. We also have the Duhamel formula of the equation \eqref{truncfNLSgauge}

\begin{equation}\label{Duhamel}
    v_N(t) = e^{-it{\rm D}^{\alpha}}\Pi_Nu^\omega
    +\mathcal{I}\Pi_N\mathcal{N}_3(v_N)(t)
    +\mathcal{I}\Pi_N\mathcal{Q}_3(v_N)(t),
    \quad \mathcal{I}v(t) = -i\int_{0}^{t}e^{-i(t-t'){\rm D}^{\alpha}}v(t')dt'.
\end{equation}

In the proof, we will fix $\frac{29}{15}<\alpha<2$, $0<\varepsilon\ll1$  in Theorem \ref{LWP} and Theorem \ref{GWP}. For such $\alpha,\varepsilon$, the parameters are chosen to satisfy:

\begin{equation}\label{parameters1}
    0<\kappa_0^{-\frac{1}{2}} \ll \delta \ll \gamma_0 \ll\gamma \ll \delta_0 \ll\varepsilon\ll \min\{15\alpha-29,2-\alpha\}
\end{equation}
\noi
and $\theta\ll \delta^{30}$, $b>b'>\frac{1}{2}$, $\kappa_0\gg1$ so that $\kappa_0^{-\frac{1}{2}}\sim \delta^3\sim b'-\frac{1}{2}\sim b-b'$. Theorem \ref{GWP} follows from a countable intersection over $\varepsilon$.

\noi

The rest of the paper is organized as follows. In Section \ref{preliminaries}, we gather some basic definitions and lemmas. In Section \ref{structuresol}, we introduce the random averaging operator and state our ansatz in detail. In Section \ref{prep}, we make some preparations for the proof of Proposition \ref{local2}, which is the key to our main results. To be precise, we give several counting estimates and tensor estimates in this section.  Then we prove Proposition \ref{local2} in Section \ref{esti} and finish the proof of our main results in Section \ref{MainProof}.

\subsection{Acknowledgment} 

Y.Wang was supported by the EPSRC Mathematical Sciences Small Grant
(grant no. UKRI1116). H. Yue was supported by the NSF grant of China
(No. 12301300). L. Zhao was supported by the National Natural Science
Foundation of China (No. 12271497 and No. 12341102).

\smallskip

\section{Preliminaries}\label{preliminaries}

 To clarify the independence of some random variables in the following proof, we first define the Borel $\sigma$-algebra.

\begin{df} \label{borel}
 For any dyadic $N$ , we denote by $\mathcal{B}_{\leq N}$  the $\sigma$-algebra generated by the random variables $g_k$ for $\langle k\rangle\leq N$, and by $\mathcal{B}_{\leq N}^+$  the smallest $\sigma$-algebra containing both $\mathcal{B}_{\leq N}$ and the $\sigma$-algebra generated by the random variables $|g_k|^2$ for $k\in\mathbb{Z}^2$.
\end{df} 

\subsection{Tensors}\label{tensordef}

Here, we introduce the norms that appear in Section \ref{prep}. These norms are standard when applying the method of random averaging operator. See \cite{DNY2022,DNY2024} for more discussions.

\begin{df}[{{\cite[Definition 2.1]{DNY2022}}}]\label{tensornorms} Let $A$ be a finite set and denote $k_A=(k_j)_{j\in A}$. A tensor $h_{k_A} = h(k_A)$ is a function $(\Z^2)^A\to \C$. The support of $h$ is the set of $k_A$ such that $h_{k_A}\neq 0$. These tensors may depend on $\omega$ which belongs to the ambient probability space $\Omega$, though we may omit this dependence. A partition of $A$ is a pair of sets $(B,C)$ such that $B\cup C=A$ and $B\cap C=\varnothing$. For such $(B,C)$, we define the norm $\|\cdot\|_{k_B\to k_C}$ by
 
 \begin{equation}\label{tensornorm}
     \|h\|_{k_B\to k_C}^2=\sup\bigg\{\sum_{k_C}\bigg|\sum_{k_B}h_{k_A}\cdot z_{k_B}\bigg|^2:\sum_{k_B}|z_{k_B}|^2=1\bigg\}.
 \end{equation}

 By duality, we have that 
 \begin{equation}\label{duality}\|h\|_{k_B\to k_C}=\sup\bigg\{\bigg|\sum_{k_B,k_C}h_{k_A}\cdot z_{k_B}\cdot y_{k_C}\bigg|:\sum_{k_B}|z_{k_B}|^2=\sum_{k_C}|y_{k_C}|^2=1\bigg\},
 \end{equation}
 \noi
 hence $\|h\|_{k_B\to k_C}=\|h\|_{k_C\to k_B}=\|\overline{h}\|_{k_B\to k_C}$. If $B=\varnothing$ or $C=\varnothing$, we get the norm $\|\cdot\|_{k_A}$ defined by
 \[\|h\|_{k_A}^2=\sum_{k_A}|h_{k_A}|^2.\] 

  \end{df}
 
 Note that $\|h\|_{k_B\to k_C}\leq \|h\|_{k_A}$ by definition. Then we give some basic definitions to simplify the statements of the following lemmas and propositions.

\begin{df}
\label{DEF:certain}
Let $\theta\ll1 $ be the small constant we have chosen. If some statement $S$ holds with probability $\mathbb{P} (S) \ge 1- C_{\theta} e^{-cA^{\theta}}$ for some $A > 0$,
we say that this statement $S$ is $A$-certain.
\end{df}

\begin{df}[{{\cite[Definition 2.3]{DNY2022}}}]
    \label{DEF:pair}
    For a complex number $z$, we define $z^+ = z$ and $z^- = \bar z$.
For a finite index set $A$,
we also use the notation $z^{\zeta_j}$ for 
$\{ \zeta_j \}_{j \in A}$ with $\zeta_j \in \{
\pm \}$.  We say that $(k_i,k_j)$ for $i,j \in A$ is a pairing in $k_A$,
    if $(k_i - k_j, \zeta_i + \zeta_j) = (0,0)$.
    We say a pairing is over-paired if $k_i = k_j = k_\ell$ for some $\ell \in A \backslash \{i,j\}$.
\end{df}

Now we record some tensor estimates from \cite{DNY2021,DNY2022,DNY2024,Wang2024}.

\begin{prop}[{{\cite[Proposition 4.11]{DNY2022}}} ]
  \label{PROP:puretensor}
  Consider two tensors $h_{k_{A_1}}^{(1)}$ and
  $h_{k_{A_2}}^{(2)}$, where $A_1 \cap A_2 = C$. Let $A_1 \Delta A_2 = A$ and
  define the semi-product
  \[ 
    H_{k_A} = \sum_{k_C} h_{k_{A_1}}^{(1)} h_{k_{A_2}}^{(2)} . 
  \]
  For any partition $(X, Y)$ of $A$, let $X \cap A_i = X_i$, $Y \cap A_i
  = Y_i$ for $i = 1, 2$. Then, we have
  \[ 
    \| H \|_{k_X \to  k_Y} \le  \| h^{(1)} \|_{k_{X_1 \cup C} \to 
    k_{Y_1}} \cdot \| h^{(2)} \|_{k_{X_2} \to  k_{C \cup Y_2}} . 
  \]
\end{prop}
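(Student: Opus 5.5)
The plan is to derive the bound from the duality formula \eqref{duality} together with two applications of Cauchy--Schwarz, one for each of the two factors. Split the index sets as in the statement. We have $A=X\cup Y$ with $X=X_1\cup X_2$ and $Y=Y_1\cup Y_2$, and the contracted variables $k_C$ are shared by $h^{(1)}$ and $h^{(2)}$. We need to bound
\[
\bigg|\sum_{k_X,k_Y}H_{k_A}\,z_{k_X}\,y_{k_Y}\bigg|
\]
uniformly over unit vectors $z$ in $\ell^2_{k_X}$ and $y$ in $\ell^2_{k_Y}$. Inserting the definition of $H$, this sum equals
\[
\sum_{k_{X_2},k_C,k_{Y_2}} h^{(2)}_{k_{A_2}}\;\Big(\sum_{k_{X_1},k_{Y_1}} h^{(1)}_{k_{A_1}}\, z_{k_{X_1}k_{X_2}}\, y_{k_{Y_1}k_{Y_2}}\Big).
\]

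The first key step is to handle the inner sum for fixed $(k_{X_2},k_{Y_2})$. Regard $h^{(1)}$ as an operator from the variables $k_{X_1\cup C}$ to $k_{Y_1}$. Define
\[
w_{k_{X_1}k_C}(k_{X_2},k_{Y_2})=\sum_{k_{Y_1}}h^{(1)}_{k_{A_1}}\,y_{k_{Y_1}k_{Y_2}},
\]
which is the adjoint applied to $y(\cdot,k_{Y_2})$. Then
\[
\|w(\cdot,k_{X_2},k_{Y_2})\|_{\ell^2_{k_{X_1}k_C}}\le \|h^{(1)}\|_{k_{X_1\cup C}\to k_{Y_1}}\,\|y(\cdot,k_{Y_2})\|_{\ell^2_{k_{Y_1}}},
\]
and note that this quantity does not depend on $k_{X_2}$. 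Summing over $k_{X_1}$ and applying Cauchy--Schwarz in $k_{X_1}$ gives
\[
\Big|\sum_{k_{X_1}}z_{k_{X_1}k_{X_2}}\,w_{k_{X_1}k_C}\Big|\le \|z(\cdot,k_{X_2})\|_{\ell^2_{k_{X_1}}}\,W_{k_C k_{Y_2}},
\]
where $W_{k_Ck_{Y_2}}=\big(\sum_{k_{X_1}}|w_{k_{X_1}k_C}|^2\big)^{1/2}$.

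The second step uses $h^{(2)}$ as an operator from $k_{X_2}$ to $k_{C\cup Y_2}$. Set $Z_{k_{X_2}}=\|z(\cdot,k_{X_2})\|_{\ell^2_{k_{X_1}}}$, so that $\|Z\|_{\ell^2}=1$. The whole expression is then bounded by
\[
\sum_{k_{X_2},k_C,k_{Y_2}}|h^{(2)}_{k_{A_2}}|\,Z_{k_{X_2}}\,W_{k_Ck_{Y_2}}.
\]
The absolute values create a problem here, because the operator norm of $|h^{(2)}|$ can exceed that of $h^{(2)}$. To avoid this, I would not take absolute values at all. Instead, define the tensor
\[
V_{k_Ck_{Y_2}}=\sum_{k_{X_2}}h^{(2)}_{k_{A_2}}\,\widetilde z_{k_{X_2}}
\]
while keeping the $k_{X_1}$-structure. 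Concretely, the expression equals $\sum_{k_{X_1},k_C,k_{Y_2}} U_{k_{X_1}k_Ck_{Y_2}}\,w'_{k_{X_1}k_Ck_{Y_2}}$, where
\[
U_{k_{X_1}k_Ck_{Y_2}}=\sum_{k_{X_2}}h^{(2)}_{k_{A_2}}\,z_{k_{X_1}k_{X_2}}
\]
and $w'_{k_{X_1}k_Ck_{Y_2}}=w_{k_{X_1}k_C}(k_{Y_2})$. Applying Cauchy--Schwarz over all of $(k_{X_1},k_C,k_{Y_2})$ gives
\[
\|U\|_{\ell^2}\le \|h^{(2)}\|_{k_{X_2}\to k_{C\cup Y_2}}\,\|z\|_{\ell^2}
\]
by applying the operator for each fixed $k_{X_1}$ and summing, and
\[
\|w'\|_{\ell^2}\le \|h^{(1)}\|_{k_{X_1\cup C}\to k_{Y_1}}\,\|y\|_{\ell^2}
\]
by the same argument for each fixed $k_{Y_2}$. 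Multiplying these two bounds and taking the supremum over $z$ and $y$ gives the claim.

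The main point of care is exactly this: one must avoid taking absolute values and must organize the sums so that each operator norm is applied while the "spectator" variables are held fixed. Those spectator variables are $k_{X_1}$ for $h^{(2)}$ and $k_{Y_2}$ for $h^{(1)}$, and each spectator pass is trivially bounded by Fubini.
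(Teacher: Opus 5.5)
Your final argument is correct: pairing $U$ (that is, $h^{(2)}$ applied in $k_{X_2}$ with $k_{X_1}$ frozen) against $w'$ (that is, $h^{(1)}$ applied in $k_{Y_1}$ with $k_{Y_2}$ frozen) in $\ell^2_{k_{X_1}k_Ck_{Y_2}}$ and then using Cauchy--Schwarz is the dual form of the standard proof in \cite{DNY2022}, which the paper cites without reproducing; there $H$ is written as the composition of these two partially frozen operators. The earlier detour through $W$, $Z$, and the never-defined $\widetilde z$ and $V$ plays no role and should be deleted.
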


\begin{lem}[{{\cite[Proposition 2.18]{Wang2024}}} ]
  \label{LEM:mixedtensor}
  The following holds.
  \[ 
    \| h^{(1)}_{k_1 k_1'} h^{(2)}_{k_2 k_1'} \|_{k_1' \to  k_1 k_2}
    \le  \| h^{(1)}_{k_1 k_1'} \|_{k_1' \to  k_1} \| h^{(2)}_{k_2
    k_1'} \|_{k'_1 \to  k_2} . 
  \]
\end{lem}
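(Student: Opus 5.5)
The statement to prove is Lemma~\ref{LEM:mixedtensor}: the tensor $H_{k_1k_2k_1'}=h^{(1)}_{k_1k_1'}h^{(2)}_{k_2k_1'}$ has no summation over $k_1'$, so it is not a semi-product in the sense of Proposition~\ref{PROP:puretensor}. I will therefore argue directly from the duality formula \eqref{duality} in Definition~\ref{tensornorms}. Fix test vectors $z_{k_1'}$ with $\sum_{k_1'}|z_{k_1'}|^2=1$ and $y_{k_1k_2}$ with $\sum_{k_1,k_2}|y_{k_1k_2}|^2=1$. The quantity to bound is
\[
\Big|\sum_{k_1,k_2,k_1'} h^{(1)}_{k_1k_1'}h^{(2)}_{k_2k_1'}z_{k_1'}y_{k_1k_2}\Big|,
\]
and I want it to be at most $\|h^{(1)}\|_{k_1'\to k_1}\|h^{(2)}\|_{k_1'\to k_2}$.

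The plan is to view the sum as the Hilbert--Schmidt pairing of $y$ with a matrix in $(k_1,k_2)$. Define the matrix
\[
M_{k_1k_2}=\sum_{k_1'} h^{(1)}_{k_1k_1'}z_{k_1'}\,h^{(2)}_{k_2k_1'}.
\]
Then the quantity above is $\langle M,\bar y\rangle$. By Cauchy--Schwarz it is bounded by $\|M\|_{k_1k_2}$, the Hilbert--Schmidt (Frobenius) norm of $M$.

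Next I factor $M=A B^{T}$, where $A_{k_1k_1'}=h^{(1)}_{k_1k_1'}z_{k_1'}$ and $B_{k_2k_1'}=h^{(2)}_{k_2k_1'}$. The relevant norms of the factors are as follows.
\begin{itemize}
\item The Hilbert--Schmidt norm of $A$ is
\[
\|A\|_{k_1k_1'}^2=\sum_{k_1'}|z_{k_1'}|^2\sum_{k_1}|h^{(1)}_{k_1k_1'}|^2 .
\]
\item Each column $\sum_{k_1}|h^{(1)}_{k_1k_1'}|^2$ equals the squared image under $h^{(1)}$ of the basis vector $e_{k_1'}$. Hence it is at most $\|h^{(1)}\|_{k_1'\to k_1}^2$.
\item Consequently $\|A\|_{k_1k_1'}\le\|h^{(1)}\|_{k_1'\to k_1}$, using $\sum_{k_1'}|z_{k_1'}|^2=1$.
\end{itemize}
Finally I apply the standard inequality $\|AB^T\|_{HS}\le\|A\|_{HS}\|B\|_{op}$, whose operator norm is $\|h^{(2)}\|_{k_1'\to k_2}$. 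Combining the three bounds gives the claim after taking the supremum over $z$ and $y$.

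There is no real obstacle here; this is a short linear-algebra computation. The only points needing care are that the column bound uses the operator norm tested on a single basis vector, and that $\|AB^T\|_{HS}=\|BA^T\|_{HS}$, so the operator norm can be put on either factor. Finiteness of the supports lets me manipulate all sums freely.
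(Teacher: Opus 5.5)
Your proof is correct. The paper gives no argument of its own: it cites \cite[Proposition 2.18]{Wang2024} and notes that the one-dimensional proof carries over to $\mathbb Z^2$. Your reduction to the Hilbert--Schmidt norm of $M=AB^{T}$, combined with $\|AB^{T}\|_{HS}\le\|A\|_{HS}\|B\|_{op}$ and the column bound $\sum_{k_1}|h^{(1)}_{k_1k_1'}|^2\le\|h^{(1)}\|_{k_1'\to k_1}^2$, is a complete, dimension-free verification of that remark.

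The duality step is unnecessary, since \eqref{tensornorm} gives $\|h^{(1)}_{k_1k_1'}h^{(2)}_{k_2k_1'}\|_{k_1'\to k_1k_2}=\sup_{\|z\|_{\ell^2}=1}\|M\|_{k_1k_2}$ directly. Your argument in fact yields the stronger bound with $\|h^{(1)}\|_{k_1'\to k_1}$ replaced by $\sup_{k_1'}\|h^{(1)}_{k_1k_1'}\|_{\ell^2_{k_1}}$.
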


In fact, \cite{Wang2024} treat the case $k_j\in\mathbb{Z}$, but the proof is the same for $k_j\in\mathbb{Z}^2$.

\begin{prop} [Weighted bounds, \cite{DNY2021,DNY2024,Wang2024}]
  \label{Prop:weightedZb} 
  Suppose that matrices $h = h_{k k''}$, $h^{(1)} = h^{(1)}_{k
  k'}$ and $h^{(2)} = h_{k' k''}^{(2)}$ satisfy that
  \[ 
    h_{k k''} = \sum_{k'} h_{k k'}^{(1)} h_{k' k''}^{(2)}, 
  \]
  and $h_{k k'}^{(1)}$ is supported in $\{| k - k' | \lesssim L\}$, then we have
  \[ 
    \Big\| \Big( 1 + \frac{| k - k'' |}{L} \Big)^{\kappa_0} h_{k k''}
    \Big\|_{\ell_{k k''}^2} \lesssim \| h^{(1)}_{kk'} \|_{k \to  k'}
    \cdot \Big\| \Big( 1 + \frac{| k' - k'' |}{L} \Big)^{\kappa_0}
    h^{(2)}_{k' k''} \Big\|_{\ell_{k' k''}^2} . 
  \]
\end{prop}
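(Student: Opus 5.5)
The plan is to expand the weighted norm and reduce it, one column $k''$ at a time, to the operator-norm bound for $h^{(1)}$. The key step is to transfer the polynomial weight from $(k,k'')$ to $(k',k'')$ using the support condition on $h^{(1)}$.

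First, on the support of $h^{(1)}_{kk'}$ we have $|k-k'|\lesssim L$. By the triangle inequality $|k-k''|\le |k-k'|+|k'-k''|$, so
\[
1+\frac{|k-k''|}{L}\lesssim 1+\frac{|k'-k''|}{L}
\]
whenever $h^{(1)}_{kk'}\neq 0$, with an implicit constant depending only on the constant in the support assumption. Raising this to the power $\kappa_0$ costs only a constant $C^{\kappa_0}$, which is harmless since $\kappa_0$ is fixed. Consequently, for each fixed $k,k''$,
\[
\Big(1+\frac{|k-k''|}{L}\Big)^{\kappa_0}\big|h_{kk''}\big|
\lesssim \sum_{k'}\big|h^{(1)}_{kk'}\big|\,\Big(1+\frac{|k'-k''|}{L}\Big)^{\kappa_0}\big|h^{(2)}_{k'k''}\big| .
\]

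Second, fix $k''$ and set $z_{k'}=(1+|k'-k''|/L)^{\kappa_0}h^{(2)}_{k'k''}$. The right-hand side is then $\sum_{k'} h^{(1)}_{kk'} z_{k'}$, up to replacing $h^{(1)}$ by $|h^{(1)}|$. I will handle the absolute values directly: choose phases $e^{i\phi_{k'}}$ so that $h^{(1)}_{kk'}\,e^{i\phi_{k'}}z_{k'}$ has a constant sign pattern. Equivalently, bound the sum with $|z_{k'}|$ in place of $z_{k'}$ before taking absolute values of the full sum, i.e. define the weighted sum with the weight placed on $z$ rather than on $h^{(1)}$. Since the weight factor is independent of $k$, the cleanest route is the exact identity
\[
\Big(1+\frac{|k-k''|}{L}\Big)^{\kappa_0}h_{kk''}
=\sum_{k'} \tilde h^{(1)}_{kk'}\, z_{k'},
\qquad
\tilde h^{(1)}_{kk'} = h^{(1)}_{kk'}\,\frac{(1+|k-k''|/L)^{\kappa_0}}{(1+|k'-k''|/L)^{\kappa_0}} .
\]
This avoids any absolute-value issue, but $\tilde h^{(1)}$ now depends on $k''$.

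Third, I sum in $k$ and then in $k''$. This requires
\[
\|\tilde h^{(1)}\|_{k'\to k}\lesssim \|h^{(1)}\|_{k\to k'}
\]
uniformly in $k''$, which is the main obstacle: a bounded entrywise multiplier does not in general preserve operator norms. I would try three routes, in order.

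1. Decompose into blocks: split $\mathbb Z^2$ into cubes $Q$ of side $L$ and localize $k'\in Q$. By the support condition, $k$ then ranges over a bounded neighborhood of $Q$, and on each such block the ratio equals a constant $c_{Q,k''}$ up to a factor $O(1)^{\kappa_0}$. This is still not exact.
2. Expand the ratio as a convergent Fourier or Schur-multiplier series in $k-k'$. The ratio is a smooth function of $(k-k')/L$ and $(k'-k'')/L$ restricted to $|k-k'|\lesssim L$, so it can be written as a rapidly convergent combination of products $a(k)b(k')$ with bounded factors. Products of this form preserve operator norms exactly.
3. Use the standard fallback of bounding the full $\ell^2_{kk''}$ norm via the weighted $\ell^2$ norm in the first step, and then use the almost-orthogonality of the cubes $Q$. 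Within a cube, the weight $(1+|k'-k''|/L)^{\kappa_0}$ is comparable to the constant $(1+\operatorname{dist}(Q,k'')/L)^{\kappa_0}$. Pulling this constant out and applying the operator norm of $h^{(1)}$ restricted to columns in $Q$ bounds each block. Because $h^{(1)}$ maps $Q$ into a bounded neighborhood with bounded overlap, the blocks can be summed by Cauchy–Schwarz.

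Route 3 is the one I would write down. Concretely, write $h^{(2)}=\sum_Q \mathbf 1_{k'\in Q}h^{(2)}$. The pieces $h^{(1)}\mathbf 1_Q h^{(2)}$ are supported in $k\in\tilde Q$, and the enlarged cubes $\tilde Q$ overlap boundedly. Hence
\[
\Big\|\Big(1+\frac{|k-k''|}{L}\Big)^{\kappa_0}h\Big\|_{\ell^2}^2
\lesssim \sum_Q w_{Q,k''}^2\,\|h^{(1)}\|_{k\to k'}^2\,\big\|\mathbf 1_Q h^{(2)}_{\cdot k''}\big\|^2 .
\]
Summing over $Q$ and $k''$ then gives the claim.
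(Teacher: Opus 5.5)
Your Route 3 is a correct and complete proof, and it is the standard localization/almost-orthogonality argument from the works the paper cites (the paper itself gives no proof): split $k'$ into $L$-cubes $Q$, note that $h^{(1)}\mathbf 1_Q h^{(2)}$ lives on the boundedly overlapping enlargements $\tilde Q$, bound the output weight there by $C^{\kappa_0}(1+\operatorname{dist}(Q,k'')/L)^{\kappa_0}$, apply $\|h^{(1)}\|_{k\to k'}$ blockwise, and finish with $1+\operatorname{dist}(Q,k'')/L\le 1+|k'-k''|/L$ for $k'\in Q$. When writing it up, state your final display for a fixed column $k''$ (or put $\sum_{k''}$ on the right), make the output-side weight comparison for $k\in\tilde Q$ explicit, and drop the first step and Routes 1--2, which are not needed (the pointwise bound in the first step would only control $\|\,|h^{(1)}|\,\|_{k\to k'}$ anyway).
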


We also recall the Wiener chaos estimate (\cite[Theorem I.22]{Simon.B}), which is useful when constructing the Gibbs measure. 

\begin{lem}
  \label{LEM:wc}
  Let $\{ g_n \}_{n \in \mathbb{N}}$ be a sequence of
  independent standard real-valued Gaussian random variables. Given $l \in
  \mathbb{Z}_{\ge  0}$, let $\{ P_j \}_{j \in \mathbb{N}}$ be a sequence
  of polynomials in $\bar{g} = \{ g_n \}_{n \in \mathbb{N}}$ of degree at most
  $l$. Then, for $p \ge  2$, we have
  \[ 
    \bigg\| \sum_{j \in \mathbb{N}} P_j (\bar{g}) \bigg\|_{L^p (\Omega)} 
    \le  (p - 1)^{\frac{l}{2}} \bigg\| \sum_{j \in \mathbb{N}} P_j
     (\bar{g}) \bigg\|_{L^2 (\Omega)} . 
  \]
\end{lem}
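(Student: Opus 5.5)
This is the Gaussian hypercontractivity estimate. I would prove it through Nelson's hypercontractivity of the Ornstein--Uhlenbeck semigroup, after reducing to finitely many Gaussians.

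\emph{Reduction.} Write $F=\sum_j P_j(\bar g)$, understood as an $L^2(\Omega)$-limit of partial sums. Let $\mathcal G_M$ be the $\sigma$-algebra generated by $g_1,\dots,g_M$. The conditional expectation $\mathbb E[\,\cdot\,|\mathcal G_M]$ maps polynomials of degree $\le l$ to polynomials of degree $\le l$ in $g_1,\dots,g_M$, because the $g_n$ are independent. It is also an $L^2$-contraction, and $\mathbb E[F|\mathcal G_M]\to F$ in $L^2$ and almost surely along a subsequence. So by Fatou's lemma it suffices to prove
\[
\|F\|_{L^p}\le (p-1)^{l/2}\|F\|_{L^2}
\]
for a single polynomial $F$ of degree $\le l$ in finitely many variables $g_1,\dots,g_M$, with respect to the standard Gaussian measure $\gamma_M$ on $\R^M$.

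\emph{Chaos decomposition.} Expand $F$ in multivariate Hermite polynomials:
\[
F=\sum_{m=0}^{l}F_m ,
\]
where $F_m$ lies in the $m$-th homogeneous Wiener chaos. These components are mutually orthogonal in $L^2(\gamma_M)$. The Ornstein--Uhlenbeck semigroup $T_t=e^{tL}$, with $L=\Delta-x\cdot\nabla$, acts diagonally: $T_tF_m=e^{-mt}F_m$.

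\emph{Choice of time and conclusion.} Nelson's theorem gives $\|T_tG\|_{L^p}\le\|G\|_{L^2}$ whenever $e^{2t}\ge p-1$. Choose $t$ with $e^{t}=(p-1)^{1/2}$ and set $G=\sum_m e^{mt}F_m$, so that $T_tG=F$. Then, using orthogonality,
\[
\|F\|_{L^p}\le\|G\|_{L^2}=\Big(\sum_{m\le l}e^{2mt}\|F_m\|_{L^2}^2\Big)^{1/2}\le (p-1)^{l/2}\|F\|_{L^2}.
\]

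\emph{Main obstacle: Nelson's theorem.} This is the only nontrivial input, and I would either cite it or prove it along Gross's route, in three steps.
\begin{enumerate}
\item Prove the logarithmic Sobolev inequality for $\gamma_1$. One way is the two-point (Bernoulli) log-Sobolev inequality combined with the central limit theorem; another is the Bakry--\'Emery argument via the $\Gamma_2\ge\Gamma$ curvature bound of $L$.
\item Tensorize to $\gamma_M$, using that log-Sobolev is dimension-free under products.
\item Apply Gross's equivalence: differentiate $t\mapsto\log\|T_tf\|_{L^{q(t)}}$ with $q(t)=1+e^{2t}$. The derivative is nonpositive exactly by the log-Sobolev inequality applied to $|T_tf|^{q/2}$. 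This yields $\|T_tf\|_{L^{q(t)}}\le\|f\|_{L^2}$.
\end{enumerate}
The delicate points are justifying the differentiation, which is fine for polynomial $f$ since everything is smooth with moments of all orders, and tracking the optimal constant $e^{2t}=p-1$.
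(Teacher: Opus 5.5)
Your argument is correct and is the standard proof. The paper gives no proof of its own and cites Simon's book (Theorem I.22), where the estimate is derived the same way: Nelson's hypercontractivity of the Ornstein--Uhlenbeck semigroup is applied to the finite chaos expansion $F=\sum_{m\le l}F_m$ with $e^{2t}=p-1$. Your reduction via $\mathbb E[\,\cdot\,|\mathcal G_M]$ and Fatou handles the infinite sum over $j$, since polynomials of degree $\le l$ in $g_1,\dots,g_M$ form a finite-dimensional, hence closed, subspace.
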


Last, we record the large-deviation-type result, which is a special case of \cite[Lemma 4.1]{DNY2024}.

\begin{lem}
\label{LEM:LDE}
  Let $E \subset \mathbb{Z}^2$ be a finite set, and $a = a_{k_1 \cdots k_r}
  (\omega)$ be a random tensor such that the collection $\{ a_{k_1 \cdots k_r}
  \}$ is independent of the collection $\{ g_k (\omega) ; k \in E \}$. Let
  $\zeta_j \in \{ \pm \}$ and assume that in the support of $a_{k_1 \cdots
  k_r}$ there is no pairing in $\{ k_1, \cdots, k_r \}$ associated with the
  signs $\zeta_j$. Define the random variable $X$ as
  \begin{align*} 
    X (\omega) := \sum_{k_1, \cdots, k_r\in E} a_{k_1 \cdots k_r} \prod_{j = 1}^r
    \eta_{k_j} (\omega)^{\zeta_j}, 
  \end{align*}
  where $\eta_{k_j} \in \{ g_{k_j} , |g_{k_j}|^2-1 \}$.
  Then, for any $A > |E|$, we have $A$-certainly that
  \[
  \begin{aligned} 
    | X (\omega) |^2 \le  A^{\theta} \cdot \sum_{k_1, \cdots, k_r\in E} |
    a_{k_1 \cdots k_r} (\omega) |^2 . 
  \end{aligned}
  \]
\end{lem}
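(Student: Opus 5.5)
The statement to prove is the large deviation estimate, Lemma~\ref{LEM:LDE}. The plan is to reduce it to a moment bound and then apply Chebyshev's inequality. First I will condition on the $\sigma$-algebra generated by the collection $\{a_{k_1\cdots k_r}\}$ (together with all $g_k$, $k\notin E$). By assumption this collection is independent of $\{g_k\}_{k\in E}$, so after conditioning the tensor $a$ may be treated as deterministic. The random variable $X$ is then a polynomial of degree at most $2r$ in the real and imaginary parts of the finitely many Gaussians $g_k$, $k\in E$. The Wiener chaos estimate, Lemma~\ref{LEM:wc}, then gives for every $p\ge 2$
\[
\bigl(\mathbb E[|X|^p\,|\,a]\bigr)^{1/p}\le (p-1)^{r}\bigl(\mathbb E[|X|^2\,|\,a]\bigr)^{1/2}.
\]

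The main step is the second-moment computation, where I want to show
\[
\mathbb E[|X|^2\,|\,a]\le C_r\sum_{k_1,\dots,k_r\in E}|a_{k_1\cdots k_r}|^2 .
\]
Expanding $|X|^2$ gives a double sum over $(k_1,\dots,k_r)$ and $(k_1',\dots,k_r')$ of $a\,\overline{a'}$ times $\mathbb E\prod_j\eta_{k_j}^{\zeta_j}\overline{\eta_{k_j'}^{\zeta_j}}$. I will group the indices of each tuple by equal values, so the expectation factorizes over distinct lattice points. The no-pairing hypothesis is what controls this factorization. Within a single tuple, the factor attached to a given $k$ is then of the form $g_k^{m}$ or $\bar g_k^{m}$ times powers of $|g_k|^2-1$; it is not a product like $g_k\bar g_k$, which would have nonzero mean.

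For a nonzero contribution, each value appearing in $(k_1,\dots,k_r)$ must also appear in $(k'_1,\dots,k'_r)$ with compatible multiplicity. Indeed, a factor $g_k^m$ with no partner has mean zero, and the centered factor $|g_k|^2-1$ also has mean zero when unmatched. Consequently the primed tuple is a permutation of the unprimed one, up to at most $r!$ choices, and each expectation is bounded by a constant $C_r$. Applying Cauchy--Schwarz across the permutations then yields the displayed bound. This is the step I expect to be the main obstacle: the bookkeeping with mixed $\eta\in\{g,|g|^2-1\}$ and repeated indices (over-paired configurations) is exactly where the no-pairing assumption must be used carefully.

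Finally, for $A>|E|$ I will choose $p\sim A^{\theta}$ and apply Chebyshev's inequality conditionally:
\[
\mathbb P\Bigl(|X|^2>A^{\theta}\sum|a|^2\,\Big|\,a\Bigr)\le \bigl((p-1)^{2r}C_rA^{-\theta}\bigr)^{p/2}.
\]
Choosing $p=cA^{\theta/(2r)}$ with $c$ small makes the right-hand side at most $e^{-cA^{\theta'}}$. Integrating over $a$ and relabeling $\theta$ then gives the $A$-certain statement of Definition~\ref{DEF:certain}. The assumption $A>|E|$ is not needed in this argument; it only matters in the general version of \cite[Lemma 4.1]{DNY2024}, where one also takes unions over finitely many configurations.
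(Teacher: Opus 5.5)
The paper gives no argument for this lemma and simply quotes \cite[Lemma 4.1]{DNY2024}. Your route (condition on $\sigma(a)$, hypercontractivity in degree $2r$, Chebyshev with $p\sim A^{\theta/(2r)}$) is the standard one. However, the second-moment step has a genuine gap, exactly where you anticipated it.

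Your claim that an unmatched centered factor has mean zero holds only for its first power. A lattice point may occupy two or more slots of type $|g|^2-1$ and no slot of type $g$. Under the no-pairing hypothesis these slots carry equal signs, so they do not form a pairing and are allowed, yet $\mathbb E(|g_k|^2-1)^2=1$. This is not a bookkeeping issue. Take $r=2$, $\zeta_1=\zeta_2=+$, both $\eta$ equal to $|g|^2-1$, and $a_{k_1k_2}=\mathbf 1_{k_1=k_2\in E}$. Then $X=\sum_{k\in E}(|g_k|^2-1)^2$ has mean $|E|=\sum|a|^2$ and variance $\mathcal O(|E|)$. Hence $|X|^2\ge|E|^2/4$ with probability $1-\mathcal O(|E|^{-1})$. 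For $A=2|E|$, the claimed bound is $A^\theta\sum|a|^2=(2|E|)^\theta|E|\ll|E|^2$. So in this configuration neither $\mathbb E|X|^2\le C_r\sum|a|^2$ nor the conclusion of the lemma holds. The assumption $A>|E|$ cannot help, since $A^\theta\ll|E|$.

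To close the argument you must add a hypothesis. For example, require that no lattice point occupies two or more $|g|^2-1$ slots without also occupying a $g$ slot. It suffices that the indices in the $|g|^2-1$ slots be distinct, which is the situation in the applications, where $|g_{k'}|^2-1$ comes from collapsing a single pairing. Then each single-point factor $g_k^{m_1}(|g_k|^2-1)^{m_2}$ (or its conjugate) has mean zero. So a nonzero term in $\mathbb E[|X|^2\,|\,a]$ forces $(k_j)$ and $(k_j')$ to take the same set of values.

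This is weaker than your permutation claim, which fails even under the extra hypothesis. With slot types $(g,g,|g|^2-1)$ and all signs $+$, the tuples $(k,p,k)$ and $(k,p,p)$, $k\ne p$, correlate with coefficient $(\mathbb E[|g|^2(|g|^2-1)])^2=1$. The weaker statement still leaves at most $r^r$ admissible $(k_j')$ for each $(k_j)$, and each expectation is bounded by $C_r$ via H\"older. Schur's test then yields $\mathbb E[|X|^2\,|\,a]\le C_r\sum|a|^2$, and the rest of your proof goes through. You are right that $A>|E|$ plays no role.

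One small correction: condition only on $\sigma(a)$. Independence of $a$ from $\{g_k\}_{k\in E}$ does not make the pair $(a,\{g_k\}_{k\notin E})$ independent of $\{g_k\}_{k\in E}$. In any case, $X$ does not involve $g_k$ with $k\notin E$.
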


\noi

\subsection{Norms}\label{normdef}

In this section, we define the function and operator norms that appear in the proof.

\begin{df}
For a space-time function $u \in C(\R; L^2 (\mathbb{T}^2))$, or the kernel $h_{kk'}(t)$ of a time-dependent operator $\mathcal{H}$, we denote
\[ 
\widetilde{u_k}(\lambda) = \widehat{u_k} (\lambda-|k|^\alpha),\quad \widetilde{h_{kk'}}(\lambda) = \widehat{h_{kk'}}(\lambda-|k|^\alpha). 
\]

Then we can define the $X^b$ norm as
\[
\| u \|_{X^b} := \bigg( \int_{\mathbb{R}} \langle \lambda \rangle^{2 b} \|
    \widetilde{u_k} (\lambda) \|_{\ell^2_k}^2 d \lambda \bigg)^{\frac12}. 
\]
\end{df}

\begin{df}
Also we can define the $Y^b$ norm as
\begin{equation}
    \label{Yb}
\| \mathcal{H} \|_{Y^b} :=  \bigg( \int_{\mathbb{R}} \langle \lambda \rangle^{2
    b} \| \widetilde{h_{k k'}} (\lambda) \|_{\ell^2_k\to \ell^2_{k'}}^2 d \lambda \bigg)^{\frac12} 
\end{equation}
\noi
and the $Z^b$ norm (Hilbert-Schmidt norm) of the operator $\mathcal{H}$,

\begin{equation}
\begin{aligned}
    \label{Zb}
    \| \mathcal{H} \|_{Z^b} := \bigg( \int_{\mathbb{R}} \langle \lambda \rangle^{2
    b} \| \widetilde{h_{k k'}} (\lambda) \|_{\ell^2_k \ell^2_{k'}}^2 d \lambda \bigg)^{\frac12}
\end{aligned}    
\end{equation}
\end{df}

More generally, consider an operator $\mathcal P$ over space-time functions given by  
\[
\mathcal P (u) (t,x) = \frac1{(2\pi)^4} \sum_{k \in \Z^2} \int_{\R} \bigg( \sum_{k'\in \Z^2} \int_\R \widetilde {P_{k k'}} (\lambda,\lambda') \widehat u_{k'} (\lambda') d\lambda' \bigg)  e^{i(t\lambda + k\cdot x)} d\lambda,
\]

\noi
where $\widetilde {P_{k k'}} (\lambda,\lambda')$ is similarly given by
\[
\widetilde{P_{kk'}}(\lambda,\lambda') = \widehat {P_{k k'}} (\lambda-|k|^\alpha,|k'|^\alpha-\lambda') .
\]

We can define the operator norm of $\mathcal P$ by
\begin{align}
    \label{Ybb}
    \begin{split}
    \| \mathcal{P} \|_{Y^{b, b'}} & := \big\| \langle \lambda \rangle^{ b}
    \langle \lambda' \rangle^{-  b'} \widetilde{P_{k k'}} (\lambda,\lambda')
    \big\|_{\ell_{k'}^2 L_{\lambda'}^2 \to  \ell_{k}^2 L_{\lambda}^2} \\ 
    & = \sup_{\|u\|_{X^{b'}} = 1} \frac1{(2\pi)^4} \bigg( \int_{\mathbb{R}} \langle \lambda \rangle^{2 b} \sum_{k} \bigg|  \sum_{k'} \int_\R  \widetilde{P_{k k'}} (\lambda,\lambda') \widehat u_{k'} (\lambda') d\lambda' \bigg|^2 d \lambda \bigg)^{\frac12} \\
     & = \sup_{\|u\|_{X^{b'}} = 1} \big\| \mathcal P (u) \big\|_{X^b},
    \end{split}
\end{align}

\noi
and the Hilbert-Schmidt norm of $\mathcal P$ 

\begin{align} 
\label{Zbb} 
\begin{split} 
    \| \mathcal{P} \|_{Z^{b, b'}} 
    & = \bigg( \int_{\mathbb{R}^2} \langle \lambda
    \rangle^{2 b} \langle \lambda' \rangle^{- 2 b'} \left\|  \widetilde{P_{k k'}} (\lambda,
    \lambda') \right\|^2_{\ell^2_k \ell^2_{k'}} d \lambda d \lambda' \bigg)^{\frac12}.
    \end{split}
\end{align}

Given the above definitions, we define the corresponding localized norms for any finite interval $I$
\[ 
  \| u \|_{X^b (I)} := \inf \left\{ \| v \|_{X^b} : v = u \text{ on } I
  \right\} 
\]
\noi
and similarly define $Y^{b, b'} (I)$ and $Z^{b, b'} (I)$. We will use $X^b$ to denote $X^b (I)$ for simplicity unless otherwise specified.

\gb

\section{Random averaging operator} \label{structuresol} 

\subsection{Structure of the solution}

We focus on the local well-posedness. Before the estimates, we may assume

\begin{equation}\label{preofgauss}
    |g_{k}|\lesssim \tau^{-\theta}|k|^\theta,\quad |m_N-\sigma_N|\lesssim \tau^{-\theta},
\end{equation}
\noi
by removing a set of probability less than $C_{\theta}e^{-\tau^{-\theta}}$ since by definition

\begin{equation}\label{gaugephase}
 m_N-\sigma_N = \sum_{\jb{k}\leq N}\frac{|g_k|^2-1}{\jbb{k}^{\alpha}}.    
\end{equation}

Let $v_N$ denote the solution to \eqref{truncfNLSgauge}. Our goal here is
to obtain a quantitative estimate for the difference
$y_N:=v_N-v_{\frac{N}{2}}$, which satisfies the equation

\begin{equation}\label{fNLSdiff}
\begin{cases}
(i\partial_t - {\rm D}^\alpha) y_N =  \Pi_N \left( \Nc_3\left(y_N + v_{\frac{N}{2}}\right) - \Nc_3\left(v_{\frac{N}{2}}\right) \right)
+\Delta_N \Nc_3\left(v_{\frac{N}{2}}\right) \\
\hspace{2.7cm}  +\Pi_N \Qc_3(v_N) - \Pi_{\frac{N}{2}} \Qc_3\left(v_{\frac{N}{2}}\right) \\
y_N(0) = \Delta_N u^\omega.
\end{cases}
\end{equation}
where $\Delta_N = \Pi_N-\Pi_{\frac{N}{2}}$. Consider the high-low-low interaction part

\begin{equation}\label{rao1}
\begin{cases}
    (i\partial_t - {\rm D}^\alpha) \psi_{N,L} = 2\Pi_N \Nc_3(\psi_{N,L}, v_L, v_L)\\
\psi_{N,L}(0) = \Delta_N u^\omega.
\end{cases}
\end{equation}
\noi
where $(N,L)$ is in the set

\begin{equation}\label{setNL}\mathcal{K}_0:=\{(N,L)\in(2^{\mathbb{Z}})^2:2^{-1}\leq L< N^{1-\delta}\}.
\end{equation}

By linearity we have,
\begin{equation}\label{RAOstructure}
(\psi_{N,L})_k
=\sum_{\frac N2<\langle k'\rangle\leq N}H_{kk'}^{N,L}
\frac{g_{k'}(\omega)}{\jbb{k'}^{\frac{\alpha}{2}}}.
\end{equation}

\noindent where for $\frac{N}{2}<\langle k'\rangle\leq N$ and $\langle k\rangle\leq N$, $H_{kk'}^{N,L}=\varphi_k$ is the $k$-th mode of the solution $\varphi$ to the equation
\begin{equation}\label{defpsi2}
\left\{
\begin{aligned}
&(i\partial_t-{\rm D}^\alpha)\varphi=2\Pi_N \Nc_3(\varphi, v_L, v_L),\\
&\varphi(0)=e^{ik'\cdot x}.
\end{aligned}
\right.
\end{equation} 

\noindent We set $H_{kk'}^{N,L}=0$ outside
\(\{\langle k\rangle,\langle k'\rangle\leq N\}\). By definition these $H_{kk'}^{N,L}$, as well as the $h_{kk'}^{N,L}$ defined below, are $\mathcal{B}_{\leq N}$ measurable and $\mathcal{B}_{\leq L}^+$ measurable in the sense of Definition \ref{borel}.

\smallskip
For any dyadic $N,L$, we further define
\begin{equation}\label{matrices}\zeta_{N,L}: =\psi_{N,L} -\psi_{N,\frac{L}{2}},\qquad h^{N,L}: =H^{N,L}-H^{N,\frac{L}{2}}.
\end{equation} 
At the bottom scale we use the convention
\[
 \zeta_{N,\frac12}:=\psi_{N,\frac12},
 \qquad h^{N,\frac12}:=H^{N,\frac12}.
\]
Note that $\psi_{N,\frac{1}{2}}=e^{-it{\rm D}^\alpha}(\Delta_Nu^{\omega})$, and that $H_{kk'}^{N,\frac{1}{2}} =e^{-i|k|^{\alpha}t}\mathbf{1}_{k=k'}$ is restricted to the frequency band $\frac{N}{2}<\langle k\rangle\leq N$. Let $L_N = \max \{ L \in 2^\mathbb{Z} : L < N^{1-\delta} \} $ be the largest $L$ such that $(N,L)\in\mathcal{K}_0$. We can define the remainder as $z_N = y_N- \psi_{N,L_N}$, so it satisfies the equation

\begin{equation}\label{rao2}
\begin{cases}
    (i\partial_t - {\rm D}^\alpha) z_{N} = \Rc_N\\
z_{N}(0) = 0
\end{cases}
\end{equation}
\noi
where

\begin{equation}\label{remainder}
\begin{aligned}
    \Rc_N = & \quad \Pi_N \left( \Nc_3\left(z_N + \psi_{N,L_N}+ v_{\frac{N}{2}}\right) - \Nc_3\left(v_{\frac{N}{2}}\right)  - 2\Nc_3(\psi_{N,L_N}, v_L, v_L)\right)\\
    & \quad +\Delta_N \Nc_3\left(v_{\frac{N}{2}}\right)+  \Pi_N \Qc_3(v_N) - \Pi_{\frac{N}{2}} \Qc_3\left(v_{\frac{N}{2}}\right)\\
    = & \sum_{N_{\max} = N_{\med} = N}\Pi_N\Nc_3(y_{N_1},y_{N_2},y_{N_3})+ \sum_{N_1,N_3\leq\frac{N}{2}}\Pi_N\Nc_3(y_{N_1},z_N+\psi_{N,L_N},y_{N_3})\\
    & \sum_{L_N\leq N_{\med}\leq\frac{N}{2}}2\Pi_N\Nc_3(\psi_{N,L_N},y_{N_2},y_{N_3})+ \sum_{N_2,N_3\leq\frac{N}{2}}2\Pi_N\Nc_3(z_{N},y_{N_2},y_{N_3})\\
    &\quad +\Delta_N \Nc_3\left(v_{\frac{N}{2}}\right)+  \Pi_N \Qc_3(v_N) - \Pi_{\frac{N}{2}} \Qc_3\left(v_{\frac{N}{2}}\right).
\end{aligned}
\end{equation}
Moreover, $z_N$ is $\mathcal{B}_{\leq N}$ measurable.

\begin{rem} With the above construction, we have the following structure for
the gauged truncated solution to \eqref{truncfNLSgauge}:

\begin{equation}\label{ansatzN}
    v_N = e^{-it{\rm D}^\alpha}\Pi_Nu^{\omega}+ \sum_{\substack{(M,L)\in\mathcal{K}_0\\M\leq N}}\zeta_{M,L} + \sum_{M\leq N}z_M.
\end{equation}

If $v=\lim\limits_{N\to\infty}v_N$ denotes the gauged limit, then

\begin{equation}\label{ansatz1}
v = e^{-it{\rm D}^\alpha}u^{\omega}+\sum_{(N,L)\in\mathcal{K}_0}\zeta_{N,L}+z,\quad \text{where} \quad z=\sum_N z_N.
\end{equation} 
The Wick ordered solution is recovered from $v$ by the inverse gauge
transform. This is the full ansatz, where
$\zeta_{N,L}$ can be viewed as a random averaging operator, whose kernel is
essentially given by $h^{N,L}$, applied to the Gaussian free field
$e^{-it{\rm D}^\alpha}u^{\omega}$.
\end{rem}

\subsection{The a priori bounds}

We now present the bound on $J=[-\tau,\tau]$ where $\tau\ll 1$. Recall the relevant parameters defined in \eqref{parameters1}, we aim to show

\begin{equation}\label{pribounds}
\begin{cases} 
\|h^{N,L}\|_{Y^b(J)} \leq L^{-\delta_0}, \\[2pt]
\|h^{N,L}\|_{Z^b(J)} \leq N^{\frac{5-2\alpha}{2}+\gamma_0} L^{-\frac{\gamma_0}{4}}, \\[4pt]
\left\| \left(1 + \dfrac{|k - k'|}{L}\right)^\kappa_0 h_{kk'}^{N,L} \right\|_{Z^b(J)} \leq N, \\[4pt]
\|z_N\|_{X^b(J)} \leq N^{\frac{10}{3}-2\alpha + \gamma}.
\end{cases}
\end{equation}

To prove these bounds, we should restrict $v_N$,$y_N$,$z_N$ and $h^{N,L}$ to the interval $J$ and construct the extensions of these restrictions to all time. See \cite[Section 3.2.1]{DNY2024} for a detailed argument. We still denote them by the original symbols for simplicity. Then it suffices to prove the following proposition:

\begin{prop}\label{local2} Recall the relevant constants defined in \eqref{parameters1}, and assume that $0<\tau\ll 1$. Consider the following statement which we call  $ \mathtt{Loc}(M) $ for $M\geq 1$: for any $(N,L)\in\mathcal{K}_0$ with $L<M$, we have
\begin{align}
\label{induct1}
&\|h^{N,L}\|_{Y^b}\leq L^{-\delta_0},\\
\label{induct2}
&\|h^{N,L}\|_{Z^b}\leq N^{\frac{5-2\alpha}{2}+\gamma_0}L^{-\frac{\gamma_0}{4}},\\
\label{induct3}&\bigg\|\bigg(1+\frac{|k-k'|}{L}\bigg)^{\kappa_0}h_{kk'}^{N,L}\bigg\|_{Z^{b}}\leq N.
\end{align} 

Define the operators

\begin{align}\label{defop1}\mathcal{P}^+(z) = \mathcal{P}_{N,L_2,L_3}^+(z)&:=\eta_{\tau}(t)\cdot\mathcal{I}\Pi_N\mathcal{N}_{3}(z,y_{L_2},y_{L_3}),\\
\label{defop2}\mathcal{P}^-(z) = \mathcal{P}_{L_1,N,L_3}^-(z)&:=\eta_{\tau}(t)\cdot\mathcal{I}\Pi_N\mathcal{N}_{3}(y_{L_1},z,y_{L_3}),
\end{align} then for any $(N,L_{\max})\in\mathcal{K}_0$ as defined in \eqref{setNL} with $L_{\max}<M$ where $L_{\max} = \max\{L_2, L_3\}$ for $\mathcal{P}^{+}$ and $L_{\max} = \max\{L_1, L_3\}$ for $\mathcal{P}^{-}$, we have

\begin{equation}
\| \mathcal{P}^{\pm} \|_{Y^{b,b}} = \|\mathcal{P}^{\pm}\|_{X^{b}\to X^{b}}\leq \tau^{\theta}L^{-2\delta_0}_{\max},\label{rao-OP}
\end{equation}
\noi
and
\begin{equation}\label{rao_HS}
\| \mathcal{P}^{+} \|_{Z^{b,b}}  \lesssim \tau^{\theta} N^{\frac{5-2\alpha}{2}+ \frac{3 \gamma_0}{4} } L^{ -\frac{\gamma_0}{3}}_{\max}.
\end{equation}

Finally for any $N\leq M$ we have
\begin{equation}\label{induct6}\|z_N\|_{X^b}\leq N^{\frac{10}{3}-2\alpha+\gamma}.
\end{equation}

Now suppose that the statement $\mathtt{Loc}(M)$ is true for $\omega\in\Omega_M$,  then the statement $\mathtt{Loc}(2M)$ is true for $\omega\in \Omega_{2M}$ where $\Omega_{2M}$ is another set such that $\mathbb{P}(\Omega_{M}\backslash\Omega_{2M})\leq C_{\theta}e^{-(\tau^{-1}M)^\theta}$. In particular, apart from a set of $\omega$ with probability $\leq C_{\theta}e^{-\tau^{-\theta}}$, the statement $\mathtt{Loc}(M)$ is true for all $M$.
\end{prop}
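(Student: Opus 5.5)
The plan is to follow the induction scheme of \cite[Section 3]{DNY2024}, replacing the quadratic counting and tensor estimates by the fractional ones of Section \ref{prep}. We assume $\mathtt{Loc}(M)$ on $\Omega_M$ and establish, in this order: the operator bounds \eqref{rao-OP}--\eqref{rao_HS} for $L_{\max}<2M$; then \eqref{induct1}--\eqref{induct3} for $L<2M$; and finally \eqref{induct6} for $N\le 2M$. The probabilistic input is confined to a removed set of probability $\le C_\theta e^{-(\tau^{-1}M)^\theta}$. Each step is an $M$-certain statement (Definition \ref{DEF:certain}) built from Lemma \ref{LEM:LDE}, applied only to dyadic blocks at the new scale, and union-bounded over the polynomially many dyadic parameters.

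\emph{Step 1 (operators).} For $\mathcal P^\pm$ we expand $y_{L_j}$ via the ansatz \eqref{ansatzN}. Each input is one of three types: a linear Gaussian term, a random averaging term $\zeta_{L_j,R}$ with kernel $h^{L_j,R}$ (controlled by \eqref{induct1}--\eqref{induct3}, which are known since $R<L_j<M$), or a remainder $z_{L_j}$ controlled by \eqref{induct6}.

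We write the kernel of $\mathcal P^\pm$ in $(k,k')$ after a time-Fourier reduction, using standard $X^b$ Duhamel estimates that give the factor $\tau^\theta$. This produces a tensor restricted to the set $S$ of \eqref{Eqn:S}, with phase localized to $m+\mathcal O(1)$. Conditioning on $\mathcal B_{\le L_{\max}/2}^{+}$, we apply Lemma \ref{LEM:LDE} to the Gaussians at the top scale $L_{\max}$. Propositions \ref{PROP:puretensor} and \ref{Prop:weightedZb} and Lemma \ref{LEM:mixedtensor} then reduce the $k'\to k$ operator norm and the Hilbert--Schmidt norm to deterministic base-tensor norms, which are bounded by the fractional counting lemmas (Lemmas \ref{2vec1}, \ref{2vec2}). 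The $L_{\max}^{-2\delta_0}$ gain in \eqref{rao-OP} requires the localized tensor bound of Lemma \ref{HLLtensor} in the high--low--low configuration, since the unlocalized bound loses $N^{0+}$. The condition $(N,L_{\max})\in\mathcal K_0$, i.e.\ $L_{\max}<N^{1-\delta}$, is what makes that loss affordable. Pairings $k_2=k_1$ or $k_2=k_3$ are excluded by the definition of $\mathcal N_3$; any remaining over-pairings are handled deterministically using \eqref{preofgauss}.

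\emph{Step 2 (kernels).} Since $H^{N,L}$ solves \eqref{defpsi2}, we have $H^{N,L}=(1-\mathcal P^+_{N,L,L})^{-1}$ applied to the free kernel, plus the $\mathcal Q_3$-type correction. Here $h^{N,L}$ is the difference of two such resolvents, so
\[
h^{N,L}=(1-\mathcal P_{N,L,L}^+)^{-1}\bigl(\mathcal P_{N,L,L}^+-\mathcal P_{N,\frac L2,\frac L2}^+\bigr)H^{N,\frac L2}.
\]
The difference involves at least one input at scale exactly $L$. A Neumann series, justified by \eqref{rao-OP} with $\tau^\theta\ll1$, gives \eqref{induct1} from the $L^{-2\delta_0}$ bound and \eqref{induct2} from \eqref{rao_HS}. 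The weighted bound \eqref{induct3} comes from Proposition \ref{Prop:weightedZb}, because the kernel of $\mathcal P^+$ is supported in $|k-k'|\lesssim L$ up to rapidly decaying tails.

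\emph{Step 3 (remainder).} For \eqref{induct6} we run a contraction in $X^b$ for \eqref{rao2}, estimating each term of \eqref{remainder}. The cases are: high--high interactions ($N_{\max}=N_{\med}=N$), the term $\Nc_3(y,z_N+\psi,y)$ via $\mathcal P^-$, the term $\psi_{N,L_N}$ with $N_{\med}\ge L_N$, which is controlled because $N_{\med}>N^{1-\delta}$, and $\Delta_N\Nc_3(v_{N/2})$ together with the $\mathcal Q_3$ differences. Terms linear in $z_N$ are absorbed using \eqref{rao-OP}. The forcing terms are estimated by trilinear tensor bounds, split by input type and frequency configuration, each reduced to counting.

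The main obstacle is the low-regularity case in which the exact projection $\Pi_N$ cuts a radial threshold, namely the $\Gamma$-condition. There we convert to the quadratic annulus and use divisor counting at a cost of $N^{2-\alpha+}$. Balancing this loss, together with the $N^{\frac{5-2\alpha}{2}}$ Hilbert--Schmidt size, against the target exponent $\frac{10}{3}-2\alpha+\gamma$ is exactly where $\alpha>\frac{29}{15}$ enters. I expect most of the case analysis to be concentrated in that step.
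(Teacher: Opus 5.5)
Your architecture matches the paper's: operator bounds for $\mathcal P^\pm$ via counting and random tensors, with Lemma \ref{HLLtensor} removing the $N^{0+}$ loss; a Neumann series for $h^{N,L}$; and a contraction for $z_N$. However, your Step 1 claim that pairings are excluded by the definition of $\Nc_3$ is false. That definition only forbids $k_2\in\{k_1,k_3\}$. Once a type (C) input is expanded as $\sum_{k_j'}h_{k_jk_j'}g_{k_j'}\jbb{k_j'}^{-\alpha/2}$, the Gaussians are indexed by $k_j'$. For $\mathcal P^+$ with $L_2=L_3$, the factor $\overline{g_{k_2'}}g_{k_3'}$ then contains the pairing $k_2'=k_3'$ even though $k_2\neq k_3$. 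The pairings $k_1'=k_2'$ and $k_2'=k_3'$ appear likewise in the trilinear remainder and $\Gamma$-condition terms. These are ordinary pairings, not over-pairings, and Lemma \ref{LEM:LDE}, Proposition \ref{Prop:tensor} and Lemma \ref{HLLtensor} all require their absence. You must write $|g_{k'}|^2=(|g_{k'}|^2-1)+1$. The first part is a non-paired random tensor. The second is the deterministic contraction $\sum_{k'}\overline{h_{k_2k'}}h_{k_3k'}\jbb{k'}^{-\alpha}$, which needs its own estimate. The paper handles it as follows: it uses the unitarity of $H^{N,L}$ (Lemma \ref{unitary}, Corollary \ref{cor:unitary-cancellation}) to replace $\jbb{k'}^{-\alpha}$ by $\jbb{k'}^{-\alpha}-\jbb{k_2}^{-\alpha}$, then applies \eqref{induct3}, the Hilbert--Schmidt bound \eqref{induct2} on one factor, and the counts $|S_{k_1}|$ or $|S_{kk_1}||S_{k_2k_3}|$. (The gained factor $L_2^{-1}R_2$ is in the end only estimated via $R_2\le L_2$, so a direct Hilbert--Schmidt bound of the mean term gives the same exponents; the point is that the term must be isolated.) Your deterministic treatment via \eqref{preofgauss} covers only the triple coincidence, which occurs only in the remainder with $N_1=N_2=N_3$.

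In Step 3, the term $\Nc_3(y_{N_1},\psi_{N,L_N},y_{N_3})$ cannot be handled ``via $\mathcal P^-$''. The bound \eqref{rao-OP} only gives $\lesssim\tau^\theta\|\psi_{N,L_N}\|_{X^b}$, and $\|\psi_{N,L_N}\|_{X^b}\gtrsim\|\Delta_Nu^\omega\|_{L^2}\sim N^{\frac{2-\alpha}{2}}$, far above the target $N^{\frac{10}{3}-2\alpha+\gamma}$. This term is a forcing term and needs trilinear random tensor bounds that exploit the Gaussians inside $\psi$ (Subsections \ref{2Hterms} and \ref{LHLremainder}). Only the $z_N$ part goes through $\mathcal P^-$, and only when $N_1\vee N_3\ll N^{1-\delta}$, since \eqref{rao-OP} holds only for $(N,L_{\max})\in\mathcal K_0$.

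Your account of where $\alpha>\frac{29}{15}$ enters is also off. The $\Gamma$-condition estimates need only weaker conditions such as $\alpha>\frac{\sqrt{337}-7}{6}$ (see \eqref{geq1/2}). The binding constraint is $\frac{29}{6}-\frac{5\alpha}{2}<0$. It arises when a remainder input of size $L^{\frac{10}{3}-2\alpha+\gamma}$ meets the $\phi_{b,-}$ counts of Lemma \ref{2vec1}; that exponent is itself fixed by the high--high (C,C,C) count from Lemma \ref{2vec2}. For example, the type (D,D) bound for $\mathscr P^+$ is $(L_2L_3)^{\frac{29}{6}-\frac{5\alpha}{2}+\gamma}$, which must beat $L_{\max}^{-3\delta_0}$.

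Finally, the operator bound needs the modulation restricted to $|m|\lesssim L_{\max}^{C_1}$ before summing over $m$. This avoids a $\log N$ loss and is exactly hypothesis (3) of Lemma \ref{HLLtensor}.
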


This is the key ingredient to the proof of the local well-posedness result, Theorem \ref{LWP}. We will present the details in Section \ref{MainProof}. As for the proof of this proposition, we will turn it to multilinear estimates in Section \ref{esti}, which requires various counting and random tensor estimates in Section \ref{prep}.

Before ending this subsection, we record a unitary property of $H^{N,L}_{kk'}$ originating from the $L^2$ conservation of the linear equation \eqref{rao1}, which was first observed by Bourgain \cite{B97} in the context of Hartree-type NLS. 

\begin{lem}\label{unitary}
  Assume $\mathtt{Loc}(M)$ given in Proposition \ref{local2} holds.
  For $L < \min (M, N^{1-\delta})$, there exists $\tau \ll 1$ such that for each $|t| \le \tau$, the operator (kernel)
  $\{ H_{k k'}^{N, L}(t) \}_{k k'}$ is unitary, i.e.
  \begin{align}
    \label{uni}
    \sum_{\langle k\rangle\leq N}
    H_{k_1 k}^{N,L}(t)\overline{H_{k_2 k}^{N,L}(t)}
    =
    \mathbf 1_{\langle k_1\rangle,\langle k_2\rangle\leq N}
    \delta_{k_1k_2}.
  \end{align}
\end{lem}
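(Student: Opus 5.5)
The plan is to derive \eqref{uni} from mass conservation for the linear equation \eqref{defpsi2}, whose $k'$-th solution gives the column $H^{N,L}_{\cdot k'}$, combined with time-reversibility.

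\textbf{Step 1: mass conservation.} Let $\varphi$ solve \eqref{defpsi2} with arbitrary initial datum $\varphi(0)=f$ supported in $\{\langle k\rangle\le N\}$. Then $\varphi(t)=\mathcal H(t)f$ with $\mathcal H(t)=(H_{kk'}^{N,L}(t))$ restricted to the ball $\{\langle k\rangle,\langle k'\rangle\le N\}$; this is the natural extension of the kernel to the whole ball, and the band $\frac N2<\langle k'\rangle\le N$ in \eqref{RAOstructure} is only the part actually used. Compute $\frac{d}{dt}\sum_k|\varphi_k|^2=2\,\mathrm{Im}\sum_k \overline{\varphi_k}\,\bigl(2\Pi_N\mathcal N_3(\varphi,v_L,v_L)\bigr)_k$, since the dispersive term $|k|^\alpha$ is real and drops out. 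By \eqref{trilinearforms}, this sum equals
\[
2\sum_{\substack{k=k_1-k_2+k_3\\ k_2\neq k_1,k_3}}\overline{\varphi_k}\,\varphi_{k_1}\overline{(v_L)_{k_2}}(v_L)_{k_3}.
\]
We write it as $\sum_{k,k_1}\overline{\varphi_k}A_{kk_1}\varphi_{k_1}$ with $A_{kk_1}=\sum_{k_2}\overline{(v_L)_{k_2}}(v_L)_{k-k_1+k_2}$, where the constraint excludes $k_2=k_1$ and $k_2=k_3$. The key point is that $A$ is Hermitian: swapping $k\leftrightarrow k_1$ and relabeling $k_2\leftrightarrow k_3$ maps the constraint set to itself, because $k_2\neq k_1$ and $k_2\neq k_3$ are interchanged with the corresponding conditions. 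Hence the quadratic form is real and the imaginary part vanishes. Therefore $\|\mathcal H(t)f\|_{\ell^2}=\|f\|_{\ell^2}$, so $\mathcal H(t)$ is an isometry on the finite-dimensional space $\ell^2(\{\langle k\rangle\le N\})$, hence unitary. In particular $\mathcal H\mathcal H^*=I$, which is exactly \eqref{uni}.

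\textbf{Step 2: well-definedness.} The equation \eqref{defpsi2} is a finite-dimensional linear ODE with coefficients built from $v_L$. Here $v_L$ is itself a finite-dimensional smooth solution of \eqref{truncfNLSgauge}, so existence on $|t|\le\tau$ is automatic. The hypothesis $\mathtt{Loc}(M)$ is used only to ensure that $H^{N,L}$ is the same object as the one appearing in the ansatz on $J$, i.e.\ that the time-extension described after \eqref{pribounds} coincides with the true solution for $|t|\le\tau$.

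\textbf{Main obstacle.} The delicate point is the Hermitian symmetry of the non-resonant constraint $k_2\ne k_1,k_3$, together with the fact that the resonant term $\mathcal Q_3$ is excluded from \eqref{defpsi2}. If the symmetry check under $k\leftrightarrow k_1$ fails, I would instead add the diagonal correction and verify directly that the diagonal contributions are real.
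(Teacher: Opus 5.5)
Your proof is correct and follows the same route the paper takes by citing \cite[Lemma~3.6]{Wang2024} (Bourgain's observation): the linear equation \eqref{defpsi2} conserves $L^2$ mass because the constraint $k_2\notin\{k_1,k_3\}$ is preserved under $k\leftrightarrow k_1$, $k_2\leftrightarrow k_3$, so $\mathcal H(t)$ is unitary on $\ell^2(\{\langle k\rangle\le N\})$ and $\mathcal H\mathcal H^*=I$ is exactly \eqref{uni}. You are also right that the kernel has to be defined through \eqref{defpsi2} on the whole ball, not only on the band $\frac N2<\langle k'\rangle\le N$; otherwise the row identity \eqref{uni} would fail.
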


\begin{cor}[Cancellation after summing dyadic increments]
\label{cor:unitary-cancellation}
Under the assumptions of Lemma \ref{unitary}, let \(L\) be dyadic and
\(L<\min(M,N^{1-\delta})\). For any function \(a:\mathbb Z^2\to\mathbb C\)
and any distinct \(\langle k_1\rangle,\langle k_2\rangle\leq N\),
\begin{equation}
\label{eq:unitary-cancellation}
\sum_{\substack{L_1,L_2\leq L\\L_1,L_2\ {\rm dyadic}}}
\sum_{\langle k'\rangle\leq N}
h_{k_1k'}^{N,L_1}\overline{h_{k_2k'}^{N,L_2}}a(k') =
\sum_{\substack{L_1,L_2\leq L\\L_1,L_2\ {\rm dyadic}}}
\sum_{\langle k'\rangle\leq N}
h_{k_1k'}^{N,L_1}\overline{h_{k_2k'}^{N,L_2}}
\bigl(a(k')-a(k_1)\bigr).
\end{equation}
\end{cor}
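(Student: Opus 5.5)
Summing the dyadic increments telescopes back to the full operator $H^{N,L}$, and the identity \eqref{eq:unitary-cancellation} then follows from the unitarity relation \eqref{uni} of Lemma~\ref{unitary}. By \eqref{matrices} and the convention $h^{N,\frac12}=H^{N,\frac12}$, for every dyadic $L$ with $L<\min(M,N^{1-\delta})$ we have the telescoping identity
\[
\sum_{\substack{L_1\leq L\\ L_1\ {\rm dyadic}}} h^{N,L_1}_{kk'} = H^{N,L}_{kk'},
\]
where the dyadic sum runs over $\frac12\le L_1\le L$. The sum is finite, so we may interchange it with the finite sum over $\langle k'\rangle\le N$. All dyadic $L_1,L_2\le L$ satisfy the same constraint $L_j<\min(M,N^{1-\delta})$, but we only need the lemma at the top scale $L$.

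Expanding the double dyadic sum then gives
\[
\sum_{L_1,L_2\leq L}\sum_{\langle k'\rangle\le N} h^{N,L_1}_{k_1k'}\overline{h^{N,L_2}_{k_2k'}}\,a(k_1)
= a(k_1)\sum_{\langle k'\rangle\le N} H^{N,L}_{k_1k'}\overline{H^{N,L}_{k_2k'}} .
\]
By \eqref{uni}, applied at each fixed $|t|\le\tau$, the right-hand side equals $a(k_1)\mathbf 1_{\langle k_1\rangle,\langle k_2\rangle\le N}\delta_{k_1k_2}$. This vanishes because $k_1\neq k_2$. Subtracting this zero quantity from the left-hand side of \eqref{eq:unitary-cancellation} yields exactly the right-hand side.

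The one point needing care is that the support conventions are compatible. Each $h^{N,L_j}_{kk'}$ is supported in $\langle k\rangle,\langle k'\rangle\le N$, since $H^{N,L}$ is set to zero outside this region. Hence restricting the $k'$ sum to $\langle k'\rangle\le N$ loses nothing, and the sum in \eqref{uni} matches the one appearing here. There is no genuine obstacle: the argument is purely algebraic once Lemma~\ref{unitary} is available. The identity holds pointwise in $t$ and $\omega$, on the event where $\mathtt{Loc}(M)$ holds.
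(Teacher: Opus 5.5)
Your proof is correct and follows the same route as the paper, which defers to \cite[Corollary 3.7]{Wang2024}. The dyadic increments telescope back to $H^{N,L}$, and the off-diagonal case $k_1\neq k_2$ of the unitarity relation \eqref{uni} then makes the subtracted term $a(k_1)\sum_{L_1,L_2\le L}\sum_{\langle k'\rangle\le N}h^{N,L_1}_{k_1k'}\overline{h^{N,L_2}_{k_2k'}}$ vanish, pointwise for $|t|\le\tau$.
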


The proof of the above lemma and corollary is the same as \cite[Lemma 3.6]{Wang2024} and \cite[Corollary 3.7]{Wang2024} respectively. See also \cite{DNY2021}.

\begin{rem}\label{rem:unitary-redistribution}
The cancellation in Corollary \ref{cor:unitary-cancellation} is an identity
for the full dyadic sum, not for a fixed pair \((L_1,L_2)\). In the pairing
estimates below, we first sum over all dyadic low scales and add the zero
identity in \eqref{eq:unitary-cancellation}; we then redistribute the
result among the dyadic pairs and estimate a representative pair. The
resulting logarithmic multiplicity is absorbed by the \(L^\theta\) losses
already allowed in Remark \ref{RMK:absorb}.
\end{rem}

\smallskip

\section{Counting and tensor estimates}\label{prep}

\subsection{Counting estimates}\label{count}

In the proof of Proposition \ref{local2}, we need counting estimates for
\begin{equation}
 \label{Eqn:S}
 S=\left\{(k,k_1,k_2,k_3)\in(\mathbb Z^2)^4:\quad
 \begin{aligned}
  &k_1-k_2+k_3-k=0,\qquad k_2\notin\{k_1,k_3\},\\
  &|k_1|^\alpha-|k_2|^\alpha+|k_3|^\alpha-|k|^\alpha
  =m+\mathcal O(1),\\
  &|k|\leq N,\qquad |k_j|\leq N_j
  \quad\text{for }j\in\{1,2,3\}
 \end{aligned}
 \right\}.
\end{equation}
Here \(m\) is the localized modulation parameter. We also estimate subsets
obtained by fixing some of the variables, denoted by $S_{k_j}$,
$S_{kk_j}$, and so on. We begin with the two-vector counting. Momentum
conservation leaves only one variable to count after two suitable
frequencies are fixed. More precisely, if we fix two $k_{j'}$'s or
$k$ together with one $k_{j'}$, the remaining set can be written as

\[
    \{k_j\in\Z^2: |k_j|\leq N_j,\, \phi_{b,\pm}(k_j)\in J_0 \},
\]
\noi
where $J_0$ is an interval of length $\mathcal{O}(1)$  and 

\begin{equation}\label{phi-def}
    \phi_{b,\pm}(x) = |x|^{\alpha} \pm |x - b|^{\alpha}, \quad x\in\R^2
\end{equation}

\noindent where $\alpha\in(1,2)$ and $b\in\Z^2$ are fixed. Note that $k_j=0$ and $k_j=b$ are two lattice points which can be counted separately. Hence, we focus on the number of lattice points in the domain

\begin{equation}\label{originaldomain}
    \tilde{S}^{\pm}_\alpha := \left\{x\in\R^2:  |x|\leq N_j \,,\,\, |x|\wedge|x-b|\geq\frac{1}{2}, \,\, \phi_{b,\pm}(x)\in J_0 \right\},
\end{equation}
\noi
on which $\phi_{b,\pm}(x)$ is smooth.

\subsubsection{Two-vector counting}

For the classical case $\alpha=2$, $\tilde{S}^{-}_2$ consists of several line segments of length $\mathcal{O}(N_j)$ while $\tilde{S}^{+}_2$ consists of several circles of radius $\mathcal{O}(N_j)$. Hence, the number theory implies that

\begin{equation}\label{classicalcounting}
    \#(\tilde{S}^{-}_2\cap\mathbb{Z}^2)\lesssim N_j, \quad \#(\tilde{S}^{+}_2\cap\mathbb{Z}^2)\lesssim_{\theta} N_j^{\theta}
\end{equation}

\noi
hold for any sufficiently small $\theta>0$. Taking this into account, we expect that $\tilde{S}^{-}_\alpha$ is a slightly curved strip, whose edge curves are not convex. Moreover, some long line segments are contained in the domain, so we can hardly expect an essentially better estimate for $\alpha\in(1,2)$
than \eqref{classicalcounting} for $\alpha=2$.  While $\tilde{S}^{+}_\alpha$ is roughly an elliptic annulus (See Figure \ref{Figure 2}) with strongly convex edge curves. We start from the $\phi_{b,-}$ case, where the counting estimates are basically reduced to dimension 1. Let us introduce the one-dimensional lattice counting here.

\begin{lem}[Basic lattice counting -1d]
\label{LEM:1d}
Let $I,J\subset\mathbb R$ be intervals, and let $F\in C^1(I)$. Suppose that
$I$ can be decomposed into at most $A$ subintervals $I_\nu$, with
$A=\mathcal{O}(1)$, such that on each $I_\nu$ one has
\[
|F'(t)|\ge \mu>0 .
\]
Then
\[
\#\{k\in I\cap\mathbb Z:\ F(k)\in J\}
\lesssim_A
1+\frac{|J|}{\mu}.
\]
\end{lem}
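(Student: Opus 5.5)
The plan is to reduce to a single monotone piece and then count integers in the preimage of $J$ under $F$ on that piece. Since $I$ is split into at most $A=\mathcal O(1)$ subintervals $I_\nu$, and the count over $I$ is at most the sum of the counts over the $I_\nu$, it is enough to prove
\[
\#\{k\in I_\nu\cap\mathbb Z:\ F(k)\in J\}\lesssim 1+\frac{|J|}{\mu}
\]
for each $\nu$ with an absolute implicit constant. Summing over $\nu$ then gives the claimed bound with constant depending only on $A$.

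Fix one $I_\nu$. Since $F\in C^1(I_\nu)$ and $|F'|\ge\mu>0$ there, $F'$ is continuous and never zero on the interval $I_\nu$. By the intermediate value theorem $F'$ has constant sign, so $F$ is strictly monotone on $I_\nu$; say it is increasing, the other case being symmetric.

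The set $E=\{t\in I_\nu: F(t)\in J\}=F^{-1}(J)\cap I_\nu$ is then an interval, because it is the preimage of an interval under a continuous monotone map. For any $s<t$ in $E$, the mean value theorem gives
\[
\mu\,(t-s)\le |F(t)-F(s)|\le |J|.
\]
Hence $|E|\le |J|/\mu$. An interval of length $\ell$ contains at most $\ell+1$ integers, so
\[
\#(E\cap\mathbb Z)\le 1+\frac{|J|}{\mu},
\]
which is what we need.

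There is no real obstacle here. The only points needing care are that the subintervals may be open, closed or unbounded, and that $J$ may be unbounded. None of these causes trouble: if $|J|=\infty$ the bound is trivial, and otherwise the argument above applies verbatim. The $\mathcal O(1)$ number of pieces is exactly what allows the per-piece constant to be absorbed into $\lesssim_A$.
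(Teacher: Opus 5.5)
Your proposal is correct and follows essentially the same route as the paper: on each $I_\nu$ you use monotonicity and the lower bound $|F'|\ge\mu$ to get $|\{t\in I_\nu: F(t)\in J\}|\le |J|/\mu$, count the integers in that set, and sum over the $\mathcal{O}_A(1)$ pieces. Your version is slightly more explicit about why $F$ is monotone (continuity of $F'$ and the intermediate value theorem) and why the length bound holds (the mean value theorem); the paper leaves these steps implicit.
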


\begin{proof}
On each interval $I_\nu$, the condition $|F'|\ge \mu$ implies that $F$
is monotone and
$|F(I_\nu\cap E)|\ge \mu |I_\nu\cap E|$
for any interval $E\subset I_\nu$. Hence
\[
|\{t\in I_\nu:\ F(t)\in J\}|
\le \frac{|J|}{\mu}.
\]
The number of integers in this set is therefore
\[
\mathcal{O}\left(1+\frac{|J|}{\mu}\right).
\]
Summing over $\mathcal{O}_A(1)$ intervals $I_\nu$ gives the claim.
\end{proof}

 Note that $|\nabla\phi_{b,-}(x)|$ enjoys a uniform lower bound on $S_{\alpha}^-$ (see \eqref{phi-lowerbound}), so the two partial derivatives can never be quite small at the same time. We may fix the first or second variable of $k_j$ depending on whether the other partial derivative is bounded from below, and count the possible choice of the other by Lemma \ref{LEM:1d}. Regarding the number of connected components $I_{\nu}$, we rely on the following lemma:

\begin{lem}[Bounded complexity]
\label{lem:one-dimensional-structure}
Let $1<\alpha<2$. For $a\in\mathbb R$ and $c\ge0$, define
\[
q_{a,c}(t)
:=
(t-a)\big((t-a)^2+c^2\big)^{\frac{\alpha-2}{2}}.
\]
Then, for any $a_1,a_2\in\mathbb R$ and $c_1,c_2\ge0$, the function
\[
Q(t):=q_{a_1,c_1}(t)-q_{a_2,c_2}(t)
\]
has only $\mathcal{O}_\alpha(1)$ monotonic intervals. Consequently, for every
$\mu>0$, the set
\[
\{t\in\mathbb R:\ |Q(t)|\ge \mu\}
\]
has $\mathcal{O}_\alpha(1)$ connected components.
\end{lem}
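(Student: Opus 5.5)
Our plan is to bound the number of zeros of $Q'$ by a constant depending only on $\alpha$. The number of monotonic intervals of $Q$ is at most one plus the number of sign changes of $Q'$. Once that is done, the second claim is immediate. On each monotonic interval, the superlevel set $\{Q\ge\mu\}$ is an interval, and so is $\{Q\le-\mu\}$. Hence $\{|Q|\ge\mu\}$ has at most twice as many components as there are monotonic intervals.

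\emph{Degenerate case.} If $c_i=0$, then $q_{a_i,0}(t)=\operatorname{sgn}(t-a_i)|t-a_i|^{\alpha-1}$. This function is continuous and strictly increasing, and it is smooth away from $a_i$. We treat the points $a_1,a_2$, where $c_i=0$, as at most two extra breakpoints. Away from them, the explicit formula below for $q'$ still applies with $c=0$, namely $q'_{a,0}(s)=(\alpha-1)|s|^{\alpha-2}$.

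\emph{General case.} Write $s=t-a$. A direct computation gives
\[
q_{a,c}'(t)=(s^2+c^2)^{\frac{\alpha-4}{2}}\bigl((\alpha-1)s^2+c^2\bigr).
\]
Thus $Q'(t)=0$ is equivalent to
\[
(s_1^2+c_1^2)^{\frac{\alpha-4}{2}}\bigl((\alpha-1)s_1^2+c_1^2\bigr)=(s_2^2+c_2^2)^{\frac{\alpha-4}{2}}\bigl((\alpha-1)s_2^2+c_2^2\bigr),
\]
where $s_i=t-a_i$. Both sides are positive, so we take logarithms. Set $F(t)=\log q'_{a_1,c_1}(t)-\log q'_{a_2,c_2}(t)$, which is a finite combination of terms $\beta_j\log P_j(t)$ with quadratic polynomials $P_j>0$. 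Then $F'(t)=\sum_j\beta_jP_j'/P_j$ is a rational function. After clearing denominators, its numerator is a polynomial of degree at most $7$, since there are four quadratics. Provided $F'$ is not identically zero, $F'$ has at most $7$ zeros. By Rolle's theorem, $F$ then has at most $8$ zeros on each interval of smoothness, and so does $Q'$. If $F'\equiv0$, then $F$ is constant on each smoothness interval. In that case $Q'$ either vanishes identically there, so $Q$ is constant and trivially monotone, or has no zeros at all.

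Combining the cases, $Q'$ has at most $\mathcal O(1)$ sign changes: at most $8$ on each of at most $3$ smoothness intervals, plus the breakpoints $a_i$. This gives $\mathcal O_\alpha(1)$ (in fact absolutely bounded) monotonic intervals.

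The main obstacle is the exponent $\frac{\alpha-4}{2}$, which is non-integer, so $Q'$ itself is not algebraic and its zeros cannot be counted directly. Passing to the logarithmic derivative of the ratio of the two positive factors turns the problem into counting zeros of a rational function; this is the key trick. The only care needed is with the degenerate identically-vanishing case and the singular points $c_i=0$.
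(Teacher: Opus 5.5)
Your proof is correct, and it takes a genuinely different and more rigorous route than the paper.

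The paper computes $q'_{a,c}$ and $q''_{a,c}$ and observes two facts: each $q_{a,c}$ is increasing, and $q'_{a,c}$ is unimodal (increasing on $(-\infty,a]$, decreasing on $[a,\infty)$). It then simply asserts, appealing to a ``bounded complexity'' heuristic, that $Q=q_{a_1,c_1}-q_{a_2,c_2}$ has $\mathcal{O}_\alpha(1)$ critical points. That inference does not follow from unimodality alone: two unimodal functions can cross arbitrarily many times. Some extra structure is needed, for instance definability in an o-minimal structure, as the remark after the lemma hints.

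Your logarithmic-derivative trick supplies exactly that structure by hand. Since $q'_{a_i,c_i}>0$ away from the singular points $t=a_i$ with $c_i=0$, the zeros of $Q'$ are the zeros of $F=\log q'_{a_1,c_1}-\log q'_{a_2,c_2}$. The derivative $F'$ is a rational function whose numerator has degree at most $7$. Rolle's theorem on each smoothness interval then gives an explicit bound that does not depend on $\alpha$: when $Q'\not\equiv 0$, $Q'$ has at most $7+3=10$ zeros. You also handle the degenerate case $F'\equiv 0$ and the points with $c_i=0$ cleanly. What the paper's sketch gains is brevity and the link to the semi-algebraic/bounded-complexity viewpoint. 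What yours gains is a complete elementary argument with a concrete constant.

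The step from monotone pieces to the component count of $\{|Q|\ge\mu\}$ is the same in substance as the paper's, which counts points on the level sets $Q=\pm\mu$. Your version of that step is fine.
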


\begin{rem}
This lemma shows that the function $Q$ has bounded complexity, a useful property in the counting arguments below. See also \cite[Section~4.1]{DWWZ} for a related use of bounded-complexity sets in the context of semi-algebraic geometry.
\end{rem}

\begin{proof}
A direct computation gives
\[
q'_{a,c}(t)
=
\big((\alpha-1)(t-a)^2+c^2\big)
\big((t-a)^2+c^2\big)^{\frac{\alpha-4}{2}},
\]
and
\[
q''_{a,c}(t)
=
(\alpha-2)(t-a)
\big((\alpha-1)(t-a)^2+3c^2\big)
\big((t-a)^2+c^2\big)^{\frac{\alpha-6}{2}}.
\]
Thus $q'_{a,c}\ge0$, and $q'_{a,c}$ is increasing on $(-\infty,a]$ and
decreasing on $[a,\infty)$. In particular, each individual function $q_{a,c}$ is
increasing and has exactly one change of convexity.

The preceding explicit formulae show that the family
\(\{q_{a,c}\}_{a\in\mathbb R,c\ge0}\) has uniformly bounded one-dimensional
complexity: each \(q_{a,c}\) is increasing, and its derivative has exactly one
maximum. Consequently, the difference
\(Q=q_{a_1,c_1}-q_{a_2,c_2}\) has only \(\mathcal{O}_\alpha(1)\) critical points. Equivalently,
\(Q\) has only \(\mathcal{O}_\alpha(1)\) monotonicity intervals. 

Moreover, the level sets $Q=\mu$ and $Q=-\mu$ have $\mathcal{O}_\alpha(1)$ many points, so
that $\{|Q|\ge \mu\}$ has $\mathcal{O}_\alpha(1)$ connected components.
\end{proof}

With the above two lemmas, we can count the two-dimensional set $\tilde{S}^{-}_\alpha$ defined in \eqref{originaldomain}. Recall the original set $S$ in \eqref{Eqn:S}, this counting applies to $S_{kk_1}$, $S_{kk_3}$, $S_{k_1k_2}$ and $S_{k_2k_3}$. Also note that the non-pairing condition implies that the fixed parameter $b\in\mathbb{Z}^2\backslash\{0\}$.

\begin{lem}[Basic lattice counting -2d]
\label{LEM:2d}
Let $b\in\mathbb Z^2\setminus\{0\}$, $N_j\ge1$. Recall $\tilde{S}^{-}_\alpha$ in \eqref{originaldomain} with 
$J_0\subset\mathbb R$ an interval of length $\mathcal{O}(1)$ and $\phi_{b,-}(x)$ defined in \eqref{phi-def}. Then
\begin{equation}
\label{count:2d}
\#\tilde{S}^{-}_\alpha
\lesssim
\frac{N_j^{3-\alpha}}{\langle b\rangle}+N_j.
\end{equation}
\end{lem}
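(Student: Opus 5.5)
We will count the points of $\tilde S^-_\alpha\cap\mathbb Z^2$ by slicing along coordinate lines, as the discussion before Lemma~\ref{LEM:1d} suggests. Write $x=(x_1,x_2)$. The gradient is
\[
\nabla\phi_{b,-}(x)=\alpha\big(|x|^{\alpha-2}x-|x-b|^{\alpha-2}(x-b)\big),
\]
so each partial derivative has exactly the form $\alpha Q$ of Lemma~\ref{lem:one-dimensional-structure}. For instance, for fixed $x_2$,
\[
\partial_1\phi_{b,-}=\alpha\big(q_{0,|x_2|}(x_1)-q_{b_1,|x_2-b_2|}(x_1)\big).
\]
The map $v\mapsto|v|^{\alpha-2}v$ is the gradient of the strictly convex function $|v|^\alpha/\alpha$. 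Its monotonicity, together with the Hessian lower bound $\gtrsim|v|^{\alpha-2}$, gives the first key estimate
\begin{equation*}
|\nabla\phi_{b,-}(x)|\gtrsim |b|\,\big(|x|+|x-b|\big)^{\alpha-2}\gtrsim |b|\,(N_j+|b|)^{\alpha-2}.
\end{equation*}
To prove it, integrate the Hessian along the segment from $x-b$ to $x$ and use $\langle D^2(|v|^\alpha)\,b,b\rangle\gtrsim|v|^{\alpha-2}|b|^2$. Since $|v|\le |x|+|x-b|$ on the segment, this yields the bound.

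Next we split the region according to which partial derivative dominates: $\tilde S^-_\alpha=E_1\cup E_2$, where $E_i=\{|\partial_i\phi_{b,-}|\ge |\nabla\phi_{b,-}|/\sqrt2\}$. On $E_1$ we fix the integer $x_2$, which has $\lesssim N_j$ choices, and count $x_1$ with $|x_1|\le N_j$, $x\in E_1$ and $\phi_{b,-}(x_1,x_2)\in J_0$. Along this line the set $\{|\partial_1\phi_{b,-}|\ge\mu\}$ with $\mu= c|b|(N_j+|b|)^{\alpha-2}$ has $\mathcal O_\alpha(1)$ components by Lemma~\ref{lem:one-dimensional-structure}. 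The excised disks $|x|,|x-b|<\tfrac12$ add only $\mathcal O(1)$ further components. Lemma~\ref{LEM:1d} then gives $\lesssim 1+|J_0|/\mu\lesssim 1+(N_j+|b|)^{2-\alpha}/|b|$ points per line. The set $E_2$ is handled symmetrically with the roles of $x_1$ and $x_2$ exchanged.

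One subtlety is that $E_1$ is defined by a comparison of the two partials, not by a single threshold on $\partial_1\phi_{b,-}$. We avoid it by using, on each line, the larger set $\{|\partial_1\phi_{b,-}|\ge\mu\}$, which contains the slice of $E_1$ thanks to the uniform gradient bound. This is exactly why the lower bound must be a uniform constant $\mu$ and not a pointwise quantity.

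Summing over the $\lesssim N_j$ lines gives
\[
\#\tilde S^-_\alpha\lesssim N_j+\frac{N_j(N_j+|b|)^{2-\alpha}}{\langle b\rangle}.
\]
If $|b|\lesssim N_j$, this is $N_j+N_j^{3-\alpha}/\langle b\rangle$, as claimed. If $|b|\gg N_j$, the second term is at most $N_j|b|^{1-\alpha}\le N_j$, so the bound \eqref{count:2d} holds in all cases.

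The main obstacle is the gradient lower bound in the near-singular regime, where $x$ is near $0$ or $b$ and the Hessian weight $|v|^{\alpha-2}$ blows up rather than degenerates. Only the lower bound matters there, and it is favourable, so the segment-integration argument should go through. Care is needed only when the segment from $x-b$ to $x$ passes near the origin, where the Hessian is singular but still integrable since $\alpha>1$, and its lower bound remains valid almost everywhere on the segment.
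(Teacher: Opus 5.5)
Your proposal is correct and follows essentially the same route as the paper's proof. Both arguments use the segment-integration lower bound $|\nabla\phi_{b,-}|\gtrsim |b|(N_j+|b|)^{\alpha-2}$, split the region according to which partial derivative is large, slice along coordinate lines using Lemma~\ref{lem:one-dimensional-structure} and Lemma~\ref{LEM:1d}, and finish with the same case split on $|b|$ versus $N_j$.

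The one difference is how the segment from $x-b$ to $x$ is allowed to meet the origin. The paper first removes a set $E_0$ of points collinear with $b$ and counts it separately. You keep those points, noting that $|v|^{\alpha-2}$ is integrable along the segment since $\alpha>1$, and pair with $b$ before applying Cauchy--Schwarz. That version is valid and slightly cleaner.
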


\begin{proof}
 If $x$ shares the same direction as $b$, or rather $x$ belongs to the set
 \begin{equation}
     E_0: = \tilde{S}^-_\alpha \cap \{x\in\mathbb{Z}^2: \exists\, n_0\in \mathbb{Z},\,\, s.t.\,  x=n_0 b \ \text{or}\ b=n_0 x\},
 \end{equation}
then the number of choices of $x$ is bounded by $\mathcal{O}\left(\frac{N_j}{|b|}+ N_j\wedge|b|\right)$, which is acceptable. In particular, we may assume that the line segment connecting $x$ and $x-b$ does not pass $(0,0)\in\mathbb{R}^2$ for any $x\in\tilde{S}^-_\alpha\backslash E_0$.

Set $G(x):=x|x|^{\alpha-2}$.
Then
$\nabla\phi_{b,-}(x)
=
\alpha\bigl(G(x)-G(x-b)\bigr)$. Note that the derivative $DG(x)$ is the Hessian of $|x|^\alpha$:

\begin{align}\label{eq:H2}
\alpha DG(x) = H(|x|^\alpha)
=
\alpha |x|^{\alpha-2} I
+
\alpha(\alpha-2)|x|^{\alpha-4}x\otimes x.
\end{align}
Since \(1<\alpha<2\), the Hessian \(H(|x|^\alpha)\) is symmetric and positive definite on
\(\mathbb R^2\setminus\{0\}\), with eigenvalues
\begin{align}\label{eq:H3}
\alpha(\alpha-1)|x|^{\alpha-2}
\qquad\text{and}\qquad
\alpha |x|^{\alpha-2}.
\end{align}
In particular, for any $x\neq0$, $\xi\in\mathbb{R}^2$, the vector norm can be estimated as
\begin{equation}\label{symmetricmatrix}
   |DG(x)\xi| \sim |x|^{\alpha-2} |\xi|.
\end{equation}
Now with $\gamma_0(t)=x+tb$, $-1\leq t\leq0$ not passing through the origin, we have the uniform lower bound on $\tilde{S}^-_\alpha\backslash E_0$:

\begin{equation}\label{phi-lowerbound}
    |\nabla\phi_{b,-}(x)|  = \alpha\left| \int_{-1}^0 DG(\gamma_0(t))b\, dt \right|\sim |\gamma_0(t)|^{\alpha-2}|b|\gtrsim (N_j+|b|)^{\alpha-2}|b|.
\end{equation}

We now split the set according to which coordinate derivative is large. Let
\[
E_i
= \left(\tilde{S}^{-}_\alpha \backslash E_0 \right) \bigcap
\left\{
x\in\mathbb Z^2:\ 
|\partial_{x_i}\phi_{b,-}(x)|\ge c (N_j+|b|)^{\alpha-2}|b|
\right\}, \qquad i=1,2,
\]
where $c>0$ is a sufficiently small constant. Since
\[
|\nabla\phi_{b,-}|
\le
|\partial_{x_1}\phi_{b,-}|+|\partial_{x_2}\phi_{b,-}|,
\]
\eqref{phi-lowerbound} implies that
\[
\tilde{S}^-_\alpha 
\subset E_0\cup E_1\cup E_2 .
\]

We estimate $E_1$, and the estimate for $E_2$ is identical. Fix
$n\in\mathbb Z$ with $|n|\lesssim N_j$, and define
\begin{equation}
    F_{n}(t):=\phi_{b,-}\left(t,n\right).
\end{equation}
Then with $b=(b_1,b_2)\neq 0$,
\[
\begin{aligned}
F_{n}'(t)
  =
\alpha\left[
(t+b_1)\left((t+b_1)^2+\left(n+b_2\right)^2\right)^{\frac{\alpha-2}{2}}
-
t\left(t^2+n^2\right)^{\frac{\alpha-2}{2}}
\right],
\end{aligned}
\]
which is of the form
\[
F_{n}'(t)
=
\alpha\left(
q_{-b_1,\,\left|n+b_2\right|}(t)-q_{0,\,\left|n\right|}(t)
\right).
\]
By Lemma~\ref{lem:one-dimensional-structure}, the set
\[
\{t\in\mathbb{R}:\ |F_{n}'(t)|\ge c(N_j+|b|)^{\alpha-2}|b|)\}
\]
has $\mathcal{O}_\alpha(1)$ connected components.

On each such component, we may apply Lemma~\ref{LEM:1d}.
Since $|F_{n}'(t)|\ge c(N_j+|b|)^{\alpha-2}|b|$ and $|J_0|=\mathcal{O}(1)$, we obtain
\[
\#\left\{t\in\mathbb Z:\ \left(t,n\right)\in E_1\right\}
\lesssim
1+\frac{1}{(N_j+|b|)^{\alpha-2}|b|} .
\]
There are $\mathcal{O}(N_j)$ possible values of $n$. Hence
\[
|E_1|
\lesssim
N_j\left(1+\frac{1}{(N_j+|b|)^{\alpha-2}|b|}\right).
\]
The same estimate holds for $E_2$.
Therefore
\[
\#\tilde{S}^-_\alpha\lesssim \sum_{j=0}^2  |E_j|
\lesssim
N_j\left(1+\frac{1}{(N_j+|b|)^{\alpha-2}|b|}\right).
\]

If $|b|\le N_j$, then $(N_j+|b|)^{\alpha-2}|b|\gtrsim N_j^{\alpha-2}|b|$.
Therefore
\[
\#\tilde{S}^-_\alpha
\lesssim
N_j+\frac{N_j^{3-\alpha}}{|b|}
\lesssim
N_j+\frac{N_j^{3-\alpha}}{\langle b\rangle}.
\]
Otherwise $|b|\ge N_j$, then
$(N_j+|b|)^{\alpha-2}|b|\gtrsim |b|^{\alpha-1}$, and hence
\[
\#\tilde{S}^-_\alpha\lesssim N_j.
\]
Combining the two cases proves \eqref{count:2d}.
\end{proof}

\gb

Now we are ready to prove the two-vector counting estimates related to $\phi_{b,-}$. This can be viewed as a two-dimensional extension of the one-dimensional estimate in \cite[Lemma~2.5]{Wang2024}. Although this counting estimate is not sufficient for all the cases needed later, it remains useful in several parts of the argument.

\begin{lem}[Two-vector counting I]
\label{2vec1}
     Let $1\leq N_j\leq N$ for $j\in\{1,2,3\}$ and $\alpha\in(1,2)$, we have the following counting estimates:

\begin{equation}
\begin{aligned}
&|S_{kk_1}| \lesssim \dfrac{(N_2 \wedge N_3)^{3-\alpha}}{\langle k - k_1 \rangle} + N_2 \wedge N_3, \\[3pt]
&|S_{kk_3}| \lesssim \dfrac{(N_1 \wedge N_2)^{3-\alpha}}{\langle k - k_1 \rangle} + N_1 \wedge N_2, \\[3pt]
&|S_{k_1 k_2}| \lesssim \dfrac{N_3^{3-\alpha}}{\langle k_1 - k_2 \rangle} + N_3, \\[3pt]
&|S_{k_2 k_3}| \lesssim \dfrac{N_1^{3-\alpha}}{\langle k_2 - k_3 \rangle} + N_1. \\
\end{aligned}
\end{equation}
\end{lem}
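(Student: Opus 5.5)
Each of the four bounds will follow from Lemma~\ref{LEM:2d} once we reduce the fixed-pair set to a single-variable count of the form $\tilde S^-_\alpha$. We choose the free variable to be the one with the smaller frequency bound, which gives the minima $N_2\wedge N_3$ and $N_1\wedge N_2$.

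Take $S_{kk_1}$. With $k,k_1$ fixed, momentum conservation $k_1-k_2+k_3=k$ gives $k_3 - k_2 = k - k_1 =: b$. The phase condition then reads
\[
|k_3|^\alpha-|k_2|^\alpha = m+|k|^\alpha-|k_1|^\alpha+\mathcal O(1).
\]
Writing $x=k_3$, the left side is $|x|^\alpha-|x-b|^\alpha=\phi_{b,-}(x)$. Writing instead $x=k_2$ (so $k_3=x+b$), it is $-\phi_{-b,-}(x)$, again of the same form. In both cases $\phi$ lies in an interval $J_0$ of length $\mathcal O(1)$, and $x$ ranges over $|x|\le N_3$ or $|x|\le N_2$ respectively. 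We choose whichever of $k_2,k_3$ has the smaller bound $N_2\wedge N_3$ as the free variable; the other is then determined.

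The non-pairing condition $k_2\ne k_3$ gives $b\neq0$, and $|b|\sim\langle b\rangle$ for $b\in\mathbb Z^2\setminus\{0\}$. The two exceptional points $x=0$ and $x=b$ add $\mathcal O(1)$. Lemma~\ref{LEM:2d} with $N_j=N_2\wedge N_3$ then gives the first bound.

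For $S_{kk_3}$ the same argument applies with $k_1-k_2=k-k_3=:b$, and the free variable is whichever of $k_1,k_2$ has the smaller bound. Here the natural denominator is $\langle k-k_3\rangle$, and the condition $k_2\neq k_1$ gives $b\ne 0$. I read the $\langle k-k_1\rangle$ in the statement as a typo for $\langle k-k_3\rangle$.

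For $S_{k_1k_2}$ we set $b=k_2-k_1=k_3-k\neq0$ and take $k_3$ as the free variable with bound $N_3$. The difference $|k_3|^\alpha-|k|^\alpha=|k_3|^\alpha-|k_3-b|^\alpha$ is again a $\phi_{b,-}$ level set. For $S_{k_2k_3}$ we take $k_1$ free with $b=k_2-k_3$. In these two cases the other unknown is $k$, whose bound $N$ is at least $N_3$ (respectively $N_1$), so using $N_3$ (respectively $N_1$) as the box size is the better choice.

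There is no real obstacle, since all of the analytic work sits in Lemma~\ref{LEM:2d}. The points that need care are the sign conventions, which show the level set is always a $\phi_{\pm b,-}$ set, and the use of the nonpairing condition to guarantee $b\neq0$ in each case.
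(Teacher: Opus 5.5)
Your proposal is correct and essentially the paper's proof: momentum conservation reduces each fixed-pair set to a unit-thickened $\phi_{\pm b,-}$ level set with $b\neq0$ (from non-pairing), and Lemma~\ref{LEM:2d} is applied in the free variable with the smaller frequency bound. The paper instead computes the count in each of the two possible free variables and takes the minimum, which amounts to the same thing. Your reading of $\langle k-k_1\rangle$ as $\langle k-k_3\rangle$ in the second estimate matches the bound the paper's proof actually derives.
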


\begin{proof}
We prove the estimate for $S_{kk_3}$, while the remaining estimates follow by relabeling. Fix $k,k_3$. From the relation
$k=k_1-k_2+k_3$, and let $b=k-k_3\neq 0$, we have $k_2=k_1-b$. The phase condition becomes
\[
|k_1|^\alpha-|k_2|^\alpha
=
|k_1|^\alpha-|k_1-b|^\alpha
\in J_0,
\]
where $J_0$ is an interval of length $\mathcal{O}(1)$, depending on the fixed
frequencies $k,k_3$ and on the modulation parameter $m$.

Counting in $k_1$ (with $k_1=0,b$ counted separately), and dropping the additional restriction $|k_2|\le N_2$, it reduces to show:

\begin{equation}\label{eq:2d}
    \#\bigl\{x\in\mathbb Z^2:\ |x|\le N_1,\,\, |x|\wedge|x-b|\geq\frac{1}{2},\,\, \phi_{b,-}(x)\in J_0\bigr\}
\lesssim
\frac{N_1^{3-\alpha}}{\langle b\rangle}+N_1.
\end{equation}
which is exactly the bound implied by Lemma \eqref{LEM:2d}. On the other hand, counting in $k_2$, we write $k_1 = k_2-k_3+k$. Then the phase condition reads
\[
|k_2|^\alpha-|k_2-b'|^\alpha\in J_0', \, \quad b'=k_3-k\neq 0,
\]
with $|J_0'|=\mathcal{O}(1)$. Then Lemma \ref{LEM:2d} implies the bound
\[
|S_{kk_3}|
\lesssim
\frac{N_2^{3-\alpha}}{\langle k-k_3\rangle}+N_2.
\]
Taking the smaller of these two bounds yields
\[
|S_{kk_3}|
\lesssim
\frac{(N_1\wedge N_2)^{3-\alpha}}{\langle k-k_3\rangle}
+N_1\wedge N_2.
\]
This completes the proof.

\end{proof}

\bigskip

For the two-vector counting estimates related to $\phi_{b,+}$, we may show that for $1<\alpha<2$,
\begin{equation}
    |S_{k k_2}|\lesssim  (N_1\wedge N_3)^{2-\frac{\alpha}{2}},\quad
    |S_{k_1 k_3}|\lesssim  N_2^{2-\frac{\alpha}{2}},
\end{equation}
in the spirit of the above proof. Nevertheless, these bounds are far from enough to reach the Gibbsian level. 

A key observation here is that the associated level curve $\phi_{b,+}$ is strongly convex, which suggests the potential for better counting estimates. However, capturing these improvements requires a more precise understanding of the geometric structure, alongside number-theoretic arguments for counting lattice points along curves where curvature comes into play.

\begin{df}
Let $\mathcal{C}$ be a curve of differentiability class $C^2$ whose
curvature $\kappa$ is positive,  then the total curvature
of $\mathcal{C}$ is defined as
\begin{equation}
    \mathscr{K}(\mathcal{C}):=\int_\mathcal{C} \kappa\,ds
\end{equation}
 where $s$ is arclength along
 $\mathcal{C}$ and the radius of curvature of $\mathcal{C}$ is $\rho = 1/\kappa$.
\end{df}

We begin by recalling the following curved strip counting result, which serves as our starting point.

\begin{lem}[Curved strip counting near a convex arc]
\label{LEM:strip}
Let \(\mathcal C\subset\mathbb R^2\) be a convex \(C^2\) arc of length \(L_{\mathcal C}\), whose
total curvature is at most \(\mathcal{O}(1)\). Assume that its curvature satisfies
\[
 \kappa_{\mathcal C}\sim R^{-1}
\]
for some \(R\ge1\). Let \(0< \delta \ll \min\{1,L_{\mathcal C}^{-1}\}\). All hidden constants above are independent of $R, L_\mathcal{C}$. Then
\begin{equation}
\#\{k\in\mathbb Z^2:\operatorname{dist}(k,\mathcal C)\le  \delta \}
\lesssim
1+\frac{L_{\mathcal C}}{R^{\frac{1}{3}}}.
\end{equation}

In particular, if a region \(\tilde{S}\) can be covered by \(K\) such curved
strips, with arc lengths bounded by \(L\) and curvature radii comparable to
\(R\), then
\begin{equation}
\#(\tilde{S}\cap\mathbb Z^2) \lesssim  K\left(1+\frac{L}{R^{\frac{1}{3}}}\right).    
\end{equation}

\end{lem}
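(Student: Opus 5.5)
The bound is a version of the classical Jarník/Bombieri–Pila principle: a strictly convex arc with curvature about $R^{-1}$ cannot contain many lattice points, and the same holds for a $\delta$-neighbourhood once $\delta$ is tiny compared to $1/L_{\mathcal C}$. The plan is to cut $\mathcal C$ into subarcs of length $\ell\sim R^{1/3}$. There are $\mathcal O(1+L_{\mathcal C}R^{-1/3})$ of them, so it suffices to show that the $\delta$-neighbourhood of each subarc contains $\mathcal O(1)$ lattice points. Summing over subarcs then gives the first claim. The covering statement follows by summing over the $K$ strips.

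Fix a subarc $\mathcal C'$ of length $\ell\lesssim R^{1/3}$ and suppose it carries lattice points $k^{(1)},\dots,k^{(n)}$, each within $\delta$ of $\mathcal C'$. We order them along the arc by their nearest-point projections $p^{(i)}\in\mathcal C'$; this works because the total curvature is $\mathcal O(1)$, so after splitting into $\mathcal O(1)$ pieces the arc is a graph over a fixed direction. Take three consecutive points $k^{(i)},k^{(i+1)},k^{(i+2)}$. Their triangle has twice its area equal to the integer $|\det(k^{(i+1)}-k^{(i)},k^{(i+2)}-k^{(i)})|$, so the area is either $0$ or at least $\tfrac12$.

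First consider the projected points $p^{(i)}$. For three points on a convex $C^2$ arc with curvature $\sim R^{-1}$ and mutual spacings $d_1,d_2\le\ell$, the triangle area is comparable to $d_1d_2(d_1+d_2)/R$. This follows from a Taylor expansion of the graph parametrisation, since the second derivative is of size $R^{-1}$ in the adapted coordinates. It gives area $\lesssim \ell^3/R\lesssim c$ once we choose $\ell=cR^{1/3}$ with $c$ small. Replacing $p^{(i)}$ by $k^{(i)}$ changes the area by at most $\mathcal O(\delta\,\ell)\ll 1$, using the hypothesis $\delta\ll L_{\mathcal C}^{-1}\le \ell^{-1}$ (and trivially if $\ell>L_{\mathcal C}$). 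Hence the lattice triangle has area $<\tfrac12$, so the three points are collinear.

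Applying this to every consecutive triple, all the $k^{(i)}$ lie on a single line $\Lambda$. It remains to bound the number of lattice points on a line that stay within $\delta$ of a strictly convex arc. A line meets a convex curve in at most two points. Near each such point, the set where $\operatorname{dist}(\cdot,\mathcal C)\le\delta$ along $\Lambda$ has length $\lesssim \delta/\sin\beta+\sqrt{R\delta}$, where $\beta$ is the angle between $\Lambda$ and the arc.

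\emph{Main obstacle.} The tangential case, where $\beta$ is small, is the delicate one. Here I would compare with the chord: lattice points on $\Lambda$ are spaced at least $1$ apart. If two consecutive lattice points of $\Lambda$ both lie near $\mathcal C'$, then a third point collinear with them must deviate from the arc by roughly $d^2/R$ with $d\ge1$. At the same time, collinearity forces the arc to stay within $2\delta$ of the chord over that length, which gives $d^2/R\lesssim\delta$. This case analysis should bound $n$ by $\mathcal O(1)$ (at most about three points), and is where the smallness of $\delta$ relative to $\ell^{-1}\gtrsim R^{-1/3}$ is needed once more. Combining the pieces yields $\#\lesssim 1+L_{\mathcal C}R^{-1/3}$.
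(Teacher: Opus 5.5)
Your first two steps are sound. You cut $\mathcal C$ into subarcs of length $\ell=cR^{1/3}$. A lattice triangle has area $0$ or at least $\frac12$, while three points within $\delta$ of one subarc span area $\lesssim \ell^3/R+\delta\ell$. Together these show that all lattice points attached to one subarc are collinear.

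\textbf{The gap is the final, tangential step.} For collinear lattice points on a line $\Lambda$ near the arc, your bulge argument gives only $d^2/R\lesssim\delta$. So the points lie on a segment of length $\lesssim\sqrt{R\delta}$, and you get $n\lesssim 1+\min\{\ell,\sqrt{R\delta}\}$. To conclude $n=\mathcal{O}(1)$ you need $R\delta\lesssim 1$. This does \emph{not} follow from $\delta\ll\min\{1,L_{\mathcal C}^{-1}\}$:
\begin{itemize}
\item bounded total curvature only gives $L_{\mathcal C}\lesssim R$, which is the wrong direction;
\item $\delta\ll\ell^{-1}\sim R^{-1/3}$ only gives $\sqrt{R\delta}\ll R^{1/3}$.
\end{itemize}

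\textbf{No argument can close this step, because the statement fails as written.} Take $\mathcal C=\{(x,\frac{x^2}{2R}):|x|\le\frac12R^{1/3}\}$ and $\delta=cR^{-1/3}$ with a fixed $c<\frac18$. Then $\kappa_{\mathcal C}\sim R^{-1}$, the total curvature is $\sim R^{-2/3}$, $L_{\mathcal C}\sim R^{1/3}$, and $\delta\ll\min\{1,L_{\mathcal C}^{-1}\}$. Yet every $(m,0)$ with $|m|\le\sqrt{2c}\,R^{1/3}$ lies within $\frac{m^2}{2R}\le\delta$ of $(m,\frac{m^2}{2R})\in\mathcal C$. That gives $\gtrsim\sqrt{c}\,R^{1/3}$ lattice points, against the claimed $\mathcal{O}(1+L_{\mathcal C}R^{-1/3})=\mathcal{O}(1)$.

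\textbf{The fix.} The missing hypothesis is $\delta\le C_0R^{-1}$. This is exactly the regime in which Lemma~\ref{2vec2} uses the lemma: strip widths $C_0M^{-1}$ with $\rho\sim M$ in Case 1 and the regular region, and $\mathcal{O}(\rho^{-1})$ in Cases 2 and 3. Under this hypothesis your route works:
\begin{itemize}
\item $\delta\ell\lesssim C_0R^{-2/3}$ still forces collinearity, so $\delta L_{\mathcal C}\ll 1$ is not even needed.
\item Apply the lower bound $\text{area}\gtrsim d_1d_2(d_1+d_2)/R$ to the two extreme points and a middle one, and compare with the perturbation $\lesssim\delta(d_1+d_2)$. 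This yields $(n-1)^2\lesssim R\delta\le C_0$, hence $\mathcal{O}_{C_0}(1)$ points per subarc.
\end{itemize}
That would be a self-contained Jarn\'ik-type alternative to the paper's one-line appeal to \cite[Theorem~1.6]{Count23}. Note, though, that the paper's own statement needs the same extra hypothesis.
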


\begin{proof}
This is a direct application of the standard lattice-point estimate for
neighborhoods of convex arcs \cite[Theorem~1.6]{Count23}, applied to the
standard lattice \(\mathbb Z^2\), for which \(d_{\mathcal L}=A_{\mathcal L}=1\). We may divide the curve into $\mathcal{O}(1)$ arcs to restrict the total curvature of each arc to less than $\pi$ . Since $\delta$ is chosen so that $\delta L_{\mathcal {C}} \ll 1$, all the hypotheses of \cite[Theorem~1.6]{Count23} are satisfied. Then we reach the desired estimate.

\end{proof}

\begin{rem}
    In fact, \cite[Theorem~1.6]{Count23} is a quantitative result on the counting estimates near a convex arc. However, we only care about the order of the length $L$ and radius of curvature $\rho\sim R$ appearing in the counting bound when $L$ and $R$ are sufficiently large. A large constant independent of $L,R$ is harmless to our analysis.
\end{rem}

    It is easy to see $\phi_{b,+}$ admits a unique minimal point at $x=\frac{b}{2}$ with $\phi_{b,+}\left(\frac{b}{2}\right)=2^{1-\alpha}|b|^{\alpha}$. In particular, the curve $\phi_{b,+}=m$ exists if and only if $m\geq 2^{1-\alpha}|b|^\alpha$. When $m\in( 2^{1-\alpha}|b|^\alpha,\infty)$, as we will see below, $\phi_{b,+}=m$ is a simple closed convex curve, which allows us to apply the above counting lemma. Based on this fact and a careful discussion for singularities, we have the following two-vector counting estimates.

\begin{lem}[Two-vector counting II]
\label{2vec2}
        Let $1\leq N_j\leq N$, for $j\in\{1,2,3\}$ and $\alpha\in(1,2)$, we have the following counting estimates:
\begin{equation}
    |S_{k k_2}|\lesssim (N_1\wedge N_3)^{\frac{8}{3}-\alpha},\quad
    |S_{k_1 k_3}|\lesssim  N_2^{\frac{8}{3}-\alpha}.
\end{equation}

\end{lem}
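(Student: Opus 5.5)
Fixing $k,k_2$ in $S_{kk_2}$ and setting $b=k+k_2$, momentum conservation gives $k_3=b-k_1$. The phase condition then reads
\[
\phi_{b,+}(k_1)=|k_1|^\alpha+|k_1-b|^\alpha\in J_0,\qquad |J_0|=\mathcal O(1),
\]
with $|k_1|\le N_1$. By the symmetry $k_1\leftrightarrow k_3$ we may count in whichever of $k_1,k_3$ has the smaller frequency range, so it suffices to bound the lattice points of $\tilde S^+_\alpha$ by $N_j^{\frac83-\alpha}$ with $N_j=N_1\wedge N_3$. The case $S_{k_1k_3}$ is identical with $b=k_1+k_3$ and counting in $k_2$. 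The points $x=0,b$ are counted separately.

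\textbf{Level-set geometry.} The level set $\{\phi_{b,+}=\lambda\}$ is nonempty only for $\lambda\ge 2^{1-\alpha}|b|^\alpha$. For larger $\lambda$ it is a closed curve, and it is convex because $\phi_{b,+}$ is a sum of two strictly convex functions with Hessians given by \eqref{eq:H2}--\eqref{eq:H3}. The curvature of a level curve is $\kappa=\frac{|\nabla^\perp\phi\cdot H\,\nabla^\perp\phi|}{|\nabla\phi|^3}$. For $|x|\lesssim N_j$ away from $0$ and $b$, $H\phi_{b,+}$ has eigenvalues $\sim |x|^{\alpha-2}+|x-b|^{\alpha-2}\gtrsim N_j^{\alpha-2}$, while $|\nabla\phi|\lesssim N_j^{\alpha-1}$. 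This gives
\[
\kappa\gtrsim N_j^{\alpha-2}N_j^{2(\alpha-1)}/N_j^{3(\alpha-1)}=N_j^{-1},
\]
so the curvature radius is $R\lesssim N_j$. Conversely, we need $R\gtrsim N_j$ on the bulk of the curve. Where $|\nabla\phi|\sim$ the size of $|x|^{\alpha-1}$ and the curve has diameter $\sim r$, the radius is $R\sim r$. Each unit-thickened level set of $\phi$ has width $\sim 1/|\nabla\phi|\lesssim$ a bounded quantity when $|x|\gtrsim 1$, so it can be covered by $\mathcal O(\max(1,|\nabla\phi|^{-1}))$ level curves, i.e.\ curved strips of width $\delta\ll 1/L$. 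Alternatively, we slice $J_0$ into $\sim L$ sub-intervals of length $\delta|\nabla\phi|$.

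\textbf{Counting.} Away from the minimum $b/2$ and the singular points, we dyadically decompose according to $r\sim|x-b/2|$ (equivalently the curve diameter, $r\lesssim N_j$). On that region $|\nabla\phi|\sim r^{\alpha-1}$ (for $|b|\lesssim r$) and $R\sim r$. The unit thickening has width $r^{1-\alpha}$, and with arcs of length $L\sim r$ the number of sub-strips is $K\sim r\cdot r^{1-\alpha}/\delta\sim r^{2-\alpha}\cdot r$. Lemma~\ref{LEM:strip} then gives
\[
\#\lesssim K\big(1+r^{2/3}\big)\sim r^{2-\alpha}\cdot r^{2/3}=r^{\frac83-\alpha}
\]
once the $\delta$-slicing is done as $K\sim r^{2-\alpha}$ strips of width $\sim 1/r$. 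Summing dyadically over $r\lesssim N_j$ yields $N_j^{\frac83-\alpha}$.

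\textbf{Exceptional regions.} The remaining regions are handled crudely by area:
\begin{itemize}
\item near the minimum, $|x-b/2|\lesssim$ small, and when $|b|\gg r$;
\item neighbourhoods of $0$ and $b$, where the curvature degenerates.
\end{itemize}
Near $b/2$ the Hessian is $\sim|b|^{\alpha-2}$, so $\phi-\phi_{\min}\sim |b|^{\alpha-2}|x-b/2|^2$. The unit sublevel region is then an ellipse of area $\lesssim |b|^{2-\alpha}\lesssim N_j^{2-\alpha}$, which is acceptable. Near $x=0$ (and symmetrically near $b$), with $|x|\le \rho$, the term $|x|^\alpha$ varies by $\lesssim\rho^\alpha$. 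We handle this by the area bound $\rho^2$ intersected with a strip of width $\sim |b|^{1-\alpha}$ in the direction of $\nabla|x-b|^\alpha$, which gives roughly $\rho\,(1+\rho\,|b|^{1-\alpha})$, acceptable for $\rho\lesssim N_j$.

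\textbf{Main obstacle.} The delicate step is the regime $|b|\gg r$, i.e.\ elongated level sets. There the two curvature eigen-directions differ (eigenvalue ratio $\alpha-1$ versus $1$ in \eqref{eq:H3}), so the curvature must be controlled uniformly in the curve parametrization. We also have to check that total curvature is $\mathcal O(1)$ per arc and that $R$ stays comparable to the chosen scale. We would first handle this via the explicit formula for $\kappa$ above, splitting each level curve into $\mathcal O(1)$ arcs on which $|x|$ and $|x-b|$ are each comparable to fixed dyadic values.
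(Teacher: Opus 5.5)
Your main counting (dyadic shells $r\sim|x-b/2|$ with $|b|\lesssim r$, curvature radius $\sim r$, $r^{2-\alpha}$ slices of width $1/r$, then Lemma~\ref{LEM:strip}) is fine. It reproduces the paper's Case~1 ($M\gg|b|$) and the regular part of Case~3. The gap is that everything else is delegated to ``crude area'' bounds, which do not work here.

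Area does not control lattice points in a band of width $\ll 1$. Such a band of length $\ell$ can contain $\sim\ell$ lattice points, so the best elementary bound is area plus length. The length term is already fatal: $N_j\lesssim N_j^{\frac83-\alpha}$ holds only for $\alpha\le\frac53$.

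\emph{Near $0$.} Your bound $\rho(1+\rho|b|^{1-\alpha})$ for $|x|\le\rho$ is at least $\rho$, with $\rho$ up to $N_j$. This region is not small. The regime $|b|\gg N_j$ does occur: for $S_{kk_2}$ with $N_1\gg N_3$ one can have $|b|=|k_1+k_3|\sim N_1$. Then the whole counting region $|x|\le N_j$ sits near the singular point $0$, at distance $\approx|b|/2\gg N_j$ from $b/2$, outside the range $r\lesssim N_j$ of your decomposition. There your claims $|\nabla\phi_{b,+}|\lesssim N_j^{\alpha-1}$ and ``curvature radius $\lesssim N_j$'' are also false. In fact $|\nabla\phi_{b,+}|\sim|b|^{\alpha-1}$, and on $|x|\sim M$ the curvature radius is $\sim M^{2-\alpha}|b|^{\alpha-1}$.

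\emph{Near the minimum.} You bound the unit \emph{sublevel} set, but $J_0$ sits at an arbitrary height. So $\{\phi_{b,+}\in J_0\}\cap\{|x-b/2|\le\eta|b|\}$ is a thin elliptical annulus of radius up to $\eta|b|$, which is $\sim N_j$ when $|b|\sim N_j$. Its area is $\sim|b|^{2-\alpha}$, but its length is $\sim N_j$, and the same objection applies.

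What is missing is the curvature argument in exactly these regions: the paper's Case~2 and the near-$0$, near-$b$ and near-minimum pieces of Case~3. The steps are:
\begin{enumerate}
\item localize dyadically in the distance $R$ to the relevant special point;
\item compute $|\nabla\phi_{b,+}|$ and the curvature radius $\varrho$ there;
\item slice $J_0$ into pieces of length $|\nabla\phi_{b,+}|\varrho^{-1}$, so that each piece is a $\varrho^{-1}$-neighbourhood of a convex arc of length $\lesssim R$;
\item apply Lemma~\ref{LEM:strip}.
\end{enumerate}
Near $0$ with $R\ll|b|$ one has $|\nabla\phi_{b,+}|\sim|b|^{\alpha-1}$ and $\varrho\sim R^{2-\alpha}|b|^{\alpha-1}$. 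This gives $R^{2-\alpha}$ slices and a total of $R^{2-\alpha}(1+R\varrho^{-1/3})\lesssim R^{\frac83-\alpha}$. Near $b/2$ (relevant only when $|b|\lesssim N_j$) one has $|\nabla\phi_{b,+}|\sim|b|^{\alpha-2}R$ and $\varrho\sim R$. This gives $|b|^{2-\alpha}$ slices and a total of $|b|^{2-\alpha}R^{\frac23}$. Both sum over dyadic $R$ to $N_j^{\frac83-\alpha}$.

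You flag the regime $|b|\gg r$ as the main obstacle and gesture at this approach, but leave it undone, while treating as settled the very regions that need it.

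A minor point: $K\sim r\cdot r^{1-\alpha}/\delta$ with $\delta\sim1/r$ equals $r^{3-\alpha}$. The correct slice count is $|J_0|\varrho/|\nabla\phi_{b,+}|\sim r^{2-\alpha}$, which is what you end up using.
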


\begin{proof}
We prove the second estimate, and the first is identical after relabeling.
Fix \(k_1,k_3\). Then the remaining variable is \(k_2\), and the counting
problem is reduced to estimating the number of lattice points in
\[
\tilde{S}
=
\left\{
x\in\mathbb Z^2:\ |x|\le N_2,\ 
\phi_{b,+}(x)\in J_0
\right\},
\]
where
\[
b=k_1+k_3,\qquad
\phi_{b,+}(x)=|x|^\alpha+|x-b|^\alpha,
\]
and \(J_0\) is an interval of length \(\mathcal{O}(1)\). 

We decompose dyadically in the radial scale
\[
\tilde{S}_M
=
\left\{
x\in\mathbb R^2:
\frac M2<|x|\le M,\ 
\phi_{b,+}(x)\in J_0
\right\},\qquad 1\le M\le N_2.
\]
It suffices to prove
\begin{equation}\label{dyadicbound}
    \#(\tilde{S}_M\cap\mathbb Z^2)
\lesssim
M^{\frac83-\alpha}
\end{equation}
for each dyadic \(M\), since summing over \(M\le N_2\) then gives
\[
|S_{k_1k_3}|
\lesssim
\sum_{M\le N_2}M^{\frac83-\alpha}
\lesssim
N_2^{\frac83-\alpha}.
\]

On \(\tilde{S}_M\), the level set
\[
\phi_{b,+}(x)=m
\]
is a simple convex curve, apart from the harmless singular points \(x=0,b\), which are
counted separately. The total curvature is always bounded by $2\pi$. We consider three geometric regimes.

\medskip

\noindent
\textbf{Case 1: \(M\gg |b|\).}

\begin{figure}[ht]
\centering
\includegraphics[width=0.6\textwidth]{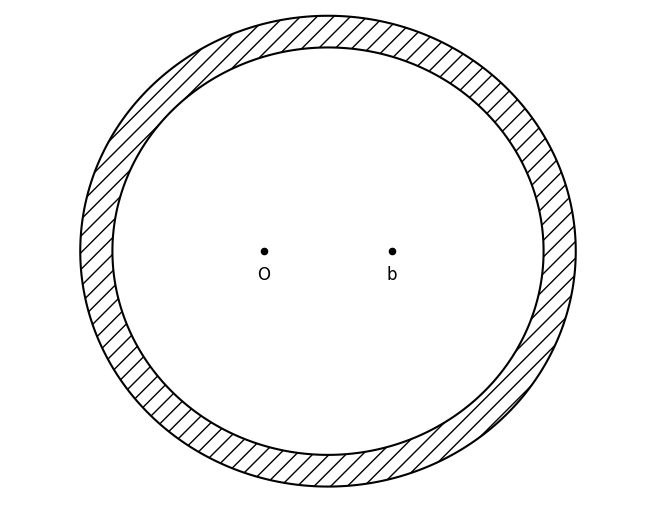}
\caption{The thickness of $\tilde{S}_M$ with $M\gg|b|$ are larger in scale than the admissible width of neighborhood in Lemma \ref{LEM:strip}, so a finer partition is necessary}
\label{Figure 2}
\end{figure}

In this case, we consider the dyadic region $\tilde{S}_M$ with \(J_0\) being an interval of length \(\mathcal{O}(1)\) and $ M\gg |b|$, which implies that     
$|x|\sim |x-b|\sim M$.
Since
\[
\nabla\phi_{b,+}(x)
=
\alpha x|x|^{\alpha-2}
+
\alpha (x-b)|x-b|^{\alpha-2}= \alpha\left[ x(|x|^{\alpha-2}+|x-b|^{\alpha-2}) - b|x-b|^{\alpha-2}\right],
\]
and  \(M\gg |b|\),
we have
\begin{equation}
\label{eq:N1}
|\nabla\phi_{b,+}(x)|
\sim M^{\alpha-1}.
\end{equation}

Next, the Hessian satisfies
\begin{align}\label{eq:H1}
H(\phi_{b,+})(x)
=
H(|x|^\alpha)+H(|x-b|^\alpha),
\end{align}
where $H(|x|^\alpha)$ is defined in \eqref{eq:H2}. In particular, in the sense of quadratic forms\footnote{Equivalently, for every \(\xi\in\mathbb R^2\),
\[
\alpha(\alpha-1) |x|^{\alpha-2}|\xi|^2
\le
\xi^T H(|x|^\alpha)\xi
\le
\alpha |x|^{\alpha-2}|\xi|^2,
\]}, the Rayleigh quotient theorem implies that
\[
\alpha(\alpha-1) |x|^{\alpha-2} I
\le H(|x|^\alpha)
\le \alpha |x|^{\alpha-2} I .
\]
and therefore 

\begin{equation}\label{Hessianequal}
    \alpha(\alpha-1) (|x|^{\alpha-2}+|x-b|^{\alpha-2}) I
\le H(\phi_{b,+})(x)
\le \alpha (|x|^{\alpha-2}+|x-b|^{\alpha-2}) I.
\end{equation}

In the regime \(M\gg |b|\), we have \(|x|\sim |x-b|\sim M\). Hence
\begin{equation}
\label{eq:H11}
c_\alpha M^{\alpha-2} I
\le H(\phi_{b,+})(x)
\le C_\alpha M^{\alpha-2} I.
\end{equation}

The standard formula for the curvature of a level curve $\phi_{b,+}(x)=m$ is
\begin{align}\label{eq:curv}
\kappa
=
\frac{
|(\nabla^\perp \phi_{b,+})^T
H(\phi_{b,+})
\nabla^\perp \phi_{b,+}|
}
{|\nabla\phi_{b,+}|^3},
\end{align}
where $\nabla^{\perp}\phi_{b,+} = (-\partial_{x_2}\phi_{b,+}\, ,\,\partial_{x_1}\phi_{b,+})^T$ is the tangent vector of the curve. By using \eqref{eq:N1} and \eqref{eq:H11}, 
we obtain
\[
\big|
(\nabla^\perp\phi_{b,+})^T
H(\phi_{b,+})
\nabla^\perp\phi_{b,+}
\big|
\sim
M^{\alpha-2} | \nabla^\perp\phi_{b,+} |^2
\sim M^{3\alpha-4},
\]
where we used $| \nabla^\perp\phi_{b,+}(x) | = | \nabla \phi_{b,+}(x) | \sim M^{\alpha -1}$.
Therefore, 
\begin{equation}
\label{eq:K1}
\kappa
=
\frac{
\big|
(\nabla^\perp\phi_{b,+})^T
H(\phi_{b,+})
\nabla^\perp\phi_{b,+}
\big|
}
{|\nabla\phi_{b,+}|^3}
\sim
\frac{M^{3\alpha-4}}{M^{3\alpha-3}}
=
M^{-1}.
\end{equation}
Equivalently, the radius of curvature satisfies $\rho\sim M$.

Now fix a level value \(
\ell\in J_0\), and consider a connected component \(\mathcal C_\ell\)
of the level curve
\[
\phi_{b,+}(x)=\ell
\]
inside the dyadic annulus \(\frac{M}{2}<|x|\le M\). By \eqref{eq:K1}, each
such component is a convex arc with radius of curvature \(\rho\sim M\). Its
length is bounded by
$\operatorname{Length}(\mathcal C_\ell)\lesssim M$, which can be divided into $\mathcal{O}(1)$ subarcs of length $\ll M$ and total curvature $\ll \pi$.

The modulation window has size \(\mathcal{O}(1)\). Since
$|\nabla\phi_{b,+}(x)|\sim M^{\alpha-1}$,
the geometric thickness of the region
\[
\{x:\phi_{b,+}(x)\in J_0\}
\]
in the normal direction to the level curve is
$\mathcal{O}(M^{1-\alpha})$.
In order to apply the curved strip lattice counting lemma, we decompose this
strip into thinner strips of width comparable to \(M^{-1}\), the reciprocal of
the curvature radius. 
More precisely, decompose \(J_0\) into subintervals \(J_\ell\) of length $|J_\ell|\sim M^{\alpha-2}$.
Since \(|J_0|=\mathcal{O}(1)\), the number of such subintervals is
$\mathcal{O}(M^{2-\alpha})$.
For each  \(J_\ell\), the corresponding subregion
\[
\tilde{S}_{M,\ell}
=
\{x\in\tilde{S}_M:\phi_{b,+}(x)\in J_\ell\}
\]
has normal thickness
\[
\mathcal{O}\left(\frac{|J_\ell|}{|\nabla\phi_{b,+}|}\right)
=
\mathcal{O}\left(\frac{M^{\alpha-2}}{M^{\alpha-1}}\right)
= C_{0} M^{-1}.
\]
Therefore \(\tilde{S}_{M,\ell}\) is contained in a $C_{0}M^{-1}$-neighborhood of
a convex arc \(\mathcal C_\ell\) of length \(\mathcal{O}(M)\) and curvature radius \(\rho\sim M\).

By  Lemma \ref{LEM:strip}, the number of lattice points in a
$\delta =C_{0}M^{-1}$-neighborhood of such an arc is bounded by
\[
\mathcal{O}\left(1+\frac{M}{M^{\frac{1}{3}}}\right)
=
\mathcal{O}(M^{\frac{2}{3}}).
\]
If necessary, we subdivide each connected component of the level curve $\mathcal C_{\ell}$ into
\(\mathcal{O}(1)\) subarcs $\mathcal C_{\ell,j}$ whose length $L_{\mathcal{C}_{\ell, j}}\ll M$ so that $\delta L_{\mathcal{C}_{\ell,j}}\ll 1$ . Summing over $j$ gives the counting estimate for $\tilde{S}_{M,\ell}$. This only changes the
implicit constants in the final lattice counting bound.
Hence
\[
\#(\tilde{S}_{M,\ell}\cap\mathbb Z^2)
\lesssim M^{\frac{2}{3}}.
\]
Summing over the \(\mathcal{O}(M^{2-\alpha})\) subintervals \(J_\ell\), we obtain
\begin{equation}
\#(\tilde{S}_M\cap\mathbb Z^2)
\lesssim
M^{2-\alpha}M^{\frac{2}{3}}
=
M^{\frac83-\alpha}.
\end{equation}
This proves the desired dyadic bound \eqref{dyadicbound} in the case \(M\gg |b|\).


\medskip

\noindent
\textbf{Case 2: \(M\ll |b|\).}

\begin{figure}[ht]
\centering
\includegraphics[width=0.5\textwidth]{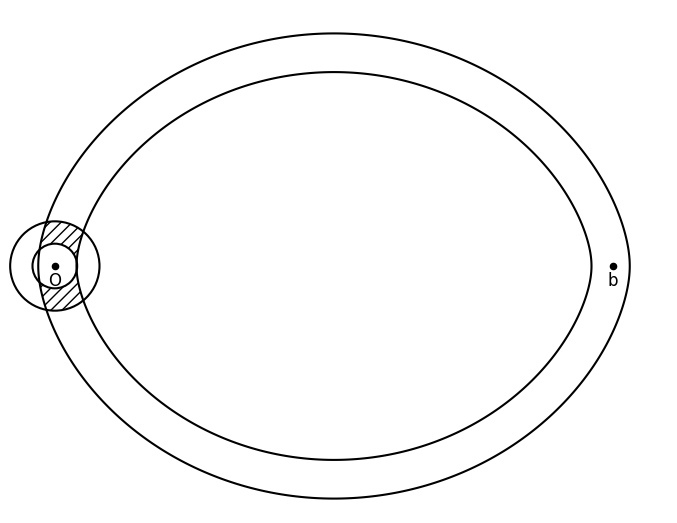}
\caption{The condition $|x|\sim M$ restrict $\tilde{S}_M$ to the neighborhood of one singular point $O$ for $ M\ll|b|$, where the curvature behaves distinctly depending on $M$}
\label{Figure 3}
\end{figure}

We now consider the regime
$M\ll |b|$.
Since \(x\in\tilde{S}_M\), we have \(|x|\sim M\). On the other hand,
$|x-b|\sim |b|$.
Thus the two terms in
\[
\phi_{b,+}(x)=|x|^\alpha+|x-b|^\alpha
\]
live at different scales. The region \(\tilde{S}_M\) is close to one of the singular
points, namely \(x=0\), but it is far away from the other singular point \(x=b\).

We first estimate the gradient. Since
\[
\nabla\phi_{b,+}(x)
=
\alpha x|x|^{\alpha-2}
+
\alpha (x-b)|x-b|^{\alpha-2},
\]
we have
\[
|x|^{\alpha-1}\sim M^{\alpha-1},
\qquad
|x-b|^{\alpha-1}\sim |b|^{\alpha-1}.
\]
Because \(M\ll |b|\) and \(\alpha>1\), the second term dominates. Therefore
\begin{equation}
\label{eq:N2}
|\nabla\phi_{b,+}(x)|
\sim |b|^{\alpha-1}.
\end{equation}

Next, we estimate the Hessian $H$. Because \(\alpha-2<0\) and \(M\ll |b|\), we have
$M^{\alpha-2}\gg |b|^{\alpha-2}$.
Thus \eqref{Hessianequal} implies that the Hessian is dominated by the contribution of the nearby singularity
\(x=0\), and
\begin{equation}
\label{eq:H22}
c_\alpha M^{\alpha-2}I
\le H(\phi_{b,+})(x)
\le C_\alpha M^{\alpha-2}I.
\end{equation}

Using the curvature formula \eqref{eq:curv} for a level curve, and combining \eqref{eq:N2} and \eqref{eq:H22}, we obtain
\[
\kappa
=
\frac{
\big|(\nabla^\perp\phi_{b,+})^T
H(\phi_{b,+})
\nabla^\perp\phi_{b,+}\big|
}
{|\nabla\phi_{b,+}|^3} \sim \frac{M^{\alpha-2}
|\nabla^\perp\phi_{b,+}|^2}{|b|^{3\alpha-3}} \sim \frac{1}{M^{2-\alpha}|b|^{\alpha-1}}.
\]
Equivalently, the radius of curvature satisfies
\begin{equation} 
\rho \sim M^{2-\alpha}|b|^{\alpha-1}.
\end{equation}
Notice that \(\rho\gg1\) in the present regime \(|b| \gg M\gg1\).

We also need to understand the length of the relevant level curve inside the
dyadic region \(\tilde{S}_M\). Since \(|x|\sim M\), the set is contained in an
annulus of radius \(M\). Hence every connected component of a level curve $\mathcal{C}_{\ell}$
\[
\phi_{b,+}(x)=\ell
\]
inside \(\tilde{S}_M\) has length
\begin{equation*}
L_{\mathcal C_\ell}\lesssim M.
\end{equation*}

We may decompose \(J_0\) into subintervals \(J_\ell\) of length
\[
|J_\ell|
\sim
|\nabla\phi_{b,+}|\rho^{-1}
\sim
|b|^{\alpha-1}
\cdot
\frac{1}{M^{2-\alpha}|b|^{\alpha-1}}
=
M^{\alpha-2}.
\]
Since \(|J_0|=\mathcal{O}(1)\), the number of such subintervals is again
$\mathcal{O}(M^{2-\alpha})$.
For each \(J_\ell\), the corresponding region
\[
\tilde{S}_{M,\ell}
=
\{x\in\tilde{S}_M:\phi_{b,+}(x)\in J_\ell\}
\]
has normal thickness
\[
\mathcal{O}\left(\frac{|J_\ell|}{|\nabla\phi_{b,+}|}\right)
=
\mathcal{O}\left(
\frac{M^{\alpha-2}}{|b|^{\alpha-1}}
\right)
=
\mathcal{O}(\rho^{-1}).
\]
Thus \(\tilde{S}_{M,\ell}\) is contained in an \(\mathcal{O}(\rho^{-1})\)-neighborhood of a
convex arc \(\mathcal C_\ell\) of length \(\mathcal{O}(M)\) and curvature radius \(\rho\).

Note that $\rho^{-1}L_{\mathcal{C_\ell}} =\mathcal{O}\left( \frac{M^{\alpha-1}}{|b|^{\alpha-1}}\right)\ll1$. By Lemma~\ref{LEM:strip}, each such strip contains at most
\[
\mathcal{O}\left(1+\frac{M}{\rho^{\frac{1}{3}}}\right)
\]
lattice points. Since \(\rho\sim M^{2-\alpha}|b|^{\alpha-1}\), this gives
\[
\#(\tilde{S}_{M,\ell}\cap\mathbb Z^2)
\lesssim
1+\frac{M}{(M^{2-\alpha}|b|^{\alpha-1})^{\frac{1}{3}}}.
\]
Summing over the \(\mathcal{O}(M^{2-\alpha})\) slices, we obtain
\begin{equation}
\#(\tilde{S}_M\cap\mathbb Z^2)
\lesssim
M^{2-\alpha}
\left(
1+
\frac{M}{(M^{2-\alpha}|b|^{\alpha-1})^{\frac{1}{3}}}
\right) 
 \lesssim M^{\frac{8}{3} - \alpha},
\end{equation}
where we used \(1 \le M\ll |b|\).
This proves the desired dyadic estimate \eqref{dyadicbound} in the case \(M\ll |b|\).


\medskip

\noindent
\textbf{Case 3: \(M\sim |b|\).}

We now consider the transition regime \(M\sim |b|\). This is the most delicate
case, since the dyadic annulus \(|x|\sim M\) may interact both with the singular
points \(0,b\) and with the unique minimum point \(\frac{b}{2}\) of \(\phi_{b,+}\).

Fix a sufficiently small constant \(\eta>0\). We decompose
\[
\tilde{S}_M
=
\tilde{S}_M^{(0)}
\cup
\tilde{S}_M^{(b)}
\cup
\tilde{S}_M^{(\mathrm{min})}
\cup
\tilde{S}_M^{(\mathrm{reg})},
\]
where
\begin{equation}
\begin{aligned}
    \tilde{S}_M^{(0)}
&:=
\tilde{S}_M\cap\{|x|\le \eta M\},\\
\tilde{S}_M^{(b)}
&:=
\tilde{S}_M\cap\{|x-b|\le \eta M\},\\
\tilde{S}_M^{(\mathrm{min})}
&:=
\tilde{S}_M\cap\left\{\left|x-\frac b2\right|\le \eta M\right\},\\
\end{aligned}
\end{equation}
and \(\tilde{S}_M^{(\mathrm{reg})}\) is the remaining part of \(\tilde{S}_M\). We
estimate these four regions separately.

\smallskip

\noindent
\underline{\it The singular region near \(0\).}
We first consider \(\tilde{S}_M^{(0)}\). Note that the curvature of the level curve differs depending on its distance to the singularity. See Case 2 and Figure \ref{Figure 4}. Hence, we decompose \(\tilde{S}_M^{(0)}\) dyadically according to

\begin{figure}[ht]
\centering
\includegraphics[width=0.8\textwidth]{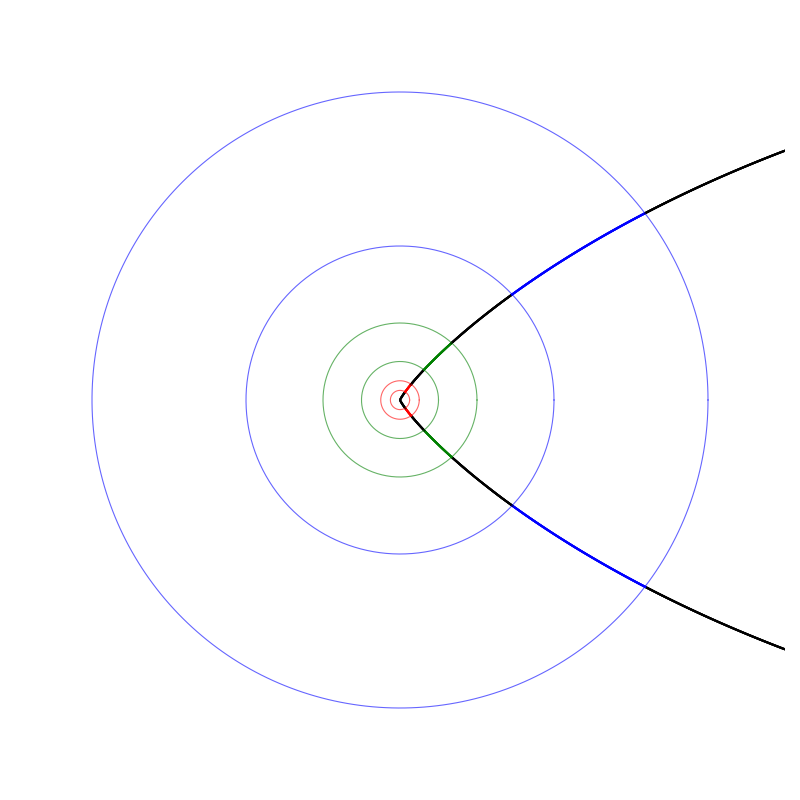}
\caption{Curvature differs acoording to the dyadic scale}
\label{Figure 4}
\end{figure}

\[
R\le |x|\le 2R,
\qquad
1\lesssim R\le \eta M.
\]
The region \(|x|\lesssim1\) contains only \(\mathcal{O}(1)\) lattice points and is
harmless, so we assume \(R\gg1\). On this dyadic piece we have
\[
|x|\sim R,
\qquad
|x-b|\sim |b|\sim M.
\]
Therefore
\begin{equation}
    |\nabla\phi_{b,+}(x)|=\Big|\alpha x|x|^{\alpha-2}+\alpha(x-b)|x-b|^{\alpha-2}\Big|\sim M^{\alpha-1},
\end{equation}
since the second term dominates when \(R\le \eta M\), provided \(\eta>0\) is
chosen sufficiently small.

For the Hessian, \eqref{Hessianequal} implies that 
\begin{equation}
    H(\phi_{b,+})(x)\sim R^{\alpha-2}I,
\end{equation}
in the sense of quadratic forms,  since \(R\ll M\) and
\(\alpha-2<0\). 

The curvature formula \eqref{eq:curv}, together with the above estimates, gives.
\[
\kappa
\sim
\frac{
R^{\alpha-2}|\nabla\phi_{b,+}|^2
}
{|\nabla\phi_{b,+}|^3}
\sim
\frac{R^{\alpha-2}}{M^{\alpha-1}}
=
\frac{1}{R^{2-\alpha}M^{\alpha-1}}.
\]
Thus the curvature radius satisfies
\begin{equation}
    \rho\sim R^{2-\alpha}M^{\alpha-1},
\end{equation}
 while the length of each relevant level-curve component inside \(R\le |x|\le2R\) is
\(\mathcal{O}(R)\).

We now decompose \(J_0\) into subintervals \(J_\ell\) of length
\[
|J_\ell|
\sim
|\nabla\phi_{b,+}|\rho^{-1}
\sim
M^{\alpha-1}
\frac{1}{R^{2-\alpha}M^{\alpha-1}}
=
R^{\alpha-2}.
\]
Since \(|J_0|=\mathcal{O}(1)\), the number of such subintervals is \(\mathcal{O}(R^{2-\alpha})\).
For each \(J_\ell\), the corresponding region has normal thickness
\[
\mathcal{O}\left(
\frac{|J_\ell|}{|\nabla\phi_{b,+}|}
\right)
=
\mathcal{O}\left(
\frac{R^{\alpha-2}}{M^{\alpha-1}}
\right)
=
\mathcal{O}(\rho^{-1}).
\]
Thus each such piece is contained in an \(\mathcal{O}(\rho^{-1})\)-neighborhood of a
convex arc of length \(\mathcal{O}(R)\) and curvature radius \(\rho\). By
Lemma~\ref{LEM:strip}, each such strip contains at most
\[
\mathcal{O}\left(
1+\frac{R}{\rho^{\frac{1}{3}}}
\right)
\]
lattice points. Therefore the contribution of the dyadic region
\(R\le |x|\le2R\) is bounded by
\[
R^{2-\alpha}
\left(
1+
\frac{R}{(R^{2-\alpha}M^{\alpha-1})^{\frac{1}{3}}}
\right) \le R^{2-\alpha} \frac{R}{(R^{2-\alpha}R^{\alpha-1})^{\frac{1}{3}}} \lesssim 
R^{\frac83-\alpha},
\]
where the harmless \(R^{2-\alpha}\) term is also bounded by
\(R^{\frac83-\alpha}\) for \(R\ge1\). Summing over dyadic
\(1\lesssim R\le \eta M\), we get
\begin{equation}\label{0bound}
    \#(\tilde{S}_M^{(0)}\cap\mathbb Z^2)
\lesssim
M^{\frac83-\alpha}.
\end{equation}

\smallskip

\noindent
\underline{\it The singular region near \(b\).}
The region \(\tilde{S}_M^{(b)}\) is treated in exactly the same way by symmetry.
Indeed, after the change of variables \(x'=x-b\), one has
\[
|x'|\le \eta M,
\qquad
|x'+b|\sim M,
\]
and the phase
\[
|x'|^\alpha+|x'+b|^\alpha
\]
has the same structure as in the preceding argument. Hence
\begin{equation}\label{bbound}
    \#(\tilde{S}_M^{(b)}\cap\mathbb Z^2)\lesssim M^{\frac83-\alpha}.
\end{equation}

\smallskip

\noindent
\underline{\it The regular region.}
We now consider
\(\tilde{S}_M^{(\mathrm{reg})}\), where we are at least $\eta M\sim \eta|b|$ away from \(0\), \(b\), and \(b/2\)  .
Scale the problem to unit size by writing
\[
x=|b|y,
\qquad
\hat b=\frac b{|b|}.
\]
Then
\[
\phi_{b,+}(x)
=
|b|^\alpha g(y),
\qquad
g(y)=|y|^\alpha+|y-\hat b|^\alpha.
\]
Since \(M\sim |b|\), the dyadic region \(|x|\sim M\) corresponds to a fixed-size
region \(|y|\sim1\). On the regular region, we also have
\[
|y-\hat b|\gtrsim1,
\qquad
\left|y-\frac{\hat b}{2}\right|\gtrsim1.
\]
Thus
\[
|\nabla g(y)|\sim1,
\qquad
H(g)(y)\sim I
\]
in the sense of quadratic forms. Note that the hidden constant is independent of $\hat{b}$ since the graph of $g$ for different $\hat{b}$ is the same up to rotation. Returning to \(x\)-variables, this gives
\begin{equation}
|\nabla\phi_{b,+}(x)|=|b|^{\alpha-1}|\nabla g(y)|\sim M^{\alpha-1},
\end{equation}
and
\begin{equation}
 H(\phi_{b,+})(x) = |b|^{\alpha-2}H(g)(y)
\sim M^{\alpha-2}I.
\end{equation}

Therefore,
\begin{equation}
\kappa
\sim
\frac{M^{\alpha-2}M^{2\alpha-2}}{M^{3\alpha-3}}
=
M^{-1},
\qquad
\rho\sim M.   
\end{equation}

Each level-curve component in this region has length \(\mathcal{O}(M)\). Decomposing
\(J_0\) into \(\mathcal{O}(M^{2-\alpha})\) subintervals of length \(M^{\alpha-2}\), and dividing each level curve into $\mathcal{O}(1)$ subarcs if necessary, as in
Case 1, then applying Lemma~\ref{LEM:strip}, we obtain
\begin{equation}\label{regularbound}
\#(\tilde{S}_M^{(\mathrm{reg})}\cap\mathbb Z^2)
\lesssim
M^{2-\alpha}M^{\frac{2}{3}}
=
M^{\frac83-\alpha}.    
\end{equation}

\smallskip

\noindent
\underline{\it The minimum region.}
It remains to estimate \(\tilde{S}_M^{(\mathrm{min})}\), where
\[
\left|x-\frac b2\right|\le \eta M.
\]
Write $x=\frac b2+y $. We decompose this region dyadically according to
\[
R\le |y|\le2R,
\qquad
1\lesssim R\le \eta M.
\]
The very small region \(|y|\lesssim1\) contains \(\mathcal{O}(1)\) lattice points and is
harmless, so we can assume $R \gtrsim 1$ in the above.

On the dyadic region \(|y|\sim R\), with \(R\le \eta M \ll M\sim |b|\), both
\[
\left|\frac b2+y\right|\sim \frac{|b|}{2} \sim M,
\qquad
\left|-\frac b2+y\right|\sim \frac{|b|}{2}\sim M
\]
provided \(\eta>0\) is sufficiently small.

We now estimate the geometry on the dyadic region
\[
        R\le \left|x-\frac b2\right|\le 2R.
\]
Since \(M\sim |b|\) and \(R\le \eta M\), by choosing \(\eta>0\) sufficiently small we have $\left|\pm \frac{b}2+ty \right|\sim M$. Therefore, for every \(0\le t\le1\), $H(\phi_{b,+}) \Big( \frac{b}2+ty \Big)$ is symmetric and positive definite. Moreover, the two eigenvalues of $H(\phi_{b,+}) \Big( \frac{b}2+ty \Big)$ are around $ M^{\alpha-2}$ by \eqref{eq:H3} and Weyl's eigenvalue inequality. The fundamental theorem of calculus implies that
\[
        \nabla\phi_{b,+}\Big( \frac{b}2+y \Big)
        =
        \int_0^1 H(\phi_{b,+})\Big( \frac{b}2+ty \Big)y\,dt,
\]
and as in the estimates \eqref{symmetricmatrix} and \eqref{phi-lowerbound},
\[
    \bigg|\nabla\phi_{b,+}\Big( \frac{b}2+y \Big) \bigg|
\sim
M^{\alpha-2}|y|.
\]

Thus, on the dyadic region \(|y|\sim R\),
\begin{align}\label{eq:N34}
|\nabla\phi_{b,+}(x)|
\sim
M^{\alpha-2}R.
\end{align}
Also, since \(|x|\sim |x-b|\sim M\) in this region, 
\begin{align}\label{eq:H34}
H(\phi_{b,+})(x)\sim M^{\alpha-2}I
\end{align}
in the sense of quadratic forms, in view of \eqref{Hessianequal}.


Thus, by \eqref{eq:curv}, \eqref{eq:N34}, and \eqref{eq:H34}, we have
\[
\kappa
\sim
\frac{
M^{\alpha-2}|\nabla\phi_{b,+}|^2
}
{|\nabla\phi_{b,+}|^3}
=
\frac{M^{\alpha-2}}{|\nabla\phi_{b,+}|}
\sim
R^{-1}.
\]
Therefore the curvature radius is
\begin{equation}
    \rho\sim R,
\end{equation}
 while the length $L_{\mathcal{C}_\ell}$ of each relevant level curve  $\phi_{b,+}(x)=\ell$ in this dyadic region is
\(\mathcal{O}(R)\).

We decompose \(J_0\) into subintervals \(J_\ell\) of length
\[
|J_\ell|
\sim
|\nabla\phi_{b,+}|\rho^{-1}
\sim
(M^{\alpha-2}R)\cdot R^{-1}
=
M^{\alpha-2}.
\]
Thus the number of such subintervals is \(\mathcal{O}(M^{2-\alpha})\). For each
\(J_\ell\), the corresponding strip has normal thickness \(\mathcal{O}(\rho^{-1})\), and dividing the strip as before if necessary,
Lemma~\ref{LEM:strip} gives  at most
\[
\mathcal{O}\left(
1+\frac{R}{R^{\frac{1}{3}}}
\right)
=
\mathcal{O}(R^{\frac{2}{3}})
\]
lattice points. Hence the contribution from the dyadic region
\(|x-b/2|\sim R\) is bounded by
\[
\mathcal{O}(M^{2-\alpha}R^{\frac{2}{3}}).
\]
Summing over dyadic \(1\lesssim R\le \eta M\), we get
\begin{equation}\label{minbound}
\#(\tilde{S}_M^{(\mathrm{min})}\cap\mathbb Z^2)
\lesssim
\sum_{1\lesssim R\le \eta M}
M^{2-\alpha}R^{\frac{2}{3}}
\lesssim
M^{2-\alpha}M^{\frac{2}{3}}
=
M^{\frac83-\alpha}.
\end{equation}

Combining the four estimates \eqref{0bound},\eqref{bbound},\eqref{regularbound},\eqref{minbound}, we conclude that
\begin{equation}
    \#(\tilde{S}_M\cap\mathbb Z^2)
\lesssim
M^{\frac83-\alpha}
\end{equation}
in the transition case \(M\sim |b|\).

\medskip 

This completes the proof of the dyadic estimate \eqref{dyadicbound} and hence the lemma.
\end{proof}

\subsubsection{Three-vector and four-vector countings}

Based on the two-vector estimates above, we have the following three-vector counting:

\begin{lem}[Three-vector counting]
\label{3vec1} Let $1\leq N_j\leq N$ for $j\in\{1,2,3\}$ and $\alpha\in(1,2)$, we have the following counting estimates:

\begin{equation}
\begin{aligned}
&|S_k| \lesssim \min \left\{ (N_2 \wedge N_3)^{3-\alpha} N_1 \log N_1 + N_1^2 (N_2 \wedge N_3),\, N_2^2 (N_1 \wedge N_3)^{\frac{8}{3} - \alpha}, \right.\\[2pt]
&\left.\hspace{2cm} (N_1 \wedge N_2)^{3-\alpha} N_3 \log N_3 + N_3^2 (N_1 \wedge N_2) \right\}, \\[2pt]
&|S_{k_1}| \lesssim \min \left\{ N_3^{3-\alpha} N_2 \log N_2 + N_2^2 N_3,\, N_3^2 N_2^{\frac{8}{3} - \alpha} \right\}, \\[2pt]
&|S_{k_2}| \lesssim \min \left\{ N_3^{3-\alpha} N_1 \log N_1 + N_1^2 N_3,\, N_1^{3-\alpha} N_3 \log N_3 + N_3^2 N_1 \right\}, \\[2pt]
&|S_{k_3}| \lesssim \min \left\{ N_1^2 N_2^{\frac{8}{3} - \alpha},\, N_1^{3-\alpha} N_2 \log N_2 + N_2^2 N_1 \right\}. \\
\end{aligned}
\end{equation}
    
\end{lem}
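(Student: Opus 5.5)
Each bound follows by fixing one additional frequency among the free ones, bounding the remaining two-vector set by Lemma \ref{2vec1} or Lemma \ref{2vec2}, and summing over the fixed frequency. Concretely, $|S_k|=\sum_{k_j}|S_{kk_j}|$ for any $j$, and similarly $|S_{k_i}|=\sum_{k_j}|S_{k_ik_j}|$ for $j\neq i$. Which two-vector lemma applies is decided by the sign structure of the pair that is fixed. Pairs $(k,k_1)$, $(k,k_3)$, $(k_1,k_2)$ and $(k_2,k_3)$ lead to $\phi_{b,-}$ and Lemma \ref{2vec1}. Pairs $(k,k_2)$ and $(k_1,k_3)$ lead to $\phi_{b,+}$ and Lemma \ref{2vec2}.

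For $S_k$ there are three choices of the additional variable.
\begin{itemize}
\item Summing over $k_1$ with $|k_1|\le N_1$ and using Lemma \ref{2vec1} for $S_{kk_1}$ gives
\[
\sum_{|k_1|\le N_1}\Big(\frac{(N_2\wedge N_3)^{3-\alpha}}{\langle k-k_1\rangle}+N_2\wedge N_3\Big).
\]
Since $\sum_{|k_1|\le N_1}\langle k-k_1\rangle^{-1}\lesssim N_1$, this yields the first bound, and the $\log$ factor gives room to spare.
\item Summing over $k_3$ is symmetric to the previous item.
\item Summing over $k_2$ with $|k_2|\le N_2$, i.e.\ $N_2^2$ choices, and using Lemma \ref{2vec2} for $S_{kk_2}$ gives $N_2^2(N_1\wedge N_3)^{\frac83-\alpha}$.
\end{itemize}

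For $S_{k_1}$ there are two choices.
\begin{itemize}
\item Summing over $k_3$ ($N_3^2$ choices) with Lemma \ref{2vec2} for $S_{k_1k_3}$ gives $N_3^2N_2^{\frac83-\alpha}$.
\item Summing over $k_2$ with Lemma \ref{2vec1} for $S_{k_1k_2}$ gives
\[
\sum_{|k_2|\le N_2}\Big(\frac{N_3^{3-\alpha}}{\langle k_1-k_2\rangle}+N_3\Big)\lesssim N_3^{3-\alpha}N_2+N_2^2N_3 .
\]
\end{itemize}
$S_{k_3}$ is handled symmetrically. For $S_{k_2}$ I would sum over $k_1$ using $S_{k_1k_2}$, or over $k_3$ using $S_{k_2k_3}$, both with Lemma \ref{2vec1}.

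One mismatch needs checking: the two-vector bound for $S_{k_1k_2}$ involves $N_3$, while the sum runs over $k_1$ with $N_1^2$ terms. This gives
\[
\sum_{|k_1|\le N_1}\Big(\frac{N_3^{3-\alpha}}{\langle k_1-k_2\rangle}+N_3\Big)\lesssim N_3^{3-\alpha}N_1+N_1^2N_3,
\]
which matches the statement.

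The only nontrivial point is the harmonic-type sum $\sum_{|k_j|\le N_j}\langle b-k_j\rangle^{-1}$ over $\mathbb Z^2$. Summing over dyadic shells of $|b-k_j|\sim D$ gives $\sum_{D\lesssim N_j}D^2\cdot D^{-1}\lesssim N_j$, uniformly in $b$. This already implies the stated bound with $\log$. I would also check that the momentum constraint gives the required bijections, so that fixing $k$ and one $k_j$ really leaves a set of the form $S_{kk_j}$ with the correct frequency restrictions.

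Finally, the stated minima come from the restriction to the box $|k_j|\le N_j$. Each two-vector lemma already takes the minimum over which variable is counted, so no further casework is needed; the statement is just the minimum of the admissible choices. I expect no real obstacle.
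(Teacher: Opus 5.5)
Your proposal is correct and follows the paper's proof: each three-vector set is bounded by summing the appropriate two-vector count over the one remaining free frequency, using Lemma \ref{2vec1} for the $\phi_{b,-}$ pairs and Lemma \ref{2vec2} for the $\phi_{b,+}$ pairs. The only difference is in the harmonic sum: the paper uses $\langle k-k_1\rangle^{-1}\le\langle k^{(1)}-k_1^{(1)}\rangle^{-1}$ and picks up $N_1\log N_1$, whereas your two-dimensional shell summation gives $\sum_{|k_j|\le N_j}\langle b-k_j\rangle^{-1}\lesssim N_j$ with no logarithm. Your displayed shell sum over $D\lesssim N_j$ misses the case $|b|\gg N_j$, but there the sum is trivially $\lesssim N_j^2/|b|\lesssim N_j$.
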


\begin{proof}

Notice that we can first fix two vectors as follows:

\begin{equation}
\begin{cases}
    |S_k| \leq \min \left\{ \sum\limits_{k_1} |S_{k k_1}|, \sum\limits_{k_2} |S_{k k_2}|, \sum\limits_{k_3} |S_{k k_3}| \right\},\\[10pt]
    |S_{k_1}| \leq \min \left\{ \sum\limits_{k_2} |S_{k_1 k_2}|, \sum\limits_{k_3} |S_{k_1 k_3}| \right\}, \\[10pt]
    |S_{k_2}| \leq \min \left\{ \sum\limits_{k_1} |S_{k_2 k_1}|, \sum\limits_{k_3} |S_{k_2 k_3}| \right\}, \\[10pt]
    |S_{k_3}| \leq \min \left\{ \sum\limits_{k_1} |S_{k_3 k_1}|, \sum\limits_{k_2} |S_{k_3 k_2}| \right\}. 
\end{cases}
\end{equation}

The computations are similar so we only show one of them:

\begin{equation}
\begin{aligned}
    |S_k| \leq \sum\limits_{k_1} |S_{k k_1}|
    & \lesssim \sum\limits_{k_1} \left(\frac{(N_2 \wedge N_3)^{3-\alpha}}{\langle k - k_1 \rangle} + (N_2 \wedge N_3)\right)\\
    & \leq \sum\limits_{k_1} \frac{(N_2 \wedge N_3)^{3-\alpha}}{\langle k^{(1)} - k_1^{(1)} \rangle} + \sum\limits_{k_1}(N_2 \wedge N_3)\\
    & \leq (N_2 \wedge N_3)^{3-\alpha} N_1 \log N_1 + N_1^2 (N_2 \wedge N_3),
\end{aligned}
\end{equation}
\noi
where $k^{(1)}$ and $k_1^{(1)}$ represent the first coordinates of $k,k_1\in\mathbb{Z}^2$. 
    
\end{proof}

We also have the four-vector counting, i.e., the counting estimates for the whole set $S$ in \eqref{Eqn:S}.

\begin{lem}[Four-vector counting]
\label{4vec1} Let $1\leq N_j\leq N$ for $j\in\{1,2,3\}$ and $\alpha\in(1,2)$, we have the following counting estimate:

\begin{equation}
\begin{aligned}
    |S|\lesssim \min \Big\{ &N_3^{3-\alpha} (N_1 \wedge N_2)^2 (N_1 \vee N_2) \log(N_1 \vee N_2) + N_1^2 N_2^2 N_3,\,\, (N_1\wedge N_3)^{\frac{8}{3} - \alpha}N_2^2N^2, \\
 &N_1^{3-\alpha} (N_2 \wedge N_3)^2 (N_2 \vee N_3) \log(N_2 \vee N_3) + N_2^2 N_3^2 N_1,\, \, N_1^2 N_3^2 N_2^{\frac{8}{3} - \alpha} \Big\}.
\end{aligned}
\end{equation}

\end{lem}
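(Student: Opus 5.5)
The plan is to reduce the four-vector count to the three-vector counts of Lemma~\ref{3vec1} by summing over one fixed variable, in the same way that Lemma~\ref{3vec1} was obtained from the two-vector counts:
\[
|S|\le \min\Big\{\sum_{k}|S_k|,\ \sum_{k_1}|S_{k_1}|,\ \sum_{k_2}|S_{k_2}|,\ \sum_{k_3}|S_{k_3}|\Big\}.
\]
Each of the four entries in the minimum will come from one such sum, with a suitable choice of the bound in Lemma~\ref{3vec1}. Equivalently, and more transparently, we can fix two variables, apply Lemmas~\ref{2vec1}--\ref{2vec2}, and sum over the two fixed variables directly. This keeps track of the $\langle b\rangle^{-1}$ gain, whose summation produces the logarithm.

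For the first entry, fix $k_3$ and the pair $(k_1,k_2)$, and use $|S_{k_1k_2}|\lesssim N_3^{3-\alpha}\langle k_1-k_2\rangle^{-1}+N_3$ from Lemma~\ref{2vec1}. Summing over $k_1,k_2$ gives $N_1^2N_2^2N_3$ from the second term. For the first term, sum the smaller-frequency variable freely, which gives $(N_1\wedge N_2)^2$, and sum the other over a ball of radius $N_1\vee N_2$. We then have
\[
\sum_{|k_1|\le N_1\vee N_2}\langle k_1-k_2\rangle^{-1}\lesssim N_1\vee N_2.
\]
This bound is actually better than needed. Alternatively, to match the stated form, we estimate the weight by the first coordinate as in the proof of Lemma~\ref{3vec1}, which yields $(N_1\vee N_2)\log(N_1\vee N_2)$. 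Either way, we obtain the first entry.

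The third entry follows symmetrically from $|S_{k_2k_3}|\lesssim N_1^{3-\alpha}\langle k_2-k_3\rangle^{-1}+N_1$, summed over $k_2,k_3$.

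The fourth entry comes from $|S_{k_1k_3}|\lesssim N_2^{\frac83-\alpha}$ in Lemma~\ref{2vec2}, summed over the at most $N_1^2N_3^2$ choices of $(k_1,k_3)$.

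For the second entry, use $|S_{kk_2}|\lesssim (N_1\wedge N_3)^{\frac83-\alpha}$ from Lemma~\ref{2vec2}, summed over $|k|\le N$ and $|k_2|\le N_2$. This gives $(N_1\wedge N_3)^{\frac83-\alpha}N_2^2N^2$.

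I expect no real obstacle here. The only points that need care are the following.
\begin{itemize}
\item The non-pairing condition $k_2\notin\{k_1,k_3\}$ ensures that the fixed vector $b$ in Lemma~\ref{2vec1} is nonzero, so that lemma applies.
\item The lattice points $x=0,b$ excluded in \eqref{originaldomain} contribute only $\mathcal O(1)$ per fixed pair, and this is absorbed.
\item The dyadic restrictions $|k_j|\le N_j$ must be used consistently when counting the summation ranges. In particular, $k$ ranges over $|k|\le N$ in the second entry, which is why $N^2$ rather than a smaller factor appears there.
\end{itemize}
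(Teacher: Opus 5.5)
Your proposal is correct and is the paper's argument: the paper bounds $|S|$ by the minimum of the four double sums $\sum_{k_1,k_2}|S_{k_1k_2}|$, $\sum_{k,k_2}|S_{kk_2}|$, $\sum_{k_1,k_3}|S_{k_1k_3}|$ and $\sum_{k_2,k_3}|S_{k_2k_3}|$, and inserts the two-vector bounds of Lemmas~\ref{2vec1}--\ref{2vec2} exactly as you do. Your remark that $\sum_{|k_1|\le N_1\vee N_2}\langle k_1-k_2\rangle^{-1}\lesssim N_1\vee N_2$ in two dimensions is also correct, so the logarithm is not actually needed.
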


\begin{proof}

Just notice that 

\begin{equation}
     |S| \leq \min \left\{ \sum\limits_{k_1, k_2} |S_{k_1 k_2}|, \sum\limits_{k, k_2} |S_{k k_2}|,\sum\limits_{k_1, k_3} |S_{k_1 k_3}|, \sum\limits_{k_2, k_3} |S_{k_2 k_3}| \right\}.
\end{equation}

The computation is the same as above.

\end{proof}

\subsubsection{$\Gamma-$condition counting}

Another structure which is important to our counting is the so-called $\Gamma$-condition. 
To be more precise,
if there exists a positive number $\Gamma$ such that 
\[ 
|k_{\max}| \le \Gamma < |k| \quad \text{or} \quad |k|\leq \Gamma < |k_{\max}|,
\]

\noindent 
where $k_{\max}$ is the frequency corresponding to $N_{\max}$,
then we say that $S$ given in \eqref{Eqn:S} satisfies the $\Gamma$-condition. To simplify the notation, let
\begin{align} 
\label{gammaB}
B_\Gamma = \{ (k,k_1,k_2,k_3) \in S: |k_{\max}| \le \Gamma < |k| \quad \text{or} \quad |k|\leq \Gamma < |k_{\max}|\}
\end{align}

\noindent 
and we denote $S^{\Gamma} = B_{\Gamma}\cap S$.

With the $\Gamma$-condition, we can improve the previous counting estimates. To simplify the discussion, we only consider the case that

\begin{equation}\label{Gamma assumption0}
    N_1\gg N_2 \vee N_3 ,\quad  \Gamma \sim N\sim |k| \sim |k_1|,
\end{equation}
\noi
which will be put in the remainder. See Section \ref{Gammacondi} for the details. Now we state two types of $\Gamma$-condition counting that we need.

\begin{lem}[$\Gamma$-condition counting I]
\label{Gammacounting}
 Suppose that $|m|\lesssim N^{\alpha}_{\med}$ and recall \eqref{parameters1},\eqref{Gamma assumption0}, then we have

\begin{itemize}
    \item 2-vector counting:

\begin{equation}
\begin{aligned}
        &|S^\Gamma_{k_{2}k_{3}}|\lesssim N^{\alpha}_{\med}N^{2-\alpha+\theta}, \\
        &|S^\Gamma_{k k_2}|\lesssim N_3^{\frac{8}{3}-\alpha} , \, |S^\Gamma_{k_1 k_2}|\lesssim N_3 ,\\
        & |S^\Gamma_{k_1k_3}|\lesssim N_2^{\frac{8}{3}-\alpha} ,  \,  |S^\Gamma_{k k_3|}\lesssim N_2, 
\end{aligned}
\end{equation}

    \item 3-vector counting:

\begin{equation}
\begin{aligned}
&|S^\Gamma_k|\lesssim (N_3^{\frac{8}{3}-\alpha} N^2_2) \wedge (N_2 N_3^2), \\
&|S^\Gamma_{k_1}|\lesssim (N_2^{\frac{8}{3}-\alpha} N^3_2) \wedge (N_3 N_2^2),\\
&|S^\Gamma_{k_2}| \lesssim N^{\frac{8}{3}-\alpha}_{3}N_{\med}^{\alpha} N^{2-\alpha+\theta},\\
&|S^{\Gamma} _{k_3}| \lesssim N_{2}^{\frac{8}{3}-\alpha}N_{\med}^{\alpha} N^{2-\alpha+\theta}.
\end{aligned}
\end{equation}
    \item 4-vector counting:

\begin{equation}
|S^{\Gamma}| \lesssim  N_{\min}^{2} N_{\med}^{\frac{8}{3}} N^{2-\alpha+\theta}.
\end{equation}
    
\end{itemize}

\end{lem}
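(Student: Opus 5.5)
The new ingredient is a single $\Gamma$-annulus count for $S^\Gamma_{k_2k_3}$. Every other bound follows from the two-vector estimates of Lemmas \ref{2vec1}--\ref{2vec2} (which still hold on $S^\Gamma\subset S$) by summing over the fixed variables.

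\textbf{Step 1: the new bound $|S^\Gamma_{k_2k_3}|\lesssim N_{\med}^\alpha N^{2-\alpha+\theta}$.} Under \eqref{Gamma assumption0} we have $k_{\max}=k_1$. Fix $k_2,k_3$ and set $b=k_3-k_2$, so that $k=k_1+b$ and $|b|\lesssim N_{\med}$. The phase relation gives
\[
|k_1|^\alpha-|k_1+b|^\alpha=m-|k_3|^\alpha+|k_2|^\alpha+\mathcal O(1)=\mathcal O(N_{\med}^\alpha),
\]
using $|m|\lesssim N_{\med}^\alpha$. This alone does not localize $|k|$, because $||k_1|^\alpha-|k|^\alpha|\lesssim N^{\alpha-1}|b|$ is automatic. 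The localization comes from the $\Gamma$-condition: $|k_1|\le\Gamma<|k|$ (or the reverse), together with $||k|-|k_1||\le|b|$, forces
\[
\bigl||k|^\alpha-\Gamma^\alpha\bigr|\le\bigl||k|^\alpha-|k_1|^\alpha\bigr|\lesssim N^{\alpha-1}N_{\med}.
\]
Passing to the quadratic annulus as in Subsection~\ref{SUB:ideas}, and using $|k|\sim\Gamma\sim N$, this becomes
\[
\bigl||k|^2-\Gamma^2\bigr|\lesssim N^{2-\alpha}\bigl||k|^\alpha-\Gamma^\alpha\bigr|\lesssim N N_{\med}.
\]
If this is the right size, the count is $\lesssim N N_{\med}\cdot N^\theta$, since $|k|^2$ takes at most $\mathcal O(NN_{\med})$ integer values and the divisor bound gives $N^\theta$ lattice points on each circle.

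I expect the main obstacle to be reconciling this with the stated bound $N_{\med}^\alpha N^{2-\alpha+\theta}$. That bound corresponds to an annulus of fractional width $N_{\med}^\alpha$ rather than $N^{\alpha-1}N_{\med}$. To reach it, I would use the phase relation itself: on the $\Gamma$-boundary, $|k_1|^\alpha\approx\Gamma^\alpha\approx|k|^\alpha$, so the difference $|k|^\alpha-|k_1|^\alpha$ must equal an explicit quantity of size $\mathcal O(N_{\med}^\alpha)$, namely $-m+|k_3|^\alpha-|k_2|^\alpha+\mathcal O(1)$. Therefore, with the sign of the $\Gamma$-condition fixed, $|k|^\alpha$ lies in an interval of length $\mathcal O(N_{\med}^\alpha)$ adjacent to $\Gamma^\alpha$. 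Converting gives $||k|^2-\Gamma^2|\lesssim N_{\med}^\alpha N^{2-\alpha}$. Counting integer values of $|k|^2$ and applying the divisor bound for $r_2(n)\lesssim n^\theta$ then yields $N_{\med}^\alpha N^{2-\alpha+\theta}$. Since $k_2,k_3$ are fixed, $k$ determines $k_1$.

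\textbf{Step 2: the remaining two-vector bounds.} These come from the general lemmas, dropping the $\Gamma$-restriction:
\begin{itemize}
\item $S_{kk_2}$ and $S_{k_1k_3}$: Lemma~\ref{2vec2}, with $N_1\wedge N_3=N_3$.
\item $S_{k_1k_2}$ and $S_{kk_3}$: Lemma~\ref{2vec1}, bounding the $N^{3-\alpha}/\langle b\rangle$ term. Here I would count in the smaller variable ($k_3$, respectively $k_2$), use the trivial bound $N_3^2\lesssim$, or note that $\tilde S^-$ is a strip of width $\lesssim 1$ along a curve of length $N_3$. I would use the directional-derivative slicing of Lemma~\ref{LEM:2d}, but with $b$ of size $\sim N$. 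Then $|\nabla\phi_{b,-}|\gtrsim N^{\alpha-1}\gg1$, and the proof of Lemma~\ref{LEM:2d} gives $\lesssim N_3(1+N^{1-\alpha})\lesssim N_3$.
\end{itemize}

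\textbf{Step 3: three- and four-vector bounds.} I would sum the two-vector bounds over the free variable, choosing the best pairing in each case:
\begin{itemize}
\item $|S^\Gamma_k|$: sum $|S^\Gamma_{kk_2}|$ over $k_2$ to get $N_2^2N_3^{8/3-\alpha}$, or sum $|S^\Gamma_{kk_3}|$ over $k_3$ to get $N_3^2N_2$.
\item $|S^\Gamma_{k_1}|$: sum over $k_3$ or over $k_2$ similarly.
\item $|S^\Gamma_{k_2}|$ and $|S^\Gamma_{k_3}|$: sum Step 1 over $k_3$ (respectively $k_2$), using the $N^{8/3-\alpha}$-type bound for the extra summed variable.
\item $|S^\Gamma|$: sum $|S^\Gamma_{k_2k_3}|$ over $k_2,k_3$, giving $N_{\min}^2N_{\med}^2\cdot N_{\med}^\alpha N^{2-\alpha+\theta}$. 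Since $\alpha<2<\tfrac83$, this is at most the stated $N_{\min}^2N_{\med}^{8/3}N^{2-\alpha+\theta}$.
\end{itemize}

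The exponent bookkeeping in Step 3 is routine; the genuine difficulty is the annulus-width argument of Step 1.
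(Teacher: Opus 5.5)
Your Steps 1 and 2 are correct and coincide with the paper's argument. In Step 1 the detour through $N^{\alpha-1}N_{\med}$ is unnecessary: your first display already gives $\bigl||k|^\alpha-|k_1|^\alpha\bigr|\lesssim N_{\med}^\alpha$, which is much smaller than $N^{\alpha-1}N_{\med}$. Combined with the $\Gamma$-condition, this is exactly how the paper obtains $\bigl||k|^2-\Gamma^2\bigr|\lesssim N_{\med}^\alpha N^{2-\alpha}$.

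The gap is in Step 3. For the four-vector bound you sum the $S^\Gamma_{k_2k_3}$ estimate trivially over $k_2,k_3$, which gives $N_{\min}^2N_{\med}^{2+\alpha}N^{2-\alpha+\theta}$. But $2+\alpha>\tfrac83$ for every $\alpha>\tfrac23$. So this exceeds the claimed $N_{\min}^2N_{\med}^{8/3}N^{2-\alpha+\theta}$ by a factor $N_{\med}^{\alpha-\frac23}$. Your comparison ``$\alpha<2<\tfrac83$'' forgets the factor $N_{\med}^2$ produced by the trivial summation.

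The same problem affects $|S^\Gamma_{k_2}|$ and $|S^\Gamma_{k_3}|$ as you describe them. Summing the Step 1 bound over $k_3$ gives $N_3^2N_{\med}^\alpha N^{2-\alpha+\theta}$, not $N_3^{\frac83-\alpha}N_{\med}^\alpha N^{2-\alpha+\theta}$. The loss matters downstream: the type (C,C,C) estimate for $\mathscr X^\Gamma$ in Subsection~\ref{Gammacondi} uses precisely the factor $N_{\med}^{4/3}$ in $|S^\Gamma|^{1/2}$.

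The missing point is that the annulus localization constrains $k$ alone (and, symmetrically, $k_1$ alone), and it holds on all of $S^\Gamma$. For every element,
\[
\bigl||k_1|^\alpha-|k|^\alpha\bigr|\le|m|+|k_2|^\alpha+|k_3|^\alpha+\mathcal O(1)\lesssim N_{\med}^\alpha ,
\]
so you never need to fix $k_2,k_3$ before using it. The paper calls this condition $(\Gamma_1)$.

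So count $k$ (or $k_1$) in the annulus first, and spend the curved-strip bound of Lemma~\ref{2vec2} on the remaining pair:
\[
|S^\Gamma_{k_2}|\le\sum_{k\ \text{in annulus}}|S^\Gamma_{kk_2}|\lesssim N_{\med}^\alpha N^{2-\alpha+\theta}N_3^{\frac83-\alpha},
\qquad
|S^\Gamma|\lesssim N_{\med}^\alpha N^{2-\alpha+\theta}\min\bigl\{N_2^2N_3^{\frac83-\alpha},\,N_3^2N_2^{\frac83-\alpha}\bigr\}.
\]
The second bound comes from $\sum_{k,k_2}|S^\Gamma_{kk_2}|$ and $\sum_{k_1,k_3}|S^\Gamma_{k_1k_3}|$. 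Choosing the version that puts the trivial factor on the smaller of $N_2,N_3$ gives exactly $N_{\min}^2N_{\med}^{\frac83}N^{2-\alpha+\theta}$. The bound for $|S^\Gamma_{k_3}|$ follows symmetrically, using $k_1$ in the annulus and $|S^\Gamma_{k_1k_3}|\lesssim N_2^{\frac83-\alpha}$. With this reorganization your proof agrees with the paper's.
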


\begin{proof}

The bounds of $|S^\Gamma_{k k_{2}}|$, $|S^\Gamma_{k_1 k_{2}}|$, $|S^\Gamma_{k k_{3}}|$, $|S^\Gamma_{k_1 k_{3}}|$ are simple by Lemma \ref{2vec1} and Lemma \ref{2vec2} since we have $|b|\gtrsim|k|\gg N_{\med}>N_{\min}$ now. So we only need to show the bound of $|S^\Gamma_{k_{2}k_{3}}|$, which comes from the counting of $k$ or $k_1$ under $\Gamma$-condition and the assumption \eqref{Gamma assumption0}. Note that

\[
\begin{cases}
    \Big| |k|^{\alpha}-|k_1|^{\alpha} \Big|\lesssim N^{\alpha}_{\med},\\
    |k|^{\alpha}>\Gamma^{\alpha}\geq |k_1|^{\alpha}
\end{cases}
\]
\noi
implies 

\[
    \Big| |k|^{\alpha}-\Gamma^{\alpha} \Big|\lesssim N^{\alpha}_{\med} \, , \quad \Big| |k_1|^{\alpha}-\Gamma^{\alpha} \Big|\lesssim N^{\alpha}_{\med}\, .
\]

Consider the function $f(x) = |x|^{\frac{2}{\alpha}}$ and apply mean value theorem to get

\[
    \Big| |k|^2-\Gamma^2 \Big| =\Big |f(|k|^\alpha)-f(\Gamma^\alpha)\Big|  \lesssim |f'(\xi_0)|\cdot \Big| |k|^{\alpha}-\Gamma^{\alpha} \Big| ,
\]
\noi
where $\xi_0$ is located in the interval with endpoints $|k|^\alpha$ and $\Gamma^\alpha$ . Hence $|\xi_0|\lesssim N^{\alpha}$ and 

\[
 \Big| |k|^2-\Gamma^2 \Big|\lesssim N^{\alpha}_{\med} N^{2-\alpha}.
\]

Similarly we have $ \Big| |k_1|^2-\Gamma^2 \Big|\lesssim N^{\alpha}_{\med} N^{2-\alpha}$. We call this `` condition $(\Gamma_1)$''. Note that the above estimates imply that condition $(\Gamma_1)$ holds directly under the restrictions in $S^{\Gamma}$ as well as the assumption \eqref{Gamma assumption0}. Then the choice of the integer $|k|^2$ or $|k_1|^2$ is at most $\mathcal{O}(N^{\alpha}_{\med} N^{2-\alpha})$. Therefore, the choice of $k$ or $k_1$ is at most $\mathcal{O}(N^{\alpha}_{\med} N^{2-\alpha+\theta})$ for any small number $\theta>0$ by divisor counting. (Notice that the vector $k$ can be viewed as a complex number and $|k|^2=k\overline{k}$.) A similar argument can be found in \cite[Proposition 4.5]{DNY2024}. This gives the stated bound for $|S^\Gamma_{k_{2}k_{3}}|$.

Regarding 3-vector and 4-vector counting, we use

\begin{equation}
\begin{cases}
    |S^\Gamma_k|\leq \min\left\{ \sum\limits_{k_2} |S^\Gamma_{k k_{2}}|,\,\sum\limits_{k_{3}} |S^\Gamma_{k k_{3}}|\right\}, \, \hspace{1cm}|S^\Gamma_{k_1}|\leq \min\left\{ \sum\limits_{k_{2}}|S^\Gamma_{k_1 k_{2}}|, \,\sum\limits_{k_{3}}|S^\Gamma_{k_1 k_{3}}|\right\},\\[10pt]
    |S^\Gamma_{k_{2}}|\leq \sum\limits_{\substack{k\\ \text{condition}\,(\Gamma_1)}} |S^\Gamma_{k k_{2}}|, \, \hspace{2.5cm}  |S^\Gamma_{k_{3}}|\leq \sum\limits_{\substack{k_1\\ \text{condition}\,(\Gamma_1)}}|S^\Gamma_{k_1 k_{3}}|, \\[10pt]
    |S^\Gamma|\leq \min\left\{ \sum\limits_{\substack{k,k_{2}\\ \text{condition}\,(\Gamma_1)}}|S^\Gamma_{k k_{2}}|, \, \sum\limits_{\substack{k_1,k_{3}\\ \text{condition}\,(\Gamma_1)}}|S^\Gamma_{k_1 k_{3}}|\right\}
\end{cases}
\end{equation}

\noindent to get desired estimates. Now we finish the proof.

\end{proof}

\begin{rem}\label{interpolationbounds}
    Since we have two bounds for $|S_k^{\Gamma}|$, interpolation tells that 
\begin{equation}
    |S_k^{\Gamma}|\lesssim (N_2N_3^2)^{\lambda_\alpha}\cdot(N_3^{\frac{8}{3}-\alpha}N_2^2)^{1-\lambda_\alpha} = N_2^{2-\lambda_\alpha}N_3^{\frac{8}{3}+\lambda_\alpha(\alpha-\frac{2}{3})}
\end{equation}

\noi
holds for every $\lambda_\alpha\in[0,1]$. Similarly, we can interpolate two bounds of $|S_{k_1}^{\Gamma}|$.
\end{rem}

To deal with the situation $|m|\gg N^{\alpha}_{\med}$, we need to count the variant of $S^{\Gamma}$ as follows:

\begin{equation}
    \label{Eqn:Sr} 
    S^r = \left\{(k, k_1, k_2, k_3) \in (\mathbb{Z}^2)^4 :\quad
    \begin{aligned} 
      & k_1 - k_2 + k_3-k = 0 , \quad k_2 \not\in \{ k_1, k_3 \}, \\
      & \Big| |{k_1}|^{\alpha} - |{k_2}|^{\alpha} + |{k_3}|^{\alpha} - |{k}|^{\alpha}
      \Big| \in (2^{r-1},2^r], \\
      & | k | \le  N, \,\, | k_j | \le  N_j\, \textup{ for } j \in \{ 1, 2, 3
      \} 
    \end{aligned}
    \right\}\cap B_{\Gamma}
  \end{equation}

\noindent where $r\in \Z_{\geq0}$ and $N^{\alpha}_{\med}\ll 2^r \lesssim N^{\alpha}$. Still we assume \eqref{Gamma assumption0}, then the following counting estimates hold:

\begin{lem}[$\Gamma$-condition counting II]
\label{Gammacounting2}
Suppose that $N_{\med}^{\alpha}\lesssim 2^r\lesssim N^\alpha$ and recall \eqref{parameters1}, \eqref{Gamma assumption0}, then we have

\begin{itemize}
    \item 2-vector counting:

\begin{equation}
\begin{aligned}
        &|S^r_{k_{2}k_{3}}|\lesssim 2^r\cdot N^{2-\alpha+\theta}, \\
        &|S^r_{k k_{2}}|\lesssim (2^r\cdot N_3^{\frac{8}{3}-\alpha})\wedge N_3^2 , \, |S^r_{k_1 k_{2}}|\lesssim (2^r\cdot N_{3})\wedge N^2_{3} ,\\
        &|S^r_{k_1k_{3}}|\lesssim (2^r\cdot N_{2}^{\frac{8}{3}-\alpha})\wedge N^2_{2},\,  |S^r_{k k_{3}}|\lesssim (2^r\cdot N_2)\wedge N_2^2   . 
\end{aligned}
\end{equation}

    \item 3-vector counting:

\begin{equation}
\begin{aligned}
  &|S^r_k|\lesssim (2^r\cdot N_2 N_3^2)\wedge(2^r \cdot N_3^{\frac{8}{3}-\alpha}N_2^2),\\
  &|S^r_{k_1}|\lesssim (2^r\cdot N_{3} N^2_{2})\wedge (2^r\cdot N_2^{\frac{8}{3}-\alpha}N_3^2),\\
  &|S^r_{k_{2}}| \lesssim 2^r\cdot N^{2}_{3} N^{2-\alpha+\theta},\\
  &|S^{r} _{k_{3}}| \lesssim 2^r\cdot N^{2}_{2}  N^{2-\alpha+\theta}.
\end{aligned}
\end{equation}

    \item 4-vector counting:

\begin{equation}
|S^{r}| \lesssim 2^r\cdot N_{2}^{2} N_{3}^2 N^{2-\alpha+\theta}.
\end{equation}

\end{itemize}

\end{lem}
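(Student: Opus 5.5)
The plan is to reduce every bound to the single-window counts of Lemma \ref{Gammacounting} (and Lemmas \ref{2vec1}, \ref{2vec2}) by slicing the dyadic modulation shell into unit windows. The condition $|\,|k_1|^\alpha-|k_2|^\alpha+|k_3|^\alpha-|k|^\alpha|\in(2^{r-1},2^r]$ is a union of $\mathcal O(2^r)$ conditions of the form $|k_1|^\alpha-|k_2|^\alpha+|k_3|^\alpha-|k|^\alpha=m+\mathcal O(1)$ with $|m|\sim 2^r$. For each fixed $m$ we are in the setting of \eqref{Eqn:S}, so every fixed-window two-vector count from Lemmas \ref{2vec1} and \ref{2vec2} applies. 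Summing over the $\mathcal O(2^r)$ windows gives the factor $2^r$. The competing bounds $N_3^2$ and $N_2^2$ are trivial: once, say, $k,k_2$ are fixed, $k_3$ ranges over a ball of radius $N_3$ and determines $k_1$. With $|b|\gtrsim N\gg N_{\med}$ under \eqref{Gamma assumption0}, Lemma \ref{2vec1} gives $N_3$ for $S^r_{k_1k_2}$ and $N_2$ for $S^r_{kk_3}$ per window, while Lemma \ref{2vec2} gives $N_3^{\frac83-\alpha}$ and $N_2^{\frac83-\alpha}$ for $S^r_{kk_2}$ and $S^r_{k_1k_3}$ per window. Together these yield all the two-vector estimates except $S^r_{k_2k_3}$.

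For $S^r_{k_2k_3}$, I would repeat the $\Gamma_1$ argument of Lemma \ref{Gammacounting}, replacing $N_{\med}^\alpha$ by $2^r$. Fixing $k_2,k_3$ fixes $k_1-k$, and the phase condition together with $|\,|k_2|^\alpha-|k_3|^\alpha|\lesssim N_{\med}^\alpha\lesssim 2^r$ gives $\bigl||k|^\alpha-|k_1|^\alpha\bigr|\lesssim 2^r$. Combined with the $\Gamma$-condition $|k_1|\le\Gamma<|k|$ (or the reverse), this yields $\bigl||k|^\alpha-\Gamma^\alpha\bigr|\lesssim 2^r$. The mean value theorem applied to $t\mapsto t^{2/\alpha}$ at scale $N^\alpha$ then gives $\bigl||k|^2-\Gamma^2\bigr|\lesssim 2^rN^{2-\alpha}$. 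So $|k|^2$ takes $\mathcal O(2^rN^{2-\alpha})$ integer values, each attained by $\mathcal O(N^\theta)$ lattice points via the divisor bound in $\mathbb Z[i]$. This gives $2^rN^{2-\alpha+\theta}$; note that here there is no extra window factor, since no unit-window slicing is used.

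The three- and four-vector bounds then follow by summation exactly as in Lemma \ref{Gammacounting}:
\begin{itemize}
\item $|S^r_k|\le\sum_{k_3}|S^r_{kk_3}|\lesssim 2^rN_2N_3^2$, or alternatively $\sum_{k_2}|S^r_{kk_2}|\lesssim 2^rN_3^{\frac83-\alpha}N_2^2$; $S^r_{k_1}$ is handled symmetrically.
\item $|S^r_{k_2}|\le\sum_{k_3}|S^r_{k_2k_3}|\lesssim 2^rN_3^2N^{2-\alpha+\theta}$, and similarly for $S^r_{k_3}$.
\item $|S^r|\le\sum_{k_2,k_3}|S^r_{k_2k_3}|\lesssim 2^rN_2^2N_3^2N^{2-\alpha+\theta}$.
\end{itemize}

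The main point needing care is the $S^r_{k_2k_3}$ step. One must check that the phase constraint controls $\bigl||k|^\alpha-|k_1|^\alpha\bigr|$ by $2^r$ rather than by $N^\alpha$, which uses $2^r\gtrsim N_{\med}^\alpha$. One must also check that the window slicing is not also applied there, which would waste a factor $2^r$.
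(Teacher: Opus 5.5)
Your proposal is correct and follows the paper's proof essentially step for step. The paper gets the factor $2^r$ in the two-vector bounds by rerunning Lemmas \ref{2vec1} and \ref{2vec2} with $J_0$ of length $2^r$, which is equivalent to your slicing into $\mathcal O(2^r)$ unit windows. It obtains $S^r_{k_2k_3}$ from the $(\Gamma_1)$ divisor argument of Lemma \ref{Gammacounting} with $N_{\med}^\alpha$ replaced by $2^r$, and it derives the three- and four-vector bounds by exactly the summations you list.
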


\begin{proof}
    The counting for $k$ or $k_1$ is similar to that in Lemma\ \ref{Gammacounting} with $N^{\alpha}_{\med}$ replaced by $2^r$. Hence we have the bound for $|S^r_{k_{2}k_{3}}|$. 

    The second bounds of $|S^r_{k k_{2}}|$ etc. are trivial counting. While the first bound can be checked in the same way as Lemma \ref{2vec1} and Lemma \ref{2vec2}  by noticing that the length of the interval $J_0$ turns to $2^r$ in this setting. As for 3-vector and 4-vector counting, we use

\begin{equation}
\begin{cases}
    |S^r_k|\leq \min\left\{ \sum\limits_{k_2} |S^r_{k k_{2}}|,\,\sum\limits_{k_{3}} |S^r_{k k_{3}}|\right\}, \, \hspace{0.5cm}|S^r_{k_1}|\leq \min\left\{ \sum\limits_{k_{2}}|S^r_{k_1 k_{2}}|, \,\sum\limits_{k_{3}}|S^r_{k_1 k_{3}}|\right\},\\[10pt]
    |S^r_{k_{2}}|\leq \sum\limits_{k_3} |S^r_{k_2 k_{3}}|, \, \hspace{3.3cm}  |S^r_{k_{3}}|\leq \sum\limits_{k_2}|S^r_{k_2 k_{3}}|, \\[10pt]
    |S^r|\leq  \sum\limits_{k_2,k_{3}}|S^r_{k_2 k_{3}}|.
\end{cases}
\end{equation}

So we finish the proof.

\end{proof}

\begin{rem}
    When applying these estimates, we may encounter $|S_{kk_{2}}||S_{k_1k_{3}}|$ or $|S_{kk_1}||S_{k_{2}k_{3}}|$. To avoid multiple $2^r$, we should always use trivial counting for one of the two sets. To be precise, we will use $|S_{kk_{2}}||S_{k_1k_{3}}|\lesssim |S_{kk_{\min}}|\cdot N_{\min}^2$ and $|S_{kk_1}||S_{k_{2}k_{3}}|\lesssim N_{\min}^2\cdot|S_{k_{2}k_{3}}|$. 
\end{rem}

\subsection{Tensor estimates}

First, we introduce several base tensors. Given $m\in \Z$, $\alpha \in (1,2)$, and dyadic numbers $1 \le N_j \le N$, $j\in\{1,2,3\}$,
we define the base tensor ${\rm T}^{{\rm b},m}$ as

\begin{equation}
        \label{baseT}
    {\rm T}^{{\rm b},m} = {\rm T}^{{\rm b},m}_{kk_1k_2k_3} = \ind_{S} (k,k_1,k_2,k_3),
\end{equation}

\noindent 
where $\ind_S$ is the indicator function, and $S$ is the subset of $(\Z^2)^4$ given in \eqref{Eqn:S}.
Here,  we will not distinguish the tensor operator ${\rm T}^{{\rm b},m}$ and its kernel ${\rm T}^{{\rm b},m}_{kk_1k_2k_3}$.
Similarly, by \eqref{Eqn:Sr} we can define

\begin{equation}\label{baseTr}
    {\rm T}^{{\rm b},r} ={\rm T}^{{\rm b},r}_{kk_1k_2k_3} =\ind_{S^r} (k,k_1,k_2,k_3).
\end{equation}

\noindent 

To connect the tensor norms with our counting, we need the following lemma:

\begin{lem}\label{schurtest}
For tensors defined above, we have the following estimates:

\begin{equation}
\begin{aligned}
    &\| {\rm T}^{{\rm b},m}_{k k_1 k_2 k_3} \|^2_{kk_B
      \to  k_C} \lesssim|S_{k k_B}||S_{k_C}|,\\
      &\| {\rm T}^{{\rm b},m}_{k k_1 k_2 k_3}\ind_{B_{\Gamma}} \|^2_{k k_B
      \to  k_C}  \lesssim |S^\Gamma_{k k_B}||S^\Gamma_{k_C}|,\\
    &\| {\rm T}^{{\rm b},r}_{k k_1 k_2 k_3} \|^2_{k k_B
      \to  k_C}  \lesssim |S^r_{k k_B}||S^r_{k_C}|.
\end{aligned}
\end{equation}

\end{lem}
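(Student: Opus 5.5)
This is a Schur-test bound for a $0$--$1$ tensor. Write $T={\rm T}^{{\rm b},m}$ and fix a partition $\{1,2,3\}=B\sqcup C$, where $k$ sits on the input side together with $k_B$. We view $T$ as a matrix with rows indexed by $x=(k,k_B)$ and columns indexed by $y=k_C$, with entries $\ind_S(k,k_1,k_2,k_3)$. For any matrix with nonnegative entries, Schur's test gives
\[
\|T\|_{x\to y}^2\le \Big(\sup_x\sum_y T_{xy}\Big)\Big(\sup_y\sum_x T_{xy}\Big).
\]
The same bound follows from Cauchy--Schwarz applied to $\sum_{x,y}T_{xy}z_x w_y$ with weights split evenly. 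So the proof reduces to identifying the two row/column sums with the fibre counts in the statement.

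\textbf{Identifying the sums.} For fixed $x=(k,k_B)$, the sum $\sum_{k_C}\ind_S$ counts the points of $S$ with $k,k_B$ frozen, so it equals $|S_{kk_B}|$. For fixed $y=k_C$, the sum $\sum_{k,k_B}\ind_S$ counts the points with $k_C$ frozen, so it equals $|S_{k_C}|$. Here $|S_{kk_B}|$ and $|S_{k_C}|$ are understood as the suprema over the frozen variables, which is how the counting lemmas (Lemmas \ref{2vec1}, \ref{2vec2}, \ref{3vec1}, \ref{4vec1}) are stated: their bounds are uniform in the fixed frequencies and in $m$. Multiplying the two suprema gives the first estimate. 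The degenerate cases are immediate. If $C=\varnothing$, the norm is the $\ell^2$ norm of a column vector, and $\|T\|^2=\sup_{k_C}$ of the full count, which is consistent with the convention $|S_{k_\varnothing}|=|S|$ and $|S_{kk_1k_2k_3}|\le1$. If $B=\varnothing$, then $x=k$ and the bound reads $|S_k|\,|S_{k_1k_2k_3}|\le |S_k|$.

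\textbf{The other two estimates.} These are identical. Multiplying by $\ind_{B_\Gamma}$ just replaces $S$ with $S^\Gamma=S\cap B_\Gamma$, since the entries remain in $\{0,1\}$. So the row and column sums become $|S^\Gamma_{kk_B}|$ and $|S^\Gamma_{k_C}|$. Likewise ${\rm T}^{{\rm b},r}=\ind_{S^r}$ yields the fibre counts $|S^r_{kk_B}|$ and $|S^r_{k_C}|$, which are bounded uniformly by Lemmas \ref{Gammacounting} and \ref{Gammacounting2}.

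\textbf{Main obstacle.} The step needing the most care is bookkeeping rather than analysis. One must check that freezing variables commutes with the constraints: the momentum relation, the non-pairing condition $k_2\notin\{k_1,k_3\}$, the $\Gamma$-restriction, and the dyadic phase window all remain part of the fibre set. One must also check that the suprema are taken over all admissible frozen values. Once this is verified, both Schur factors are exactly the quantities already bounded, and the lemma follows.
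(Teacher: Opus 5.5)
Your proposal is correct and follows the same route as the paper: a Schur test for the $0$--$1$ tensor, with the row and column sums identified as the fibre counts $\sup|S_{kk_B}|$ and $\sup|S_{k_C}|$. The $\Gamma$-restricted and ${\rm T}^{{\rm b},r}$ cases are then handled identically, exactly as the paper indicates with ``other tensor norms can be controlled in the same way.''
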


\begin{proof}

By Schur's test, we have 
\[
       \| {\rm T}^{{\rm b},m}_{k k_1 k_2 k_3} \|^2_{kk_B
      \to  k_C} \lesssim \bigg( \sup_{k, k_B \in (\mathbb{Z}^2)^{|B|+1}}
      \sum_{k_C \in (\mathbb{Z}^2)^{|C|}} | {\rm T}^{{\rm b},m}_{k k_1 k_2
      k_3} | \bigg) \times \bigg( \sup_{k_C \in (\mathbb{Z}^2)^{|C|}}
      \sum_{k, k_B \in (\mathbb{Z}^2)^{|B|+1}} | {\rm T}^{{\rm b},m}_{k k_1 k_2
      k_3} | \bigg) ,
\]
\noi
which gives the desired bound. Other tensor norms can be controlled in the same way.

\end{proof}

\medskip

\subsection{Random tensor estimates}

In this section, we state the ordinary random tensor estimate, whose proof can be found in \cite{DNY2022,Wang2024}.

\begin{prop}[{{\cite[Proposition 4.14]{DNY2022}}}]
  \label{Prop:tensor} 
  Let $A$ be a finite set and $h_{b c k_A} = h_{b c k_A}
  (\omega)$ be a tensor, where each $k_j$ and $(b, c) \in (\mathbb{Z}^2)^q$ \ for
  some integer $q \ge  2$. Given signs $\zeta_j \in \{ \pm \}$, we also
  assume that $\jb{b-b_0}, \jb{c-c_0} \lesssim M$ and $\jb{k_j } \lesssim M$ for some fixed $b_0, c_0 \in \Z^2$ and all $j \in A$, where $M$ is a dyadic number, and that in support of $h_{b c k_A}$
  there is no pairing in $k_A$. Define the tensor
  \begin{equation}
    \label{Eqn:Hbc} 
    H_{b c} = \sum_{k_A} h_{b c k_A} \prod_{j \in A}
    \eta^{\zeta_j}_{k_j},
  \end{equation}
  where we restrict $k_j \in E$ in \eqref{Eqn:Hbc}, $E\subseteq\Z^2$ being a finite set
  such that $\{ h_{b c k_A} \}$ is independent with $\{ \eta_k : k \in E \}$, and $\eta_{k_j} \in \{ g_{k_j} , |g_{k_j}|^2-1 \}$.  
  Then $\tau^{-1}M$-certainly, we have
  \[ 
    \| H_{b c} \|_{b \to  c} \lesssim \tau^{- \theta} M^{\theta} \cdot
    \max_{(B, C)} \| h \|_{b k_B \to  c k_C}, 
  \]
  for some $\tau\ll 1$,
  where $(B, C)$ runs over all partitions of $A$.
\end{prop}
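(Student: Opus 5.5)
The plan is to follow the moment-method proof of \cite[Proposition 4.14]{DNY2022}, which is insensitive to the dimension and to the dispersion relation. It therefore transfers verbatim once the index sets are read as subsets of $\mathbb Z^2$ instead of $\mathbb Z$. The first step removes the supremum over the operator norm. By the localization $\langle b-b_0\rangle,\langle c-c_0\rangle\lesssim M$, the matrix $H_{bc}$ has at most $\mathcal O(M^{2q})$ entries, hence at most polynomially many in $M$. We estimate $\|H\|_{b\to c}$ through the trace moment
\[
\|H\|_{b\to c}^{2p}\le \operatorname{tr}\big((HH^*)^p\big)=\sum_{b^1,c^1,\dots,b^p,c^p} H_{b^1c^1}\overline{H_{b^2c^1}}H_{b^2c^2}\cdots\overline{H_{b^1c^p}},
\]
with $p\sim M^{\theta/10}$ or a large fixed power chosen at the end. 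Conditioning on the $\sigma$-algebra generated by $h$ (independent of $\{\eta_k:k\in E\}$), we expand each $H$ by \eqref{Eqn:Hbc}. We then take expectation in the Gaussians. The product becomes a polynomial of degree $2p|A|$ in the $\eta_k$, $k\in E$.

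Next comes the Wick/Isserlis expansion. Each nonvanishing term corresponds to a pairing or grouping of the $2p|A|$ factors $\eta^{\pm}_{k_j}$. The hypothesis that there is no pairing within a single $h_{bck_A}$ forces every pairing to link distinct copies of $h$. For $\eta=|g|^2-1$ factors, which have mean zero, the same grouping argument applies, with higher-order groupings giving at most combinatorial constants $(Cp)^{Cp}$. For each grouping pattern, the resulting contracted sum is a chain/network of $2p$ copies of $h$ and $\bar h$ glued along the matched $k_j$ variables and along the cyclic $b,c$ variables. We bound it by repeatedly applying the semi-product inequality, Proposition~\ref{PROP:puretensor}. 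At each copy, the indices shared with earlier copies (including the incoming $b$ or $c$) form the input set and the rest form the output set. This gives a bound of the form $\prod\|h\|_{bk_B\to ck_C}$ over suitable partitions. The cyclic structure closes with at most one Hilbert--Schmidt-type loss, bounded by the number of possible $b$ values, i.e.\ $M^{\mathcal O(1)}$, independent of $p$. Counting patterns then yields
\[
\mathbb E\operatorname{tr}\big((HH^*)^p\big)\le (Cp)^{Cp} M^{C}\Big(\max_{(B,C)}\|h\|_{bk_B\to ck_C}\Big)^{2p}.
\]

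The conclusion follows from Chebyshev. We take $p$ so that $M^{C/p}\le M^{\theta}$ and threshold at $\tau^{-\theta}M^{\theta}\max\|h\|$. The failure probability becomes $\le C_\theta e^{-c(\tau^{-1}M)^{\theta}}$, i.e.\ the bound holds $\tau^{-1}M$-certainly. Alternatively, the tail can be derived from hypercontractivity, Lemma~\ref{LEM:wc}, applied to the finite-degree chaos, combined with a union bound over the $M^{\mathcal O(1)}$ configurations; this is the same route as Lemma~\ref{LEM:LDE}.

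The main obstacle is the combinatorial step: ordering the $2p$ tensor copies so that every contracted diagram is dominated by a product of $\|h\|_{bk_B\to ck_C}$ norms with only one $M^{\mathcal O(1)}$ loss. Over-paired indices ($k_i=k_j=k_\ell$ across copies) and $|g|^2-1$ factors must be handled by the grouping argument without introducing a pairing within a single copy. For this step I would directly cite the graph-ordering argument of \cite{DNY2022} (or its adaptation in \cite{Wang2024}). That argument only uses the tensor structure and never the dispersion, so no new fractional input is needed.
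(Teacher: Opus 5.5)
Your route is the paper's. The paper gives no argument of its own: it imports \cite[Proposition 4.14]{DNY2022}, whose proof is this trace-moment method and uses nothing about the dimension or the dispersion relation. The localization of $b,c$ is exactly what keeps the Hilbert--Schmidt loss at the closing of the cycle polynomial in $M$.

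One step needs correcting, namely the choice of $p$.
\begin{itemize}
\item With $p$ fixed, or with $p\sim M^{\theta/10}$ independent of $\tau$, Chebyshev only gives a failure probability polynomial in $\tau$, not $C_\theta e^{-c(\tau^{-1}M)^{\theta}}$.
\item Instead, take $p$ to be a small power of $\tau^{-1}M$, with exponent small relative to $\theta/|A|$. Then the $(Cp)^{Cp}$ matching count loses to $(\tau^{-\theta}M^{\theta})^{2p}$.
\item Alternatively, keep $p$ fixed and apply Lemma~\ref{LEM:wc} to the scalar $\operatorname{tr}((HH^*)^p)$, which is a chaos of degree at most $4p|A|$. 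No union bound is needed in that case.
\end{itemize}

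A union bound over the $M^{\mathcal O(1)}$ entries, as in Lemma~\ref{LEM:LDE}, cannot replace this. It only bounds $\|H\|_{bc}$ by $\|h\|_{bck_A}$, which can exceed $\max_{(B,C)}\|h\|_{bk_B\to ck_C}$ by a power of $M$; take for instance $h_{bck_A}=\mathbf 1_{b=c}f_{k_A}$.

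The over-pairing issue you single out as the main obstacle is not a real one. Wick's formula for complex Gaussians writes $\mathbb E\prod g\prod\bar g$ as a sum over perfect matchings of Kronecker deltas, with coincidences allowed. A factor $|g|^2-1$ only forbids matching its own two factors. So every diagram is an honest graph, each index shared by exactly two copies, and contracting in cyclic order gives Hilbert--Schmidt norms only at the first and last copies.
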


\begin{rem} 
\label{RMK:absorb}
In what follows, we may omit the constant $\tau^{-\theta} M^\theta$, as these constants may be absorbed most of the time by choosing $\theta \ll 1$ in our analysis. See \cite{DNY2022,DNY2024}  for similar arguments. However, there do exist situations in which we cannot absorb $M^{\theta}$ even if $\theta\ll1$. For example, the operator norm with low frequency $L$ does not allow any high frequency increase.  This drives us to seek a variant of the above proposition under specific conditions, which is similar to \cite[Proposition 4.15]{DNY2022}.
\end{rem}

\begin{lem}[A special random tensor estimate]
\label{HLLtensor}
    Consider the same setting as in Proposition $\ref{Prop:tensor}$, with the following differences: (1) we only restrict $\langle k_j\rangle\lesssim L$ but do not impose any condition on $\langle b\rangle$ or $\langle c\rangle$; (2) we assume that $b,c\in\Z^2$, and that in the support of $h_{bck_A}$ we have $|b-\zeta c|\lesssim L$ where $\zeta\in\{\pm\}$; (3) the tensor $h_{bck_A}$ only depends on $b-\zeta c$, $|b|^\alpha-\zeta |c|^\alpha$ and $k_A$, and is supported in the set where $||b|^\alpha-\zeta|c|^\alpha|\leq L^{C_1}$;(4) a $\mathcal{O}(L^{-C_2})$ perturbation on $|b|^\alpha-\zeta |c|^\alpha$ does not affect the norm $\|h\|_{bk_B\to ck_C}$. Here, $C_1$, $C_2$ are constants that we can choose to be large enough. The other assumptions are the same. Then $\tau^{-1}L$-certainly we have
 
  \begin{equation}\label{HLLthisbound}
  \|H_{bc}\|_{b\to c}\lesssim \tau^{-\theta}L^\theta\cdot\sup_{(B,C)}\|h\|_{bk_B\to ck_C}.
  \end{equation}
\end{lem}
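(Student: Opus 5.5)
The plan is to follow the proof of \cite[Proposition~4.15]{DNY2022}: reduce to Proposition~\ref{Prop:tensor} by localizing $b$ and $c$ into windows of size $L$, and control the number of windows by the structure assumptions (2)--(4). Since $h_{bck_A}$ depends only on $d:=b-\zeta c$, on $\Phi:=|b|^\alpha-\zeta|c|^\alpha$ and on $k_A$, with $|d|\lesssim L$ and $|\Phi|\le L^{C_1}$, the operator $H_{bc}$ is determined by the finite-dimensional family of random variables
\[
\mathcal H(d,\Phi):=\sum_{k_A} h(d,\Phi,k_A)\prod_{j\in A}\eta_{k_j}^{\zeta_j}.
\]
By assumption (4), we may round $\Phi$ to the lattice $L^{-C_2}\mathbb Z$. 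This loses only a harmless error, which is absorbed into the right-hand side of \eqref{HLLthisbound}. After rounding, $(d,\Phi)$ ranges over a set of cardinality $\lesssim L^{2+C_1+C_2}$. This is polynomial in $L$, and it is independent of the sizes of $b$ and $c$.

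Next, I will take a union bound over this polynomial family. I apply Lemma~\ref{LEM:LDE}, or directly Proposition~\ref{Prop:tensor} with $M\sim L$, to each sub-tensor obtained by freezing a small window of $(d,\Phi)$ values. This holds $\tau^{-1}L$-certainly, and since there are only $L^{O(1)}$ of them, the exceptional probability stays $\lesssim e^{-c(\tau^{-1}L)^{\theta}}$. The deterministic step is then to recover $\|H_{bc}\|_{b\to c}$ from these frozen pieces without any loss in $\langle b\rangle$. The key decomposition is to write $b=b_0+f$ and $c=\zeta(b_0-d)+\zeta f$ with $f$ ranging over $\mathbb Z^2$. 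Then $\Phi=|b_0+f|^\alpha-|b_0-d+f|^\alpha$ for $\zeta=+$, and $\Phi=|b_0+f|^\alpha+|c_0+f|^\alpha$ for $\zeta=-$. As indicated in Subsection~\ref{SUB:ideas}, this is a nonlinear function of $f$.

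The main obstacle is this last point, and I would handle it in the following way. I partition $f$ into regions where a Taylor expansion of $f\mapsto |x+f|^\alpha-|y+f|^\alpha$ (with $|x-y|\lesssim L$) has a definite leading term: a linear term $\alpha (x-y)\cdot G(f)$ in the region where $|f|\gg L$, together with a bounded number of exceptional regions near the singular points $f=-x,-y$. On each region, the map $f\mapsto\Phi$ has bounded complexity in the sense of Lemma~\ref{lem:one-dimensional-structure}. Hence the matrix $(b,c)\mapsto H_{bc}$ restricted to the region is a block-diagonal sum, indexed by level sets of $\Phi$ at scale $L^{-C_2}$, of pieces that act like the frozen $\mathcal H(d,\Phi)$ composed with translations. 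The $b\to c$ norm of a block-diagonal operator is the supremum over blocks, so it suffices to bound each block by $\sup_{(B,C)}\|h\|_{bk_B\to ck_C}$ up to $L^\theta$. For this I use Proposition~\ref{PROP:puretensor} and the independence from $\{\eta_k:k\in E\}$. Finally, I sum over the $\mathcal{O}(1)$ regions, or over $\log L$ many regions if a dyadic decomposition in $|f|$ near the singular points is needed. The resulting loss is at most $L^\theta$.
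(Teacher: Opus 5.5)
There is a genuine gap in the deterministic step. A union bound over the $L^{O(1)}$ scalar variables $\mathcal H(d,\Phi)$, via Lemma~\ref{LEM:LDE}, only controls individual entries: $|H_{bc}|\lesssim L^{\theta}\|h_{bc\,\cdot}\|_{k_A}$. The operator norm cannot in general be recovered from entries. The mixed norms $\|h\|_{bk_B\to ck_C}$ in \eqref{HLLthisbound} reflect random cancellation across a whole localized matrix, and only a moment/chaos argument such as Proposition~\ref{Prop:tensor} captures this. A deterministic passage, for instance writing $H=\sum_{|d|\lesssim L}D_dS_d$ (a shift composed with a diagonal multiplier), loses a factor up to $L^2$ rather than $L^\theta$.

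Your alternative of applying Proposition~\ref{Prop:tensor} to pieces with $(d,\Phi)$ frozen in a window also fails. Such a piece is still supported on an unbounded set of $b$, so the hypothesis $\langle b-b_0\rangle,\langle c-c_0\rangle\lesssim M$ with $M\sim L$ is violated. Restricting to level sets of $\Phi$ does not give a block-diagonal operator either. A level set of $\Phi(b,c)$ is a set of pairs, not a product of a row set with a column set, so a fixed row $b$ meets many levels and you are left with a sum of $L^{C_1+C_2}$ pieces. Lemma~\ref{lem:one-dimensional-structure}, which counts monotonicity intervals of a one-variable function, does not address any of this.

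The missing ingredients are a spatial localization and a count of types. Since $|b-c|\lesssim L$, orthogonality reduces $\|H\|_{b\to c}$ to $\sup_f$ of the norms of the restrictions to $|b-f|,|c-f|\lesssim L$. In the variables $x=b-f$, $y=c-f$ every index is $\lesssim L$, so Proposition~\ref{Prop:tensor} applies to each box $\tau^{-1}L$-certainly.

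There are infinitely many $f$, so one must then show that the box matrices fall into only $L^{O(1)}$ classes. They depend on $f$ only through $(x,y)\mapsto|x+f|^\alpha-|y+f|^\alpha$ on the support, up to $\mathcal O(L^{-C_2})$. Boxes with $|f|\lesssim L^{C_2/(2-\alpha)}$ are handled by a direct union bound. For larger $|f|$, the Taylor expansion (whose linear leading term you did identify) reduces the phase to $F_f(m)=\alpha|f|^{\alpha-2}f\cdot m$ with $m=x-y$.

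The real work is Claim~\ref{maintype}: the pairs $(F_f,\{|m|\lesssim L,\ |F_f(m)|\lesssim L^{C_1}\})$ have only polynomially many types. If the domain contains three non-collinear lattice points, then $|f|^{\alpha-1}\lesssim L^{C_1+3}$, so there are polynomially many such $f$. Otherwise the domain lies on a lattice line, where $F_f$ is affine in one integer parameter, and its slope and intercept, bounded by $L^{C_1}$, can be discretized. Without this count the union bound over $f$ is unjustified, and this count is the heart of the lemma.
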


\begin{proof}

We may assume $\zeta=+$, since the other case is easier. Since $h_{bck_A}$ is supported in the set where $|b-c|\lesssim L$, by an orthogonality argument we may restrict $h$ to the set where $|b-f|\lesssim L$ and $|c-f|\lesssim L$, and to bound $\|H_{bc}\|_{b\to c}$ it suffices to bound these restricted operators uniformly in $f$.

For any $f\in\Z^2$, let $x=b-f$ and $y=c-f$, then $x$ and $y$ are both assumed to have size $\lesssim L$, and it suffices to estimate the norm

\[\|\widetilde{H}_{f;xy}\|_{x\to y},\quad\mathrm{where}\quad \widetilde{H}_{f;xy}=\sum_{k_A}\widetilde{h}_{f;xyk_A}\prod_{j\in A}\eta_{k_j}^{\zeta_j},\quad\mathrm{and}\quad\widetilde{h}_{f;xyk_A}:=h_{x+f,y+f,k_A}\cdot\mathbf{1}_{|x|,|y|\lesssim L}.\] 

For any fixed $f$, we may apply Proposition \ref{Prop:tensor} to get that $\tau^{-1}L$-certainly

\begin{equation}\label{thisbound}
\|\widetilde{H}_{f;xy}\|_{x\to y}\lesssim \tau^{-\theta}L^\theta \cdot \sup_{(B,C)}\|\widetilde{h}_{f;xyk_A}\|_{xk_B\to yk_C}\leq \tau^{-\theta}L^\theta\cdot \sup_{(B,C)}\|h\|_{bk_B\to ck_C},
\end{equation} 
so it suffices to establish (\ref{thisbound}) uniformly in $f$. By our assumption, $\widetilde{h}_{f;xyk_A}$ only depends on $(x,y,k_A)$ and $|x+f|^\alpha-|y+f|^{\alpha}$ and 
 $\mathcal{O}(L^{-C_2})$ perturbation on $|b|^\alpha-\zeta |c|^\alpha$ is acceptable. Hence we should analyze the number of choices of $f$ such that $|x+f|^\alpha-|y+f|^{\alpha}$ (under perturbation) and its domain have less than $\mathcal{O}(L^{C_3})$ types.

 First of all, we may assume $|f|\gtrsim L^{\frac{C_2}{2-\alpha}}$ since we may use Proposition \ref{Prop:tensor} $\mathcal{O}(L^{\frac{2C_2}{2-\alpha}})$ times and still get the bound $\tau^{-1}L$-certainly. Now denote 

 \begin{equation}\label{taylorremainder}
     R_{\alpha}(x_1,x_2) = |x_1+x_2|^\alpha-|x_2|^\alpha-\alpha|x_1|^{\alpha-2}x_1\cdot x_2 ,\quad x_1,x_2\in\Z^2,
 \end{equation}
\noi
 then we have $|x+f|^\alpha-|y+f|^{\alpha} = \alpha|y+f|^{\alpha-2}(y+f)\cdot (x-y) + R_\alpha(y+f,x-y)$, where $|y+f|\sim |f| \gg L \gtrsim|x-y|$.  To give an estimate of $R_{\alpha}(x_1,x_2)$ when $|x_1|\gg |x_2|$, we write

 \[z = \frac{2x_1 \cdot x_2 + \vert x_2 \vert^2}{\vert x_1 \vert^2} \in \mathbb{R}, \quad \vert z \vert \leqslant \frac{2\vert x_2 \vert}{\vert x_1 \vert} + \left( \frac{\vert x_2 \vert}{\vert x_1 \vert} \right)^2 \ll 1\]
 \noi
 and 

 \begin{equation}\label{taylorexpand}
 \begin{aligned}
      \vert x_1 + x_2 \vert^\alpha &= \left( \vert x_1 \vert^2 + 2x_1 \cdot x_2 + \vert x_2 \vert^2 \right)^{\frac{\alpha}{2}} = \vert x_1 \vert^\alpha \left( 1 + \frac{2x_1 \cdot x_2 + \vert x_2 \vert^2}{\vert x_1 \vert^2} \right)^{\frac{\alpha}{2}} = \vert x_1 \vert^\alpha(1 + z)^{\frac{\alpha}{2}}\\
      & = \vert x_1 \vert^\alpha + \alpha \vert x_1 \vert^{\alpha - 2} x_1 \cdot x_2 + \frac{\alpha}{2} \frac{\vert x_2 \vert^2}{\vert x_1 \vert^{2-\alpha} } + \frac{\alpha (\alpha -2)}{4} (1 - \theta_0) (1 + \theta_0 z)^{\frac{\alpha}{2} - 2} z^2|x_1|^\alpha     
 \end{aligned}
 \end{equation}
\noi
where we can use the Taylor expansion with Cauchy remainder. By \eqref{taylorremainder} and \eqref{taylorexpand}, we have

\[
    |R_{\alpha}(x_1,x_2)| \leq \frac{\alpha}{2} \cdot \frac{\vert x_2 \vert^2}{\vert x_1 \vert^{2 - \alpha}} + \frac{\alpha (2 - \alpha)}{4} \cdot \frac{1 - \theta_0}{1 + \theta_0 z} (1 + \theta_0 z)^{\frac{\alpha-2}{2}} z^2|x_1|^\alpha  \lesssim \,\, \frac{\vert x_2 \vert^2}{\vert x_1 \vert^{2 - \alpha}},
\]
\noi
and hence we can approximate

\begin{equation}\label{alexpand1}
 |x+f|^\alpha-|y+f|^{\alpha} = \alpha|y+f|^{\alpha-2}(y+f)\cdot (x-y) + \mathcal{O}(L^{2-C_2}).
\end{equation}
Then consider the function $G(x)=x|x|^{\alpha-2}$ as in the proof of Lemma \ref{2vec1} and the path $ \gamma_f(t) = f + ty, \, t\in[0,1].$ We know that $|\gamma_f(t)| \sim |f|$ and $\alpha DG(x)=H(|x|^\alpha)$ defined in \eqref{eq:H2}. Therefore

\[
    \Big|G(\gamma_f(1)) - G(\gamma_f(0))\Big| = \left|\int_0^1 (DG)(\gamma_f(t)) \, y \, dt\right| \lesssim |f|^{\alpha - 2} |y| \lesssim L^{1 - C_2},
\]
\noi
and we can write

\begin{equation}\label{alexpand2}
     |x + f|^\alpha - |y + f|^\alpha = \alpha |f|^{\alpha - 2} f \cdot (x - y) + \mathcal{O}(L^{2 - C_2}).
\end{equation}

Now we only need to consider the main term and its domain. Actually, we have the following claim:

\begin{claim}\label{maintype}
 Let $F_{f}(m) = \alpha |f|^{\alpha - 2} f \cdot m$, $Dom(F_f) =\{m\in\Z^2 : |m|\lesssim L, \, |F_{f}(m)|\lesssim L^{C_1}\}$. There exist $A_0\lesssim L^{\frac{c C^3_1}{\alpha-1}} $ and $\{(F_{f_j}, Dom(F_{f_j}))\}_{j=1}^{A_0}$ such that for any $f\in\Z^2$, there exists $j\in\{1,2,\dots,A_0\}$ such that
 \begin{center}
    $ Dom(F_f) = Dom(F_{f_j})$ , $F_f = F_{f_j} + \mathcal{O}(L^{-C^2_1})$ on $Dom(F_f)$.
    \end{center}
\end{claim}

\begin{proof}[Proof of Claim \ref{maintype}]

First suppose that \(Dom(F_f)\) contains three non-collinear points
\(q_0,q_1,q_2\). There are at most \(\mathcal{O}(L^6)\) choices, so we may fix
them. Since \(y_j:=q_j-q_0\in\mathbb Z^2\), \(j=1,2\), are linearly
independent, we can write

\[
     \alpha |f|^{\alpha - 2} f=: x_f = \lambda_1 y_1 + \lambda_2 y_2
\]
\noi
where
\[
    \lambda_1 = \frac{ (x_f \cdot y_1) |y_2|^2 - (x_f \cdot y_2) (y_1 \cdot y_2) }{ |y_1|^2 |y_2|^2 - |y_1 \cdot y_2|^2 },\quad \lambda_2 = \frac{ (x_f \cdot y_2) |y_1|^2 - (x_f \cdot y_1) (y_1 \cdot y_2) }{ |y_1|^2 |y_2|^2 - |y_1 \cdot y_2|^2 }.
\]
Note that $|y_1|^2 |y_2|^2 - |y_1 \cdot y_2|^2\geq 1$, and for $j=1,2$,
\[
    |x_f\cdot y_j|=\left| \alpha |f|^{\alpha - 2} f \cdot (q_j - q_0) \right| \leq |F_f(q_1)| + |F_f(q_0)| \lesssim L^{C_1},
\]
\noi
we have $ |x_f| \leq |\lambda_1|  |y_1| + |\lambda_2| |y_2| \lesssim L^{C_1 +3} $ and thus

\begin{equation}
    |f|\lesssim L^{\frac{C_1 +3}{\alpha-1}}.
\end{equation}

\noindent Since $f\in\Z^2$, there are at most $\mathcal{O}(L^{\frac{2C_1+6}{\alpha-1}})$ possibilities for this case.

\smallskip

Secondly, assume that \(Dom(F_f)\) is contained in a line. Empty domains
give one type. For a singleton domain, there are \(O(L^2)\) choices of its
point, and discretizing the single value \(F_f(q_0)\), whose magnitude is
at most \(L^{C_1}\), at mesh \(L^{-C_1^2}\) gives only polynomially many
types. We may therefore assume \(|Dom(F_f)|\geq2\). Fix two points
\(q_0,q_1\), put
\(q_1-q_0=(n_1,n_2)\), \(d=\gcd(n_1,n_2)\), and
\[
 k=\left(\frac{n_1}{d},\frac{n_2}{d}\right).
\]
Every lattice point on this line has the form \(m=q_0+\sigma k\) with
\(\sigma\in\mathbb Z\). Indeed, choose integers \(a_1,a_2\) with
\(a_1k_1+a_2k_2=1\); then
\(\sigma=(a_1,a_2)\cdot(m-q_0)\in\mathbb Z\). Therefore, we can write

\begin{equation}
\begin{cases}
        F_f(m) = a_f\sigma + b_f, \\
        Dom(F_f) = \{q_0+\sigma k : \sigma\in\Z,\, |q_0+\sigma k|\lesssim L, \, |a_f\sigma + b_f|\lesssim L^{C_1}\}.
\end{cases}
\end{equation}
\noi
where $|a_f| = \left|\alpha |f|^{\alpha - 2} f\cdot k\right|\lesssim L^{C_1}$ and $|b_f| = \left|\alpha |f|^{\alpha - 2} f\cdot q_0\right|\lesssim L^{C_1}$. Taking this form into account, we define the intermediate approximation functions as follows:

Take $\{a_j\}_{j=1}^{A_1}$, $\{b_l\}_{l=1}^{B_1} \subseteq \mathbb{R}$, such that for any $a,b\in\mathbb{R}$ with $|a|$, $|b|\lesssim L^{C_1}$ , there exist $ a_j$, $b_l$ with $|a-a_j|\leq L^{-C^2_1-1}$ and $|b-b_l|\leq L^{-C^2_1}$ . We may choose these nets with
\[
A_1\lesssim L^{C_1^2+C_1+1},
\qquad
B_1\lesssim L^{C_1^2+C_1}.
\] Then we define

\begin{equation}
\begin{cases}
    F^{q_0,k}_{j,l}(m) = a_j\sigma + b_l, \\
    Dom(F^{q_0,k}_{j,l}) = \{q_0+\sigma k : \sigma\in\Z,\, |q_0+\sigma k|\lesssim L, \, |a_j\sigma + b_l|\lesssim L^{C_1}\}.
\end{cases}
\end{equation}
Once $q_0,k$ are fixed, the geometric condition
$|q_0+\sigma k|\lesssim L$ restricts $\sigma$ to an interval, and the additional affine inequality cuts out a subinterval containing $0$. Then the domain is essentially a sequence of successive integers $\sigma$. Thus there are only $\mathcal{O}(L^2)$ possible domains for each
quadruple $(q_0,k,j,l)$. We include all of these domains as
variants of $F^{q_0,k}_{j,l}$. The resulting total number of approximation
functions with domains is still
$\mathcal{O}(L^{\frac{cC_1^3}{\alpha-1}})$ for some absolute constant $c$.

Now return to the proof. For any $f\in\Z^2$ for which $Dom(F_f)$ is
contained in a line, and with $q_0,q_1$ fixed, choose $j,l$ so that
$|a_f-a_j|\leq L^{-C_1^2-1}$ and
$|b_f-b_l|\leq L^{-C_1^2}$. On the whole geometric $\sigma$-interval,
\[
|(a_f-a_j)\sigma+(b_f-b_l)|\lesssim L^{-C_1^2}.
\]
We assign to $(j,l)$ the interval variant whose domain is exactly
$Dom(F_f)$. Therefore

\[
\{ f \in \mathbb{Z}^2 : Dom(F_f)\,\, \text{is contained in a line} \} = \bigcup_{\substack{1 \leq j \leq A_1 ,\, 1 \leq l \leq B_1,\\ |a_j|, |b_l| \leq L,\, |q_0|,|q_1|\lesssim L}} E_{j,l}^{q_0,k}
\]
where

\[
E_{j,l}^{q_0,k} = \{ f \in \mathbb{Z}^2 : F_f = F_{j,l}^{q_0,k} + \mathcal{O}(L^{-C_1^2}),\, Dom(F_f) = Dom(F_{j,l}^{q_0,k})\},
\]
with the interval variants included in the indices on the right-hand side. Now for any $f\in E_{j,l}^{q_0,k}$, the functions $F_f$ share the same domain and admit $O(L^{-C_1^2})$ errors, so we can take one $f_{j,l,q_0,k}$ as representative and the claim is proven.    
\end{proof}

By the claim, the pairs consisting of the admissible domain and the phase
function fall into at most \(A_0=L^{\mathcal{O}(\frac{C_1^3}{\alpha-1})}\) classes, up to
an \(\mathcal{O}(L^{-C_1^2}+L^{2-C_2})\) phase error. Choose
\(C_2>C_1^2+3\). Assumption (4) then makes the deterministic tensor norm
constant on each class. Applying \eqref{thisbound} to one representative
of every class and taking a union bound costs only a fixed polynomial in
\(L\), which is absorbed by the exponential decay of the probability of exceptional set. This proves \eqref{HLLthisbound} uniformly in \(f\).

\end{proof}

\begin{rem}
    The proposition is designed for the operator norm of high-low-low random averaging operator. Reconsider the beginning part of the proof, we can get the estimate \eqref{HLLthisbound} for $(b,c)\in (\Z^2)^q$ , $q\geq2$ as long as there is only one "high-frequency" term in both $b$ and $c$. In fact, we can do the shift $f$ for the two high terms in $\Z^2$ and leave those "low-frequency" terms unchanged. Then we get the desired estimates. We need the situation when we analyze the $(C,D)$ or $(D,C)$ case.
\end{rem}

\smallskip

\section{Bilinear and trilinear estimates}\label{esti} 

\subsection{Preparation}

In this subsection, we record some basic lemmas from \cite{DNY2022}. Define the original and truncated integral
operators
\begin{align}
    \label{Duhamel_I}
  \mathcal{J} v (t) = \int^t_0 v (t') d t', \quad
  \mathcal{J}_{\eta} v (t) = \eta (t) \int^t_0 \eta (t') v (t') d t', 
\end{align}  

\noi 
where $\eta(t)$ is a smooth cutoff function, such that $\eta(t) = 1$ for $|t|\le 1$; and $\eta(t) = 0$ for $|t|\ge 2$. 

\begin{rem}\label{abuseXb}
Notice that the Duhamel operator $\mathcal{I}$ from \eqref{Duhamel} roughly reduces to the integral operator $\mathcal J$ defined in \eqref{Duhamel_I} under the transformation $u (t)  \to
e^{i  t {\rm D}^\alpha} u (t)$. 
See \cite[Section 4.1]{DNY2022} for further details. In Fourier space, the operator $e^{it{\rm D}^\alpha}$ results in a $|k|^\alpha$ shift in time, which is considered in the definition of $X^b$ space in the form of $\widetilde{u}$ . 
\end{rem}

We have the following kernel estimates. See \cite[Lemma 4.1]{DNY2022} for a proof.

\begin{lem}
  \label{LEM:kerneles}
  We have the formula
  \begin{equation}
    \label{Eqn:Ik} 
    \widehat{\mathcal{J}_{\eta} v} (\lambda) =
    \int_{\mathbb{R}} \mathcal{K} (\lambda, \lambda') \hat{v} (\lambda')
    d \lambda',
  \end{equation}
  where the kernel $\mathcal{K}$ satisfies the following pointwise estimates
  \begin{equation}
    \label{Eqn:bk} 
    | \mathcal{K} (\lambda,\lambda') | + | \partial_{\lambda, \lambda'}
    \mathcal{K} (\lambda,\lambda')  | \lesssim \bigg( \frac{1}{\langle \lambda \rangle^3} +
    \frac{1}{\langle \lambda - \lambda' \rangle^3} \bigg) \frac{1}{\langle
    \lambda' \rangle} \lesssim \frac{1}{\langle \lambda \rangle \langle
    \lambda - \lambda' \rangle} .
  \end{equation}
\end{lem}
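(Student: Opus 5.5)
The plan is to compute the kernel explicitly in Fourier variables and read off both bounds. Let $\widehat\eta$ denote the Fourier transform of the cutoff, which is Schwartz. Since $\mathcal J_\eta v(t)=\eta(t)\int_0^t \eta(t')v(t')\,dt'$, I would first write
\[
\int_0^t \eta(t')v(t')\,dt' = \int_{\mathbb R}\widehat{\eta v}(\mu)\,\frac{e^{it\mu}-1}{i\mu}\,d\mu,
\qquad
\widehat{\eta v}(\mu)=\int_{\mathbb R}\widehat\eta(\mu-\lambda')\hat v(\lambda')\,d\lambda'.
\]
Multiplying by $\eta(t)$ and taking the Fourier transform in $t$ gives \eqref{Eqn:Ik} with
\[
\mathcal K(\lambda,\lambda')=\int_{\mathbb R}\widehat\eta(\mu-\lambda')\,\frac{\widehat\eta(\lambda-\mu)-\widehat\eta(\lambda)}{i\mu}\,d\mu .
\]
The integrand is regular at $\mu=0$, because the numerator vanishes there; the quotient is a smooth difference quotient of a Schwartz function.

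\textbf{Case 1: $|\lambda'|\lesssim 1$.} Here the target bound reduces to $\langle\lambda\rangle^{-3}+\langle\lambda-\lambda'\rangle^{-3}\sim\langle\lambda\rangle^{-3}$. I would use the mean value form
\[
\frac{\widehat\eta(\lambda-\mu)-\widehat\eta(\lambda)}{\mu}=-\int_0^1\widehat\eta\,'(\lambda-s\mu)\,ds .
\]
Together with $|\widehat\eta(\mu-\lambda')|\lesssim\langle\mu\rangle^{-10}$, this makes the $\mu$-integral localize to $|\mu|\lesssim\langle\lambda\rangle/2$. On that range $|\lambda-s\mu|\gtrsim|\lambda|$, so the integrand is $\lesssim\langle\lambda\rangle^{-3}$. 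The tail $|\mu|\gtrsim\langle\lambda\rangle$ contributes $\lesssim\langle\lambda\rangle^{-3}$ as well, using $1/|\mu|$ and the rapid decay of $\widehat\eta(\mu-\lambda')$.

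\textbf{Case 2: $|\lambda'|\gg1$.} I would split $\mathcal K$ into the $\widehat\eta(\lambda-\mu)$ term and the $\widehat\eta(\lambda)$ term.
\begin{itemize}
\item In the first term, $\widehat\eta(\mu-\lambda')\widehat\eta(\lambda-\mu)$ concentrates at $\mu\approx\lambda'$, where $1/|\mu|\sim1/\langle\lambda'\rangle$. The convolution of two Schwartz functions gives decay $\langle\lambda-\lambda'\rangle^{-3}$. The region near $\mu=0$ is handled by the rapid decay of $\widehat\eta(\mu-\lambda')$ at $\mu\approx 0$, which is $\langle\lambda'\rangle^{-10}$.
\item The second term equals $\widehat\eta(\lambda)$ times a principal-value-type integral $\int\widehat\eta(\mu-\lambda')/\mu\,d\mu$, which is $\lesssim\langle\lambda'\rangle^{-1}$. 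This yields the $\langle\lambda\rangle^{-3}\langle\lambda'\rangle^{-1}$ piece.
\end{itemize}
The main care point is the apparent singularity at $\mu=0$ when the two terms are split. I would handle it by cutting $|\mu|<1/2$ off with a smooth partition of unity. On that piece I keep the difference-quotient form, and there $|\widehat\eta(\mu-\lambda')|\lesssim\langle\lambda'\rangle^{-10}$ makes everything harmless.

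\textbf{Derivatives.} The bounds for $\partial_\lambda\mathcal K$ and $\partial_{\lambda'}\mathcal K$ follow the same way, since differentiation only falls on Schwartz factors $\widehat\eta$.

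\textbf{Final inequality.} The last inequality in \eqref{Eqn:bk} follows from two elementary facts: $\langle\lambda\rangle^{-3}\langle\lambda'\rangle^{-1}\lesssim\langle\lambda\rangle^{-1}\langle\lambda-\lambda'\rangle^{-1}$, since $\langle\lambda-\lambda'\rangle\lesssim\langle\lambda\rangle\langle\lambda'\rangle$; and the symmetric bound $\langle\lambda-\lambda'\rangle^{-3}\langle\lambda'\rangle^{-1}\lesssim\langle\lambda\rangle^{-1}\langle\lambda-\lambda'\rangle^{-1}$.
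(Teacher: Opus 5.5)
Your proposal is correct and follows the standard argument behind \cite[Lemma 4.1]{DNY2022}, which the paper cites rather than proving: you derive the explicit kernel $\mathcal K(\lambda,\lambda')=\int_{\mathbb R}\widehat\eta(\mu-\lambda')\,\frac{\widehat\eta(\lambda-\mu)-\widehat\eta(\lambda)}{i\mu}\,d\mu$ from $\int_0^t e^{it'\mu}\,dt'=\frac{e^{it\mu}-1}{i\mu}$ and bound it by Schwartz decay, keeping the difference-quotient form near $\mu=0$. One detail should be written out in Case 2. On the range $1\lesssim|\mu|\leq|\lambda'|/2$, the factor $\langle\lambda'\rangle^{-10}$ coming from $\widehat\eta(\mu-\lambda')$ does not by itself give the $\langle\lambda\rangle^{-3}$ decay needed when $|\lambda|\gg|\lambda'|$, so you must also keep the decay of $\widehat\eta(\lambda-\mu)$; using $\langle\lambda-\lambda'\rangle\lesssim\langle\lambda-\mu\rangle\langle\mu-\lambda'\rangle$ makes this immediate.
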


We also need the following short-time bound on Fourier restriction norm from \cite[Proposition 2.7]{DNY2024} and \cite[Lemma 4.2]{DNY2022}.

\begin{prop}
  \label{Prop:stb}{{\em (Short time bounds).\/}} 
  Let $\eta$ be any Schwartz
  function. Recall that $\eta_\tau (t) = \eta (\tau^{- 1} t)$ for $\tau \ll
  1$. Then for any $u \in X^{b_1}$, we have
  \[ 
    \| \eta_\tau \cdot u \|_{X^b} \lesssim \tau^{b_1 - b} \| u \|_{X^{b_1}}, 
  \]
  provided either $0 < b \le  b_1 < 1 / 2$, or $u_k (0) = 0$ and $1 / 2 <
  b \le  b_1 < 1$.
\end{prop}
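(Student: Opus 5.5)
We reduce to the time-localization estimate of Proposition~\ref{Prop:stb} in its standard setting, following \cite[Lemma 4.2]{DNY2022}. First we strip off the dispersion. By Remark~\ref{abuseXb}, the $X^b$ norm is defined through $\widetilde{u_k}(\lambda)=\widehat{u_k}(\lambda-|k|^\alpha)$. Multiplication by $\eta_\tau(t)$ acts on each mode $k$ as convolution in $\lambda$ with $\widehat{\eta_\tau}$, and this commutes with the shift by $|k|^\alpha$. Since the $\ell^2_k$ norm is taken pointwise in $\lambda$ and the operation is diagonal in $k$, it suffices to prove the scalar estimate
\[
\|\langle\lambda\rangle^{b}(\widehat{\eta_\tau}\ast f)\|_{L^2_\lambda}\lesssim \tau^{b_1-b}\|\langle\lambda\rangle^{b_1}f\|_{L^2_\lambda}
\]
for functions $f$ of one variable, with the constraint $\int f\,d\lambda=0$ in the second regime. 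This constraint is exactly the statement $u_k(0)=0$ after undoing the shift, since the phase $e^{-it|k|^\alpha}$ equals $1$ at $t=0$. Because $\ell^2_k$ is summed trivially, the dimension, the value of $\alpha$ and the lattice $\mathbb Z^2$ play no role.

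For the regime $0<b\le b_1<1/2$, we split $f$ into low-frequency and high-frequency parts, $|\lambda|\lesssim\tau^{-1}$ and $|\lambda|\gg\tau^{-1}$. Since $\widehat{\eta_\tau}(\lambda)=\tau\widehat\eta(\tau\lambda)$ is an approximate identity at scale $\tau^{-1}$, the high part keeps its frequency size, and we get the gain $\langle\lambda\rangle^{b-b_1}\lesssim\tau^{b_1-b}$ directly from Young's inequality. For the low part, the output is concentrated on $|\lambda|\lesssim\tau^{-1}$ with Schwartz tails. We bound the output in $L^\infty$ by
\[
\tau\|f\|_{L^1(|\lambda|\lesssim\tau^{-1})}\lesssim \tau\cdot\tau^{b_1-1/2}\|\langle\lambda\rangle^{b_1}f\|_{L^2}
\]
using Cauchy--Schwarz, which requires $b_1<1/2$. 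Integrating $\langle\lambda\rangle^{2b}$ over the output region then gives a factor $\tau^{-b-1/2}$, and combining the two yields $\tau^{b_1-b}$. The tails are handled by the rapid decay of $\widehat\eta$.

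For the regime $1/2<b\le b_1<1$ with $\int f=0$, we write $\widehat{\eta_\tau}\ast f(\lambda)=\int(\widehat{\eta_\tau}(\lambda-\mu)-\widehat{\eta_\tau}(\lambda))f(\mu)\,d\mu$. This difference form kills the low-frequency obstruction, because at $|\mu|\lesssim\tau^{-1}$ the difference is of size $\tau^2|\mu|$, which replaces the bad $L^1$ mass. We then repeat the dyadic splitting in $\mu$ relative to $\tau^{-1}$, using $\|\tau^2|\mu|f\|_{L^1(|\mu|\lesssim\tau^{-1})}\lesssim\tau^{b_1+1/2}\|\langle\mu\rangle^{b_1}f\|_{L^2}$, which is valid since $b_1<3/2$. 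For $|\mu|\gg\tau^{-1}$, we bound the two terms of the difference separately. The term $\widehat{\eta_\tau}(\lambda)\int_{|\mu|\gg\tau^{-1}}f$ needs $\int_{|\mu|\gg\tau^{-1}}|f|\lesssim\tau^{b_1-1/2}\|\langle\mu\rangle^{b_1}f\|_{L^2}$, which holds because $b_1>1/2$.

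The main obstacle is this high-frequency term in the second regime. There the mass of $f$ located at $|\mu|\gg\tau^{-1}$ is transported to $|\lambda|\lesssim\tau^{-1}$, so the exponents must be balanced with care to land exactly on $\tau^{b_1-b}$: the weight gives $\tau^{-b+1/2}$, which combines with $\tau\cdot\tau^{-1/2}\cdot\tau^{b_1-1/2}$. Since all of this is standard, in the paper we will simply cite \cite[Proposition 2.7]{DNY2024} once the reduction to scalar modes has been made.
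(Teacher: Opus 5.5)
Your proposal is correct and agrees with the paper, which gives no argument of its own and simply cites \cite[Proposition 2.7]{DNY2024} and \cite[Lemma 4.2]{DNY2022}. Your mode-by-mode reduction to a scalar weighted estimate (with $u_k(0)=0$ becoming $\int f=0$) and your sketch — the low/high split at $|\lambda|\sim\tau^{-1}$, and the mean-zero difference trick when $u_k(0)=0$ — are the standard argument behind those citations.

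One cosmetic slip: in your last paragraph the factor $\tau\cdot\tau^{-1/2}$ is counted twice, since $\|\langle\lambda\rangle^b\widehat{\eta_\tau}\|_{L^2}\sim\tau^{1/2-b}$ already contains it. The correct product is $\tau^{1/2-b}\cdot\tau^{b_1-1/2}=\tau^{b_1-b}$, as in your preceding paragraph.
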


Now we are ready to prove Proposition \ref{local2} by induction. So we assume $ \mathtt{Loc}(M) $ and there are basically two parts to deal with.

\subsection{Random averaging operator}

In this subsection, we prove \eqref{induct1} to \eqref{rao_HS}. In particular, we need to show \eqref{rao-OP} to \eqref{rao_HS} for $L_{\max}=M$ and \eqref{induct1} to \eqref{induct3} for $L=M$.

\subsubsection{Reduction to bilinear estimates}
First of all, the $\tau^{\theta}$ comes from Lemma \ref{Prop:stb} and a small change in $b$. Later we omit the factor $\tau^{\theta}L^{\theta}$ or $\tau^{\theta}N^{\theta}$ for simplicity. Then note that $\|\mathcal P^{\pm}\|_{Y^{1,0}} \lesssim L_{\max}^{C}$ and $\| \mathcal{P}^{\pm} \|_{Z^{1, b}} \lesssim N^4 L_{\max}^{C}$ for some $C\gg 1$ by trivial estimates. The key ingredient is the decomposition $y_L = \psi_{L,R_{L}}+z_L$ whose $X^b$ norm is known by the induction  hypothesis. A similar argument can be found in \cite[Corollary 4.1]{Wang2024}. By interpolation, it suffices to show that

\begin{equation}
\begin{aligned}
&\| \mathcal P^{\pm} \|_{Y^{1-b,b'}}\lesssim L^{-3\delta_0},\\
&\| \mathcal{P}^{+} \|_{Z^{1-b,b'}}  \lesssim N^{\frac{5-2\alpha}{2} + \frac{ 2\gamma_0}{3} } L^{ -\frac{\gamma_0}{2}}_{\max},
\end{aligned}
\end{equation}
\noi
where $b>b'>\frac{1}{2}$.

Recall that $\widehat{P^{\pm}_{k k'}} (\tau,
\tau')$ is the temporal Fourier transform of the kernel $P^{\pm}_{k k'} (t,
t')$ of $\mathcal{P}^{\pm}$ in \eqref{defop1} and \eqref{defop2}, i.e.
\[ 
  \widehat{(\mathcal{P}^{\pm} \psi)_k} (\lambda) = \sum_{k'}
  \int_{\mathbb{R}} \widetilde{P^{\pm}_{k k'}} (\lambda, \lambda')
  \widehat{\psi_{k'}} (\lambda') d \lambda' . 
\]

We start with the operator $\mathcal{P}^+$.
 Rewrite the temporal
Fourier transform of the kernel as
\[ 
  \begin{aligned}
     \widehat{P^+_{k k'}} (\lambda, \lambda')
      & = - i  \sum_{\substack{
      k_3 - k_2 = k - k'\\
      k_2 \not\in \{ k_3, k' \}
    }} \int_{\mathbb{R}^2} \mathcal{K} (\lambda, \Phi + \lambda'  - \lambda_2+
    \lambda_3)  
    \cdot \overline{(\widetilde{y_{L_2}})_{k_2}} (\lambda_2)
    \cdot ( \widetilde{y_{L_3}} )_{k_3} (\lambda_3)d \lambda_2 d \lambda_3\\
    & = - i \sum_{\substack{m\in\Z,\\|m|\lesssim N^{\alpha}}}\int_{\mathbb{R}^2} \sum_{k_2, k_3}
    \mathcal{K} (\lambda, m + \Phi - [\Phi] + \lambda'  - \lambda_2+ \lambda_3)\\
    &\hspace{3cm}\times\mathrm{T}_{k k' k_2 k_3}^{\text{b}, m} \cdot  \overline{(\widetilde{y_{L_2}})_{k_2}} (\lambda_2)\cdot ( \widetilde{y_{L_3}}
    )_{k_3} (\lambda_3) d \lambda_2 d \lambda_3,
  \end{aligned}
\]
where $\Phi = |k'|^{\alpha}-|k_2|^{\alpha}+|k_3|^{\alpha}-|k|^{\alpha}$ comes from the translation of each $\lambda_j$. (See Remark \ref{abuseXb}) $[\Phi]$ denotes the largest integer
that is not greater than $\Phi$, and the base tensor ${\rm T}^{{\rm b},m}$ is given by
\eqref{baseT}. Then by Minkowski inequality, Lemma \ref{LEM:kerneles} and H\"older inequality, we have

\begin{equation}
  \label{Eqn:re2} 
  \begin{aligned}
   \| \mathcal P^+    \|_{Y^{1-b,b'}}^2 &\lesssim  \int_{\mathbb{R}^2} \langle \lambda \rangle^{2 (1 - b)} \langle \lambda'
    \rangle^{- 2 b'} \| \widehat{P^+_{k k'}} (\lambda, \lambda') \|_{k \to 
    k'}^2 d \lambda d \lambda'\\
    & \lesssim \int_{\mathbb{R}^2} \langle \lambda \rangle^{- 2 b} \langle \lambda'
    \rangle^{- 2 b'} \bigg( \sum_{|m|\lesssim N^\alpha} \int_{\mathbb{R}^2} 
    \langle \lambda_2 \rangle^{- b} \langle \lambda_3 \rangle^{- b} \\
    &\hspace{1.5cm}\times \Big\|  \sum_{k_2, k_3} \langle \lambda - m + \Phi - [\Phi]
    - \lambda'  + \lambda_2 - \lambda_3 \rangle^{- 1} \\
    & \hspace{1.5cm} \times \mathrm{T}_{k k' k_2
    k_3}^{\text{b}, m} \cdot \overline{( \langle \lambda_2
    \rangle^b \widetilde{y_{L_2}} )_{k_2} } (\lambda_2) \cdot ( \langle \lambda_3 \rangle^b \widetilde{y_{L_3}}
    )_{k_3} (\lambda_3)  \Big\|_{k
    \to  k'}  d \lambda_2 d \lambda_3\bigg)^2 d \lambda
    d \lambda'\\
    & \lesssim \int_{\mathbb{R}^4} \langle \lambda \rangle^{- 2 b} \langle \lambda'
    \rangle^{- 2 b'}  \\
    &\hspace{1cm}\times\left(\sum_{|m|\lesssim N^\alpha}\langle \lambda - m - \lambda' + \lambda_2 - \lambda_3\rangle^{- 1} \langle \lambda_2
    \rangle^{- b} \langle \lambda_3 \rangle^{- b}\right)^2d \lambda_2 d \lambda_3 d
    \lambda d \lambda'\\
    & \hspace{1cm}  \times \sup_{m\in\Z}\Big\|\sum_{k_2, k_3} \mathrm{T}_{k k'
    k_2 k_3}^{\text{b}, m}  \cdot \overline{( \langle \lambda_2
    \rangle^b \widetilde{y_{L_2}} )_{k_2} }(\lambda_2) \cdot ( \langle \lambda_3 \rangle^b
    \widetilde{y_{L_3}} )_{k_3} (\lambda_3)\Big\|^2_{L^2_{\lambda_2\lambda_3}(k
    \to  k')} \\
  \end{aligned}
\end{equation}

\noi 
\noi
where $b > b' > \frac12$. Now we have to localize $m$ to avoid a $\log N$ loss. In fact, if $|\Phi|\sim |m|\gtrsim L^{C_1}_{\max}$, we will gain $L_{\max}^{-2bC_1}$ from the convolution in $\lambda$ and $\lambda_j$, then a trivial estimate is enough to achieve the desired bound by taking $C_1$ large. Hence, we assume that $|m|\lesssim L^{C_1}_{\max}$. Continuing the estimate \eqref{Eqn:re2}, we use the sum over $m$

\[
\sum_{|m|\lesssim L^{C_1}_{\max}}\langle m-\lambda + \lambda'-\lambda_2+\lambda_3\rangle^{-1}\lesssim C_1 \log L_{\max} \lesssim L_{\max}^{\theta}
\]
\noi
to obtain the estimate

\[
\begin{aligned}
    \| \mathcal P^+ \|_{Y^{1-b,b'}}&\lesssim  L^{\theta}_{\max}\cdot \sup_{m\in\Z}\Big\|\sum_{k_2, k_3} \mathrm{T}_{k k'
    k_2 k_3}^{\text{b}, m}  \cdot \overline{( \langle \lambda_2
    \rangle^b \widetilde{y_{L_2}} )_{k_2} }(\lambda_2) \cdot ( \langle \lambda_3 \rangle^b
    \widetilde{y_{L_3}} )_{k_3} (\lambda_3)\Big\|_{L^2_{\lambda_2\lambda_3}(k
    \to  k')}\\
    & \lesssim L^{\theta}_{\max}\left\| \sum_{k_2,k_3}\mathrm{T}_{k k'
    k_2 k_3}^{\text{b}, m}  \cdot \|( \langle \lambda_2
    \rangle^b \widetilde{y_{L_2}} )_{k_2} (\lambda_2)\|_{L^2_{\lambda_2}} \cdot \|( \langle \lambda_3 \rangle^b
    \widetilde{y_{L_3}} )_{k_3} (\lambda_3)\|_{L^2_{\lambda_3}} \right\|_{k\to k'}
\end{aligned}
\]
We can deal with $\| \mathcal P^-    \|_{Y^{1-b,b'}}$ in the same way. As for $\| \mathcal{P}^{\pm} \|_{Z^{1-b,b'}}$, we cannot localize $|m|\lesssim L_{\max}^{C_1}$, but the restriction $|m|\lesssim N^\alpha$ is acceptable. Multiplying $\log N$ will not affect the estimates for $Z^{1-b,b'}$ norm. Then for $(N,M)\in \mathcal{K}_0$ and  $L_j\leq M$, recall that

\[
 y_{L_j} = e^{-it{\rm D}^\alpha}(\Delta_{L_j} u^{\omega})+\sum_{(L_j,L)\in\mathcal{K}_0}\zeta_{L_j,L}+z_{L_j},
\]
\noi
and we know all bounds of the above terms by $ \mathtt{Loc}(M) $ since $L < L_j^{1-\delta}<M$. Moreover, the initial data component can be viewed as a special kind of random averaging operator with $h^{Lj,\frac{1}{2}}_{kk'} = \ind_{\frac{L_j}{2}<\jb{k}\leq L_j}\ind_{k=k'}$. Hence, it reduces to the following bilinear estimates with a slight abuse of notations:

\begin{equation}\label{needtoshow}
\|\mathscr{P}^{\pm}_{k k'}\|_{k\to k'} \lesssim L_{\max}^{-3\delta_0},\quad 
\|\mathscr{P}^{+}_{k k'}\|_{k k'}  \lesssim N^{\frac{5-2\alpha}{2} + \frac{ \gamma_0}2 } L^{ - \frac{\gamma_0}{2}}_{\max},
\end{equation}
\noi
where

\begin{equation}
\mathscr{P}^{+}_{k k_1} = \sum_{k_2, k_3} \mathrm{T}_{k k_1
    k_2 k_3}^{\text{b}, m}  \cdot \overline{ (w_{L_2})_{k_2} } \cdot (w_{L_3})_{k_3},\quad
    \mathscr{P}^{-}_{k k_2} = \sum_{k_1, k_3} \mathrm{T}_{k k_1
    k_2 k_3}^{\text{b}, m}  \cdot (w_{L_1})_{k_1} (w_{L_3})_{k_3}.
\end{equation}
Here $w_{L_j}$ are of the following two types:

\begin{itemize}
  \item Type (C), where
  \[ 
    (w_{L_j})_{k_j} := \sum_{ {k_j'} } h_{k_j
    k_j'}^{L_j, R_j} (\omega) \frac{g_{k_j'} (\omega)}{\jbb{k_j'}^{\frac{\alpha}{2}}},
  \]
  with $h_{k_j k_j'}^{L_j, R_j}$ supported in the set $\{  k_j': \frac{L_j}{2} < \jb{k_j'} \le  L_j \}$ and $\mathcal{B}_{\le 
  R_j}$-measurable for some $R_j \le  L_j^{1 - \delta}$, and satisfying
  the bounds
  \begin{equation}
    \label{Eqn:ee1} 
       \| h_{k_j k_j'}^{L_j, R_j} \|_{\ell_{k_j}^2 \to  \ell_{k_j'}^2}
      \lesssim R_j^{- \delta_0},\quad \| h_{k_j k_j'}^{L_j, R_j} \|_{{\ell_{k_j k_j'}^2}} \lesssim
      L_j^{\frac{5-2\alpha}{2} + \gamma_0} R_j^{ -\frac{\gamma_0}{4}}.
  \end{equation}

\noi
Moreover, from
  \eqref{induct3} we may assume that $h_{k_j k_j'}^{L_j, R_j}$ is supported in 
  \{$| k_j - k_j' | \lesssim L_j^{\kappa_0^{-\frac{1}{2}}} R_j$\}.

  \smallskip
  
  \item Type (D), where $(w_{L_j})_{k_j}$ is supported in $\{k_j \in \Z^2:  \jb{ k_j } \le 
  L_j \}$, and satisfies
  \begin{equation}
    \label{Eqn:ee3} 
    \| (w_{L_j})_{k_j} \|_{\ell_{k_j}^2} \lesssim L_j^{\frac{10}{3} -2\alpha+ \gamma} .
  \end{equation}
\end{itemize}

Notice that we omit the factor $\jb{\lambda_j}^b$ and integral in $\lambda_j$ to simplify the notation.

\begin{rem} (Meshing arguments) 
    To be precise, we need the meshing argument to "eliminate" the dependence on time variable of $h^{L,R}_{kk'}(\lambda)$ so that we can apply large deviation estimates. Since it has become a standard procedure nowadays, we only sketch its main ideas here. For detailed argument and further discussion, see \cite[Remark 4.3]{{DNY2024}} , \cite[Subsection 3.4]{DNY2021}, and \cite[Subsection 6.1]{DNY2022}. 

First, we may restrict to low frequency $|\lambda|\leq N^{\delta^{-7}}$ for some $\delta\ll1$, since we may gain considerable smoothing otherwise. Next, we divide the box $\{\lambda\in\mathbb{R}^2:|\lambda|\leq N^{\delta^{-7}}\}$ into small boxes of sidelength $N^{-\delta^{-1}}$. Finally, by taking averages
on these small boxes and using Poincar\'e inequality, 
we can approximate the tensor $\{\widetilde{h^{N,L}_{kk'}}(\lambda) \}_{k,k'\in \Z^2}$ by  $\{\widetilde{(h^{N,L}_{0})_{kk'}}(\lambda)\}_{k,k'\in \Z^2}$ in the sense that
\[
\|\widetilde{{h}}^{N,L} - \widetilde{{h}}^{N,L}_{0} \|_{Z^b(J)} \lesssim \, N^{-\delta^{-1}} \,\bigg( \int_{\mathbb{R}} \jb{\lambda}^{2b} \|  \widetilde{h^{N,L}_{kk'}} (\lambda)\|_{kk'}^2 d\lambda + \int_{\mathbb{R}}  \jb{\lambda}^{2b}\|  \partial_\lambda \widetilde{h^{N,L}_{kk'}} (\lambda)\|_{kk'}^2 d\lambda \bigg)^{\frac12}.
\]
Once the above is properly controlled, it suffices to consider at most $N^{2\delta^{-7}}$ different values of $\lambda$. This allows us to apply large deviation estimates by removing an exceptional set whose probability is still $O (e^{-(\tau^{-1}N)^{\theta}})$.
Roughly speaking, we may fix the $\lambda$ variable in the computation. Similar meshing arguments would be needed when we reduce the remainders to trilinear estimates.
\end{rem}

\mb
\subsubsection{Estimates for $\mathscr{P}^{+}$} We start from the high-low-low case. Based on the types of $w_{N_j}$, we shall discuss the following situations:

\gb
\paragraph{(1) Type (C, C)}  
\label{Subsubsub:cc+}

In this case, the bilinear form can be written as
\[ 
  \mathscr{P}_{k k_1}^{+} = \sum_{k_2, k_3} {\rm T}^{{\rm b},m}_{k k_1 k_2
  k_3} \cdot \sum_{k_2', k_3'}  \overline{h_{k_2 k_2'}^{L_2, R_2}}h_{k_3 k_3'}^{L_3, R_3}   \frac{ \overline{g_{k_2'}}g_{k_3'}}{\jbb{k_2'}^{\frac{\alpha}{2}} \jbb{k_3'}^{\frac{\alpha}{2}}} 
\]
where $ \{ h_{k_j k'_j}^{L_j, R_j} \}$ , $j \in \{ 2,
3 \}$  satisfies \eqref{Eqn:ee1}.

We first consider the no pairing case, i.e. $k'_2 \neq k'_3$, for which we apply Proposition \ref{HLLtensor} and Proposition \ref{PROP:puretensor} to get 
  \begin{align} 
  \label{P_CC}
  \begin{split}
    & \| \mathscr{P}_{kk_1}^{+} \|_{k \to  k_1}   \lesssim (L_2 L_3)^{-\frac{\alpha}{2}} \big ( \| \mathrm{T}_{k k_1
    k_2 k_3}^{\text{b}, m} \|_{k k_2 \to  k_1 k_3} + \|
    \mathrm{T}_{k k_1 k_2 k_3}^{\text{b}, m} \|_{k k_3 \to  k_1
    k_2} \\
    & \hspace{2cm} + \| \mathrm{T}_{k k_1 k_2
    k_3}^{\text{b}, m} \|_{k k_2 k_3 \to  k_1} + \|
    \mathrm{T}_{k k_1 k_2 k_3}^{\text{b}, m} \|_{k \to  k_1 k_2
    k_3} \big )  \prod_{j=2}^3 \| h_{k_j k_j'}^{L_j, R_j} \|_{k_j \to  k'_j} \\
    & \hspace{2cm}\lesssim (L_2 L_3)^{-\frac{\alpha}{2}} \big(|S_{kk_2}|^{\frac12}|S_{k_1 k_3}|^{\frac12}+|S_{kk_3}|^{\frac12}|S_{k_1 k_2}|^{\frac12}+|S_{k_1}|^{\frac12}+|S_{k }|^{\frac12}\big),
  \end{split} 
\end{align}

\noi 
which is enough for our purpose. When $L_2 = L_3$, \eqref{P_CC} follows from Proposition \ref{HLLtensor} and then Proposition \ref{PROP:puretensor} as $k_2' \neq k_3'$.  Suppose $L_2 < L_3$ (the other case is similar). Then the coefficients $h_{k_2 k_2'}^{L_2, R_2}$ may not be independent of $(g_{k_3'})_{\frac{L_3}{2}<\jb{k_3'} \le L_3}$. Therefore, we cannot use Proposition \ref{HLLtensor} to $k_2'$ and $k_3'$ directly. 
Instead, we first use Proposition \ref{HLLtensor} only with respect to $k_3'$, then Proposition \ref{PROP:puretensor} and finally apply Proposition \ref{HLLtensor} again with respect to $k_2'$. A similar argument can be found in \cite[Section 4.2]{Wang2024}.

Now we assume $L_2 \geq L_3$ and the other case is similar by using different bounds of $S_{k_1}$ and $S_k$. Then by Lemma \ref{2vec1}, Lemma \ref{2vec2} and Lemma \ref{3vec1}, we have

\[
\begin{aligned}
    \| \mathscr{P}_{k k_1}^{+} \|_{k \to  k_1}   &\lesssim (L_2 L_3)^{-\frac{\alpha}{2}} \left((L_2 L_3)^{\frac{4}{3}-\frac{\alpha}{2}}+(L_2 L_3)^{\frac{3-\alpha}{2}}  + L_2^{\frac{4}{3}-\frac{\alpha}{2}}L_3^1 + L_2^{\frac{3-\alpha}{2}}L_3^{\frac{1}{2}+\theta}+L_2^{\frac{1}{2}}L_3^1\right)\\
    & \lesssim L_2^{\frac{3-2\alpha}{2}}L_3^{\frac{3-2\alpha}{2}} +L_2^{\frac{1-\alpha}{2}}L_3^{1-\frac{\alpha}{2}} \lesssim L_{2}^{\frac{3-2\alpha}{2}}
\end{aligned}
\]

Similarly we can apply Proposition \ref{Prop:tensor} and Proposition \ref{PROP:puretensor} to get

\[
\begin{aligned}
    \| \mathscr{P}_{k k_1}^{+} \|_{k k_1}   &\lesssim (L_2 L_3)^{-\frac{\alpha}{2}} \|\mathrm{T}_{k k_1 k_2 k_3}^{\text{b}, m} \|_{k  k_1 k_2
    k_3}  \prod_{j=2}^3 \| h_{k_j k_j'}^{L_j, R_j} \|_{k_j \to  k'_j}\\
    & \lesssim  (L_2 L_3)^{-\frac{\alpha}{2}} |S|^{\frac{1}{2}}\,\,\, \lesssim (L_2 L_3)^{-\frac{\alpha}{2}} \cdot\left( N_1^{\frac{3-\alpha}{2}} L_2^{\frac{1}{2}+\theta} L_3+L_1^{\frac{1}{2}}L_2L_3\right)\\
    & \lesssim N_1^{\frac{3-\alpha}{2}} L_2^{\frac{1-\alpha}{2}+\theta} L_3^{1-\frac{\alpha}{2}}+N_1^{\frac{1}{2}}(L_2L_3)^{1-\frac{\alpha}{2}}\\
    & \lesssim N_1^{\frac{5-2\alpha}{2}} \lesssim N_1^{\frac{5-2\alpha}{2}+\frac{\gamma_0}{2}}L_2^{-\frac{\gamma_0}{2}}
\end{aligned}
\]
\noi
where $\theta$ comes from $\log L_2$ and this is enough for \eqref{needtoshow}.

Then we consider the pairing case, i.e. $k_2'=k_3'$ that implies $L_2= L_3 = L_{\max}$. We may assume that $R_2\leq R_3$ and the other case is similar. (If $R_2\geq R_3$, we should use the norm $\| h_{k_2 k_2'}^{L_2, R_2}\|_{k_2 k_2'}$ in the following estimates of $\mathscr{P}_{k k_1}^{+,2}$ .)

\[
\begin{aligned}
  \mathscr{P}_{k k_1}^{+} 
  & = \sum_{k_2, k_3} {\rm T}^{{\rm b},m}_{k k_1 k_2
  k_3} \cdot \sum_{k_2'}  \overline{h_{k_2 k_2'}^{L_2, R_2}}h_{k_3 k_2'}^{L_3, R_3}   \frac{ |g_{k_2'}|^2-1}{\jbb{k_2'}^{\alpha} } + \sum_{k_2, k_3} {\rm T}^{{\rm b},m}_{k k_1 k_2
  k_3} \cdot \sum_{k_2'}  \overline{h_{k_2 k_2'}^{L_2, R_2}}h_{k_3 k_2'}^{L_3, R_3}   \frac{ 1}{\jbb{k_2'}^{\alpha} }\\
  & = \mathscr{P}_{k k_1}^{+,1} + \mathscr{P}_{k k_1}^{+,2}
\end{aligned}
\]

The $\mathscr{P}_{k k_1}^{+,1}$ part is similar to, but easier than the non-pairing case. In fact, we can apply Proposition \ref{HLLtensor} (or Proposition \ref{Prop:tensor} for the $\ell^2_{kk_1}$ norm) , Proposition \ref{PROP:puretensor} and then Lemma \ref{LEM:mixedtensor} to get

\[
\begin{aligned}
    \|\mathscr{P}_{k k_1}^{+,1}\|_{k\to k_1} &\lesssim L_2^{-\alpha} \left\|\sum_{k_2, k_3} {\rm T}^{{\rm b},m}_{k k_1 k_2
  k_3} \overline{h_{k_2 k_2'}^{L_2, R_2}}h_{k_3 k_2'}^{L_3, R_3}  \right\|_{(kk_2'\to k_1)\cap(k\to k_1k_2')}  \\
  & \lesssim L_2^{-\alpha} \|{\rm T}^{{\rm b},m}_{k k_1 k_2
  k_3}\|_{(kk_2k_3\to k_1)\cap(k\to k_1 k_2 k_3)} \prod_{j=2}^3 \| h_{k_j k_j'}^{L_j, R_j} \|_{k_j \to  k'_j}\\
  & \lesssim L_2^{-\alpha} \left( |S_{k_1}|^{\frac{1}{2}}+|S_{k}|^{\frac{1}{2}} \right) 
\end{aligned}
\]
\noi
and

\[
    \| \mathscr{P}_{k k_1}^{+,1} \|_{k k_1}   \lesssim L_2 ^{-\alpha} \|\mathrm{T}_{k k_1 k_2 k_3}^{\text{b}, m} \|_{k  k_1 k_2
    k_3}  \prod_{j=2}^3 \| h_{k_j k_j'}^{L_j, R_j} \|_{k_j \to  k'_j} \lesssim  L_2^{-\alpha} |S|^{\frac{1}{2}}.
\]

The counting here is enough to get the desired bound. We next consider
$\mathscr{P}_{k k_1}^{+,2}$. After summing the dyadic increments in the two
contracted factors, Corollary \ref{cor:unitary-cancellation} and Remark
\ref{rem:unitary-redistribution} reduce its estimate to representative
dyadic terms of the form

\[
 \mathscr{P}_{k k_1}^{+,2}  = \sum_{k_2, k_3} {\rm T}^{{\rm b},m}_{k k_1 k_2
  k_3} \cdot \sum_{k_2'}  \overline{h_{k_2 k_2'}^{L_2, R_2}}h_{k_3 k_2'}^{L_3, R_3} \left(  \frac{1}{\jbb{k_2'}^{\alpha} }- \frac{1}{\jbb{k_2}^{\alpha} }\right).
\]

By \eqref{induct3}, we may assume that $|k_2-k_2'|\lesssim L_2^{\kappa_0^{-\frac{1}{2}}}R_2$ and hence

\[
\left|\frac{1}{\jbb{k_2'}^{\alpha} }- \frac{1}{\jbb{k_2}^{\alpha} }\right| \lesssim L_2^{-\alpha-1+\kappa_0^{-\frac{1}{2}}}R_2. 
\]

Now we may apply Proposition \ref{PROP:puretensor}  to get

\[
\begin{aligned}
    \|\mathscr{P}_{k k_1}^{+,2}\|_{k\to k_1} \lesssim&  \,L_2^{-\alpha-1+\kappa_0^{-\frac{1}{2}}}R_2 \cdot \min\left\{\|{\rm T}^{{\rm b},m}_{k k_1 k_2
  k_3}\|_{kk_2k_3\to k_1} ,  \|{\rm T}^{{\rm b},m}_{k k_1 k_2
  k_3}\|_{k\to k_1 k_2 k_3}\right\}\\
  & \, \times \left\|\sum_{k_2'} \overline{h_{k_2 k_2'}^{L_2, R_2}}h_{k_3 k_2'}^{L_3, R_3} \right\|_{k_2 k_3}\\
  \lesssim &\, L_2^{-\alpha-1+\kappa_0^{-\frac{1}{2}}}R_2 \cdot |S_{k_1}|^{\frac{1}{2}} \cdot\| h_{k_2 k_2'}^{L_2, R_2} \|_{k_2\to k_2'} \| h_{k_3 k_2'}^{L_3, R_3}\|_{k_3 k_2'}\\
  \lesssim &\, L_2^{\frac{3-4\alpha}{2}+\kappa_0^{-\frac{1}{2}} +\gamma_0} R_2^{1-\frac{\gamma_0}{4}} (L_2^{\frac{8}{3}-\alpha}L_2^2)^{\frac{1}{2}}\,
  \lesssim  \, L_2^{\frac{29}{6}-\frac{5\alpha}{2} + \kappa_0^{-\frac{1}{2}} +\frac{3\gamma_0}{4}}.
\end{aligned}
\]
\noi
and

\[
\begin{aligned}
    \|\mathscr{P}_{k k_1}^{+,2}\|_{k k_1} \lesssim&  \,L_2^{-\alpha-1+\kappa_0^{-\frac{1}{2}}}R_2 \cdot \|{\rm T}^{{\rm b},m}_{k k_1 k_2
  k_3}\|_{kk_1\to k_2k_3} \left\|\sum_{k_2'} \overline{h_{k_2 k_2'}^{L_2, R_2}}h_{k_3 k_2'}^{L_3, R_3} \right\|_{k_2 k_3}\\
  \lesssim &\, L_2^{-\alpha-1+\kappa_0^{-\frac{1}{2}}}R_2 \cdot |S_{k k_1}|^{\frac{1}{2}}|S_{k_2 k_3}|^{\frac{1}{2}} \cdot\| h_{k_2 k_2'}^{L_2, R_2} \|_{k_2\to k_2'} \| h_{k_3 k_2'}^{L_3, R_3}\|_{k_3 k_2'}\\
  \lesssim &\, L_2^{\frac{3-4\alpha}{2}+\kappa_0^{-\frac{1}{2}} +\gamma_0} R_2^{1-\frac{\gamma_0}{4}} (N_1^{3-\alpha}L_2^{3-\alpha})^{\frac{1}{2}}\,
  \lesssim  \, N_1^{\frac{3-\alpha}{2}} L_2^{\frac{8-5\alpha}{2} + \kappa_0^{-\frac{1}{2}} +\frac{3\gamma_0}{4}}.
\end{aligned}    
\]

These bounds are enough for \eqref{needtoshow} by our choice of parameters. Therefore, the proof of type (C,C) is finished.

\gb

\paragraph{(2) Type (C, D) and Type (D,C)}  
\label{Subsubsub:cd+}

The two situations are similar so we only consider

\[ 
  \mathscr{P}_{k k_1}^{+} = \sum_{k_2, k_3} {\rm T}^{{\rm b},m}_{k k_1 k_2
  k_3} \cdot \sum_{k_2'}  \overline{h_{k_2 k_2'}^{L_2, R_2}}   \frac{ \overline{g_{k_2'}}}{\jbb{k_2'}^{\frac{\alpha}{2}} } \cdot (z_{L_3})_{k_3}
\]
where $ \{ h_{k_2 k'_2}^{L_2, R_2} \}$ satisfies \eqref{Eqn:ee1} and $z_{L_3}$ satisfies the bound \eqref{Eqn:ee3}. Apply Proposition \ref{PROP:puretensor} followed by Proposition \ref{HLLtensor}  to get

\[
\begin{aligned}
    \|\mathscr{P}_{k k_1}^{+}\|_{k\to k_1} \lesssim&  \,L_2^{-\frac{\alpha}{2}} \cdot  \left\|\sum_{k_2} {\rm T}^{{\rm b},m}_{k k_1 k_2
  k_3}\sum_{k_2'}  \overline{h_{k_2 k_2'}^{L_2, R_2}}   \overline{g_{k_2'}}\right\|_{k\to k_1 k_3}\cdot \|z_{L_3}\|_{\ell_{k_3}^2}\\
  \lesssim &\, L_2^{-\frac{\alpha}{2}} L_3^{\frac{10}{3}-2\alpha+\gamma}\cdot\left\|\sum_{k_2} {\rm T}^{{\rm b},m}_{k k_1 k_2
  k_3}   \overline{h_{k_2 k_2'}^{L_2, R_2}}   \right\|_{(kk_2'\to k_1 k_3)\cap(k\to k_1 k_2' k_3)} \\
  \lesssim &\, L_2^{-\frac{\alpha}{2}} L_3^{\frac{10}{3}-2\alpha+\gamma} \|h_{k_2 k_2'}^{L_2, R_2}\|_{k_2'\to k_2}\cdot\|{\rm T}^{{\rm b},m}_{k k_1 k_2
  k_3}\|_{(kk_2\to k_1k_3)\cap(k\to k_1k_2k_3)}\\
  \lesssim & \, L_2^{-\frac{\alpha}{2}} L_3^{\frac{10}{3}-2\alpha+\gamma}\cdot \left(|S_{kk_2}|^{\frac{1}{2}}|S_{k_1 k_3}|^{\frac{1}{2}}+|S_k|^{\frac{1}{2}}\right).
\end{aligned}
\]

Now we use Lemma \ref{2vec1}, Lemma \ref{2vec2} and Lemma \ref{3vec1} to get

\[
\begin{aligned}
    \|\mathscr{P}_{k k_1}^{+}\|_{k\to k_1} &\lesssim L_2^{-\frac{\alpha}{2}} L_3^{\frac{10}{3}-2\alpha+\gamma}\Big( (L_2^{\frac{3-\alpha}{2}} L_3^{\frac{1}{2}+\theta}+ L_2^{
    \frac{1}{2}}L_3) \wedge (L_3^{\frac{4}{3}-\frac{\alpha}{2}}L_2)+
    (L_2L_3)^{\frac{4}{3}-\frac{\alpha}{2}}\Big)\\
    &\lesssim \, \begin{cases}
       L_2^{\frac{3-2\alpha}{2}}L_3^{\frac{29}{6}-\frac{5\alpha}{2}+\gamma}+L_2^{\frac{1-\alpha}{2}}L_3^{\frac{13}{3}-2\alpha+\gamma} , \, &\text{if} \, L_2\geq L_3,\\
       L_2^{\frac{3-2\alpha}{2}}L_3^{\frac{29}{6}-\frac{5\alpha}{2}+\gamma}+L_3^{\frac{14}{3}-\frac{5\alpha}{2}+\gamma}L_2^{1-\frac{\alpha}{2}} , \, &\text{if} \, L_2\leq L_3
   \end{cases}\\
   &\lesssim (L_2\vee L_3)^{\frac{29}{6}-\frac{5\alpha}{2}+\gamma}
\end{aligned}
\]

\noi
which are enough for the operator norm in \eqref{needtoshow} by our choice of parameters. Similarly, we can apply Proposition \ref{PROP:puretensor} followed by Proposition \ref{Prop:tensor}  to get

\[
\begin{aligned}
    \|\mathscr{P}_{k k_1}^{+}\|_{k k_1} \lesssim&  \,L_2^{-\frac{\alpha}{2}} \cdot \left\|\sum_{k_2} {\rm T}^{{\rm b},m}_{k k_1 k_2
  k_3}\sum_{k_2'}  \overline{h_{k_2 k_2'}^{L_2, R_2}}   \overline{g_{k_2'}}\right\|_{k k_1 \to k_3}\cdot \|z_{L_3}\|_{\ell_{k_3}^2}\\
  \lesssim &\, L_2^{-\frac{\alpha}{2}} L_3^{\frac{10}{3}-2\alpha+\gamma}\cdot\left\|\sum_{k_2} {\rm T}^{{\rm b},m}_{k k_1 k_2
  k_3}   \overline{h_{k_2 k_2'}^{L_2, R_2}}   \right\|_{(kk_1k_2'\to k_3)\cap(k k_1\to k_2' k_3)} \\
  \lesssim &\, L_2^{-\frac{\alpha}{2}} L_3^{\frac{10}{3}-2\alpha+\gamma} \|h_{k_2 k_2'}^{L_2, R_2}\|_{k_2'\to k_2}\cdot\|{\rm T}^{{\rm b},m}_{k k_1 k_2
  k_3}\|_{(kk_1k_2\to k_3)\cap(kk_1\to k_2k_3)}\\
  \lesssim & \, L_2^{-\frac{\alpha}{2}} L_3^{\frac{10}{3}-2\alpha+\gamma}\cdot \Big(|S_{k_3}|^{\frac{1}{2}} +  |S_{kk_1}|^{\frac{1}{2}}|S_{k_2 k_3}|^{\frac{1}{2}}\Big)\\
   \lesssim & \, L_2^{-\frac{\alpha}{2}} L_3^{\frac{10}{3}-2\alpha+\gamma}\cdot \Big( N_1^{\frac{3-\alpha}{2}}L_2^{\frac{1}{2}+\theta}+N_1^{
    \frac{1}{2}}L_2 +  (L_3\wedge L_2)^{\frac{3-\alpha}{2}} N_1^{\frac{3-\alpha}{2}}\Big) .
\end{aligned}
\]

Now if $L_2\leq L_3$ , then

\[
    \|\mathscr{P}_{k k_1}^{+}\|_{k k_1} \lesssim N_1^{\frac{3-\alpha}{2}} L_3^{\frac{10}{3}-2\alpha+\gamma},
\]
\noi
and if $L_2\geq L_3$, then

\[
\begin{aligned}
    \|\mathscr{P}_{k k_1}^{+}\|_{k k_1} &\lesssim (N_1^{\frac{3-\alpha}{2}} L_2^{\frac{1-\alpha}{2}+\theta}+ N_1^{\frac{1}{2}}L_2^{1-\frac{\alpha}{2}})\cdot L_3^{\frac{10}{3}-2\alpha+\gamma}+ N_1^{\frac{3-\alpha}{2}}L_2^{-\frac{\alpha}{2}} L_3^{\frac{29}{6}-\frac{5\alpha}{2}+\gamma}\\
    &\lesssim N_1^{\frac{3-\alpha}{2}} \lesssim N_1^{\frac{5-2\alpha}{2}}L_2^{\frac{\alpha-2}{2}},
\end{aligned}
\]
\noi
which is also enough for \eqref{needtoshow} by our choice of parameters.

\gb

\paragraph{(3) Type (D, D)}  
\label{Subsubsub:dd+}

Finally, we consider

\[ 
  \mathscr{P}_{k k_1}^{+} = \sum_{k_2, k_3} {\rm T}^{{\rm b},m}_{k k_1 k_2
  k_3} \cdot  \overline{(z_{L_2})_{k_2}} \cdot (z_{L_3})_{k_3}
\]
where $z_{L_2}$, $z_{L_3}$ satisfy the bound \eqref{Eqn:ee3}. Apply Proposition \ref{PROP:puretensor} to get

\[
\begin{aligned}
    \|\mathscr{P}_{k k_1}^{+}\|_{k\to k_1} & \lesssim \|{\rm T}^{{\rm b},m}_{k k_1 k_2
  k_3}\|_{kk_3\to k_1 k_3} \|z_{L_2}\|_{\ell_{k_2}^2}\|z_{L_3}\|_{\ell_{k_3}^2}\\
  & \lesssim |S_{kk_3}|^{\frac{1}{2}}|S_{k_1 k_2}|^{\frac{1}{2}} (L_2L_3)^{\frac{10}{3}-2\alpha+\gamma}\, \lesssim (L_2 L_3)^{\frac{29}{6}-\frac{5\alpha}{2}+\gamma}
\end{aligned}
\]
\noi
and

\[
\begin{aligned}
    \|\mathscr{P}_{k k_1}^{+}\|_{k k_1} & \lesssim \|{\rm T}^{{\rm b},m}_{k k_1 k_2
  k_3}\|_{kk_1\to k_2 k_3} \|z_{L_2}\|_{\ell_{k_2}^2}\|z_{L_3}\|_{\ell_{k_3}^2}\\
  & \lesssim |S_{kk_1}|^{\frac{1}{2}}|S_{k_2 k_3}|^{\frac{1}{2}} (L_2L_3)^{\frac{10}{3}-2\alpha+\gamma}\\
  & \lesssim  N_1^{\frac{3-\alpha}{2}} (L_2\vee L_3)^{\frac{10}{3}-2\alpha+\gamma} (L_2\wedge L_3)^{\frac{29}{6}-\frac{5\alpha}{2} + \gamma}\\
  &\lesssim N_1^{\frac{3-\alpha}{2}} (L_2\vee L_3)^{\frac{10}{3}-2\alpha+\gamma},
\end{aligned}
\]
\noi
which are enough for \eqref{needtoshow} by our choice of parameters. Hence, we complete the proof of \eqref{rao-OP} and \eqref{rao_HS} for $\mathcal{P}^{+}$.

\subsubsection{Estimates for $\mathscr{P}^{-}$} Now we consider the low-high-low case. Still we have the following possibilities:

\gb

\paragraph{(1) Type (C, C)}  
\label{Subsubsub:cc-} 

The bilinear form we consider now can be written as

\[ 
  \mathscr{P}_{k k_2}^{-} = \sum_{k_1, k_3} {\rm T}^{{\rm b},m}_{k k_1 k_2
  k_3} \cdot \sum_{k_1', k_3'}  h_{k_1 k_1'}^{L_1, R_1}h_{k_3 k_3'}^{L_3, R_3}   \frac{ g_{k_1'}g_{k_3'}}{\jbb{k_1'}^{\frac{\alpha}{2}} \jbb{k_3'}^{\frac{\alpha}{2}}} 
\]
where $ \{ h_{k_j k'_j}^{L_j, R_j} \}$ , $j \in \{ 1,
3 \}$  satisfies \eqref{Eqn:ee1}.

This case is easier than that of high-low-low since there is no possibility of pairing. However, we still need to discuss when the independence between coefficients and  Gaussian random variables holds. We omit the details. Apply Proposition \ref{HLLtensor} and Proposition \ref{PROP:puretensor} to get

\[
  \begin{aligned} 
    & \| \mathscr{P}_{kk_2}^{-} \|_{k \to  k_2}   \lesssim (L_1 L_3)^{-\frac{\alpha}{2}} \big ( \| \mathrm{T}_{k k_1
    k_2 k_3}^{\text{b}, m} \|_{k k_1 \to  k_2 k_3} + \|
    \mathrm{T}_{k k_1 k_2 k_3}^{\text{b}, m} \|_{k k_3 \to  k_1
    k_2} \\
    & \hspace{2cm} + \| \mathrm{T}_{k k_1 k_2
    k_3}^{\text{b}, m} \|_{k k_1 k_3 \to  k_2} + \|
    \mathrm{T}_{k k_1 k_2 k_3}^{\text{b}, m} \|_{k \to  k_1 k_2
    k_3} \big )  \prod_{j=1,3} \| h_{k_j k_j'}^{L_j, R_j} \|_{k_j \to  k'_j} \\
    & \hspace{2cm}\lesssim (L_1 L_3)^{-\frac{\alpha}{2}} \big(|S_{kk_1}|^{\frac12}|S_{k_2 k_3}|^{\frac12}+|S_{kk_3}|^{\frac12}|S_{k_1 k_2}|^{\frac12}+|S_{k_2}|^{\frac12}+|S_{k }|^{\frac12}\big).
\end{aligned}
\]

Furthermore, we assume that $L_1\geq L_3$ and the other case is almost the same. Then

\[
    \| \mathscr{P}_{kk_2}^{-} \|_{k \to  k_2}   \lesssim (L_1 L_3)^{-\frac{\alpha}{2}}\cdot\Big((L_1 L_3)^{\frac{3-\alpha}{2}} + L_1^{\frac{3-\alpha}{2}} L_3^{\frac{1}{2}+\theta}+L_1^{\frac{1}{2}}L_3\Big)\lesssim L_1^{\frac{3-2\alpha}{2}}.
\]

This is the desired bound in \eqref{needtoshow}.

\gb
\paragraph{(2) Type (C, D) and Type (D,C)}  
\label{Subsubsub:cd-}

The two situations are similar so we still only consider

\[ 
  \mathscr{P}_{k k_2}^{-} = \sum_{k_1, k_3} {\rm T}^{{\rm b},m}_{k k_1 k_2
  k_3} \cdot \sum_{k_1'}  h_{k_1 k_1'}^{L_1, R_1}   \frac{ g_{k_1'}}{\jbb{k_1'}^{\frac{\alpha}{2}} } \cdot (z_{L_3})_{k_3}
\]
where $ \{ h_{k_1 k'_1}^{L_1, R_1} \}$ satisfies \eqref{Eqn:ee1} and $z_{L_3}$ satisfies the bound \eqref{Eqn:ee3}. Apply Proposition \ref{PROP:puretensor} followed by Proposition \ref{HLLtensor}  to get

\[
\begin{aligned}
    \|\mathscr{P}_{k k_2}^{-}\|_{k\to k_2} \lesssim&  \,L_1^{-\frac{\alpha}{2}} \cdot \left\|\sum_{k_1} {\rm T}^{{\rm b},m}_{k k_1 k_2
  k_3} \sum_{k_1'}  h_{k_1 k_1'}^{L_1, R_1}   g_{k_1'} \right\|_{kk_3\to k_2} \cdot \|z_{L_3}\|_{\ell_{k_3}^2}\\
  \lesssim &\, L_1^{-
    \frac{\alpha}{2}} L_3^{\frac{10}{3}-2\alpha+\gamma}\cdot\left\|\sum_{k_1} {\rm T}^{{\rm b},m}_{k k_1 k_2
  k_3}  h_{k_1 k_1'}^{L_1, R_1}    \right\|_{(kk_1'k_3\to k_2)\cap(kk_3\to k_1'k_2)}\\
  \lesssim &\, L_1^{-
    \frac{\alpha}{2}} L_3^{\frac{10}{3}-2\alpha+\gamma} \|h_{k_1 k_1'}^{L_1, R_1}\|_{k_1'\to k_1}\cdot\|{\rm T}^{{\rm b},m}_{k k_1 k_2
  k_3}\|_{(kk_1k_3\to k_2)\cap(kk_3\to k_1k_2)}\\
  \lesssim & \, L_1^{-
    \frac{\alpha}{2}} L_3^{\frac{10}{3}-2\alpha+\gamma}\cdot \Big(|S_{k_2}|^{\frac{1}{2}}+|S_{kk_3}|^{\frac{1}{2}}|S_{k_1 k_2}|^{\frac{1}{2}} \Big)\\
  \lesssim & \, L_1^{-
    \frac{\alpha}{2}} L_3^{\frac{10}{3}-2\alpha+\gamma}\cdot \Big(  (L_3^{\frac{3-\alpha}{2}} L_1^{\frac{1}{2}+\theta}  + L_1 L_3^{\frac{1}{2}})\wedge( L_1^{\frac{3-\alpha}{2}} L_3^{\frac{1}{2}+\theta} + L_3 L_1^{\frac{1}{2}})+ (L_1 L_3)^{\frac{3-\alpha}{2}} \Big).
\end{aligned}
\]

Therefore, we use the first bound of $|S_{k_2}|$ if $L_1\leq L_3$ while the second bound if $L_1\geq L_3$ to get

\[
\|\mathscr{P}_{k k_2}^{-}\|_{k\to k_2} \lesssim
\begin{cases}
L_1^{\frac{3-2\alpha}{2}}L_3^{\frac{29}{6}-\frac{5\alpha}{2}+\gamma}+L_1^{\frac{29}{6}-\frac{5\alpha}{2}+\gamma}, \, &\text{if}\quad L_1\geq L_3,\\
L_1^{\frac{3-2\alpha}{2}}L_3^{\frac{29}{6}-\frac{5\alpha}{2}+\gamma}+L_3^{\frac{29}{6}-\frac{5\alpha}{2}+\gamma}, \, &\text{if}\quad L_1\leq L_3.
\end{cases}
\]

These estimates are enough to derive the bound \eqref{needtoshow}.

\gb

\paragraph{(3) Type (D, D)}  
\label{Subsubsub:dd-}

Finally, we consider

\[ 
  \mathscr{P}_{k k_2}^{-} = \sum_{k_1, k_3} {\rm T}^{{\rm b},m}_{k k_1 k_2
  k_3} \cdot  (z_{L_1})_{k_1} \cdot (z_{L_3})_{k_3}
\]
where $z_{L_1}$, $z_{L_3}$ satisfy the bound \eqref{Eqn:ee3}. Apply Proposition \ref{PROP:puretensor} to get

\[
\begin{aligned}
    \|\mathscr{P}_{k k_2}^{-}\|_{k\to k_2} & \lesssim \|{\rm T}^{{\rm b},m}_{k k_1 k_2
  k_3}\|_{kk_1\to k_2 k_3} \|z_{L_1}\|_{\ell_{k_1}^2}\|z_{L_3}\|_{\ell_{k_3}^2}\\
  & \lesssim |S_{kk_1}|^{\frac{1}{2}}|S_{k_2 k_3}|^{\frac{1}{2}} (L_1L_3)^{\frac{10}{3}-2\alpha+\gamma}\, \lesssim (L_1 L_3)^{\frac{29}{6}-\frac{5\alpha}{2}+\gamma},
\end{aligned}
\]
\noi
which are enough for \eqref{needtoshow} by our choice of parameters. So we finish the proof of \eqref{rao-OP} and \eqref{rao_HS}.

\noi
\subsubsection{The kernel of random averaging operator}
 Here we prove \eqref{induct1} to \eqref{induct3} for $L=M$.

First of all , we denote

\[
\begin{aligned}
    \mathcal{P}^{N,L}(z) &= \eta_{\tau}\mathcal{I}\Pi_{N}\mathcal{N}_3(z,v_L,v_L)-\eta_{\tau}\mathcal{I}\Pi_{N}\mathcal{N}_3\left(z,v_{\frac{L}{2}},v_{\frac{L}{2}}\right)\\
    & = \sum_{L_{\max}=L} \eta_{\tau}\mathcal{I}\Pi_{N}\mathcal{N}_3(z,y_{L_2},y_{L_3})\, = \sum_{L_{\max}=L}\mathcal{P}^{+}_{N,L_2,L_3}(z)
\end{aligned}
\]
\noi
and let $\mathfrak{p}^{N,L}_{kk'}$ be its kernel. By \eqref{rao-OP}  and \eqref{rao_HS} which we have shown above, the trivial estimates imply the following

\[
\begin{aligned}
    &\|\mathcal{P}^{N,L}\|_{Y^{b,b}}\lesssim \tau^{\theta} L^{-2\delta_0}(\log L)^2 \lesssim \tau^{\theta} L^{-\frac{3\delta_0}{2}},\\    &\|\mathcal{P}^{N,L}\|_{Z^{b,b}}\lesssim\tau^{\theta} N^{ \frac{5-2\alpha}{2}+ \frac{3\gamma_0}{4} } L^{ - \frac{\gamma_0}{3}}(\log L)^2 \lesssim \tau^{\theta} N^{\frac{5-2\alpha}{2}+ \frac{3\gamma_0}{4} } L^{ -\frac{7\gamma_0}{24}}
\end{aligned}
\]

For any $L\leq M$. Recall that 

\[
    \varphi^{N,L}(t) = \eta(t)e^{-it{\rm D}^\alpha}(e^{ik'x}) + 2\eta_{\tau}(t)\mathcal{I}\Pi_N\mathcal{N}_3 (\varphi^{N,L},v_L,v_L),
\]
\noi
and $H^{N,L}_{kk'} = \varphi^{N,L}_k$. Now we write

\[
\begin{aligned}
    \varphi^{N,M} - \varphi^{N,\frac{M}{2}} = & 2\eta_{\tau}\mathcal{I}\Pi_N\mathcal{N}_3(\varphi^{N,M},v_M,v_M) - 2\eta_{\tau}\mathcal{I}\Pi_N\mathcal{N}_3\left(\varphi^{N,\frac{M}{2}},v_{\frac{M}{2}},v_{\frac{M}{2}}\right)\\
     = & 2\eta_{\tau}\mathcal{I}\Pi_N \mathcal{N}_3(\varphi^{N,M} -\varphi^{N,\frac{M}{2}}, v_M, v_M) + 2\mathcal{P}^{N,M}(\varphi^{N,\frac{M}{2}})\\
     = & 2\sum_{L\leq M}\mathcal{P}^{N,L}(\varphi^{N,M} -\varphi^{N,\frac{M}{2}})  + 2\sum_{L<M}\mathcal{P}^{N,M}(\varphi^{N,L}-\varphi^{N,\frac{L}{2}}) + 2\mathcal{P}^{N,M}(\varphi^{N,\frac{1}{2}}). 
\end{aligned}
\]

Note that $h^{N,M}_{kk'} = (\varphi^{N,M} - \varphi^{N,\frac{M}{2}})_k$, so we have

\[
\begin{aligned}
    h^{N,M}_{kk'}(t) =& 2\sum_{L\leq M} \sum_{k^{\ast}} \int_{\mathbb{R}} \mathfrak{p}^{N,L}_{kk^{\ast}}(t,t')h^{N,M}_{k^{\ast}k'}(t')dt' + 2\sum_{L<M}\sum_{k^{\ast}}\int_{\mathbb{R}}  \mathfrak{p}^{N,M}_{kk^{\ast}}(t,t')h^{N,L}_{k^{\ast}k'}(t')dt'\\
    &+ 2\sum_{k^{\ast}}\int_{\mathbb{R}}  \mathfrak{p}^{N,M}_{kk^{\ast}}(t,t')H^{N,\frac{1}{2}}_{k^{\ast}k'}(t')dt'.
\end{aligned}
\]

Now we are ready to prove \eqref{induct1} to \eqref{induct3} for $L=M$ assuming $ \mathtt{Loc}(M) $. For example, 

\[
\begin{aligned}
    \|h^{N,M}\|_{Y^b} &\lesssim \sum_{L\leq M} \|\mathcal{P}^{N,L}\|_{Y^{b,b}} \cdot \|h^{N,M}\|_{Y^b} + \|\mathcal{P}^{N,M}\|_{Y^{b,b}} \cdot \sum_{L<M}  (\|h^{N,L}\|_{Y^b}+1)\\
    &\lesssim \tau^{\theta}\|h^{N,M}\|_{Y^b} \sum_{L\leq M} L^{-\frac{3\delta_0}{2}} +  \tau^{\theta}M^{-\frac{3\delta_0}{2}} (\sum_{L<M} L^{-\delta_0} + 1).
\end{aligned}
\]

This implies \eqref{induct1} for $L=M$ by choosing $\tau\ll 1$. Similarly, we get\eqref{induct2} and \eqref{induct3} for $L=M$ by Proposition \ref{Prop:weightedZb} and the fact that $\mathfrak{p}^{N,L}_{kk^{\ast}}$ is supported in $|k-k^{\ast}|\lesssim L$. Also see the proof in \cite[Section 3]{DNY2024}.

\gb

\subsection{The remainder}

In this subsection, we focus on the remainder. To be precise, we shall prove \eqref{induct6} for $N=2M$ based on $ \mathtt{Loc}(M) $ and \eqref{induct1} to \eqref{rao_HS} for $L=M$. Recall that

\begin{equation}
    z_{2M} = \eta_{\tau}(t)\mathcal{I}\Pi_{2M}\mathcal{R}_{2M}
\end{equation}
\noi
where $\Rc_{2M}$ is given by \eqref{remainder}. We need to show that the right hand side is a contraction $\Phi$, mapping $\mathcal{Z} = \{z: \|z\|_{X^b}\lesssim (2M)^{\frac{10}{3}-2\alpha+\gamma}\}$ to itself. Here we only prove that it sends $\mathcal{Z}$ to $\mathcal{Z}$. For the difference estimate, expand every cubic difference by multilinearity. Each resulting term contains exactly one factor $z-\widetilde z$, while all other remainder factors are bounded by the radius of $\mathcal Z$. The estimates below apply without change to such a term, with the bound for that distinguished factor left unsummed. Consequently,
\[
\|\Phi(z)-\Phi(\widetilde z)\|_{X^b}
\leq C\tau^\theta\|z-\widetilde z\|_{X^b}.
\]
After decreasing $\tau$ so that $C\tau^\theta\leq\frac12$, the map is a contraction.

\noi

To start with, Lemma \ref{Prop:stb} and a small change for $b$ will produce the factor $\tau^{b_1-b}$. Then we separate the other terms of $\eta_{\tau}(t)\mathcal{I}\Pi_{2M}\mathcal{R}_{2M}$ into the following groups:

\begin{itemize}
    \item [(a)] $\eta_{\tau}(t)\mathcal{I}\Pi_{2M}\mathcal{N}_3(\psi_{2M,L_{2M}},y_{N_2},y_{N_3})$, where $N_2\gtrsim (2M)^{1-\delta}$ or $N_3\gtrsim (2M)^{1-\delta}$;
\noi   
    \item [(b)] $\eta_{\tau}(t)\mathcal{I}\Pi_{2M}\mathcal{N}_3(y_{N_1},\psi_{2M,L_{2M}},y_{N_3})$, where $N_1, N_3\leq M$;
\noi    
    \item [(c)] $\eta_{\tau}(t)\mathcal{I}\Pi_{2M}\mathcal{N}_3(z_{2M},y_{N_2},y_{N_3})$, where $N_2, N_3\leq M$;
\noi
    \item [(d)] $\eta_{\tau}(t)\mathcal{I}\Pi_{2M}\mathcal{N}_3(y_{N_1},z_{2M},y_{N_3})$, where $N_1, N_3\leq M$;
\noi
    \item [(e)] $\eta_{\tau}(t)\mathcal{I}\Delta_{2M}\mathcal{N}_3(y_{N_1},y_{N_2},y_{N_3})$, where $N_1,N_2, N_3\leq M$;
\noi
    \item [(f)] $\eta_{\tau}(t)\mathcal{I}\Pi_{M'}\mathcal{Q}_3(y_{M'},y_{M'},y_{M'})$, where $M'\in \{M,2M\}$.
    
\end{itemize}

\noi

Notice that $L_{2M}\leq (2M)^{1-\delta}< M$ and  $z_{2M}\in \mathcal{Z}$, so we know the bounds of all the terms appearing in the trilinear forms above. To simplify the symbols, we still write $N=2M$ in the following estimates. By choosing $\tau\ll 1$, we only need to show that

\begin{equation}\label{needtoprove}
    \|\mathscr{F}\|_{X^{b_{1}}}\lesssim N^{\frac{10}{3}-2\alpha+\gamma}
\end{equation}
\noi
where $\mathscr{F}$ ranges from (a) to (f). However, we have to deal with these situations in different ways.

\gb

\subsubsection{At least two high frequency terms}\label{2Hterms}

First of all, we consider (a) to (d) when $N_{\max}, N_{\med}\gtrsim N^{1-\delta}$.  Like the estimates for random averaging operator, a trivial counting shows that $\|\mathscr{F}\|_{X^1}\lesssim N^C$. So it suffices to prove

\[
    \|\mathscr{F}\|_{X^{1-b}}\lesssim N^{\frac{10}{3}-2\alpha+\frac{3\gamma}{4}}
\]
\noi
and we get the desired bound by interpolation. (Recall that $b,b_1$ are slightly above $\frac{1}{2}$.) In fact,

\begin{equation}
  \label{Eqn:ftremain1} 
  \begin{aligned}
     \widetilde{\mathscr{F}} (\lambda, k)
      & = - i  \sum_{\substack{
      k = k_1 - k_2 + k_3  \\
      k_2 \not\in \{ k_1, k_3 \}
    }} \int_{\mathbb{R}^3} \mathcal{K} (\lambda, \Phi + \lambda_1  - \lambda_2+
    \lambda_3) \cdot ( \widetilde{y_{N_1}} )_{k_1} (\lambda_1)\\
   & \hspace{3cm} \times \overline{(\widetilde{y_{N_2}})_{k_2}} (\lambda_2)
    \cdot ( \widetilde{y_{N_3}} )_{k_3} (\lambda_3)d \lambda_1 d \lambda_2 d \lambda_3\\
    & = - i \sum_{m\in\Z,|m|\lesssim N^{\alpha}}\int_{\mathbb{R}^3} \sum_{k_1,k_2, k_3}
    \mathcal{K} (\lambda, m + \Phi - [\Phi] + \lambda_1  - \lambda_2+ \lambda_3)\cdot \mathrm{T}_{k k_1 k_2 k_3}^{\text{b}, m}\\
    & \hspace{3cm} \times   ( \widetilde{y_{N_1}} )_{k_1} (\lambda_1)  \cdot  \overline{(\widetilde{y_{N_2}})_{k_2}} (\lambda_2)\cdot ( \widetilde{y_{N_3}}
    )_{k_3} (\lambda_3) d\lambda_1 d \lambda_2 d \lambda_3,
  \end{aligned}
\end{equation}
\noi
where $\Phi = |k'|^{\alpha}-|k_2|^{\alpha}+|k_3|^{\alpha}-|k|^{\alpha}$ , $[\Phi]$ is the integer
part of $\Phi$, and the base tensor ${\rm T}^{{\rm b},m}$ is given by
\eqref{baseT}. Similar to \eqref{Eqn:re2}, we have the following estimates by Minkowski inequality, Lemma \ref{LEM:kerneles} and H\"older inequality

\begin{equation}
  \label{remainder2H} 
  \begin{aligned}
   \| \mathscr{F}    \|_{X^{1-b}}^2 &\lesssim  \int_{\mathbb{R}} \langle \lambda \rangle^{2 (1 - b)}  \| \widetilde{\mathscr{F}} (\lambda, k) \|_{k}^2 d \lambda\\
    & \lesssim \int_{\mathbb{R}^4} \langle \lambda \rangle^{- 2 b} \left(\sum_{|m|\lesssim N^\alpha}\langle \lambda - m - \lambda_1 + \lambda_2 - \lambda_3\rangle^{- 1}\prod_{j=1}^3\langle \lambda_j \rangle^{- b} \right)^2 d\lambda_1 d \lambda_2 d \lambda_3 d
    \lambda\\
    & \hspace{1cm}  \times \sup_{m\in\Z}\Big\|\sum_{k_1,k_2, k_3} \mathrm{T}_{k k'
    k_2 k_3}^{\text{b}, m} \cdot \prod_{j=1}^3 ( \langle \lambda_j \rangle^b \widetilde{y_{N_j}}
    )_{k_j}^{\zeta_j} (\lambda_j)\Big\|^2_{L^2_{\lambda_1\lambda_2\lambda_3}\ell_{k}^2} \\
    & \lesssim \log N \cdot \sup_{m\in\Z}\Big\|\sum_{k_1,k_2, k_3} \mathrm{T}_{k k'
    k_2 k_3}^{\text{b}, m} \cdot \prod_{j=1}^3 ( \langle \lambda_j \rangle^b \widetilde{y_{N_j}}
    )_{k_j}^{\zeta_j} (\lambda_j)\Big\|^2_{L^2_{\lambda_1\lambda_2\lambda_3}\ell_{k}^2}
  \end{aligned}
\end{equation}
\noi
where $\zeta_1,\zeta_3 = +$ and $\zeta_2 = -$. Similar to the random averaging operator, it reduces to the following trilinear estimates:

\begin{equation}
  \label{Eqn:remainderaim1} 
  \| \mathscr{X}_k \|_{k} \lesssim N^{\frac{10}{3}-2\alpha+ \frac{2\gamma}{3} },
\end{equation}
where $\mathscr{X}_k $ is given by
\begin{equation}
  \label{Eqn:Xk1} 
  \mathscr{X}_k = \sum_{k_1, k_2, k_3} \mathrm{T}_{k k_1 k_2
  k_3}^{\text{b}, m} \cdot (w_{N_1})_{k_1} \cdot   \overline{(w_{N_2}) _{k_2}} \cdot (w_{N_3})_{k_3}
\end{equation}
and $w_{N_j}$ are either of the following two types (with a slight abuse of notation, we still call them type (C) and (D)). 
\begin{itemize}
  \item Type (C), where
  \[ 
    (w_{N_j})_{k_j} = \sum_{ k_j' } h_{k_j
    k_j'}^{N_j, L_j} \frac{g_{k_j'} (\omega)}{\jbb{k_j'}^{\frac{\alpha}{2}}}, 
  \]
  with $h_{k_j k_j'}^{N_j, L_j} (\omega)$ supported in the set $ \{
  \frac{N_j}{2} < \jb{k_j'} \le  N_j\}$,
  $\mathcal{B}_{\le  L_j}$-measurable for some $L_j \le  N_j^{1 -
  \delta}$, and satisfying the bounds
  \begin{equation}
    \label{Eqn:eee1} 
      \| h^{N_j, L_j}_{k_j k_j'} \|_{\ell_{k_j}^2 \to  \ell^2_{k_j'}}
      \lesssim L_j^{- \delta_0},\quad \| h^{N_j, L_j}_{k_j k_j'} \|_{\ell_{k_j k_j'}^2} \lesssim
      N_j^{\frac{5-2\alpha}{2} + \gamma_0} L_j^{ -\frac{\gamma_0}{4}}.
  \end{equation}
 Moreover, by
  \eqref{induct3} we may assume that $h^{N_j, L_j}_{k_j k_j'}$ is supported in $\{| k_j
  - k_j' | \lesssim N_j^{\kappa_0^{-\frac{1}{2}}} L_j\}$.
  
  \item Type (D), where $(w_{N_j})_{k_j}$ is supported in $\{ \jb{k_j} \lesssim
  N_j \}$, and satisfies
  \begin{equation}
    \label{Eqn:eee2} 
    \| (w_{N_j})_{k_j} \|_{\ell_{k_j}^2} \lesssim N_j^{\frac{10}{3}-2\alpha +\gamma} .
  \end{equation}
\end{itemize}
Note that we omit the factor $\jb{\lambda_j}^b$ and integral for $\lambda_j$ to simplify the notation. Considering the types of $w_{N_j}$, we should discuss the following situations:

\gb
\paragraph{(1) Type (C,C,C)}

In this situation, the trilinear form can be written as
\[
  \mathscr{X}_k = \sum_{k_1, k_2, k_3} \mathrm{T}^{\text{b},
  m}_{k k_1 k_2 k_3} \cdot \sum_{\substack{
    k_1', k'_2, k_3'}} h_{k_1 k'_1}^{N_1, L_1} \overline{h_{k_2 k'_2}^{N_2, L_2}}
  h_{k_3 k'_3}^{N_3, L_3} \frac{g_{k'_1} \overline{g_{k'_2}}g_{k'_3}}{\jbb{k'_1}^{\frac{\alpha}{2}} \jbb{k'_2}^{\frac{\alpha}{2}}
  \jbb{k'_3}^{\frac{\alpha}{2}}}
\]
\noi
where $h^{N_j, L_j}_{k_j k_j'}$ satisfies
\eqref{Eqn:eee1} and ${\rm T}^{{\rm b},m}_{k k_1 k_2 k_3}$ is defined in \eqref{baseT}.  We begin with the no pairing case, i.e. $k_2'\neq k_1', k_3'$. Apply Proposition \ref{Prop:tensor} and Proposition \ref{PROP:puretensor} repeatedly to get

\[
\begin{aligned}
    \|\mathscr{X}_k\|_{k}&\lesssim (N_1 N_2 N_3)^{-\frac{\alpha}{2}}\left\|\sum_{k_1, k_2, k_3} \mathrm{T}^{\text{b},
  m}_{k k_1 k_2 k_3} \cdot h_{k_1 k'_1}^{N_1, L_1} \overline{h_{k_2 k'_2}^{N_2, L_2}}
  h_{k_3 k'_3}^{N_3, L_3}\right\|_{kk_1'k_2'k_3'}\\
  & \lesssim (N_1 N_2 N_3)^{-\frac{\alpha}{2}}\|\mathrm{T}^{\text{b},
  m}_{k k_1 k_2 k_3}\|_{kk_1k_2k_3}\cdot\prod_{j=1}^3\|h_{k_j k'_j}^{N_j, L_j}\|_{k_j\to k_j'}\, \lesssim (N_1 N_2 N_3)^{-\frac{\alpha}{2}} |S|^{\frac{1}{2}}.
\end{aligned}
\]

Here we want to apply Lemma \ref{4vec1}. By the first two bounds, we have

\begin{equation}\label{LHLCCC1}
|S|\lesssim
\begin{cases}
 N_{2}^{\frac{8}{3}-\alpha}N_{1}^{2}N_{3}^2, \quad &\text{if}\,  N_{2} \geq N_{\med},\\
 (N_1\wedge N_3)^{\frac{8}{3}-\alpha}N_{2}^2 N^2, \quad &\text{if}\,  N_{2} = N_{\min},
\end{cases}
\end{equation}
\noi
which gives the bound

\begin{equation}\label{LHLCCC2}
\|\mathscr{X}_k\|_{k}\lesssim
\begin{cases}
 N_2^{\frac{4}{3}-\alpha}(N_1N_3)^{1-\frac{\alpha}{2}}, \quad &\text{if}\,  N_{2} \geq N_{\med},\\
 (N_1\wedge N_3)^{\frac{4}{3}-\alpha} N_{2}^{1-\frac{\alpha}{2}} N(N_1\vee N_{3})^{-\frac{\alpha}{2}}, \quad &\text{if}\,  N_{2} = N_{\min}.
\end{cases}
\end{equation}

Since we assume that $N_{\max}\geq N_{\med}\gtrsim N^{1-\delta}$, combining the two cases gives

\[
    \|\mathscr{X}_k\|_{k}\lesssim N^{\frac{10}{3}-2\alpha+\delta}.
\]
This is enough to get \eqref{Eqn:remainderaim1} by our choice of parameters.

Secondly, we consider the single pairing case, i.e. $k_1'=k_2'\neq k_3'$ or $k_1'\neq k_2'=k_3'$. The two cases are similar so we only show the case $k_1'=k_2'$, which implies $N_1=N_2$. Furthermore, we may assume that $L_1\geq L_2$. (Otherwise we use the norm $\|h^{N_2,L_2}_{k_2k_2'}\|_{k_2k_2'}$ in the following estimates. ) Then we can write

\[
\begin{aligned}
    \mathscr{X}_k 
    & = \sum_{k_1,k_2,k_3} \mathrm{T}^{\text{b},
  m}_{k k_1 k_2 k_3} \sum_{k_1' \neq k_3'} h_{k_1k_1'}^{N_1,L_1} \overline{h_{k_2k_1'}^{N_2,L_2}} h_{k_3k_3'}^{N_3L_3} \frac{(|g_{k_1'}|^2 - 1) g_{k_3'}}{\jbb{ k_1'}^{\alpha} \jbb{ k_3' }^{\frac{\alpha}{2}}} \\
  & \, + \sum_{k_1,k_2,k_3} \mathrm{T}^{\text{b},
  m}_{k k_1 k_2 k_3} \sum_{k_1' \neq k_3'} h_{k_1k_1'}^{N_1,L_1} \overline{h_{k_2k_1'}^{N_2,L_2}}h_{k_3k_3'}^{N_3,L_3} \frac{1}{\jbb{ k_1'}^{\alpha}} \cdot \frac{g_{k_3'}}{\jbb{ k_3' }^{\frac{\alpha}{2}}}\\
  &  = \mathscr{X}_k^{1} + \mathscr{X}_k^{2}.
\end{aligned}
\]

Notice that $N_1=N_2$ admits at least one $N_{\med}$ among the three numbers, so $N_1=N_2\gtrsim N^{1-\delta}$. Then $\mathscr{X}_k^{1}$ can be treated similarly as the no-pairing case to get \eqref{Eqn:remainderaim1}. In fact, 

\[
\begin{aligned}
    \|\mathscr{X}_k^{1}\|_{k}&\lesssim N_1^{-\alpha} N_3^{-\frac{\alpha}{2}}\left\|\sum_{k_1, k_2, k_3} \mathrm{T}^{\text{b},
  m}_{k k_1 k_2 k_3} \cdot h_{k_1 k'_1}^{N_1, L_1} \overline{h_{k_2 k'_1}^{N_2, L_2}}
  h_{k_3 k'_3}^{N_3, L_3}\right\|_{kk_1'k_3'}\\
  & \lesssim N_1^{-\alpha} N_3^{-\frac{\alpha}{2}}\|\mathrm{T}^{\text{b},
  m}_{k k_1 k_2 k_3}\|_{kk_1k_2k_3}\cdot\prod_{j=1}^3\|h_{k_j k'_j}^{N_j, L_j}\|_{k_j\to k_j'}.
\end{aligned}
\]

The following estimates are almost the same.

\noi

For $\mathscr{X}_k^{2}$, first sum the dyadic increments in the two
contracted factors. Corollary \ref{cor:unitary-cancellation} and Remark
\ref{rem:unitary-redistribution} then reduce the estimate to representative
dyadic terms of the form

\[
    \mathscr{X}_k^{2} = \sum_{k_1,k_2,k_3} \mathrm{T}^{\text{b},
  m}_{k k_1 k_2 k_3} \sum_{k_1',k_3'}  h_{k_1k_1'}^{N_1,L_1} \overline{h_{k_2k_1'}^{N_2,L_2}} h_{k_3k_3'}^{N_3,L_3} \left( \frac{1}{\jbb{k_1'}^{\alpha}} - \frac{1}{\jbb{k_1}^{\alpha}} \right) \frac{g_{k_3'}}{\jbb{k_3'}^{\frac{\alpha}{2}}}.
\]

By \eqref{induct3}, we may assume that $|k_1-k_1'|\lesssim N_1^{\kappa_0^{-\frac{1}{2}}}L_1\ll N_1$ where $|k_1'|\sim N_1$. Hence, we have $|k_1|\sim N_1$ and

\[
\left|\frac{1}{\jbb{k_1'}^{\alpha} }- \frac{1}{\jbb{k_1}^{\alpha} }\right| \lesssim N_1^{-\alpha-1+\kappa_0^{-\frac{1}{2}}}L_1. 
\]

Now we may apply Proposition \ref{Prop:tensor} and Proposition \ref{PROP:puretensor}  to get

\[
\begin{aligned}
    \|\mathscr{X}_k^{2}\|_{k} \lesssim &  \,N_1^{-\alpha-1+\kappa_0^{-\frac{1}{2}}}L_1 N_3^{-\frac{\alpha}{2}} \cdot \left\|\sum_{k_1,k_2,k_3}{\rm T}^{{\rm b},m}_{k k_1 k_2
  k_3}\sum_{k_1'} h_{k_1 k_1'}^{N_1, L_1} \overline{h_{k_2 k_1'}^{N_2, L_2}}h_{k_3 k_3'}^{N_3, L_3} \right\|_{k k_3'\cap(k\to k_3')}\\
  \lesssim & \,N_1^{-\alpha-1+\kappa_0^{-\frac{1}{2}}}L_1 N_3^{-\frac{\alpha}{2}} \cdot \Big(\|{\rm T}^{{\rm b},m}_{k k_1 k_2
  k_3}\|_{kk_3\to k_1 k_2} + \|{\rm T}^{{\rm b},m}_{k k_1 k_2
  k_3}\|_{k\to k_1k_2k_3}\Big)\\
  & \times \left\|\sum_{k_1'} h_{k_1 k_1'}^{N_1, L_1} \overline{h_{k_2 k_1'}^{N_2, L_2}}\right\|_{k_1k_2}\cdot\|h_{k_3 k_3'}^{N_3, L_3}\|_{k_3\to k_3'}\\
  \lesssim & \,N_1^{-\alpha-1+\kappa_0^{-\frac{1}{2}}}L_1 N_3^{-\frac{\alpha}{2}} \cdot \Big(|S_{kk_3}|^\frac{1}{2}|S_{k_1k_2}|^\frac{1}{2} + |S_{k}|^\frac{1}{2}\Big)\cdot \|h_{k_1 k_1'}^{N_1, L_1}\|_{k_1k_1'}\prod_{j=2}^3\|h_{k_j k_j'}^{N_j, L_j}\|_{k_j\to k_j'}\\
  \lesssim & \, N_1^{\frac{5-4\alpha}{2}+\frac{3\gamma_0}{4}+\kappa_0^{-\frac{1}{2}}} N_3^{-\frac{\alpha}{2}}\cdot \Big(|S_{kk_3}|^\frac{1}{2}|S_{k_1k_2}|^\frac{1}{2} + |S_{k}|^\frac{1}{2}\Big).
\end{aligned}    
\]

By Lemma \ref{2vec1} and Lemma \ref{3vec1}, we have

\[
    |S_{kk_3}|^\frac{1}{2}|S_{k_1k_2}|^\frac{1}{2} + |S_{k}|^\frac{1}{2}\lesssim
    N_1^{\frac{3-\alpha}{2}}N_3^{\frac{3-\alpha}{2}} + N_1^{\frac{3-\alpha}{2}}N_3^{\frac{1}{2}+\theta}+N_1^{\frac{1}{2}}N_3.
\]
Therefore

\[
    \|\mathscr{X}_k^{2}\|_{k} \lesssim N_1^{\frac{8-5\alpha}{2}+\frac{3\gamma_0}{4}+\kappa_0^{-\frac{1}{2}}} N_3^{\frac{3-2\alpha}{2}+\frac{3\gamma_0}{4}+\kappa_0^{-\frac{1}{2}}}+ N_1^{3-2\alpha+\frac{3\gamma_0}{4}+\kappa_0^{-\frac{1}{2}}}N_3^{1-\frac{\alpha}{2}}
\]
\noi
which gives \eqref{Eqn:remainderaim1} by $N_1\gtrsim N^{1-\delta}$ and our choice of parameters.

Finally we consider the over pairing case, where we can write

\[
    \mathscr{X}_k = \sum_{k_1,k_2,k_3} \mathrm{T}^{\text{b},
  m}_{k k_1 k_2 k_3} \sum_{k_1'} h_{k_1k_1'}^{N_1,L_1} \overline{h_{k_2k_1'}^{N_2,L_2}} h_{k_3k_1'}^{N_3,L_3} \frac{|g_{k_1'}|^2 g_{k_1'}}{\jbb{ k_1'}^{\frac{3\alpha}{2}} }.
\]
Since \(N_1=N_2=N_3=:N_*\) and there are at least two high-frequency
terms, this case closes directly. By \eqref{preofgauss}, repeated
Cauchy-Schwarz and Proposition \ref{PROP:puretensor} give
\[
 \|\mathscr X_k\|_k
 \lesssim
 N_*^{-\frac{3\alpha}{2}+3\theta}
 \|{\rm T}^{{\rm b},m}\|_{kk_1k_2k_3}
 \prod_{j=1}^3\|h^{N_*,L_j}\|_{k_j\to k_j'}
 \lesssim
 N_*^{-\frac{3\alpha}{2}+3\theta}|S|^{1/2}
 \lesssim N_*^{\frac{10}{3}-2\alpha+3\theta}.
\]
Here the last step uses the fourth bound in Lemma \ref{4vec1},
\(|S|\lesssim N_*^{\frac{20}{3}-\alpha}\). This gives \eqref{Eqn:remainderaim1} by our choice of parameters.

\gb
\paragraph{(2) Type (C,C,D) or (D,C,C)}

These two situations are similar so we only consider

\[
  \mathscr{X}_k = \sum_{k_1, k_2, k_3} \mathrm{T}^{\text{b},
  m}_{k k_1 k_2 k_3} \cdot \sum_{\substack{
    k_1', k'_2}} h_{k_1 k'_1}^{N_1, L_1} \overline{h_{k_2 k'_2}^{N_2, L_2}}
   \frac{g_{k'_1} \overline{g_{k'_2}}}{\jbb{k'_1}^{\frac{\alpha}{2}} \jbb{k'_2}^{\frac{\alpha}{2}}
  } \cdot (z_{N_3})_{k_3}
\]
\noi
where $h^{N_j, L_j}_{k_j k_j'}$ satisfies
\eqref{Eqn:eee1} and $(z_{N_3})_{k_3}$ satisfies \eqref{Eqn:eee2}. Still we begin with the no pairing case, i.e. $k_1'\neq k_2'$. Apply Proposition \ref{PROP:puretensor} and Proposition \ref{Prop:tensor}  to get

\begin{equation}\label{LHLCCD1}
\begin{aligned}
    \|\mathscr{X}_k\|_{k}&\lesssim (N_1 N_2)^{-\frac{\alpha}{2}}\left\|\sum_{k_1, k_2} \mathrm{T}^{\text{b},
  m}_{k k_1 k_2 k_3} \sum_{k_1'\neq k_2'} h_{k_1 k'_1}^{N_1, L_1} \overline{h_{k_2 k'_2}^{N_2, L_2}}g_{k_1'}\overline{g_{k_2'}}
  \right\|_{k\to k_3}\cdot \|(z_{N_3})_{k_3}\|_{k_3}\\
  & \lesssim (N_1 N_2)^{-\frac{\alpha}{2}} N_3^{\frac{10}{3}-2\alpha+\gamma}\Big(\|\mathrm{T}^{\text{b},
  m}_{k k_1 k_2 k_3}\|_{kk_1k_2\to k_3} + \|\mathrm{T}^{\text{b},
  m}_{k k_1 k_2 k_3}\|_{kk_1\to k_2k_3}\\
  &\hspace{1cm} + \|\mathrm{T}^{\text{b},
  m}_{k k_1 k_2 k_3}\|_{kk_2\to k_1k_3} + \|\mathrm{T}^{\text{b},
  m}_{k k_1 k_2 k_3}\|_{k\to k_1k_2 k_3}\Big)\cdot\prod_{j=1}^2\|h_{k_j k'_j}^{N_j, L_j}\|_{k_j\to k_j'}\\
  &\lesssim (N_1 N_2)^{-\frac{\alpha}{2}} N_3^{\frac{10}{3}-2\alpha+\gamma}\Big(|S_{k_3}|^{\frac{1}{2}}+ |S_{kk_1}|^{\frac{1}{2}}|S_{k_2k_3}|^{\frac{1}{2}}+|S_{kk_2}|^{\frac{1}{2}}|S_{k_1k_3}|^{\frac{1}{2}}+|S_{k}|^{\frac{1}{2}}\Big).
\end{aligned}
\end{equation}

If $ N_3=N_{\min} $, then we choose the bounds

\begin{equation}\label{LHLCCD2}
\begin{aligned}
    \|\mathscr{X}_k\|_{k}&\lesssim (N_1 N_2)^{-\frac{\alpha}{2}} N_3^{\frac{10}{3}-2\alpha+\gamma}\Big( N_1N_2^{\frac{4}{3}-\frac{\alpha}{2}}+  (N_1\wedge N_3)^{\frac{3-\alpha}{2}} N_1^{\frac{3-\alpha}{2}} + (N_2\wedge N_3)^{\frac{4}{3}-\frac{\alpha}{2}}N_2^{\frac{4}{3}-\frac{\alpha}{2}}\\
    & \hspace{1cm}+ (N_1\wedge N_2)^{\frac{3-\alpha}{2}}N_3^{\frac{1}{2}+\theta}+(N_1\wedge N_2)^{\frac{1}{2}}N_3 \Big)\\
    &\lesssim N_2^{\frac{4}{3}-\alpha}N_1^{1-\frac{\alpha}{2}} + N_1^{\frac{3-2\alpha}{2}}N_2^{-\frac{\alpha}{2}}+N_1^{-\frac{\alpha}{2}}N_2^{\frac{4}{3}-\alpha}+N_1^{\frac{3-2\alpha}{2}}N_2^{-\frac{\alpha}{2}}+N_1^{\frac{1-\alpha}{2}}N_3^{\frac{13}{3}-2\alpha+\gamma}N_2^{-\frac{\alpha}{2}}.
\end{aligned}
\end{equation}
Since $N_1\geq N_3$ and $N_2\gtrsim N^{1-\delta}$, this proves \eqref{Eqn:remainderaim1}. Otherwise $N_{3} \geq N_{\med}\gtrsim N^{1-\delta}$, we choose the bounds

\[
\begin{aligned}
    \|\mathscr{X}_k\|_{k}&\lesssim (N_1 N_2)^{-\frac{\alpha}{2}} N_3^{\frac{10}{3}-2\alpha+\gamma}\left( ( N_2^{\frac{4}{3}-\frac{\alpha}{2}}N_1)\wedge(N_1^{\frac{3-\alpha}{2}}N_2^{\frac{1}{2}+\theta}+ N_1^{\frac{1}{2}}N_2)+ N_2^{\frac{3-\alpha}{2}} N_1^{\frac{3-\alpha}{2}}\right.\\ 
    &\hspace{1cm}\left.  + N_1^{\frac{4}{3}-\frac{\alpha}{2}}N_2^{\frac{4}{3}-\frac{\alpha}{2}} +(N_1^{\frac{4}{3}-\frac{\alpha}{2}}N_2)\wedge(N_2^{\frac{3-\alpha}{2}}N_1^{\frac{1}{2}+\theta}+N_2^{\frac{1}{2}}N_1) \right)\\
    &\lesssim N_3^{\frac{10}{3}-2\alpha+\gamma}\cdot \left((N_1N_2)^{\frac{3-2\alpha}{2}}+ (N_1\vee N_2)^{\frac{4}{3}-\alpha}(N_1\wedge N_2)^{1-\frac{\alpha}{2}} + (N_1\vee N_2)^{\frac{3-2\alpha}{2}} \right).
\end{aligned}
\]
This is enough for \eqref{Eqn:remainderaim1} since the negative power of $(N_1\vee N_2)\gtrsim N^{1-\delta}$ can slightly decrease the total power by our choice of parameters. 

\gb

Secondly, we consider the pairing case, i.e $k_1'=k_2'$ that implies $N_1=N_2$. In addition, we assume $L_1\geq L_2$ as before. Then we can write

\[
\begin{aligned}
    \mathscr{X}_k 
    & = \sum_{k_1,k_2,k_3} \mathrm{T}^{\text{b},
  m}_{k k_1 k_2 k_3} \sum_{k_1'} h_{k_1k_1'}^{N_1,L_1} \overline{h_{k_2k_1'}^{N_2,L_2}} \frac{(|g_{k_1'}|^2 - 1)} {\jbb{ k_1'}^{\alpha}}\cdot (z_{N_3})_{k_3} \\
  & \, + \sum_{k_1,k_2,k_3} \mathrm{T}^{\text{b},
  m}_{k k_1 k_2 k_3} \sum_{k_1'} h_{k_1k_1'}^{N_1,L_1} \overline{h_{k_2k_1'}^{N_2,L_2}}\frac{1}{\jbb{ k_1'}^{\alpha}}\cdot (z_{N_3})_{k_3}\\
  &  = \mathscr{X}_k^{3} + \mathscr{X}_k^{4}.
\end{aligned}
\]

Still $\mathscr{X}_k^{3}$ can be treated similarly to the no pairing case. In fact,

\[
\begin{aligned}
    \|\mathscr{X}_k^{3}\|_{k}&\lesssim N_1^{-\alpha} \left\|\sum_{k_1, k_2, k_3} \mathrm{T}^{\text{b},
  m}_{k k_1 k_2 k_3} \cdot h_{k_1 k'_1}^{N_1, L_1} \overline{h_{k_2 k'_1}^{N_2, L_2}}(|g_{k_1'}|^2 - 1)
  \right\|_{k\to k_3}\cdot\|(z_{N_3})_{k_3}\|_{k_3}\\
  & \lesssim N_1^{-\alpha} N_3^{\frac{10}{3}-2\alpha+\gamma}\|\mathrm{T}^{\text{b},
  m}_{k k_1 k_2 k_3}\|_{(kk_1k_2\to k_3)\cap(k\to k_1 k_2k_3)}\cdot\prod_{j=1}^2\|h_{k_j k'_j}^{N_j, L_j}\|_{k_j\to k_j'}.
\end{aligned}
\]

The following estimates are almost the same. After summing the dyadic
increments, Corollary \ref{cor:unitary-cancellation} and Remark
\ref{rem:unitary-redistribution} reduce $\mathscr{X}_k^{4}$ to the form

\[
    \mathscr{X}_k^{4} = \sum_{k_1,k_2,k_3} \mathrm{T}^{\text{b},
  m}_{k k_1 k_2 k_3} \sum_{k_1',k_3'}  h_{k_1k_1'}^{N_1,L_1} \overline{h_{k_2k_1'}^{N_2,L_2}} \left( \frac{1}{\jbb{k_1'}^{\alpha}} - \frac{1}{\jbb{k_1}^{\alpha}} \right)\cdot (z_{N_3})_{k_3}.
\]

By \eqref{induct3}, we may assume that $|k_1-k_1'|\lesssim N_1^{\kappa_0^{-\frac{1}{2}}}L_1$. Still $N_1=N_2$ implies that $N^{1-\delta}\lesssim N_1\leq N$. So we have $|k_1|\sim N_1$ and

\[
\left|\frac{1}{\jbb{k_1'}^{\alpha} }- \frac{1}{\jbb{k_1}^{\alpha} }\right| \lesssim N_1^{-\alpha-1+\kappa_0^{-\frac{1}{2}}}L_1. 
\]

Now we may apply Proposition \ref{Prop:tensor} and Proposition \ref{PROP:puretensor}  to get

\[
\begin{aligned}
    \|\mathscr{X}_k^{4}\|_{k} \lesssim &  \,N_1^{-\alpha-1+\kappa_0^{-\frac{1}{2}}}L_1  \cdot\min\left\{ \left\|\sum_{k_1,k_2,k_3}{\rm T}^{{\rm b},m}_{k k_1 k_2
  k_3}\sum_{k_1'} h_{k_1 k_1'}^{N_1, L_1} \overline{h_{k_2 k_1'}^{N_2, L_2}} \right\|_{k\to k_3},\right.\\
  &\hspace{0.5cm}\left.\left\|\sum_{k_1,k_2,k_3}{\rm T}^{{\rm b},m}_{k k_1 k_2
  k_3}\sum_{k_1'} h_{k_1 k_1'}^{N_1, L_1} \overline{h_{k_2 k_1'}^{N_2, L_2}} \right\|_{k k_3}\right\} \cdot\|(z_{N_3})_{k_3}\|_{k_3}\\
  \lesssim & \,N_1^{-\alpha-1+\kappa_0^{-\frac{1}{2}}}L_1 N_3^{\frac{10}{3}-2\alpha+\gamma} \cdot \min\{\|{\rm T}^{{\rm b},m}_{k k_1 k_2
  k_3}\|_{kk_1k_2\to k_3} , \\
  & \hspace{0.5cm}\|{\rm T}^{{\rm b},m}_{k k_1 k_2
  k_3}\|_{kk_3\to k_1k_2}\} \cdot \left\|\sum_{k_1'} h_{k_1 k_1'}^{N_1, L_1} \overline{h_{k_2 k_1'}^{N_2, L_2}}\right\|_{k_1k_2}\\
  \lesssim & \, N_1^{\frac{5-4\alpha}{2}+\frac{3\gamma_0}{4}+\kappa_0^{-\frac{1}{2}}}N_3^{\frac{10}{3}-2\alpha+\gamma} \cdot \min\{|S_{k_3}|^{\frac{1}{2}}, |S_{kk_3}|^{\frac{1}{2}}|S_{k_1k_2}|^{\frac{1}{2}}\}.
\end{aligned}    
\]

If $N_3\leq N_1$, we count $S_{kk_3}$ and $S_{k_1k_2}$ to get

\[
    \|\mathscr{X}_k^{4}\|_{k} \lesssim N_1^{\frac{5-4\alpha}{2}+\frac{3\gamma_0}{4}+\kappa_0^{-\frac{1}{2}}}N_3^{\frac{10}{3}-2\alpha+\gamma} \cdot N_1^{\frac{3-\alpha}{2}}N_3^{\frac{3-\alpha}{2}}\leq N_1^{\frac{8-5\alpha}{2}+\frac{3\gamma_0}{4}+\kappa_0^{-\frac{1}{2}}} N_3^{\frac{29}{6}-\frac{5\alpha}{2}+\gamma},
\]
\noi
which is enough for \eqref{Eqn:remainderaim1} since $N_1=N_2\gtrsim N^{1-\delta}$. While $N_1 \leq N_3$ is easier because we have $N_3\gtrsim N^{1-\delta}$ now. Then we count $S_{k_3}$ to get

\[
    \|\mathscr{X}_k^{4}\|_{k} \lesssim N_1^{\frac{5-4\alpha}{2}+\frac{3\gamma_0}{4}+\kappa_0^{-\frac{1}{2}}}N_3^{\frac{10}{3}-2\alpha+\gamma} \cdot N_1^{\frac{7}{3}-\frac{\alpha}{2}}\lesssim N_1^{\frac{29}{6}-\frac{5\alpha}{2}+\frac{3\gamma_0}{4}+\kappa_0^{-\frac{1}{2}}}N_3^{\frac{10}{3}-2\alpha+\gamma},
\]
\noi
which also proves \eqref{Eqn:remainderaim1} by decreasing the total power with $N_1$.

\noi
\paragraph{(3) Type (C,D,C)}

This situation admits no pairing because of the signs. We can write

\[
  \mathscr{X}_k = \sum_{k_1, k_2, k_3} \mathrm{T}^{\text{b},
  m}_{k k_1 k_2 k_3} \cdot \sum_{\substack{
    k_1', k'_3}} h_{k_1 k'_1}^{N_1, L_1} h_{k_3 k'_3}^{N_3, L_3}
   \frac{g_{k'_1} g_{k'_3}}{\jbb{k'_1}^{\frac{\alpha}{2}} \jbb{k'_3}^{\frac{\alpha}{2}}
  } \cdot \overline{(z_{N_2})_{k_2}}
\]
\noi
where $h^{N_j, L_j}_{k_j k_j'}$ satisfies
\eqref{Eqn:eee1} and $(z_{N_2})_{k_2}$ satisfies \eqref{Eqn:eee2}.Apply Proposition \ref{PROP:puretensor} and Proposition \ref{Prop:tensor}  to get

\[
\begin{aligned}
    \|\mathscr{X}_k\|_{k}&\lesssim (N_1 N_3)^{-\frac{\alpha}{2}}\left\|\sum_{k_1, k_2} \mathrm{T}^{\text{b},
  m}_{k k_1 k_2 k_3} \sum_{k_1', k_3'} h_{k_1 k'_1}^{N_1, L_1} h_{k_3 k'_3}^{N_3, L_3}g_{k_1'}g_{k_3'}
  \right\|_{k\to k_2}\cdot \|(z_{N_2})_{k_2})\|_{k_2}\\
  & \lesssim (N_1 N_3)^{-\frac{\alpha}{2}} N_2^{\frac{10}{3}-2\alpha+\gamma}\Big(\|\mathrm{T}^{\text{b},
  m}_{k k_1 k_2 k_3}\|_{kk_1k_3\to k_2} + \|\mathrm{T}^{\text{b},
  m}_{k k_1 k_2 k_3}\|_{kk_1\to k_2k_3}\\
  &\hspace{1cm} + \|\mathrm{T}^{\text{b},
  m}_{k k_1 k_2 k_3}\|_{kk_3\to k_1k_2} + \|\mathrm{T}^{\text{b},
  m}_{k k_1 k_2 k_3}\|_{k\to k_1k_2 k_3}\cdot\prod_{j=1}^2\|h_{k_j k'_j}^{N_j, L_j}\|_{k_j\to k_j'}\\
  &\lesssim (N_1 N_3)^{-\frac{\alpha}{2}} N_2^{\frac{10}{3}-2\alpha+\gamma}\Big(|S_{k_2}|^{\frac{1}{2}}+ |S_{kk_1}|^{\frac{1}{2}}|S_{k_2k_3}|^{\frac{1}{2}}+|S_{kk_3}|^{\frac{1}{2}}|S_{k_1k_2}|^{\frac{1}{2}}+|S_{k}|^{\frac{1}{2}}\Big).
\end{aligned}
\]

If $N_2=N_{\min}$, then we have 

\[
\begin{aligned}
    \|\mathscr{X}_k\|_{k}&\lesssim (N_1 N_3)^{-\frac{\alpha}{2}} N_2^{\frac{10}{3}-2\alpha+\gamma}\Big( (N_1\wedge N_3)^{\frac{4}{3}-\frac{\alpha}{2}}N+ N_2^{\frac{3-\alpha}{2}} N_1^{\frac{3-\alpha}{2}} + N_2^{\frac{3-\alpha}{2}}N_3^{\frac{3-\alpha}{2}}+ (N_1\wedge N_3)^{\frac{4}{3}-\frac{\alpha}{2}}N_2 \Big)\\
    &\lesssim (N_1\vee N_3)^{-\frac{\alpha}{2}}(N_1\wedge N_3)^{\frac{4}{3}-\alpha} N + (N_1\vee N_3)^{\frac{3-2\alpha}{2}}(N_1\wedge N_3)^{-\frac{\alpha}{2}}
\end{aligned}
\]
\noi
which proves \eqref{Eqn:remainderaim1} since $(N_1\vee N_3)\geq (N_1\wedge N_3)\gtrsim N^{1-\delta}$. Otherwise $N_2=N_{\med}$ or $N_{\max}$, so that $N_{\min} = N_1\wedge N_3$. We should use the counting estimates:

\[
\begin{aligned}
    \|\mathscr{X}_k\|_{k}&\lesssim (N_1 N_3)^{-\frac{\alpha}{2}} N_2^{\frac{10}{3}-2\alpha+\gamma}\Big( (N_1\vee N_3)^{\frac{3-\alpha}{2}}(N_1\wedge N_3)^{\frac{1}{2}+\theta}+(N_1\vee N_3)^{\frac{1}{2}}(N_1\wedge N_3) \\
    & \hspace{1cm}+ (N_2\wedge N_3)^{\frac{3-\alpha}{2}} N_1^{\frac{3-\alpha}{2}}+ (N_1\wedge N_2)^{\frac{3-\alpha}{2}}N_3^{\frac{3-\alpha}{2}} \Big)\\
    &\lesssim  N_2^{\frac{10}{3}-2\alpha+\gamma}\Big( (N_1 N_3)^{\frac{3-2\alpha}{2}} + (N_1\vee N_3)^{\frac{1-\alpha}{2}}(N_1\wedge N_3)^{1-\frac{\alpha}{2}}\Big)
\end{aligned}
\]
\noi
which proves \eqref{Eqn:remainderaim1} since $(N_1\vee N_3), \, N_2\geq N_{\med}\gtrsim N^{1-\delta}$.

\gb
\paragraph{(4) Type (C,D,D) or (D,D,C)}

These two situations are similar as well, so we only consider

\[ 
  \mathscr{X}_k = \sum_{k_1, k_2, k_3} \mathrm{T}^{\text{b},
  m}_{k k_1 k_2 k_3} \cdot \sum_{
    k_1'} h_{k_1 k'_1}^{N_1, L_1} 
   \frac{g_{k'_1}}{\jbb{k'_1}^{\frac{\alpha}{2}}} \cdot \overline{(z_{N_2})_{k_2}}\cdot(z_{N_3})_{k_3}
\]
\noi
where $h^{N_1, L_1}_{k_1 k_1'}$ satisfies
\eqref{Eqn:eee1} and $(z_{N_2})_{k_2}$, $(z_{N_3})_{k_3}$ satisfy \eqref{Eqn:eee2}.Apply Proposition \ref{PROP:puretensor} and Proposition \ref{Prop:tensor}  to get

\[
\begin{aligned}
    \|\mathscr{X}_k\|_{k}&\lesssim N_1^{-\frac{\alpha}{2}}\min\left\{\left\|\sum_{k_1} \mathrm{T}^{\text{b},
  m}_{k k_1 k_2 k_3} \sum_{k_1'} h_{k_1 k'_1}^{N_1, L_1} g_{k_1'}
  \right\|_{kk_2\to k_3},\right.\\
  &\hspace{1cm}\left.\left\|\sum_{k_1} \mathrm{T}^{\text{b},
  m}_{k k_1 k_2 k_3} \sum_{k_1'} h_{k_1 k'_1}^{N_1, L_1} g_{k_1'}
  \right\|_{k\to k_2 k_3}\right\}\cdot \prod_{j=2}^3\|(z_{N_j})_{k_j})\|_{k_j}\\
  & \lesssim N_1^{-\frac{\alpha}{2}} (N_2N_3)^{\frac{10}{3}-2\alpha+\gamma}\min\Big\{ \|\mathrm{T}^{\text{b},
  m}_{k k_1 k_2 k_3}\|_{kk_1k_2\to k_3} + \|\mathrm{T}^{\text{b},
  m}_{k k_1 k_2 k_3}\|_{kk_2\to k_1k_3},\\
  &\hspace{1cm} \|\mathrm{T}^{\text{b},
  m}_{k k_1 k_2 k_3}\|_{k\to k_1k_2 k_3} + \|\mathrm{T}^{\text{b},
  m}_{k k_1 k_2 k_3}\|_{kk_1\to k_2k_3}\Big\}\cdot\|h_{k_1 k'_1}^{N_1, L_1}\|_{k_1\to k_1'}\\
  &\lesssim N_1^{-\frac{\alpha}{2}} (N_2N_3)^{\frac{10}{3}-2\alpha+\gamma}\min\Big\{|S_{k_3}|^{\frac{1}{2}}+ |S_{kk_2}|^{\frac{1}{2}}|S_{k_1k_3}|^{\frac{1}{2}},|S_{k}|^{\frac{1}{2}}+ |S_{kk_1}|^{\frac{1}{2}}|S_{k_2k_3}|^{\frac{1}{2}}\Big\}.
\end{aligned}
\]

If $N_1=N_{\min}$, we use the first bound to get

\[
\begin{aligned}
    \|\mathscr{X}_k\|_{k}&\lesssim N_1^{-\frac{\alpha}{2}} (N_2N_3)^{\frac{10}{3}-2\alpha+\gamma}\Big(N_2^{\frac{4}{3}-\frac{\alpha}{2}}N_1 +(N_1\wedge N_3)^{\frac{4}{3}-\frac{\alpha}{2}}N_2^{\frac{4}{3}-\frac{\alpha}{2}} \Big)\\
    & \lesssim N_1^{1-\frac{\alpha}{2}}N_2^{\frac{14}{3}-\frac{5\alpha}{2}+\gamma}\cdot N_3^{\frac{10}{3}-2\alpha+\gamma} \lesssim N_2^{\frac{17}{3}-3\alpha+\gamma} N_3^{\frac{10}{3}-2\alpha+\gamma}.
\end{aligned}   
\]
This proves \eqref{Eqn:remainderaim1} since $N_2, N_3\gtrsim N^{1-\delta}$. While $N_1=N_{\med}$ or $N_{\max}$, we use the second bound to get

\[
\begin{aligned}
    \|\mathscr{X}_k\|_{k}\lesssim & N_1^{-\frac{\alpha}{2}} (N_2N_3)^{\frac{10}{3}-2\alpha+\gamma}\Big(((N_1\wedge N_3)^{\frac{4}{3}-\frac{\alpha}{2}}N_2)\wedge(N_1^{\frac{3-\alpha}{2}}N_3^{\frac{1}{2}+\theta} +N_1^{\frac{1}{2}}N_3)+ (N_2\wedge N_3)^\frac{3-\alpha}{2}N_1^{\frac{3-\alpha}{2}} \Big)\\
    \lesssim & \begin{cases}
        N_1^{-\frac{\alpha}{2}}N_2^{\frac{13}{3}-2\alpha+\gamma}N_3^{\frac{14}{3}-\frac{5\alpha}{2}+\gamma} + N_1^{\frac{3-2\alpha}{2}}N_2^{\frac{29}{6}-\frac{5\alpha}{2}+\gamma} N_3^{\frac{10}{3}-2\alpha+\gamma}, \, &\text{if} \, N_3\geq N_2,\\
        (N_1^{\frac{3-2\alpha}{2}}N_3^{\frac{29}{6}-\frac{5\alpha}{2}+\gamma}  + N_1^{\frac{1-\alpha}{2}}N_3^{\frac{13}{3}-2\alpha+\gamma})\cdot N_2^{\frac{10}{3}-2\alpha+\gamma} , \, &\text{if} \, N_3\leq N_2
    \end{cases}\\
    \lesssim & (N_2\vee N_3)^{\frac{10}{3}-2\alpha+\gamma} N_{1}^{\frac{29}{6}-\frac{5\alpha}{2}+\gamma} + N^{9-5\alpha+2\gamma+\frac{4\delta}{3}}.
\end{aligned}  
\]
This also proves \eqref{Eqn:remainderaim1} by our choice of parameters.

\gb
\paragraph{(5) Type (D,C,D)}

Here we can write

\[
  \mathscr{X}_k = \sum_{k_1, k_2, k_3} \mathrm{T}^{\text{b},
  m}_{k k_1 k_2 k_3} \cdot (z_{N_1})_{k_1}\cdot \sum_{
    k_2'} \overline{h_{k_2 k'_2}^{N_2, L_2}} 
   \frac{\overline{g_{k'_2}}}{\jbb{k'_2}^{\frac{\alpha}{2}}} \cdot(z_{N_3})_{k_3}
\]
\noi
where $h^{N_2, L_2}_{k_2 k_2'}$ satisfies
\eqref{Eqn:eee1} and $(z_{N_1})_{k_1}$, $(z_{N_3})_{k_3}$ satisfy \eqref{Eqn:eee2}. Apply Proposition \ref{PROP:puretensor} and Proposition \ref{Prop:tensor}  to get

\begin{equation}\label{LHLDCD0}
\begin{aligned}
    \|\mathscr{X}_k\|_{k}&\lesssim N_2^{-\frac{\alpha}{2}}\min\Big\{\left\|\sum_{k_2} \mathrm{T}^{\text{b},
  m}_{k k_1 k_2 k_3} \sum_{k_2'} \overline{h_{k_2 k'_2}^{N_2, L_2}}\overline{ g_{k_2'}}
  \right\|_{kk_1\to k_3},\\
  &\hspace{2cm}\left\|\sum_{k_2} \mathrm{T}^{\text{b},
  m}_{k k_1 k_2 k_3} \sum_{k_2'} \overline{h_{k_2 k'_2}^{N_2, L_2}} \overline{g_{k_2'}}
  \right\|_{kk_3\to k_1}\Big\}\cdot \prod_{j=1,3}\|(z_{N_j})_{k_j})\|_{k_j}\\
  & \lesssim N_2^{-\frac{\alpha}{2}} (N_1N_3)^{\frac{10}{3}-2\alpha+\gamma}\min\Big\{ \|\mathrm{T}^{\text{b},
  m}_{k k_1 k_2 k_3}\|_{kk_1k_2\to k_3} + \|\mathrm{T}^{\text{b},
  m}_{k k_1 k_2 k_3}\|_{kk_1\to k_2k_3},\\
  &\hspace{1cm}  \|\mathrm{T}^{\text{b},
  m}_{k k_1 k_2 k_3}\|_{kk_2k_3\to k_1} + \|\mathrm{T}^{\text{b},
  m}_{k k_1 k_2 k_3}\|_{kk_3\to k_1k_2}\Big\}\cdot\|h_{k_2 k'_2}^{N_2, L_2}\|_{k_2\to k_2'}\\
  &\lesssim N_2^{-\frac{\alpha}{2}} (N_1N_3)^{\frac{10}{3}-2\alpha+\gamma}\min\Big\{|S_{k_3}|^{\frac{1}{2}}+ |S_{kk_1}|^{\frac{1}{2}}|S_{k_2k_3}|^{\frac{1}{2}}, |S_{k_1}|^{\frac{1}{2}}+ |S_{kk_3}|^{\frac{1}{2}}|S_{k_1k_2}|^{\frac{1}{2}}\Big\}.
\end{aligned}
\end{equation}

If $N_{\min}=N_2$, we have

\[
\begin{aligned}
    \|\mathscr{X}_k\|_{k}&\lesssim N_2^{-\frac{\alpha}{2}} (N_1N_3)^{\frac{10}{3}-2\alpha+\gamma}\min\Big\{N_1^{\frac{3-\alpha}{2}}N_2^{\frac{1}{2}+\theta}+N_1^{\frac{1}{2}}N_2+N_2^{\frac{3-\alpha}{2}}N_1^{\frac{3-\alpha}{2}},\\
    &\hspace{1cm} N_3^{\frac{3-\alpha}{2}}N_2^{\frac{1}{2}+\theta}+N_3^{\frac{1}{2}}N_2+N_2^{\frac{3-\alpha}{2}}N_3^{\frac{3-\alpha}{2}} \Big\}\\
    &\lesssim (N_1\vee N_3)^{\frac{10}{3}-2\alpha+\gamma}(N_1\wedge N_3)^{\frac{29}{6}-\frac{5\alpha}{2}+\gamma}
\end{aligned}
\]
\noi
which proves \eqref{Eqn:remainderaim1}. If $N_{\min} = N_1$, we use another bound of $|S_{k_3}|^{\frac{1}{2}}+ |S_{kk_1}|^{\frac{1}{2}}|S_{k_2k_3}|^{\frac{1}{2}}$ to get

\begin{equation}\label{LHLDCD1}
\begin{aligned}
    \|\mathscr{X}_k\|_{k}&\lesssim N_2^{-\frac{\alpha}{2}} (N_1N_3)^{\frac{10}{3}-2\alpha+\gamma}\Big(N_2^{\frac{4}{3}-\frac{\alpha}{2}}N_1+(N_2\wedge N_3)^{\frac{3-\alpha}{2}}N_1^{\frac{3-\alpha}{2}}\Big)\\
    &\lesssim N_2^{\frac{4}{3}-\alpha}N_3^{\frac{23}{3}-4\alpha+2\gamma}+  N_2^{-\frac{\alpha}{2}}(N_1N_3)^{\frac{29}{6}-\frac{5\alpha}{2}+\gamma},
\end{aligned}
\end{equation}
\noi
which proves \eqref{Eqn:remainderaim1} because of $N_2,N_3\gtrsim N^{1-\delta}$ and our choice of parameters. If $N_{\min}= N_3$, we choose another bound of $|S_{k_1}|^{\frac{1}{2}}+ |S_{kk_3}|^{\frac{1}{2}}|S_{k_1k_2}|^{\frac{1}{2}}$ to get

\begin{equation}\label{LHLDCD2}
\begin{aligned}
    \|\mathscr{X}_k\|_{k}&\lesssim N_2^{-\frac{\alpha}{2}} (N_1N_3)^{\frac{10}{3}-2\alpha+\gamma}\Big( N_2^{\frac{4}{3}-\frac{\alpha}{2}}N_3 + (N_1\wedge N_2)^{\frac{3-\alpha}{2}} N_3^{\frac{3-\alpha}{2}}\Big)\\
    &\lesssim N_2^{\frac{4}{3}-\alpha}N_1^{\frac{23}{3}-4\alpha+2\gamma}+  N_2^{-\frac{\alpha}{2}}(N_1N_3)^{\frac{29}{6}-\frac{5\alpha}{2}+\gamma}.
\end{aligned}
\end{equation}
As in the case $N_{\min}=N_1$, we prove \eqref{Eqn:remainderaim1}.

\gb
\paragraph{(6) Type (D,D,D)}

Finally, we consider

\[ 
  \mathscr{X}_k = \sum_{k_1, k_2, k_3} \mathrm{T}^{\text{b},
  m}_{k k_1 k_2 k_3} \cdot (z_{N_1})_{k_1} \cdot \overline{(z_{N_2})_{k_2}}\cdot(z_{N_3})_{k_3}
\]
\noi
where  $(z_{N_j})_{k_j}$ satisfy \eqref{Eqn:eee2}. We apply Proposition \ref{PROP:puretensor} to get

\[
\begin{aligned}
    \|\mathscr{X}_k\|_{k}&\lesssim \|\mathrm{T}^{\text{b},
  m}_{k k_1 k_2 k_3}\|_{kk_{\min}\to k_{\max}k_{\med}}\prod_{j=1}^3\|(z_{N_j})_{k_j}\|_{k_j}\\
  &\lesssim (N_{\med}N_{\min})^{\frac{3-\alpha}{2}}(N_1N_2N_3)^{\frac{10}{3}-2\alpha+\gamma}\, \lesssim N_{\max}^{\frac{10}{3}-2\alpha+\gamma}N_{\med}^{\frac{29}{6}-\frac{5\alpha}{2}+\gamma}N_{\min}^{\frac{29}{6}-\frac{5\alpha}{2}+\gamma}
\end{aligned}
\]
\noi
which proves \eqref{Eqn:remainderaim1}.

\gb
\subsubsection{Low-high-low with high frequency random term}\label{LHLremainder}

Now we consider (b) when the low frequency is much less than $N^{1-\delta}$. That is to say $N_1, N_3\ll N^{1-\delta}$. The reduction step is the same as above but now we have $N_2\gg N_1\vee N_3$. Then \eqref{induct3} allows us to assume that $|k_2|\sim N_2$. To be precise, it reduces to 

\noi

\begin{equation}
  \label{Eqn:remainderaim2} 
  \| \mathscr{X}^{-}_k \|_{k} \lesssim N^{\frac{10}{3}-2\alpha+ \frac{2\gamma}{3} },
\end{equation}
\noi
where $\mathscr{X}^{-}_k $ is given by

\[
  \mathscr{X}^{-}_k = \sum_{k_1, k_2, k_3} \mathrm{T}_{k k_1 k_2
  k_3}^{\text{b}, m}\cdot (w_{N_1})_{k_1} \cdot   \sum_{ k_2' } \overline{h_{k_2
    k_2'}^{N_2, L_2}} \frac{\overline{g_{k_2'}}}{\jbb{k_2'}^{\frac{\alpha}{2}}} \cdot (w_{N_3})_{k_3}
\]
\noi
and $w_{N_j}$ are either of  type (C) or (D) as above.  Then we have the following situations, many of which can be treated as those in Section \ref{2Hterms} :

\gb
\paragraph{(1) Type (C,C,C)}

Note that there is no need to consider pairing because $N_2\gg N_1, N_3$ and the sign. We write

\[ 
  \mathscr{X}_k^{-} = \sum_{k_1, k_2, k_3} \mathrm{T}^{\text{b},
  m}_{k k_1 k_2 k_3} \cdot \sum_{\substack{
    k_1', k'_2, k_3'}} h_{k_1 k'_1}^{N_1, L_1} \overline{h_{k_2 k'_2}^{N_2, L_2}}
  h_{k_3 k'_3}^{N_3, L_3} \frac{g_{k'_1} \overline{g_{k'_2}}g_{k'_3}}{\jbb{k'_1}^{\frac{\alpha}{2}} \jbb{k'_2}^{\frac{\alpha}{2}}
  \jbb{k'_3}^{\frac{\alpha}{2}}}
\]

\noi
where $h^{N_j, L_j}_{k_j k_j'}$ satisfies
\eqref{Eqn:eee1}. We apply Proposition \ref{Prop:tensor} and Proposition \ref{PROP:puretensor} repeatedly to get

\[
\|\mathscr{X}^{-}_k\|_{k}
   \lesssim (N_1 N_2 N_3)^{-\frac{\alpha}{2}}\|\mathrm{T}^{\text{b},
  m}_{k k_1 k_2 k_3}\|_{kk_1k_2k_3}\cdot\prod_{j=1}^3\|h_{k_j k'_j}^{N_j, L_j}\|_{k_j\to k_j'}\,\lesssim (N_1 N_2 N_3)^{-\frac{\alpha}{2}} |S|^{\frac{1}{2}}.
\]
Then we can prove \eqref{Eqn:remainderaim2} as \eqref{LHLCCC1}  and \eqref{LHLCCC2}.

\gb
\paragraph{(2) Type (C,C,D) or (D,C,C)}

These two situations are similar so we only consider

\[
  \mathscr{X}^{-}_k = \sum_{k_1, k_2, k_3} \mathrm{T}^{\text{b},
  m}_{k k_1 k_2 k_3} \cdot \sum_{
    k_1', k'_2} h_{k_1 k'_1}^{N_1, L_1} \overline{h_{k_2 k'_2}^{N_2, L_2}}
   \frac{g_{k'_1} \overline{g_{k'_2}}}{\jbb{k'_1}^{\frac{\alpha}{2}} \jbb{k'_2}^{\frac{\alpha}{2}}
  } \cdot (z_{N_3})_{k_3}
\]
\noi
where $h^{N_j, L_j}_{k_j k_j'}$ satisfies
\eqref{Eqn:eee1} and $(z_{N_3})_{k_3}$ satisfies \eqref{Eqn:eee2}. Note that there is still no possibility of pairing due to $N_2\gg N_1$. As \eqref{LHLCCD1}, we apply Proposition \ref{PROP:puretensor} and Proposition \ref{Prop:tensor}  to get

\[
    \|\mathscr{X}^{-}_k\|_{k}\lesssim (N_1 N_2)^{-\frac{\alpha}{2}} N_3^{\frac{10}{3}-2\alpha+\gamma}\Big(|S_{k_3}|^{\frac{1}{2}}+ |S_{kk_1}|^{\frac{1}{2}}|S_{k_2k_3}|^{\frac{1}{2}}+|S_{kk_2}|^{\frac{1}{2}}|S_{k_1k_3}|^{\frac{1}{2}}+|S_{k}|^{\frac{1}{2}}\Big).
\]

If $N_1\geq N_3$, we get \eqref{Eqn:remainderaim2} as the estimates in \eqref{LHLCCD2}. Now we assume $N_1\leq N_3$, then

\[
\begin{aligned}
    \|\mathscr{X}^{-}_k\|_{k}&\lesssim (N_1 N_2)^{-\frac{\alpha}{2}} N_3^{\frac{10}{3}-2\alpha+\gamma} \Big( N_1 N_2^{\frac{4}{3}-\frac{\alpha}{2}}+ (N_1N_3)^{\frac{3-\alpha}{2}}+ (N_1\wedge N_3)^{\frac{4}{3}-\frac{\alpha}{2}}N_2^{\frac{4}{3}-\frac{\alpha}{2}}+ N_3^{\frac{3-\alpha}{2}}N_1^{\frac{1}{2}+\theta}+N_3^{\frac{1}{2}}N_1\Big)\\
    &\lesssim N_2^{\frac{4}{3}-\alpha}N_1^{1-\frac{\alpha}{2}}N_3^{\frac{10}{3}-2\alpha+\gamma}+ N_2^{-\frac{\alpha}{2}}N_1^{\frac{3-2\alpha}{2}}N_3^{\frac{29}{6}-\frac{5\alpha}{2}+\gamma} + N_2^{-\frac{\alpha}{2}}N_1^{1-\frac{\alpha}{2}}N_3^{\frac{23}{6}-2\alpha+\gamma}\, \lesssim N_2^{\frac{7}{3}-\frac{3\alpha}{2}}
\end{aligned}
\]
\noi
which is enough for \eqref{Eqn:remainderaim2} by our choice of parameters.

\gb
\paragraph{(3) Type (D,C,D)}

In this situation, we write

\[
  \mathscr{X}^{-}_k = \sum_{k_1, k_2, k_3} \mathrm{T}^{\text{b},
  m}_{k k_1 k_2 k_3} \cdot (z_{N_1})_{k_1}\cdot \sum_{
    k_2'} \overline{h_{k_2 k'_2}^{N_2, L_2}} 
   \frac{\overline{g_{k'_2}}}{\jbb{k'_2}^{\frac{\alpha}{2}}} \cdot(z_{N_3})_{k_3}
\]
\noi
where $h^{N_2, L_2}_{k_2 k_2'}$ satisfies
\eqref{Eqn:eee1} and $(z_{N_1})_{k_1}$, $(z_{N_3})_{k_3}$ satisfy \eqref{Eqn:eee2}. As \eqref{LHLDCD0}, we apply Proposition \ref{PROP:puretensor} and Proposition \ref{Prop:tensor}  to get

\[
    \|\mathscr{X}^{-}_k\|_{k}\lesssim N_2^{-\frac{\alpha}{2}} (N_1N_3)^{3-2\alpha+\gamma}\min\Big\{|S_{k_3}|^{\frac{1}{2}}+ |S_{kk_1}|^{\frac{1}{2}}|S_{k_2k_3}|^{\frac{1}{2}}, |S_{k_1}|^{\frac{1}{2}}+ |S_{kk_3}|^{\frac{1}{2}}|S_{k_1k_2}|^{\frac{1}{2}}\Big\}.
\]

Then we can prove \eqref{Eqn:remainderaim2} as \eqref{LHLDCD1} or \eqref{LHLDCD2}.

\mb
\subsubsection{High frequency remainder and two low frequency terms}

Here we consider (c) and (d) when the low frequency is much less than $N^{1-\delta}$. Recall the definition of $\mathcal{P}^{\pm}$; this is equivalent to

\[
\begin{aligned}
    &\|\mathcal{P}^{+}_{N,N_2,N_3}(z_N)\|_{X^b} \lesssim \tau^{\theta} (N_2\vee N_3)^{-2\delta_0} \|z_N\|_{X^b} \leq N^{\frac{10}{3}-2\alpha+\gamma},\\
    &\|\mathcal{P}^{-}_{N,N_1,N_3}(z_N)\|_{X^b} \lesssim \tau^{\theta} (N_1\vee N_3)^{-2\delta_0} \|z_N\|_{X^b} \leq N^{\frac{10}{3}-2\alpha+\gamma}.
\end{aligned}
\]

Hence, there is no need to use Lemma \ref{Prop:stb} in advance and reduce to trilinear estimates in this case.

\gb
\subsubsection{$\Gamma$-condition}\label{Gammacondi}

In this part, we consider (e). Note that the truncation $\Delta_N$ provides a lower bound for $k$. So we have the $\Gamma$-condition $|k|>\left(\frac{N^2}{4}-1\right)^{\frac{1}{2}}\geq |k_{\max}|$ where $\Gamma \sim N$.

\gb

\paragraph{5.3.4.1} Two different reductions

\noi

Actually, we only need to deal with the type (C,$\  \cdot  \ $,$\  \cdot \ $) with high-low-low frequency since other situations can be put into the remainder as above without the $\Gamma$-condition. (Note that the $\Gamma$-condition implies no low-low-low case.) To be precise, we may assume that $N_1\gg N_2 \vee N_3 $, $ \Gamma \sim N\sim |k| \sim |k_1|$ and $y_{N_1} = \zeta_{N_1,L_1}$ defined by \eqref{matrices}. Now we write

\[
  \begin{aligned}
     \widetilde{\mathscr{F}}(\lambda, k)
    = & - i \sum_{|m|\lesssim N_{\med}^{\alpha}}\int_{\mathbb{R}^3} \sum_{k_1,k_2, k_3}
    \mathcal{K} (\lambda, m + \Phi - [\Phi] + \lambda_1  - \lambda_2+ \lambda_3)\cdot \mathrm{T}_{k k_1 k_2 k_3}^{\text{b}, m}\ind_{B_\Gamma}\\
    & \hspace{2cm} \times   ( \widetilde{\zeta_{N_1,L_1}} )_{k_1} (\lambda_1)  \cdot  \overline{(\widetilde{y_{N_2}})_{k_2}} (\lambda_2)\cdot ( \widetilde{y_{N_3}}
    )_{k_3} (\lambda_3) d\lambda_1 d \lambda_2 d \lambda_3\\
    & -i \sum_{\alpha\log N_{\med}\ll r \lesssim \alpha\log N}\int_{\mathbb{R}^3} \sum_{k_1,k_2, k_3}\sum_{2^{r-1}<|m|\leq 2^r}
    \mathcal{K} (\lambda, m + \Phi - [\Phi] + \lambda_1  - \lambda_2+ \lambda_3) \\
    & \hspace{2cm} \times \mathrm{T}_{k k_1 k_2 k_3}^{\text{b}, m}\ind_{B_{\Gamma}}  ( \widetilde{\zeta_{N_1,L_1}} )_{k_1} (\lambda_1)  \cdot  \overline{(\widetilde{y_{N_2}})_{k_2}} (\lambda_2)\cdot ( \widetilde{y_{N_3}}
    )_{k_3} (\lambda_3) d\lambda_1 d \lambda_2 d \lambda_3\\
    = & (I) + (II).
  \end{aligned}
\]

\noi

We can treat $(I)$ as \eqref{remainder2H} and reduce it to the trilinear estimates:

\begin{equation}
  \label{Eqn:remainderaim3} 
  \| \mathscr{X}_k^{\Gamma} \|_{k} \lesssim N^{\frac{10}{3}-2\alpha+ \frac{2\gamma}{3} },
\end{equation}
where $\mathscr{X}^{\Gamma}_k $ is given by
\[
  \mathscr{X}_k^{\Gamma} = \sum_{k_1, k_2, k_3} \mathrm{T}_{k k_1 k_2
  k_3}^{\text{b}, m}\ind_{B_{\Gamma}} \cdot \sum_{ k_1' } h_{k_1
    k_1'}^{N_1, L_1} \frac{g_{k_1'} (\omega)}{\jbb{k_1'}^{\frac{\alpha}{2}}} \cdot  \overline{(w_{N_2}) _{k_2}} \cdot (w_{N_3})_{k_3}
\]
\noi
and $w_{N_j}$ are either of type (C) or (D) as before. Moreover, we have $|m|\lesssim N^{\alpha}_{\med}$.

However, the reduction for $(II)$ is different. In fact,

\[
   \| \mathscr{F} \|_{X^{1-b}}^2 \lesssim  \int_{\mathbb{R}} \langle \lambda \rangle^{2 (1 - b)}  \| (I) \|_{k}^2 d \lambda + \int_{\mathbb{R}} \langle \lambda \rangle^{2 (1 - b)}  \| (II) \|_{k}^2 d \lambda
\]
\noi
and again by Minkowski inequality, Lemma \ref{LEM:kerneles} and H\"older inequality, we have

\medskip

\[
\begin{aligned}
 \int_{\mathbb{R}} \langle \lambda \rangle^{2 (1 - b)}  \| (II) \|_{k}^2 d \lambda
  \lesssim & \int_{\mathbb{R}} \langle \lambda \rangle^{- 2 b} \bigg( \sum_{r} \int_{\mathbb{R}^3} \prod_{j=1}^3 \langle \lambda_j \rangle^{- b}
    \cdot \Big\|  \sum_{k_1,k_2, k_3} \langle \lambda - 2^r
    - \lambda_1  + \lambda_2 - \lambda_3 \rangle^{- 1} \\
    &  \times \sum_{2^{r-1}<|m|\leq 2^r}\mathrm{T}_{k k_1 k_2
    k_3}^{\text{b}, m}\ind_{B_{\Gamma}}\cdot \prod_{j=1}^3 ( \langle \lambda_j \rangle^b \widetilde{y_{N_j}}
    )_{k_j}^{\zeta_j} (\lambda_j) d\lambda_j \Big\|_{k}\bigg)^2 d \lambda\\  
    \lesssim & \int_{\mathbb{R}} \langle \lambda \rangle^{- 2 b}\bigg( \sum_{r} \int_{\mathbb{R}^3} \prod_{j=1}^3 \langle \lambda_j \rangle^{- b}\langle \lambda - 2^r
    - \lambda_1  + \lambda_2 - \lambda_3 \rangle^{- 1}\\
    &  \times \Big\|\sum_{k_1,k_2, k_3}\mathrm{T}_{k k_1 k_2
    k_3}^{\text{b}, r}\cdot \prod_{j=1}^3 ( \langle \lambda_j \rangle^b \widetilde{y_{N_j}}
    )_{k_j}^{\zeta_j} (\lambda_j) d\lambda_j \Big\|_{k}\bigg)^2 d \lambda \\
    \lesssim & \int_{\mathbb{R}} \langle \lambda \rangle^{- 2 b}\left(\sum_{r}\frac{2^{r(1+\varepsilon_0)}}{\langle \lambda - 2^r - \lambda_1 + \lambda_2 - \lambda_3\rangle^{ 2}\prod\limits_{j=1}^3\langle \lambda_j \rangle^{2b}} \right) \prod_{j=1}^3 d\lambda_j d\lambda\\
    &  \times \sum_{r} 2^{-r\varepsilon_0} \left\|\sum_{k_1,k_2, k_3}\frac{1}{2^{\frac{r}{2}}}\mathrm{T}_{k k_1 k_2
    k_3}^{\text{b}, r}\cdot \prod_{j=1}^3 ( \langle \lambda_j \rangle^b \widetilde{y_{N_j}}
    )_{k_j}^{\zeta_j} (\lambda_j) d\lambda_j \right\|_{L^2_{\lambda_1\lambda_2\lambda_3}\ell^2_k}^2\\
    \lesssim & \sum_{r} 2^{r(1+\varepsilon_0-2b)} \cdot \sum_{r} 2^{-r\varepsilon_0}\\
    & \times \sup_{r} \left\|\sum_{k_1,k_2, k_3}\frac{1}{2^{\frac{r}{2}}}\mathrm{T}_{k k_1 k_2
    k_3}^{\text{b}, r}\cdot \prod_{j=1}^3 ( \langle \lambda_j \rangle^b \widetilde{y_{N_j}}
    )_{k_j}^{\zeta_j} (\lambda_j) d\lambda_j \right\|_{L^2_{\lambda_1\lambda_2\lambda_3}\ell^2_k}^2
\end{aligned}
\]

\noi
where the base tensor ${\rm T}^{{\rm b},r}$ is given by \eqref{baseTr} and $y_{N_1} = \zeta_{N_1,L_1}$. By choosing $0<\varepsilon_0<2b-1$, it reduces to the following trilinear estimates:

\begin{equation}
  \label{Eqn:remainderaim4} 
  \| \mathscr{X}_k^{r} \|_{k} \lesssim N^{\frac{10}{3}-2\alpha+ \frac{2\gamma}{3} },
\end{equation}
where $\mathscr{X}^{r}_k $ is given by

\[
  \mathscr{X}_k^{r} = \frac{1}{2^{\frac{r}{2}}}\sum_{k_1, k_2, k_3} \mathrm{T}_{k k_1 k_2
  k_3}^{\text{b}, r} \cdot \sum_{ k_1' } h_{k_1
    k_1'}^{N_1, L_1} \frac{g_{k_1'} (\omega)}{\jbb{k_1'}^{\frac{\alpha}{2}}} \cdot  \overline{(w_{N_2}) _{k_2}} \cdot (w_{N_3})_{k_3}
\]
\noi
and $w_{N_j}$ are either of type (C) or (D) as before.

\gb

\paragraph{5.3.4.2}Estimates for $\mathscr{X}^{\Gamma}$.
We will show \eqref{Eqn:remainderaim3} here. The situations are as follows:

\gb
\subparagraph{(1) Type (C,C,C)}

Now we write

\[
  \mathscr{X}_k^{\Gamma} = \sum_{k_1, k_2, k_3} \mathrm{T}^{\text{b},
  m}_{k k_1 k_2 k_3}\ind_{B_{\Gamma}} \cdot \sum_{
    k_1', k'_2, k_3'} h_{k_1 k'_1}^{N_1, L_1} \overline{h_{k_2 k'_2}^{N_2, L_2}}
  h_{k_3 k'_3}^{N_3, L_3} \frac{g_{k'_1} \overline{g_{k'_2}}g_{k'_3}}{\jbb{k'_1}^{\frac{\alpha}{2}} \jbb{k'_2}^{\frac{\alpha}{2}}
  \jbb{k'_3}^{\frac{\alpha}{2}}}
\]

\noi
where $h^{N_j, L_j}_{k_j k_j'}$ satisfies
\eqref{Eqn:eee1}. This situation admits one pairing case $k_2'=k_3'$, but we still begin with the case without pairing. Apply Proposition \ref{Prop:tensor} and Proposition \ref{PROP:puretensor} repeatedly to get

\[
\|\mathscr{X}^{\Gamma}_k\|_{k}
   \lesssim (N_1 N_2 N_3)^{-\frac{\alpha}{2}}\|\mathrm{T}^{\text{b},
  m}_{k k_1 k_2 k_3}\ind_{B_{\Gamma}}\|_{kk_1k_2k_3}\cdot\prod_{j=1}^3\|h_{k_j k'_j}^{N_j, L_j}\|_{k_j\to k_j'}\, \lesssim (N_1 N_2 N_3)^{-\frac{\alpha}{2}} |S^{\Gamma}|^{\frac{1}{2}}.
\]

Note that we have $N_1\sim N$ now. Then apply Lemma \ref{Gammacounting} to derive the bound:

\[
    \|\mathscr{X}^{\Gamma}_k\|_{k}\lesssim (N_1 N_2 N_3)^{-\frac{\alpha}{2}} N^{1-\frac{\alpha}{2}+\theta}N_{\med}^{\frac{4}{3}}N_{\min}\leq N^{\frac{10}{3}-2\alpha+\theta}
\]
\noi
which proves \eqref{Eqn:remainderaim3}. 

\noi

Secondly, we consider the pairing case, i.e. $ k_2'=k_3'$ that implies $N_2=N_3$. Furthermore, we may assume that $L_2\geq L_3$. (Otherwise we use the norm $\|h^{N_3,L_3}_{k_3k_3'}\|_{k_3k_3'}$ in the following estimates. ) Then we can write

\[
\begin{aligned}
    \mathscr{X}^{\Gamma}_k 
    & = \sum_{k_1,k_2,k_3} \mathrm{T}^{\text{b},
  m}_{k k_1 k_2 k_3}\ind_{B_{\Gamma}} \sum_{k_1' ,k_2'} h_{k_1k_1'}^{N_1,L_1} \overline{h_{k_2k_2'}^{N_2,L_2}} h_{k_3k_2'}^{N_3L_3} \frac{g_{k_1'}(|g_{k_2'}|^2 - 1) }{\jbb{ k_1'}^{\frac{\alpha}{2}} \jbb{ k_2' }^{\alpha}} \\
  & \, + \sum_{k_1,k_2,k_3} \mathrm{T}^{\text{b},
  m}_{k k_1 k_2 k_3}\ind_{B_{\Gamma}} \sum_{k_1', k_2'} h_{k_1k_1'}^{N_1,L_1} \overline{h_{k_2k_2'}^{N_2,L_2}}h_{k_3k_2'}^{N_3,L_3}  \cdot \frac{g_{k_1'}}{\jbb{ k_1' }^{\frac{\alpha}{2}}}\cdot \frac{1}{\jbb{ k_2'}^{\alpha}}\\
  &  = \mathscr{X}^{\Gamma,1}_k + \mathscr{X}^{\Gamma,2}_k.
\end{aligned}
\]

Notice that  $\mathscr{X}_k^{\Gamma,1}$ can be treated like the no-pairing case to get \eqref{Eqn:remainderaim3}. So we only consider $\mathscr{X}_k^{\Gamma,2}$. After summing the dyadic increments in the contracted factors, Corollary \ref{cor:unitary-cancellation} and Remark \ref{rem:unitary-redistribution} reduce its estimate to representative terms of the form

\[
    \mathscr{X}_k^{\Gamma,2} = \sum_{k_1,k_2,k_3} \mathrm{T}^{\text{b},
  m}_{k k_1 k_2 k_3}\ind_{B_{\Gamma}} \sum_{k_1',k_2'}  h_{k_1k_1'}^{N_1,L_1} \overline{h_{k_2k_2'}^{N_2,L_2}} h_{k_3k_2'}^{N_3,L_3} \frac{g_{k_1'}}{\jbb{k_1'}^{\frac{\alpha}{2}}} \left( \frac{1}{\jbb{k_2'}^{\alpha}} - \frac{1}{\jbb{k_2}^{\alpha}} \right) .
\]

By \eqref{induct3}, we may assume that $|k_2-k_2'|\lesssim N_2^{\kappa_0^{-\frac{1}{2}}}L_2$. Hence, we have 

\[
\left|\frac{1}{\jbb{k_2'}^{\alpha} }- \frac{1}{\jbb{k_2}^{\alpha} }\right| \lesssim N_2^{-\alpha-1+\kappa_0^{-\frac{1}{2}}}L_2. 
\]

And Proposition \ref{Prop:tensor} together with Proposition \ref{PROP:puretensor}  implies that

\begin{equation}\label{gammaCCCcal}
\begin{aligned}
    \|\mathscr{X}_k^{\Gamma,2}\|_{k} \lesssim &  \,N_1^{-\frac{\alpha}{2}}N_2^{-\alpha-1+\kappa_0^{-\frac{1}{2}}}L_2  \cdot \left\|\sum_{k_1,k_2,k_3}{\rm T}^{{\rm b},m}_{k k_1 k_2
  k_3}\ind_{B_{\Gamma}}\sum_{k_2'} h_{k_1 k_1'}^{N_1, L_1} \overline{h_{k_2 k_2'}^{N_2, L_2}}h_{k_3 k_2'}^{N_3, L_3} \right\|_{k k_1'\cap(k\to k_1')}\\
  \lesssim & \,N_1^{-\frac{\alpha}{2}}N_2^{-\alpha-1+\kappa_0^{-\frac{1}{2}}}L_2  \cdot \Big(\|{\rm T}^{{\rm b},m}_{k k_1 k_2
  k_3}\ind_{B_{\Gamma}}\|_{kk_1\to k_2 k_3} + \|{\rm T}^{{\rm b},m}_{k k_1 k_2
  k_3}\ind_{B_{\Gamma}}\|_{k\to k_1k_2k_3}\Big)\\
  & \times \left\|\sum_{k_2'}  \overline{h_{k_2 k_1'}^{N_2, L_2}}h_{k_3 k_2'}^{N_3, L_3}\right\|_{k_2k_3}\cdot\|h_{k_1 k_1'}^{N_1, L_1}\|_{k_1\to k_1'}\\
  \lesssim & \, N_1^{-\frac{\alpha}{2}}N_2^{\frac{5-4\alpha}{2}+\frac{3\gamma_0}{4}+\kappa_0^{-\frac{1}{2}}} \cdot \Big(|S^{\Gamma}_{kk_1}|^\frac{1}{2}|S^{\Gamma}_{k_2k_3}|^\frac{1}{2} + |S^{\Gamma}_{k}|^\frac{1}{2}\Big).
\end{aligned}    
\end{equation}

By Lemma \ref{Gammacounting}, we know that

\[
\begin{aligned}
    \|\mathscr{X}_k^{\Gamma,2}\|_{k} &\lesssim N_1^{-\frac{\alpha}{2}}N_2^{\frac{5-4\alpha}{2}+\frac{3\gamma_0}{4}+\kappa_0^{-\frac{1}{2}}} \cdot \Big( N_2^{\frac{3-\alpha}{2}}N^{1-\frac{\alpha}{2}+\theta}+ N_2^{\frac{7}{3}-\frac{\alpha}{2}}\Big)\\
    & \lesssim N^{1-\alpha+\theta} N_2^{\frac{8-5\alpha}{2}+\frac{3\gamma_0}{4}+\kappa_0^{-\frac{1}{2}}} + N^{-\frac{\alpha}{2}}N_2^{\frac{29}{6}-\frac{5\alpha}{2}+\frac{3\gamma_0}{4}+\kappa_0^{-\frac{1}{2}}}\, \lesssim N^{1-\alpha+\theta}
\end{aligned}
\]
\noi
which proves \eqref{Eqn:remainderaim3} as well.

\gb
\subparagraph{(2) Type (C,D,D)}

We consider

\[
  \mathscr{X}^{\Gamma}_k = \sum_{k_1, k_2, k_3} \mathrm{T}^{\text{b},
  m}_{k k_1 k_2 k_3}\ind_{B_{\Gamma}} \cdot \sum_{
    k_1'} h_{k_1 k'_1}^{N_1, L_1} 
   \frac{g_{k'_1}}{\jbb{k'_1}^{\frac{\alpha}{2}}} \cdot \overline{(z_{N_2})_{k_2}}\cdot(z_{N_3})_{k_3}
\]
\noi
where $h^{N_1, L_1}_{k_1 k_1'}$ satisfies
\eqref{Eqn:eee1} and $(z_{N_2})_{k_2}$, $(z_{N_3})_{k_3}$ satisfy \eqref{Eqn:eee2}. Apply Proposition \ref{PROP:puretensor} , Proposition \ref{Prop:tensor}  and Lemma \ref{Gammacounting} to get

\[
\begin{aligned}
    \|\mathscr{X}^{\Gamma}_k\|_{k}&\lesssim N_1^{-\frac{\alpha}{2}}\left\|\sum_{k_1} \mathrm{T}^{\text{b},
  m}_{k k_1 k_2 k_3}\ind_{B_{\Gamma}} \sum_{k_1'} h_{k_1 k'_1}^{N_1, L_1} g_{k_1'}
  \right\|_{k\to k_2k_3}\cdot \prod_{j=2}^3\|(z_{N_j})_{k_j})\|_{k_j}\\
  &\lesssim N_1^{-\frac{\alpha}{2}} (N_2N_3)^{\frac{10}{3}-2\alpha+\gamma}\Big(|S^{\Gamma}_{k}|^{\frac{1}{2}}+ |S^{\Gamma}_{kk_1}|^{\frac{1}{2}}|S^{\Gamma}_{k_2k_3}|^{\frac{1}{2}}\Big)\\
  &\lesssim N_1^{-\frac{\alpha}{2}} (N_2N_3)^{\frac{10}{3}-2\alpha+\gamma}\Big( |S^{\Gamma}_{k}|^{\frac{1}{2}} + N_{\min}N^{1-\frac{\alpha}{2}+\theta}\Big)\\
  &\lesssim N_1^{-\frac{\alpha}{2}} (N_2N_3)^{\frac{10}{3}-2\alpha+\gamma}\cdot |S^{\Gamma}_{k}|^{\frac{1}{2}} + N^{1-\alpha+\delta}N_{\med}^{\frac{23}{3}-4\alpha+2\gamma}.
\end{aligned}
\]

The second term is good enough for \eqref{Eqn:remainderaim3}, so we only discuss the first term later. In fact, if $N_{3}\geq N_2$, we use the estimate of $|S^{\Gamma}_{k}|^{\frac{1}{2}}\lesssim N_3^{\frac{4}{3}-\frac{\alpha}{2}}N_2$ to get

\begin{equation}\label{leq1/2}
    N_1^{-\frac{\alpha}{2}}(N_2N_3)^{\frac{10}{3}-2\alpha+\gamma}|S^{\Gamma}_{k}|^{\frac{1}{2}}\lesssim N_1^{-\frac{\alpha}{2}}N_{3}^{\frac{14}{3}-\frac{5\alpha}{2}+\gamma}N_{2}^{\frac{13}{3}-2\alpha+\gamma}
     \lesssim N^{9-5\alpha+2\gamma}.
\end{equation}

This proves \eqref{Eqn:remainderaim3} by our choice of parameters. If $N_{3}\leq N_2$, we use Remark \ref{interpolationbounds} to get

\begin{equation}\label{geq1/2}
\begin{aligned}
    N_1^{-\frac{\alpha}{2}}(N_2N_3)^{\frac{10}{3}-2\alpha+\gamma}|S^{\Gamma}_{k}|^{\frac{1}{2}}&\lesssim N_1^{-\frac{\alpha}{2}}N_2^{\frac{13}{3}-2\alpha+\gamma-\frac{\lambda_\alpha}{2}}N_3^{\frac{14}{3}-\frac{5\alpha}{2}+\gamma+\frac{\lambda_\alpha}{2}(\alpha-\frac{2}{3})}\\
    &\lesssim N_1^{-\frac{\alpha}{2}}N_2^{\frac{10}{3}-\frac{3\alpha}{2}}N_3^{\frac{14}{3}-\frac{5\alpha}{2}+\gamma+(1-\frac{\alpha}{2}+\gamma)(\alpha-\frac{2}{3})}\\
    &\lesssim N_1^{\frac{10}{3}-2\alpha}N_3^{\frac{14}{3}-\frac{5\alpha}{2}+\frac{7\gamma}{3}+(1-\frac{\alpha}{2})(\alpha-\frac{2}{3})}.    
\end{aligned}
\end{equation}

Here we take $\lambda_\alpha=2-\alpha+2\gamma$. To eliminate powers of $N_3$, we require $\frac{14}{3}-\frac{5\alpha}{2}+(1-\frac{\alpha}{2})(\alpha-\frac{2}{3})<0$, which implies $\alpha>\frac{\sqrt{337}-7}{6}$. This holds because we have already chosen $\alpha>\frac{29}{15}$, so we get the desired bound \eqref{Eqn:remainderaim3}.

\gb
\subparagraph{(3) Type (C,C,D)}

In this situation, we write

\[ 
  \mathscr{X}^{\Gamma}_k = \sum_{k_1, k_2, k_3} \mathrm{T}^{\text{b},
  m}_{k k_1 k_2 k_3}\ind_{B_{\Gamma}} \cdot \sum_{
    k_1', k'_2} h_{k_1 k'_1}^{N_1, L_1} \overline{h_{k_2 k'_2}^{N_2, L_2}}
   \frac{g_{k'_1} \overline{g_{k'_2}}}{\jbb{k'_1}^{\frac{\alpha}{2}} \jbb{k'_2}^{\frac{\alpha}{2}}
  } \cdot (z_{N_3})_{k_3}
\]
\noi
where $h^{N_j, L_j}_{k_j k_j'}$ satisfies
\eqref{Eqn:eee1} and $(z_{N_3})_{k_3}$ satisfies \eqref{Eqn:eee2}. Note that there is no possibility of pairing due to $N_2\ll N_1$. So we apply Proposition \ref{PROP:puretensor} and Proposition \ref{Prop:tensor}  to get

\[
\begin{aligned}
    \|\mathscr{X}^{\Gamma}_k\|_{k}&\lesssim (N_1 N_2)^{-\frac{\alpha}{2}}\left\|\sum_{k_1, k_2} \mathrm{T}^{\text{b},
  m}_{k k_1 k_2 k_3}\ind_{B_{\Gamma}} \ \sum_{k_1', k_2'} h_{k_1 k'_1}^{N_1, L_1} \overline{h_{k_2 k'_2}^{N_2, L_2}}g_{k_1'}\overline{g_{k_2'}}
  \right\|_{k\to k_3}\cdot \|(z_{N_3})_{k_3}\|_{k_3}\\
  &\lesssim (N_1 N_2)^{-\frac{\alpha}{2}} N_3^{\frac{10}{3}-2\alpha+\gamma}\Big(|S^{\Gamma}_{k_3}|^{\frac{1}{2}}+ |S^{\Gamma}_{kk_1}|^{\frac{1}{2}}|S^{\Gamma}_{k_2k_3}|^{\frac{1}{2}}+|S^{\Gamma}_{kk_2}|^{\frac{1}{2}}|S^{\Gamma}_{k_1k_3}|^{\frac{1}{2}}+|S^{\Gamma}_{k}|^{\frac{1}{2}}\Big).
\end{aligned}
\]

Now if $N_2\geq N_3$, then

\[
\begin{aligned}
    \|\mathscr{X}^{\Gamma}_k\|_{k}&\lesssim (N_1 N_2)^{-\frac{\alpha}{2}} N_{3}^{\frac{10}{3}-2\alpha+\gamma}\Big( N_2^{\frac{4}{3}}N^{1-\frac{\alpha}{2}+\theta}+ N_3^{\frac{3-\alpha}{2}}N^{1-\frac{\alpha}{2}+\theta} +(N_2N_3)^{\frac{4}{3}-\frac{\alpha}{2}}+N_2^{\frac{1}{2}}N_3\Big)\\
    &\lesssim N^{1-\alpha+\theta}N_{2}^{\frac{4}{3}-\frac{\alpha}{2}}N_{3}^{\frac{10}{3}-2\alpha+\gamma} + N^{1-\alpha+\theta}N_{3}^{\frac{29}{6}-\frac{5\alpha}{2}+\gamma}\\
    &\hspace{0.5cm}+ N^{-\frac{\alpha}{2}}N_{2}^{\frac{4}{3}-\alpha}N_{3}^{\frac{14}{3}-\frac{5\alpha}{2}+\gamma} + N^{-\frac{\alpha}{2}}N_{2}^{\frac{1-\alpha}{2}}N_{3}^{\frac{13}{3}-2\alpha+\gamma}\\
    &\lesssim N^{\frac{7}{3}-\frac{3\alpha}{2}+\theta}.
\end{aligned}
\]

This proves \eqref{Eqn:remainderaim3}. Otherwise $N_2\leq N_3$, then

\[
\begin{aligned}
    \|\mathscr{X}^{\Gamma}_k\|_{k}&\lesssim (N_1 N_2)^{-\frac{\alpha}{2}} N_{3}^{\frac{10}{3}-2\alpha+\gamma}\Big( |S^{\Gamma}_{k_{3}}|^{\frac{1}{2}}+ N_2^{\frac{3-\alpha}{2}}N^{1-\frac{\alpha}{2}+\theta} +(N_2N_3)^{\frac{4}{3}-\frac{\alpha}{2}}+N_3^{\frac{4}{3}-\frac{\alpha}{2}}N_2 \Big)\\
    &\lesssim (N_1 N_2)^{-\frac{\alpha}{2}} N_{3}^{\frac{10}{3}-2\alpha+\gamma} |S^{\Gamma}_{k_{3}}|^{\frac{1}{2}}+ N^{1-\alpha+\theta}+N_1^{-\frac{\alpha}{2}}N_3^{\frac{14}{3}-\frac{5\alpha}{2}+\gamma}N_2^{1-\frac{\alpha}{2}}.
\end{aligned}
\]

By our choice of parameters, we only need to consider the first term here. Since $|S^{\Gamma}_{k_3}|\leq|S_{k_3}|$ , we have

\[
    |S^{\Gamma}_{k_3}|\lesssim (N^{1-\frac{\alpha}{2}+\theta}N_3^{\frac{\alpha}{2}}N_2^{\frac{4}{3}-\frac{\alpha}{2}})\wedge (N_1^{\frac{3-\alpha}{2}}N_2^{\frac{1}{2}}+N_1^{\frac{1}{2}}N_2).
\]
If $N_3\leq N_1^{\frac{1}{2}}$, we use the first bound to get

\begin{equation}\label{leq1/2new}
    (N_1 N_2)^{-\frac{\alpha}{2}} N_{3}^{\frac{10}{3}-2\alpha+\gamma} |S^{\Gamma}_{k_{3}}|^{\frac{1}{2}}\lesssim N^{1-\alpha+\theta}N_3^{\frac{10}{3}-\frac{3\alpha}{2}+\gamma}N_2^{\frac{4}{3}-\alpha}\lesssim N^{\frac{8}{3}-\frac{7\alpha}{4}+\gamma},
\end{equation}
While $N_3\geq N_1^{\frac{1}{2}}$, we use the second bound to get

\begin{equation}\label{geq1/2new}
     (N_1 N_2)^{-\frac{\alpha}{2}} N_{3}^{\frac{10}{3}-2\alpha+\gamma} |S^{\Gamma}_{k_{3}}|^{\frac{1}{2}}\lesssim (N_1^{\frac{3-2\alpha}{2}}+ N_1^{\frac{1-\alpha}{2}}N_2^{1-\frac{\alpha}{2}})\cdot N_3^{\frac{10}{3}-2\alpha+\gamma} \lesssim N^{\frac{19}{6}-2\alpha+\frac{\gamma}{2}}.
\end{equation}
Combining the two cases  proves \eqref{Eqn:remainderaim3} for this situation.

\gb
\subparagraph{(4) Type (C,D,C)}

This situation is similar to the above. Actually, we have

\[ 
  \mathscr{X}^{\Gamma}_k = \sum_{k_1, k_2, k_3} \mathrm{T}^{\text{b},
  m}_{k k_1 k_2 k_3}\ind_{B_{\Gamma}} \cdot \sum_{\substack{
    k_1', k'_3}} h_{k_1 k'_1}^{N_1, L_1} h_{k_3 k'_3}^{N_3, L_3}
   \frac{g_{k'_1} g_{k'_3}}{\jbb{k'_1}^{\frac{\alpha}{2}} \jbb{k'_3}^{\frac{\alpha}{2}}
  } \cdot \overline{(z_{N_2})_{k_2}}
\]
\noi
where $h^{N_j, L_j}_{k_j k_j'}$ satisfies
\eqref{Eqn:eee1} and $(z_{N_2})_{k_2}$ satisfies \eqref{Eqn:eee2}. Apply Proposition \ref{PROP:puretensor} and Proposition \ref{Prop:tensor} to get

\[
\begin{aligned}
    \|\mathscr{X}^{\Gamma}_k\|_{k}&\lesssim (N_1 N_3)^{-\frac{\alpha}{2}}\left\|\sum_{k_1, k_3} \mathrm{T}^{\text{b},
  m}_{k k_1 k_2 k_3}\ind_{B_{\Gamma}} \ \sum_{k_1', k_3'} h_{k_1 k'_1}^{N_1, L_1} h_{k_3 k'_3}^{N_3, L_3}g_{k_1'}g_{k_3'}
  \right\|_{k\to k_2}\cdot \|(z_{N_2})_{k_2}\|_{k_2}\\
  &\lesssim (N_1 N_3)^{-\frac{\alpha}{2}} N_2^{\frac{10}{3}-2\alpha+\gamma}\Big(|S^{\Gamma}_{k_2}|^{\frac{1}{2}}+ |S^{\Gamma}_{kk_1}|^{\frac{1}{2}}|S^{\Gamma}_{k_2k_3}|^{\frac{1}{2}}+|S^{\Gamma}_{kk_3}|^{\frac{1}{2}}|S^{\Gamma}_{k_1k_2}|^{\frac{1}{2}}+|S^{\Gamma}_{k}|^{\frac{1}{2}}\Big).
\end{aligned}
\]

Only the bound of $|S^{\Gamma}_{kk_3}|^{\frac{1}{2}}|S^{\Gamma}_{k_1k_2}|^{\frac{1}{2}}$ becomes worse, but

\[
    (N_1N_3)^{-\frac{\alpha}{2}}N_2^{\frac{10}{3}-2\alpha+\gamma}|S^{\Gamma}_{kk_3}|^{\frac{1}{2}}|S^{\Gamma}_{k_1k_2}|^{\frac{1}{2}}\lesssim N_1^{-\frac{\alpha}{2}}N_2^{\frac{23}{6}-2\alpha+\gamma}N_3^{\frac{1-\alpha}{2}},
\]

\noi
which is still enough for \eqref{Eqn:remainderaim3}. Other terms can be treated as above by reversing the place of $N_2$ and $ N_3$. We omit the details.

\gb
\paragraph{5.3.4.3}Estimates for $\mathscr{X}^{r}$.
\noi
We will show \eqref{Eqn:remainderaim4} here.

\gb
\subparagraph{(1) Type (C,C,C)}

Similarly, we write

\[
  \mathscr{X}_k^{r} = \frac{1}{2^{\frac{r}{2}}}\sum_{k_1, k_2, k_3} \mathrm{T}^{\text{b},
  r}_{k k_1 k_2 k_3} \cdot \sum_{
    k_1', k'_2, k_3'} h_{k_1 k'_1}^{N_1, L_1} \overline{h_{k_2 k'_2}^{N_2, L_2}}
  h_{k_3 k'_3}^{N_3, L_3} \frac{g_{k'_1} \overline{g_{k'_2}}g_{k'_3}}{\jbb{k'_1}^{\frac{\alpha}{2}} \jbb{k'_2}^{\frac{\alpha}{2}}
  \jbb{k'_3}^{\frac{\alpha}{2}}}
\]

\noi
where $h^{N_j, L_j}_{k_j k_j'}$ satisfies
\eqref{Eqn:eee1}. This situation also admits one pairing case $k_2'=k_3'$, but we consider the no-pairing case at first. Apply Proposition \ref{Prop:tensor} and Proposition \ref{PROP:puretensor} repeatedly to get

\[
\|\mathscr{X}^{r}_k\|_{k}
   \lesssim 2^{-\frac{r}{2}}(N_1 N_2 N_3)^{-\frac{\alpha}{2}}\|\mathrm{T}^{\text{b},
  r}_{k k_1 k_2 k_3}\|_{kk_1k_2k_3}\cdot\prod_{j=1}^3\|h_{k_j k'_j}^{N_j, L_j}\|_{k_j\to k_j'}\lesssim 2^{-\frac{r}{2}}(N_1 N_2 N_3)^{-\frac{\alpha}{2}} |S^{r}|^{\frac{1}{2}}.
\]

Then we use Lemma \ref{Gammacounting2} to derive

\[
    \|\mathscr{X}^{r}_k\|_{k}\lesssim 2^{-\frac{r}{2}} (N_1 N_2 N_3)^{-\frac{\alpha}{2}} \cdot 2^{\frac{r}{2}} N^{1-\frac{\alpha}{2}+\theta}N_2 N_3 \lesssim N^{3-2\alpha+\theta}
\]
\noi
which proves \eqref{Eqn:remainderaim4}. As for the pairing case, i.e. $ k_2'=k_3'$, we divide $\mathscr{X}^{r}_k$ into two parts as $\mathscr{X}^{\Gamma}_k$ and only consider the bad part

\[
    \mathscr{X}_k^{r,2} = \frac{1}{2^{\frac{r}{2}}}\sum_{k_1,k_2,k_3} \mathrm{T}^{\text{b},
  r}_{k k_1 k_2 k_3}\sum_{k_1',k_2'}  h_{k_1k_1'}^{N_1,L_1} \overline{h_{k_2k_2'}^{N_2,L_2}} h_{k_3k_2'}^{N_3,L_3} \frac{g_{k_1'}}{\jbb{k_1'}^{\frac{\alpha}{2}}} \left( \frac{1}{\jbb{k_2'}^{\alpha}} - \frac{1}{\jbb{k_2}^{\alpha}} \right) 
\]
\noi
where we assume $N_2=N_3$, $L_2\geq L_3$. By a similar computation as \eqref{gammaCCCcal}, we get

\[
    \|\mathscr{X}_k^{r,2}\|_k\lesssim 2^{-\frac{r}{2}}N_1^{-\frac{\alpha}{2}}N_2^{\frac{5-4\alpha}{2}+\frac{3\gamma_0}{4}+\kappa_0^{-\frac{1}{2}}} \cdot \Big(|S^r_{kk_1}|^\frac{1}{2}|S^{r}_{k_2k_3}|^\frac{1}{2} + |S^{r}_{k}|^\frac{1}{2}\Big).
\]

Then apply Lemma \ref{Gammacounting2} to get

\[
\begin{aligned}
    \|\mathscr{X}_k^{r,2}\|_k&\lesssim 2^{-\frac{r}{2}}N_1^{-\frac{\alpha}{2}}N_2^{\frac{5-4\alpha}{2}+\frac{3\gamma_0}{4}+\kappa_0^{-\frac{1}{2}}} \cdot 2^{\frac{r}{2}}\Big(N^{1-\frac{\alpha}{2}+\theta}N_2 + N_2^{\frac{7}{3}-\frac{\alpha}{2}}\Big)\\
    &\lesssim N^{1-\alpha+\theta}N_2^{\frac{7-4\alpha}{2}+\frac{3\gamma_0}{4}+\kappa_0^{-\frac{1}{2}}} + N^{-\frac{\alpha}{2}}N_2^{\frac{29}{6}-\frac{5\alpha}{2}+\frac{3\gamma_0}{4}+\kappa_0^{-\frac{1}{2}}}\\
    &\lesssim N^{\frac{9}{2}-3\alpha+\theta} + N^{-\frac{\alpha}{2}}.
\end{aligned}
\]

This gives \eqref{Eqn:remainderaim4} by our choice of parameters.

\gb
\subparagraph{(2) Type (C,D,D)}

Now we write

\[
  \mathscr{X}^{r}_k = \frac{1}{2^{\frac{r}{2}}}\sum_{k_1, k_2, k_3} \mathrm{T}^{\text{b},
  r}_{k k_1 k_2 k_3}  \cdot \sum_{
    k_1'} h_{k_1 k'_1}^{N_1, L_1} 
   \frac{g_{k'_1}}{\jbb{k'_1}^{\frac{\alpha}{2}}} \cdot \overline{(z_{N_2})_{k_2}}\cdot(z_{N_3})_{k_3}
\]
\noi
where $h^{N_1, L_1}_{k_1 k_1'}$ satisfies
\eqref{Eqn:eee1} and $(z_{N_2})_{k_2}$, $(z_{N_3})_{k_3}$ satisfy \eqref{Eqn:eee2}. Apply Proposition \ref{PROP:puretensor} , Proposition \ref{Prop:tensor}  and Lemma \ref{Gammacounting2} to get

\[
\begin{aligned}
    \|\mathscr{X}^{r}_k\|_{k}&\lesssim 2^{-\frac{r}{2}}N_1^{-\frac{\alpha}{2}}\left\|\sum_{k_1} \mathrm{T}^{\text{b},
  r}_{k k_1 k_2 k_3} \sum_{k_1'} h_{k_1 k'_1}^{N_1, L_1} g_{k_1'}
  \right\|_{k\to k_2k_3}\cdot \prod_{j=2}^3\|(z_{N_j})_{k_j})\|_{k_j}\\
  &\lesssim 2^{-\frac{r}{2}}N_1^{-\frac{\alpha}{2}} (N_2N_3)^{\frac{10}{3}-2\alpha+\gamma}\Big(|S^{r}_{k}|^{\frac{1}{2}}+ |S^{r}_{kk_1}|^{\frac{1}{2}}|S^{r}_{k_2k_3}|^{\frac{1}{2}}\Big)\\
  &\lesssim N_1^{-\frac{\alpha}{2}} (N_2N_3)^{\frac{10}{3}-2\alpha+\gamma}\cdot (N_{3}^{\frac{4}{3}-\frac{\alpha}{2}}N_{2})\wedge(N_2^{\frac{1}{2}}N_3) + N^{1-\alpha+\theta} N_{\med}^{\frac{23}{3}-4\alpha+2\gamma}.
\end{aligned}
\]

Then the first term need to be discussed as \eqref{leq1/2new} and \eqref{geq1/2new}, while the second term is enough for \eqref{Eqn:remainderaim4}.

\gb
\subparagraph{(3) Type (C,C,D) and Type (C,D,C)}

As discussed in the estimates of $\mathscr{X}^{\Gamma}$, the two situations are similar. So we only consider

\[ 
  \mathscr{X}^{r}_k = \frac{1}{2^{\frac{r}{2}}}\sum_{k_1, k_2, k_3} \mathrm{T}^{\text{b},
  r}_{k k_1 k_2 k_3} \cdot \sum_{
    k_1', k'_2} h_{k_1 k'_1}^{N_1, L_1} \overline{h_{k_2 k'_2}^{N_2, L_2}}
   \frac{g_{k'_1} \overline{g_{k'_2}}}{\jbb{k'_1}^{\frac{\alpha}{2}} \jbb{k'_2}^{\frac{\alpha}{2}}
  } \cdot (z_{N_3})_{k_3}
\]
\noi
where $h^{N_j, L_j}_{k_j k_j'}$ satisfies
\eqref{Eqn:eee1}, $(z_{N_3})_{k_3}$ satisfies \eqref{Eqn:eee2} and  $N_2\ll N_1$ ensures no pairing. So we apply Proposition \ref{PROP:puretensor} and Proposition \ref{Prop:tensor}  to get

\[
\begin{aligned}
    \|\mathscr{X}^{r}_k\|_{k}&\lesssim 2^{-\frac{r}{2}}(N_1 N_2)^{-\frac{\alpha}{2}}\left\|\sum_{k_1, k_2} \mathrm{T}^{\text{b},
  r}_{k k_1 k_2 k_3} \ \sum_{k_1', k_2'} h_{k_1 k'_1}^{N_1, L_1} \overline{h_{k_2 k'_2}^{N_2, L_2}}g_{k_1'}\overline{g_{k_2'}}
  \right\|_{k\to k_3}\cdot \|(z_{N_3})_{k_3}\|_{k_3}\\
  &\lesssim2^{-\frac{r}{2}} (N_1 N_2)^{-\frac{\alpha}{2}} N_3^{\frac{10}{3}-2\alpha+\gamma}\Big(|S^{r}_{k_3}|^{\frac{1}{2}}+ |S^{r}_{kk_1}|^{\frac{1}{2}}|S^{r}_{k_2k_3}|^{\frac{1}{2}}+|S^{r}_{kk_2}|^{\frac{1}{2}}|S^{r}_{k_1k_3}|^{\frac{1}{2}}+|S^{r}_{k}|^{\frac{1}{2}}\Big).
\end{aligned}
\]

Note that $N_1\geq N_2$, we have

\[
    \|\mathscr{X}^{r}_k\|_{k}\lesssim 2^{-\frac{r}{2}}(N_1 N_{2})^{-\frac{\alpha}{2}} N_{3}^{\frac{10}{3}-2\alpha+\gamma}\cdot 2^{-\frac{r}{2}}\Big( N^{1-\frac{\alpha}{2}+\theta}N_2+N_{2}N_{3}^{\frac{4}{3}-\frac{\alpha}{2}}\Big)\lesssim N^{2-\frac{3\alpha}{2}+\theta}N^{\frac{14}{3}-\frac{5\alpha}{2}}_{3} ,
\]
which gives \eqref{Eqn:remainderaim4}.

\noi
\subsubsection{Resonant}

Finally, we consider (f). By a similar reduction step, we only need to consider the following trilinear estimates:

\begin{equation}
  \label{Eqn:remainderaim5} 
  \|\mathscr{X}^{Q}_k\|_k \lesssim N^{\frac{10}{3}-2\alpha+ 
  \frac{2\gamma}{3} }.
\end{equation}

The trilinear form is defined as

\[ 
  \mathscr{X}^{Q}_k = (w_{N_1})_{k} \cdot   \overline{(w_{N_2})_{k}} \cdot (w_{N_3})_{k}
\]
\noi
where $w_{N_j}$ are either type (C) or (D) as above and $N_1=N_2=N_3=N$. Notice that for type(C), 

\[
\begin{aligned}
    \|(w_{N_j})_{k}\|_{\ell^{\infty}} &= \left\|\sum_{ k_j' } h_{k
    k_j'}^{N_j, L_j} \frac{g_{k_j'}}{\jbb{k_j'}^{\frac{\alpha}{2}}}\right\|_{\ell^{\infty}}\leq  \left\|\sum_{ k_j' } h_{k
    k_j'}^{N_j, L_j} \frac{g_{k_j'}}{\jbb{k_j'}^{\frac{\alpha}{2}}}\right\|_{\ell^2_k}\\
    &\lesssim \|h_{k
    k_j'}^{N_j, L_j}\|_{kk_j}N_j^{-\frac{\alpha}{2}}\lesssim N_j^{\frac{5-3\alpha}{2}+\gamma_0}L^{-\frac{\gamma_0}{4}}.
\end{aligned}
\]

As for type(D), we have the $\ell^{\infty}$ norm

\[
\|z_{N_j}\|_{\ell^{\infty}}\leq\|z_{N_j}\|_{\ell^2}\lesssim N_j^{\frac{10}{3}-2\alpha+\gamma}<1.
\]

To conclude, if there is at least one $w_{N_j}$ of type(D), say $w_{N_3}$, we may directly estimate

\[
    \|\mathscr{X}^{Q}_k\|_k \lesssim \prod_{j=1}^2\|w_{N_j}\|_{\ell^\infty}\cdot\|w_{N_3}\|_{\ell^2}\lesssim N^{\frac{10}{3}-2\alpha+\frac{2\gamma}{3}}.
\]
\noi
where we use the negative power from $\ell^\infty$ norm to decrease the total power. Otherwise we have the situation (C,C,C) i.e.

\[
    \mathscr{X}^{Q}_k = \sum_{k_1',k_2',k_3'}h_{kk_1'}^{N, L_1}\overline{h_{kk_2'}^{N, L_2}}h_{kk_3'}^{N, L_3}\frac{g_{k_1'}\overline{g_{k_2'}}g_{k_3'}}{\jbb{k_1'}^{\frac{\alpha}{2}}\jbb{k_2'}^{\frac{\alpha}{2}}\jbb{k_3'}^{\frac{\alpha}{2}}}.
\]
A direct estimate implies

\[
\begin{aligned}
    \|\mathscr{X}^{Q}_k\|_k &\lesssim \prod_{j=1}^2\left\|\sum_{ k_j' } h_{k
    k_j'}^{N, L_j} \frac{g_{k_j'}}{\jbb{k_j'}^{\frac{\alpha}{2}}}\right\|_{\ell^\infty}\cdot\left\|\sum_{ k_3' } h_{k
    k_3'}^{N, L_3} \frac{g_{k_3'}}{\jbb{k_3'}^{\frac{\alpha}{2}}}\right\|_{\ell^2}\\
    &\lesssim N^{\frac{15-9\alpha}{2}+3\gamma_0} \lesssim N^{\frac{10}{3}-2\alpha-\frac{5}{6}(3\alpha-5)+3\gamma_0}.    
\end{aligned}
\]
This bound is enough for \eqref{Eqn:remainderaim5} by our choice of parameters.

\section{Proof of the main results}\label{MainProof}

In this section we give a self-contained proof of Theorems \ref{LWP} and
\ref{GWP} using the estimates established in Sections
\ref{preliminaries}--\ref{esti}.  
The argument follows the globalization
scheme in \cite{DNY2024}, with the necessary modifications tailored to our situation. 
We include the details
to make these modifications explicit. We fix
\[
 s=\frac{\alpha-2}{2}-\varepsilon, \quad
 s_0=\frac{\alpha-2}{2}-\frac{\varepsilon}{2}>s,
 \qquad
 \beta=2\alpha-\frac{10}{3}-\gamma>0,
\]
where the parameters satisfy \eqref{parameters1}.  Thus \eqref{induct6}
becomes $\|z_N\|_{X^b}\leq N^{-\beta}$. Since $\alpha$,$\varepsilon$ are fixed throughout the proof, we omit the dependence on $\alpha$, $\varepsilon$ of sets and constants below for simplicity. Our main theorems follow from a countable intersection over $\varepsilon$. We usually denote $I=[-\tau,\tau]$ if not explicitly emphasized.

\subsection{Consequences of the local estimates}

We first present the forced stability result for truncated local solutions, which is useful for establishing the uniqueness and global dynamics. See also \cite[Proposition 5.5]{DNY2024}.

\begin{prop}[Stability]\label{prop:complete-stability}
Let $I=[t_0-\tau,t_0+\tau]$, and suppose that $v_N$ is the solution to \eqref{truncfNLSgauge} on $I$. Then $\tau^{-1}$-certainly,

\begin{enumerate}[(i)]
\item Suppose that $N\leq N'$, then
\begin{equation}\label{complete-commutator}
 \|v_N-\Pi_Nv_{N'}\|_{X^b(I)}\leq C N^{-\beta}.
\end{equation}

\item Suppose that $w=\Pi_Nw$ satisfies
\begin{equation}\label{complete-forced-equation}
 (i\partial_t-{\rm D}^{\alpha})w
 =\Pi_N\Nc_3(w)+\Pi_N\Qc_3(w)+e_N
\end{equation}
and
\begin{equation}\label{complete-forced-data}
 \|w(t_0)-v_N(t_0)\|_{L^2}
 +\|\eta_{2\tau}(t-t_0)\mathcal I_{t_0}e_N\|_{X^b}
 \leq A N^{-\beta},
\end{equation}
where
\[
 \mathcal I_{t_0}F(t)
 :=-i\int_{t_0}^t e^{-i(t-t'){\rm D}^{\alpha}}F(t')\,dt'.
\] Then
\begin{equation}\label{complete-forced-stability}
 \|w-v_N\|_{X^b(I)}\leq C_A N^{-\beta}.
\end{equation}
\end{enumerate}
\end{prop}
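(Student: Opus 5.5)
Part (i) follows from telescoping and the a priori bounds of Proposition \ref{local2}; part (ii) is a perturbative argument in the structured class. Both are run on the time-translated interval, following \cite[Proposition 5.5]{DNY2024}.

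For (i), we translate time so that $t_0=0$. This uses the invariance of the law of the Gaussian data, or at time $t_0$ the invariance of the truncated Gibbs measures (Remark \ref{truncatedGibbs}). We then have $\mathtt{Loc}(M)$ for all $M$ $\tau^{-1}$-certainly. We write
\[
v_{N'}-v_N=\sum_{N<M\le N'}y_M,\qquad \Pi_Nv_{N'}-v_N=\sum_{N<M\le N'}\Pi_Ny_M .
\]
We insert the ansatz $y_M=\psi_{M,L_M}+z_M$. The $z_M$ part is bounded by $\sum_{M>N}M^{-\beta}\lesssim N^{-\beta}$ using \eqref{induct6}.

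For $\Pi_N\psi_{M,L_M}$ with $M>N$, we use that $\psi_{M,L_M}$ has input frequencies $\frac M2<\langle k'\rangle\le M$. By \eqref{induct3}, its kernel $H^{M,L}_{kk'}$ is concentrated on $|k-k'|\lesssim M^{\kappa_0^{-1/2}}L\ll M$. Hence $\Pi_N\psi_{M,L_M}$ is nonzero only when $N\ge \frac M2-O(M^{1-\delta/2})$, i.e.\ only for $M=2N$. In that case only the boundary layer near $|k|\sim N$ survives. The weight $(1+|k-k'|/L)^{-\kappa_0}$ in \eqref{induct3} gives a factor $N^{-c\kappa_0\delta}$ off the thin annulus. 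On the annulus itself we use the $\Gamma$-type bounds from Section \ref{Gammacondi}, i.e.\ the $\Delta_N$-truncation analysis of term (e), which already yields $N^{-\beta}$.

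Equivalently, we compare the equation for $\Pi_Nv_{N'}$ with that for $v_N$. The commutator $\Pi_N\mathcal N_3(v_{N'})-\Pi_N\mathcal N_3(\Pi_Nv_{N'})$ requires at least one input at frequency $>N$ with output at frequency $\le N$. This is exactly the $\Gamma$-condition configuration, so Lemmas \ref{Gammacounting}--\ref{Gammacounting2} bound it by $N^{-\beta}$. We then close with (ii).

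For (ii), we set $d=w-v_N$. It satisfies
\[
(i\partial_t-{\rm D}^\alpha)d=\Pi_N\big[\mathcal N_3(v_N+d)-\mathcal N_3(v_N)+\mathcal Q_3(v_N+d)-\mathcal Q_3(v_N)\big]+e_N,
\]
and we expand the difference multilinearly. The terms linear in $d$ with the other two inputs at low frequency are $\mathcal P^\pm(d)$, which are controlled by \eqref{rao-OP} with the small factor $\tau^\theta$. All other terms are estimated as in the remainder analysis of Section \ref{esti}, treating $d$ as a type (D) input of size $A N^{-\beta}$. The quadratic and cubic terms in $d$ are even smaller. Together with \eqref{complete-forced-data} and Proposition \ref{Prop:stb}, this gives a contraction in a ball of radius $C_AN^{-\beta}$ in $X^b(I)$, and hence \eqref{complete-forced-stability}.

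The main obstacle is the linear term $\mathcal N_3(d,\psi,\psi)$ in which $d$ sits at high frequency and the $\psi$'s at intermediate frequency $L\ge N^{1-\delta}$. There the type (D) bound for $d$ carries no $N^{-\beta}$ room to spare. We handle it with the operator bound \eqref{rao-OP} extended to $L_{\max}$ up to $N$, proved by the same (C,C) and (C,D) computations. The resulting loss is bounded, so the coefficient $\tau^\theta$ suffices to close.
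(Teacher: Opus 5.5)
Your part (ii) is the paper's argument: a contraction for $d=w-v_N$ in a ball of radius $\sim N^{-\beta}$. The linear terms whose two companions are low are controlled by \eqref{rao-OP}, and everything else by the remainder estimates of Section~\ref{esti}. The term you single out as the main obstacle is, in the paper, one of the two-high-frequency terms of Subsection~\ref{2Hterms}, whose bounds carry negative powers of $N_{\med}\gtrsim N^{1-\delta}$. Your alternative of extending \eqref{rao-OP} to $L_{\max}\le N$ also works, because the (C,C), (C,D), (D,D) operator bounds decay in $L_{\max}$. For example, the (C,D) bound $(L_2\vee L_3)^{\frac{29}{6}-\frac{5\alpha}{2}+\gamma}$ has a negative exponent precisely because $\alpha>\frac{29}{15}$.

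For (i) your route differs from the paper's. The paper telescopes, bounds $\Pi_Nz_M$ by \eqref{induct6}, and treats $\Pi_N\psi$, with $\psi=\psi_{M,L_M}$, through its own linear Duhamel equation. Since $\Pi_N\Delta_Mu^\omega=0$ there is no free term. Splitting the first slot as $\psi=\Pi_N\psi+\Pi_N^\perp\psi$ gives
$\Pi_N\psi=2\eta_{2\tau}\mathcal I_{t_0}\Pi_N\mathcal N_3(\Pi_N\psi,v_{L_M},v_{L_M})+2\eta_{2\tau}\mathcal I_{t_0}\Pi_N\mathcal N_3(\Pi_N^\perp\psi,v_{L_M},v_{L_M})$.
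The first term is absorbed using \eqref{rao-OP} and $\tau\ll1$. The second is the reversed $\Gamma$-configuration $|k|\le\Gamma<|k_1|$ of Subsection~\ref{Gammacondi}.

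Your first sketch (kernel localization to $M=2N$, then an annulus near $|k|\sim N$) is missing exactly this step. $\Gamma$-counting acts on a multilinear expression, so to apply it to $\Pi_N\psi$ you need that equation. Once you have it, the annulus decomposition is superfluous.

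Your ``equivalent'' route treats $\Pi_Nv_{N'}$ as a forced solution with defect $e_N=\Pi_N[\mathcal N_3(v_{N'})-\mathcal N_3(\Pi_Nv_{N'})]$ and then invokes (ii). The $\mathcal Q_3$ part of the defect vanishes identically, since $\mathcal Q_3$ is diagonal. This route does close, and it is the mechanism the paper uses for uniqueness in Proposition~\ref{prop:complete-local}. But it needs more than the $\Gamma$-lemmas, because $e_N$ also contains:
low-high-low pieces, handled as in Subsection~\ref{LHLremainder};
pieces with two inputs above $N$ at scales $M\gg N$, which must be summed using decay in $M$ as in Subsection~\ref{2Hterms};
$\Pi_N^\perp z_M$ inputs, handled by \eqref{rao-OP} together with \eqref{induct6}.

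The paper's route is more economical: it reuses \eqref{induct6} for every $z_M$ and needs only one $\Gamma$-estimate, for a linear equation with low-frequency coefficients. Yours makes (i) a corollary of (ii) through a single defect bound, which keeps it uniform with the uniqueness argument.
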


\begin{proof}
For (i), we write
\[\Pi_Nv_{N'}-v_N=\sum_{N<M\leq N'}\Pi_Ny_M=\sum_{N<M\leq N'}(\Pi_N\psi_{M,L_M}+\Pi_Nz_M),\] where $L_M$ is the largest $L$ satisfying $(M,L)\in\mathcal{K}_0$. The bound for the remainder $\Pi_Nz_M$ follows from Proposition \ref{local2}, so it suffices to bound $\Pi_N\psi_{M,L_M}$. We simply write $\psi=\psi_{M,L_M}$, then $\Pi_N\psi$ solves the equation

\begin{equation}
\begin{aligned}
 \Pi_N\psi(t)= &2\eta_{2\tau}(t-t_0)\mathcal{I}_{t_0}\Pi_N\mathcal{N}_{3}\big(\Pi_N\psi,v_{L_M},v_{L_M}\big) + 2\eta_{2\tau}(t-t_0)\mathcal{I}_{t_0}\Pi_N\mathcal{N}_{3}\big(\Pi_N^{\perp}\psi,v_{L_M},v_{L_M}\big)\\
 &+2\eta_{2\tau}(t-t_0)\mathcal{I}_{t_0}\Pi_N\mathcal{Q}_{3}\big(\Pi_N\psi,v_{L_M},v_{L_M}\big) + 2\eta_{2\tau}(t-t_0)\mathcal{I}_{t_0}\Pi_N\mathcal{Q}_{3}\big(\Pi_N^{\perp}\psi,v_{L_M},v_{L_M}\big),
\end{aligned}\notag
\end{equation}
The linear term disappears due to $N\leq\frac{M}{2}$. Now  we may control the first and third terms on the right hand side by \eqref{rao-OP}. The second and fourth terms are $\tau^{-1}$-certainly controlled by the variant $\Gamma$-condition counting as in Section \ref{Gammacondi}. The last term can be absorbed by the left hand side if we choose $\tau\ll 1$ and apply Proposition \ref{Prop:stb}. In the end we get that

\[\|\Pi_N\psi\|_{X^{b}}\lesssim {\tau}^\theta\|\Pi_N\psi\|_{X^{b}}+{\tau}^\theta M^{-\beta-\frac{\gamma}{4}},\] which proves (\ref{complete-commutator}).

For (ii), we may assume $N$ is large depending on $A$. The difference $z=w-v_N$ satisfies the equation
\[
(i\partial_t-{\rm D}^\alpha)z=\Pi_N[\mathcal{N}_{3}(z+v_N)-\mathcal{N}_{3}(v_N)]+
\Pi_N[\mathcal{Q}_{3}(z+v_N)-\mathcal{Q}_{3}(v_N)]+e_N
\]
on $J$, and $z_0=z(t_0)$ satisfies $\|z_0\|_{L^2}\leq AN^{-\beta}$. The fixed point map for $z$ is
\[
\begin{aligned}
 \Gamma(z)=\ & \eta(t-t_0)e^{-i(t-t_0){\rm D}^{\alpha}}z(t_0)
 +\eta_{2\tau}(t-t_0)\mathcal I_{t_0}\Pi_N
 \big(\Nc_3(v_N+z)-\Nc_3(v_N)\big)\\
 &+\eta_{2\tau}(t-t_0)\mathcal I_{t_0}\Pi_N
 \big(\Qc_3(v_N+z)-\Qc_3(v_N)\big)+\eta_{2\tau}(t-t_0)\mathcal I_{t_0}e_N\\.
\end{aligned}
\]
Every cubic difference contains a copy of $z$.  Terms with one high input
are bounded by \eqref{rao-OP}; terms with at least two high inputs are $\tau^{-1}$-certainly 
bounded as in Subsection \ref{2Hterms}; the low-high-low, output-truncation,
and resonant terms follow the estimates in Subsections \ref{LHLremainder},
\ref{Gammacondi}, and \eqref{Eqn:remainderaim5}, respectively.  Replacing
one copy of $z$ by $z-\widetilde z$ gives the corresponding difference
estimate. More precisely, the estimates proved in those subsections give,
uniformly for
$\|z\|_{X^b},\|\widetilde z\|_{X^b}\leq A_1N^{-\beta}$,
\begin{equation}\label{complete-cubic-lipschitz}
\begin{aligned}
 &\left\|\eta_{2\tau}\mathcal I_{t_0}\Pi_N
 \big[\Nc_3(v_N+z)-\Nc_3(v_N)
      +\Qc_3(v_N+z)-\Qc_3(v_N)\big]\right\|_{X^b}
 \leq c_{A_1}(\tau)\|z\|_{X^b},\\
 &\left\|\eta_{2\tau}\mathcal I_{t_0}\Pi_N
 \big[\Nc_3(v_N+z)-\Nc_3(v_N+\widetilde z)
      +\Qc_3(v_N+z)-\Qc_3(v_N+\widetilde z)\big]\right\|_{X^b}
 \leq c_{A_1}(\tau)\|z-\widetilde z\|_{X^b},
\end{aligned}
\end{equation}
where $c_{A_1}(\tau)\to0$ as $\tau\to0$, uniformly in $N$ and $t_0$.
Choose $A_1=2CA$ and then choose $\tau\ll 1$ so that
$c_{A_1}(\tau)\leq\frac14$.
Consequently,
\[
 \|\Gamma(z)\|_{X^b}
 \leq CA N^{-\beta}+c_{A_1}(\tau)\|z\|_{X^b},\qquad
 \|\Gamma(z)-\Gamma(\widetilde z)\|_{X^b}
 \leq c_{A_1}(\tau)\|z-\widetilde z\|_{X^b}.
\]
The map is therefore a contraction on the ball
$\{z:\|z\|_{X^b}\leq A_1N^{-\beta}\}$, which proves
\eqref{complete-forced-stability}.
\end{proof}

\begin{rem}\label{rem:complete-long-stability}
Proposition \ref{prop:complete-stability} may be iterated over finitely many
adjacent short intervals.  The resulting constant depends on the number of
intervals but is uniform in $N$.
\end{rem}

Let $\Psi_t^N$ be the flow of the truncated gauged system \eqref{truncfNLSgauge}, we can identify the following good sets:

\begin{prop}[Uniform local good sets]\label{prop:complete-goodsets}
For each $0<\tau\ll1$, there is a rotation-invariant Borel set
$E_\tau\subseteq H^s$ with $\mu_\alpha(E_{\tau}^c)\leq C_{\theta}e^{-c\tau^{-\theta}}$, such that for each dyadic $N$ and $E^N_\tau = \Pi_N E_{\tau}$,
\begin{equation}\label{complete-good-prob}
 \mu_{\alpha,N}((E_\tau^N)^c)
 +\rho_{\alpha,N}^{\circ}((E_\tau^N)^c)
 \leq C_\theta e^{-c\tau^{-\theta}},
\end{equation}
and Proposition \ref{local2}, Proposition \ref{prop:complete-stability} up to frequency $N$ hold for initial data on $E^N_{\tau}$. Moreover, the sets may be chosen so that
\begin{equation}\label{complete-good-size}
 \|v\|_{H^{s_0}}\leq \tau^{-C_0},
 \qquad\text{for every }v\in E_\tau^N,
\end{equation}
where $C_0$ is independent of $N$.
The sets are compatibly across cutoffs: if $v\in E_\tau^N$ and
$M\leq N$ is dyadic, then the solution $\Psi_t^M\Pi_Mv$ on
$[-\tau,\tau]$ has the decomposition \eqref{ansatzN}, with $N$ replaced by
$M$, and satisfies \eqref{induct1}--\eqref{induct3} and \eqref{induct6}. Proposition \ref{prop:complete-stability} also holds for $\Psi_t^M\Pi_Mv$. The same statement holds on an interval centered at $t_0$ if
$\Psi_{t_0}^Nv\in E_\tau^N$.
\end{prop}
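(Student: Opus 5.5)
The set $E_\tau$ will be built as a preimage under the Gaussian map $\omega\mapsto u^\omega$. Write $\Omega_\tau^{(N)}$ for the exceptional-set complement furnished by Proposition \ref{local2} at truncation $N$; this is the statement $\mathtt{Loc}(M)$ for all $M\le N$, together with \eqref{preofgauss}. Write $\widetilde\Omega_\tau^{(N)}$ for the analogous set on which the $\tau^{-1}$-certain conclusions of Proposition \ref{prop:complete-stability} hold up to frequency $N$. Both sets are determined by the $\sigma$-algebra $\mathcal B_{\le N}$, so they are of the form $\{\omega:\Pi_Nu^\omega\in G_N\}$ for Borel sets $G_N\subset \Pi_N H^s$.

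The first step is to intersect over all dyadic $N$. We set
\[
E_\tau=\Big\{u\in H^s:\ \|u\|_{H^{s_0}}\le\tau^{-C_0},\ \Pi_Mu\in G_M\ \text{for all dyadic }M\Big\}.
\]
In Proposition \ref{local2} the exceptional probabilities at consecutive scales are bounded by $C_\theta e^{-(\tau^{-1}M)^\theta}$, and these are summable in $M$. The stability estimates are summable in the same way, because they are proved scale by scale with losses $e^{-(\tau^{-1}M)^\theta}$. Hence $\mu_\alpha(E_\tau^c)\le C_\theta e^{-c\tau^{-\theta}}$. The bound on the $H^{s_0}$ norm holds off a set of probability $e^{-c\tau^{-\theta}}$, by Gaussian tails, since $s_0<\frac{\alpha-2}{2}$. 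The compatibility across cutoffs then holds by construction: if $v\in \Pi_NE_\tau$ and $M\le N$, then $\Pi_Mv\in G_M$, and so all of \eqref{ansatzN}, \eqref{induct1}--\eqref{induct3}, \eqref{induct6} and the stability bounds hold for $\Psi^M_t\Pi_Mv$. The shifted-interval statement follows in the same way, since the local analysis is translation invariant in time once we recenter at $t_0$ with the data $\Psi^N_{t_0}v$.

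The second step is rotation invariance. The law $\mu_\alpha$ is invariant under $u\mapsto e^{i\phi}u$. The equation \eqref{truncfNLSgauge} and all the norms involved are invariant under this phase rotation, and the estimates in Proposition \ref{local2} are stated for $g_k$, whose law is invariant under $g_k\mapsto e^{i\phi}g_k$. We may therefore replace each $G_N$ by $\bigcap_{\phi}e^{i\phi}G_N$. Alternatively, we can define the conditions through quantities that are themselves rotation invariant, namely norms of $v_N$, $z_N$ and $h^{N,L}$, all of which are unchanged by a global phase. This does not change any of the probabilities.

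The third step transfers the bound to the truncated measures. For $\mu_{\alpha,N}$ we have $\mu_{\alpha,N}((E_\tau^N)^c)\le\mu_\alpha(E_\tau^c)$, because $\Pi_Nu\notin\Pi_NE_\tau$ forces $u\notin E_\tau$. For $\rho^\circ_{\alpha,N}$ we use Hölder:
\[
\rho^\circ_{\alpha,N}(A)\le Z_{\alpha,N}^{-1}\|R_N\|_{L^2(\mu_\alpha)}\,\mu_{\alpha,N}(A)^{1/2}.
\]
Proposition \ref{PROP:Gibbs} gives $\sup_N\|R_N\|_{L^2}<\infty$ and $\inf_NZ_{\alpha,N}>0$. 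This produces the bound $e^{-\frac c2\tau^{-\theta}}$, and we absorb the factor $\frac12$ into $c$.

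The main obstacle is measurability together with uniformity in $N$. The events in Proposition \ref{local2} involve the solution $v_N$, so we must check two things. First, the good set at scale $M$ should depend only on $\Pi_Mu$ and not on $N$. Second, each $G_M$ must be Borel. The first holds because the inductive construction only ever uses $\mathcal B_{\le M}$-measurable quantities. The second holds because the conditions are countably many inequalities between continuous functions of finitely many Fourier modes, and the flow maps of the finite-dimensional ODE are continuous.
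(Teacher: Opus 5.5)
Your proposal is correct and follows essentially the same route as the paper. You take the simultaneous multi-scale good event from Proposition~\ref{local2} together with the summable events for Proposition~\ref{prop:complete-stability}, intersect with a Gaussian-tail (Fernique) condition on $\|u\|_{H^{s_0}}$, get rotation invariance from the phase invariance of the defining conditions, and pass to $\rho^{\circ}_{\alpha,N}$ by H\"older using the uniform $L^p$ bounds from Proposition~\ref{PROP:Gibbs}.

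The one step to drop is replacing $G_N$ by $\bigcap_\phi e^{i\phi}G_N$. An intersection over uncountably many rotated copies need not preserve the probability, or even measurability, unless $G_N$ is already invariant. Rely instead on your second justification, that the defining quantities are themselves phase-invariant, which is exactly the paper's argument.
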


\begin{proof}
Proposition \ref{local2} is already a simultaneous statement in all
frequency parameters: on one event it gives the induction bounds for every
dyadic cutoff and every dyadic scale.
The exceptional probabilities in Proposition \ref{local2} are summed only
over the dyadic induction scale, giving
$C_\theta e^{-c\tau^{-\theta}}$ uniformly in $N$.  And Proposition \ref{prop:complete-stability} holds $\tau^{-1}$-certainly, which is also uniform in the dyadic scale. The definition of their available sets implies the
stated cutoff compatibility. The conditions defining the event are
unchanged by a constant phase rotation, so $E_\tau^N$ is rotation
invariant. By Fernique's theorem, after choosing $C_0$ sufficiently large,
the complement of \eqref{complete-good-size} has Gaussian measure at most
$C_\theta e^{-c\tau^{-\theta}}$, uniformly in $N$.  Intersecting with this
event preserves \eqref{complete-good-prob}. Time translation gives the
centered version. Finally, Proposition
\ref{PROP:Gibbs} and H\"older's
inequality give, for $p>1$,
\[
 \rho_{\alpha,N}^{\circ}((E_\tau^N)^c)
 \leq
 \|Z_{\alpha,N}^{-1}R_N\|_{L^p(\mu_{\alpha,N})}
 \mu_{\alpha,N}((E_\tau^N)^c)^{1-\frac1p}.
\]
Changing $\theta$ proves \eqref{complete-good-prob}.
\end{proof}

To recover the Wick-ordered system from the gauged one, we need the convergence rate of the phase as follows.

\begin{lem}[Renormalized mass]\label{lem:complete-mass}
Fix
\begin{equation}\label{complete-phase-exponent}
 \frac{2-\alpha}{2}<\beta_1<
 \min\left\{\alpha-1,\, \beta-\frac{2-\alpha}{2}\right\}.
\end{equation}
There is a rotation-invariant Borel set  $\Sigma_{\rm mass}\subset H^s$ of
full $\mu_\alpha$- and $\rho_\alpha$-measure such that
\begin{equation}\label{complete-mass-limit}
 X(u):=\lim_{N\to\infty}
 \left(\frac1{(2\pi)^2}\|\Pi_Nu\|_{L^2}^2-\sigma_N\right)
\end{equation}
exists and
\begin{equation}\label{complete-mass-rate}
 \left|
 \frac1{(2\pi)^2}\|\Pi_Nu\|_{L^2}^2-\sigma_N-X(u)
 \right|
 \leq C(u)N^{-\beta_1}.
\end{equation}
More precisely, one may write $\Sigma_{\rm mass}=\bigcup_{B=1}^{\infty}\Sigma_{\rm mass}(B)$, where $\Sigma_{\rm mass}(B)$ is Borel. The constant in
\eqref{complete-mass-rate} and $|X(u)|$, as well as the bound
\begin{equation}\label{complete-L2-growth}
 \|\Pi_Nu\|_{L^2}\leq B N^{\frac{2-\alpha}{2}},
\end{equation}
is uniform for $u\in\Sigma_{\rm mass}(B)$.
\end{lem}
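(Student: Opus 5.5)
Under $\mu_\alpha$ the quantity $\frac1{(2\pi)^2}\|\Pi_Nu\|_{L^2}^2-\sigma_N$ is the random variable $Y_N=\sum_{\langle k\rangle\le N}\frac{|g_k|^2-1}{\jbb{k}^{\alpha}}$ from \eqref{gaugephase}. It is a sum of independent, mean-zero, second-chaos variables. I would work with dyadic increments
\[
D_M=Y_M-Y_{M/2}=\sum_{M/2<\langle k\rangle\le M}\frac{|g_k|^2-1}{\jbb{k}^\alpha}.
\]
By independence,
\[
\mathbb E|D_M|^2\lesssim\sum_{\langle k\rangle\sim M}\jbb{k}^{-2\alpha}\sim M^{2-2\alpha}.
\]
Lemma~\ref{LEM:wc} with $l=2$ then gives $\|D_M\|_{L^p(\Omega)}\lesssim p\,M^{1-\alpha}$. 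Equivalently, Lemma~\ref{LEM:LDE} applied with $A=M^{2}$ gives $|D_M|\le M^{1-\alpha+\theta}$ outside a set of probability $\lesssim e^{-cM^{\theta}}$.

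Since $\beta_1<\alpha-1$ by \eqref{complete-phase-exponent}, I can pick $\theta$ with $1-\alpha+\theta<-\beta_1$. I then define
\[
\Sigma_{\rm mass}(B)=\Big\{u:\ |D_M(u)|\le B\,M^{-\beta_1}\ \text{for all dyadic }M,\ \ \|\Pi_Nu\|_{L^2}\le BN^{\frac{2-\alpha}{2}}\ \text{for all dyadic }N\Big\},
\]
where $D_M(u)$ is read as the deterministic functional $\frac1{(2\pi)^2}\|\Delta_Mu\|_{L^2}^2-(\sigma_M-\sigma_{M/2})$. On $\Sigma_{\rm mass}(B)$ the series $\sum_M D_M$ converges geometrically, which gives the existence of the limit $X(u)$ in \eqref{complete-mass-limit}. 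The tail bound $\sum_{M>N}BM^{-\beta_1}\lesssim BN^{-\beta_1}$ gives \eqref{complete-mass-rate}, and we also get $|X(u)|\lesssim B+|Y_1(u)|\lesssim B$. Non-dyadic $N$ follow from the dyadic case, because the partial sums between consecutive dyadic scales are controlled by the same increment bounds.

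For the $L^2$ growth condition, I would use $\|\Pi_Nu\|^2_{L^2}=(2\pi)^2(\sigma_N+Y_N)$ with $\sigma_N\sim N^{2-\alpha}$ and $Y_N$ bounded on the set. This gives \eqref{complete-L2-growth} as soon as $B$ is large. The defining conditions involve only countably many continuous functionals of $u\in H^s$, namely the norms of finite Fourier projections, so each $\Sigma_{\rm mass}(B)$ is Borel. They depend only on $|u_k|$, so the sets are invariant under rotations $u\mapsto e^{i\vartheta}u$.

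To get full measure, I would take a union bound over dyadic $M$:
\[
\mu_\alpha(\Sigma_{\rm mass}(B)^c)\lesssim\sum_M \mathbb P\big(|D_M|>BM^{-\beta_1}\big)+\mathbb P\Big(\sup_N|Y_N|>B\Big).
\]
The chaos tail for the increments gives
\[
\mathbb P\big(|D_M|>BM^{-\beta_1}\big)\lesssim \exp\!\big(-c(BM^{\alpha-1-\beta_1})\big).
\]
This is summable in $M$ and tends to $0$ as $B\to\infty$, since the exponent $\alpha-1-\beta_1$ is positive. Hence $\Sigma_{\rm mass}=\bigcup_B\Sigma_{\rm mass}(B)$ has full $\mu_\alpha$-measure. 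Because $\rho_\alpha\ll\mu_\alpha$ by \eqref{Gibbs_rig}, it also has full $\rho_\alpha$-measure.

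The only point requiring care is making sure the rate $\beta_1$ sits strictly below the variance exponent $\alpha-1$, which \eqref{complete-phase-exponent} guarantees. The other constraints in \eqref{complete-phase-exponent}, namely $\beta_1>\frac{2-\alpha}2$ and $\beta_1<\beta-\frac{2-\alpha}2$, are not needed here. I expect them to matter later, when the phase error $N^{-\beta_1}$ is multiplied by $\|\Pi_Nu\|_{L^2}\lesssim N^{\frac{2-\alpha}2}$ in the inverse gauge transform, so they only need to be compatible, which holds for small $\gamma,\varepsilon$.
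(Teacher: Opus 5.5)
For dyadic $N$, which is the paper's standing convention, your argument is correct and is essentially the paper's. Both proofs rest on three steps:
\begin{itemize}
\item a variance bound $\lesssim M^{2-2\alpha}$ for shell sums of the independent centered variables $(|g_k|^2-1)\jbb{k}^{-\alpha}$;
\item hypercontractivity via Lemma~\ref{LEM:wc}, followed by summation over dyadic scales;
\item a definition of $\Sigma_{\rm mass}(B)$ by countably many rotation-invariant conditions on finite Fourier projections.
\end{itemize}
The paper applies Chebyshev and Borel--Cantelli to the tails $X-X_N$ at the sharper rate $N^{1-\alpha+\theta}$, and only afterwards derives \eqref{complete-L2-growth}. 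You instead use a direct union bound at rate $BM^{-\beta_1}$ and build the growth bound into the set. That difference is only bookkeeping.

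The step that fails is your sentence on non-dyadic $N$. On your set, the condition $|D_M(u)|\le BM^{-\beta_1}$ controls only the complete shell sum. For $M/2<N<M$, the partial sum $Y_N-Y_{M/2}=\sum_{M/2<\langle k\rangle\le N}(|g_k|^2-1)\jbb{k}^{-\alpha}$ can be large even when $D_M$ is small, because the two parts of the shell can cancel. So neither \eqref{complete-mass-rate} nor the existence of the limit along all $N$ follows from your conditions. The bound \eqref{complete-L2-growth} does extend, since $\|\Pi_Nu\|_{L^2}$ is monotone in $N$, but the renormalized quantity is not monotone.

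This is exactly where the paper uses Kolmogorov's maximal inequality on the independent summands within each shell. It gives $\mathbb P\big(\max_{M/2<N\le M}|Y_N-Y_{M/2}|>BM^{-\beta_1}\big)\lesssim B^{-2}M^{-2(\alpha-1-\beta_1)}$. This is summable over dyadic $M$ and tends to $0$ as $B\to\infty$. Alternatively, apply your chaos tail with a union bound over the $O(M^2)$ distinct intermediate partial sums.

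To make the constant uniform on $\Sigma_{\rm mass}(B)$, you should also add the condition $\max_{M/2<N\le M}|Y_N(u)-Y_{M/2}(u)|\le BM^{-\beta_1}$ to its definition. This is still a countable family of continuous, rotation-invariant conditions. With it, $|Y_N(u)-X(u)|\lesssim BN^{-\beta_1}$ holds for every $N$, uniformly on $\Sigma_{\rm mass}(B)$.
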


\begin{proof}
Under $\mu_\alpha$, the partial sums in \eqref{complete-mass-limit} are  $X_N=\sum_{\jb{k}\leq N}
 \frac{|g_k|^2-1}{\jbb{k}^{\alpha}}$. Since
\[
 \mathbb E|X-X_N|^2
 \lesssim\sum_{\jb{k}>N}\jbb{k}^{-2\alpha}
 \lesssim N^{2-2\alpha},
\]
the Wiener chaos estimate in Lemma \ref{LEM:wc}, followed by
Chebyshev's inequality and Borel--Cantelli, gives
$|X-X_N|\lesssim N^{1-\alpha+\theta}$ for dyadic $N$. Kolmogorov's maximal
inequality, applied to the independent centered variables
$(|g_k|^2-1)\jbb{k}^{-\alpha}$ in each dyadic shell, gives the same bound
for every integer $N$. This implies
\eqref{complete-mass-rate} by our choice \eqref{complete-phase-exponent}.  Define
$\Sigma_{\rm mass}(B)$ using the countable conditions
\[
 |X_N(u)-X_M(u)|\leq B N^{1-\alpha+\theta}
 \quad(M\geq N,\ M,N\text{ dyadic}, \ B\in \mathbb{N}),\qquad |X(u)|\leq B.
\]
These sets are Borel. Since
\[
 \frac1{(2\pi)^2}\|\Pi_Nu\|_{L^2}^2
 =\sigma_N+X(u)+\mathcal{O}(BN^{1-\alpha+\theta})
 \lesssim_BN^{2-\alpha},
\]
enlarging $B$ also gives \eqref{complete-L2-growth}.  Absolute continuity
$\rho_\alpha\ll\mu_\alpha$ implies the full measure of $\Sigma_{\rm mass}$ under $\rho_\alpha$.
\end{proof}

\begin{lem}[Total-variation approximation]\label{lem:complete-TV}
With $\rho_{\alpha,N}$ as in Remark \ref{truncatedGibbs},
\begin{equation}\label{complete-TV}
 \|\rho_{\alpha,N}-\rho_\alpha\|_{\rm TV}\longrightarrow0.
\end{equation}
\end{lem}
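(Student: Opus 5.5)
The plan is to compare both measures with the same Gaussian reference measure $\mu_\alpha$ and reduce the total-variation statement to an $L^1(\mu_\alpha)$ convergence of densities. First I will check the product structure. By Remark \ref{truncatedGibbs}, $\rho_{\alpha,N}=\rho^\circ_{\alpha,N}\times\mu_{\alpha,N}^\perp$, and $\mu_\alpha=\mu_{\alpha,N}\times\mu_{\alpha,N}^\perp$ because the Fourier modes are independent Gaussians. Hence
\[
 d\rho_{\alpha,N}=Z_{\alpha,N}^{-1}R_N(u)\,d\mu_\alpha,
\]
where $R_N(u)=e^{-G_N(u)}$ depends only on $\Pi_Nu$ and $Z_{\alpha,N}=\int R_N\,d\mu_\alpha$. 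Similarly, by \eqref{Gibbs_rig}, $d\rho_\alpha=Z^{-1}R(u)\,d\mu_\alpha$ with $Z=\int R\,d\mu_\alpha$.

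Both measures are therefore absolutely continuous with respect to $\mu_\alpha$, and
\[
 \|\rho_{\alpha,N}-\rho_\alpha\|_{\rm TV}
 =\int\big|Z_{\alpha,N}^{-1}R_N-Z^{-1}R\big|\,d\mu_\alpha .
\]
Proposition \ref{PROP:Gibbs} gives $R_N\to R$ in $L^1(\mu_\alpha)$, which in turn gives $Z_{\alpha,N}\to Z$. We need $Z>0$. This holds because $R>0$ $\mu_\alpha$-almost surely: $R$ is an $L^1$ limit of strictly positive functions, and $\log R=-G(u)$ is finite almost surely since $G_N\to G$ in $L^2(\mu_\alpha)$.

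For the remaining convergence, split
\[
 \big|Z_{\alpha,N}^{-1}R_N-Z^{-1}R\big|\le Z_{\alpha,N}^{-1}|R_N-R|+|Z_{\alpha,N}^{-1}-Z^{-1}|\,R .
\]
Integrating, the first term tends to $0$ because $Z_{\alpha,N}$ is bounded below for large $N$. The second term is bounded by $|Z_{\alpha,N}^{-1}-Z^{-1}|\,Z\to0$. This proves \eqref{complete-TV}.

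The only point needing care is the identification of the densities. One must check that the normalization in \eqref{truncateGibbsmeas} uses the same Wick constant $\sigma_N$ as the $R_N$ in \eqref{RN}, and that the truncated Gaussian factor times $\mu^\perp_{\alpha,N}$ reproduces $\mu_\alpha$ exactly. Positivity of $Z$ is a mild obstacle as well; if needed, it can also be obtained from Jensen's inequality, $Z\ge e^{-\int G\,d\mu_\alpha}=1$, since $\int G\,d\mu_\alpha=0$ by Wick ordering.
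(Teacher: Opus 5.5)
Your proposal is correct and follows the paper's proof. Both write $\rho_{\alpha,N}$ and $\rho_\alpha$ as densities $Z_{\alpha,N}^{-1}R_N$ and $Z^{-1}R$ with respect to the full Gaussian measure $\mu_\alpha$, and both deduce \eqref{complete-TV} from the $L^1(\mu_\alpha)$ convergence $R_N\to R$ in Proposition~\ref{PROP:Gibbs}. Your additional check that the limiting normalization is positive (for instance $Z\ge 1$ by Jensen, since the Wick-ordered quartic potential has mean zero) fills in a point the paper leaves implicit.
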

\begin{proof}
On the full Gaussian space,
\[
 d\rho_{\alpha,N}=Z_{\alpha,N}^{-1}R_N\,d\mu_\alpha,
 \qquad
 d\rho_\alpha=Z_\alpha^{-1}R\,d\mu_\alpha.
\]
Proposition \ref{PROP:Gibbs} gives $R_N\to R$ in $L^1(\mu_\alpha)$ and
$Z_{\alpha,N}\to Z_\alpha$.  The densities therefore converge in
$L^1(\mu_\alpha)$, which is \eqref{complete-TV}.
\end{proof}

\subsection{Local existence and uniqueness}

We will first show the convergence of solutions to the truncated gauged systems \eqref{truncfNLSgauge}. Indeed, we obtain the explicit rate of convergence:

\begin{prop}[Quantitative local Cauchy estimate]
\label{prop:complete-local-cauchy}
Let $v\in E_{\tau}$ and $\Psi^N_t\Pi_Nv$ be the solution to \eqref{truncfNLSgauge}, and let $N\leq N'$ be dyadic. Then 
\begin{equation}\label{complete-local-cauchy}
 \|\Psi^N_t\Pi_Nv-\Psi^{N'}_t\Pi_{N'}v\|_
 {C(I;H^s)}
 \leq C(\tau)N^{-\frac{\varepsilon}{2}}.
\end{equation}
\end{prop}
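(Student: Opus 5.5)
The plan is to telescope over dyadic scales and use the ansatz \eqref{ansatzN}, which holds simultaneously for every cutoff on $E_\tau$ by Proposition \ref{prop:complete-goodsets}. Write
\[
\Psi^{N'}_t\Pi_{N'}v-\Psi^N_t\Pi_Nv=\sum_{N<M\le N'} y_M,\qquad y_M=v_M-v_{M/2},
\]
and estimate each increment in $C(I;H^s)$, using $X^b\hookrightarrow C_tL^2$ since $b>\frac12$. With $\mathtt{Loc}(M)$ valid for all $M$, each $y_M=\psi_{M,L_M}+z_M$ is frequency supported in $\jb{k}\le M$.

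For the remainder we have $\|z_M\|_{X^b}\le M^{-\beta}$ with $\beta>0$. Since $s<0$, this gives $\|z_M\|_{C_tH^s}\lesssim M^{-\beta}$, which is summable.

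For the random averaging part, write $\psi_{M,L_M}=\sum_{L}\zeta_{M,L}$, whose kernel $H^{M,L_M}$ is supported in the output ball $\jb{k}\lesssim M$. The inputs are $\frac{g_{k'}}{\jbb{k'}^{\alpha/2}}$ with $\jb{k'}\sim M$. By Lemma \ref{unitary}, $H^{M,L_M}(t)$ is unitary at each $t\in I$. Hence
\[
\|\psi_{M,L_M}(t)\|_{L^2}=\|\Delta_Mu^\omega\|_{L^2}\lesssim M^{\frac{2-\alpha}{2}}\tau^{-\theta}M^{\theta},
\]
using \eqref{preofgauss} or a large deviation bound (Lemma \ref{LEM:LDE}), which is built into $E_\tau$. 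The output frequencies of this piece concentrate near $|k|\sim M$, because \eqref{induct3} gives $|k-k'|\lesssim M^{\kappa_0^{-1/2}}L_M\ll M$ up to rapidly decaying tails. So the weight $\jb{k}^{s}\sim M^{s}$ yields
\[
\|\psi_{M,L_M}\|_{C_tH^s}\lesssim M^{\frac{\alpha-2}{2}-\varepsilon+\frac{2-\alpha}{2}+\theta}=M^{-\varepsilon+\theta}.
\]
The tail where $|k|\ll M$ is controlled by the weighted $Z^b$ bound \eqref{induct3}: on that region $(1+|k-k'|/L_M)^{-\kappa_0}\lesssim M^{-c\kappa_0}$, so the loss $\jb{k}^{s}\le 1$ is harmless. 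Only the $\mathcal{B}_{\le L_M}$-measurability is needed to take the $\ell^2$ norm against the independent Gaussians $g_{k'}$. Alternatively one can work on the dyadic pieces $\zeta_{M,L}$ with the $Y^b$ bound \eqref{induct1} and accept an $M^\theta$ loss.

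Summing over $M>N$ gives
\[
\sum_{M>N}\bigl(M^{-\varepsilon+\theta}+M^{-\beta}\bigr)\lesssim C(\tau)N^{-\varepsilon/2},
\]
taking $\theta\ll\varepsilon$ and noting $\beta>\varepsilon/2$ by \eqref{parameters1}. The restriction to $I$ comes from the extension convention in the a priori bounds.

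The main obstacle is the pointwise-in-time $H^s$ control of $\psi_{M,L_M}$. The $X^b$ control is only at the $Z^b$ level $N^{\frac{5-2\alpha}{2}}$, which is too weak once multiplied by $M^{-\alpha/2}$. I would therefore rely first on unitarity (Lemma \ref{unitary}) combined with the off-diagonal decay \eqref{induct3}, and only fall back to the $\Gamma$-condition-type splitting if the low-output tail needs a finer estimate.
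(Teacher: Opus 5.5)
Your argument is correct, but it treats the random averaging part differently from the paper.

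\textbf{The paper's route.} It keeps the free evolution separate, bounding $\sum_{N<M\le N'}e^{-it{\rm D}^\alpha}\Delta_M v$ by $N^{s-s_0}\|v\|_{H^{s_0}}$ via \eqref{complete-good-size}. It then estimates each increment $\zeta_{M,L}$ in $X^b$ using the Hilbert--Schmidt bound \eqref{induct2} together with the large deviation estimate, which turns $\jbb{k'}^{-\alpha/2}$ into $M^{-\alpha/2+\theta}$ without summing over $k'$. This gives $\|\zeta_{M,L}\|_{X^b}\lesssim M^{\frac{5-3\alpha}{2}+\gamma_0}L^{-\gamma_0/4}$. After localizing the output to $|k|\sim M$ via \eqref{induct3}, the weight $M^s$ leaves $M^{\frac{3-2\alpha}{2}+\gamma_0-\varepsilon}$, which is summable in $L$ and $M$ since $\alpha>\frac32$.

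\textbf{Your route.} You keep $\psi_{M,L_M}$ whole, free part included, which is exactly $y_M-z_M$. You use the $L^2$ conservation of the self-adjoint linear flow \eqref{rao1} (Lemma \ref{unitary}) to get $\sup_{t\in I}\|\psi_{M,L_M}(t)\|_{L^2}=\|\Delta_Mu^\omega\|_{L^2}$. You invoke \eqref{induct3} only to discard the output region $|k|\ll M$. On that region $|k-k'|/L\gtrsim M^{\delta}$, so the restricted kernel has Hilbert--Schmidt norm $\lesssim M^{1-\delta\kappa_0}\log M$ pointwise in $t$ (because $b>\frac12$). The bound there is therefore purely deterministic, and no measurability or independence with respect to $g_{k'}$ is needed, contrary to what you wrote.

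\textbf{Comparison.} Your approach is more elementary for this piece: it needs no large deviations and is insensitive to the exponent in \eqref{induct2}. It also never has to separate out the bottom scale $L=\frac12$. The paper's approach yields more information, namely that the nonlinear corrections $\zeta_{M,L}$ are strictly smoother than the free part.

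\textbf{A mistaken remark.} Your closing claim that the $Z^b$ route is ``too weak'' is wrong. That would only be true if you used Cauchy--Schwarz in $k'$, which costs an extra factor $M$. With Lemma \ref{LEM:LDE} the factor $M^{-\alpha/2}$ is gained at the cost of only $M^{\theta}$, and $M^{s}\cdot M^{\frac{5-3\alpha}{2}+\gamma_0}$ decays. This is precisely the paper's argument.
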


\begin{proof}
Write the difference as the sum of the dyadic pieces $y_M$ with
$N<M\leq N'$ and use \eqref{ansatzN}. For the free part, set
$s_0=\frac{\alpha-2}{2}-\frac{\varepsilon}{2}$. By \eqref{complete-good-size},
\[
\begin{aligned}
 \left\|\sum_{N<M\leq N'}
 e^{-it{\rm D}^{\alpha}}\Delta_Mv\right\|_{C(I;H^s)}
 &\leq N^{s-s_0}\|v\|_{H^{s_0}} \leq \tau^{-C_0}N^{-\frac{\varepsilon}{2}}.
\end{aligned}
\]

For the remainder, $s<0$, so the embedding
$X^b\hookrightarrow C(I;L^2)\hookrightarrow C(I;H^s)$ and
\eqref{induct6} give
\[
 \left\|\sum_{N<M\leq N'}z_M\right\|_{C(I;H^s)}
 \lesssim\sum_{M>N}M^{-\beta}
 \lesssim N^{-\beta}
 \lesssim N^{-\frac{\varepsilon}{2}}.
\]

Finally, \eqref{RAOstructure}, \eqref{matrices}, and \eqref{induct2} give
\[
 \|\zeta_{M,L}\|_{X^b}
 \lesssim
 M^{\frac{5-3\alpha}{2}+\gamma_0}
 L^{-\frac{\gamma_0}{4}}.
\]
The weighted off-diagonal bound \eqref{induct3} allows the output to be
decomposed into dyadic annuli around $|k|\sim M$; the contributions away
from this annulus are summable because $\kappa_0\gg1$. Hence
\[
 \left\|\sum_{\substack{N<M\leq N'\\(M,L)\in\mathcal K_0}}
 \zeta_{M,L}\right\|_{C(I;H^s)}\lesssim
 \sum_{M>N}M^s
 \sum_{L:(M,L)\in\mathcal K_0}\|\zeta_{M,L}\|_{X^b}\lesssim
 \sum_{M>N} M^{\frac{3-2\alpha}{2}+\gamma_0-\varepsilon}\lesssim N^{-\frac{\varepsilon}{2}},
\]
where all sums are dyadic and the last inequality follows from
\eqref{parameters1}. Combining the three bounds proves
\eqref{complete-local-cauchy}.
\end{proof}

To prove uniqueness, we state the definition of the structured solution
class as follows.

\begin{df}\label{structured-class}
For $u_0\in H^s$, let $\mathcal U_{u_0}(I)$ be the structured solution
class consisting of distributional solutions $v\in C(I; H^s)$ to the gauged equation \eqref{Eqn:gaugedlimit}
with $v(0)=u_0$, such that, for every dyadic $N$, $v_N^\sharp=\Pi_Nv$
satisfies
\begin{equation}\label{complete-admissible-defect}
\begin{aligned}
 &(i\partial_t-{\rm D}^{\alpha})v_N^\sharp
 -\Pi_N\Nc_3(v_N^\sharp)-\Pi_N\Qc_3(v_N^\sharp)=e_N,\\
 &\|\eta_{2\tau}\mathcal I e_N\|_{X^b(I)}
 \leq C_vN^{-\beta}.
\end{aligned}
\end{equation}    

\end{df}

\begin{prop}[Local limit and uniqueness]\label{prop:complete-local}
Outside a set of probability at most $C_\theta e^{-c\tau^{-\theta}}$, the
gauged solutions $\Psi_t^N\Pi_Nu^\omega$ converge in $C(I;H^s)$ to a limit
$v:=\Psi_tu^\omega$.  This limit is the unique solution in
$\mathcal {U}_{u^\omega}(I)$.
\end{prop}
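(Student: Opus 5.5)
We restrict to the event $u^\omega\in E_\tau$ from Proposition \ref{prop:complete-goodsets}. Its complement has probability at most $C_\theta e^{-c\tau^{-\theta}}$. On this event the argument has three steps.

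\textbf{Existence of the limit.} By Proposition \ref{prop:complete-local-cauchy}, the sequence $v_N:=\Psi_t^N\Pi_Nu^\omega$ is Cauchy along dyadic $N$ in $C(I;H^s)$, with rate $N^{-\varepsilon/2}$. Define $v$ as its limit. To see that $v$ is a distributional solution of \eqref{Eqn:gaugedlimit}, we test the truncated equation \eqref{truncfNLSgauge} against a smooth compactly supported function and pass to the limit. The linear terms converge by the $C(I;H^s)$ convergence. For the nonlinearity we use the ansatz \eqref{ansatzN}. The multilinear estimates of Section \ref{esti} bound each dyadic block of $\Pi_N\Nc_3(v_N)+\Pi_N\Qc_3(v_N)$ in $X^{b-1}$ (equivalently, after applying $\mathcal I$, in $X^b$) with a summable negative power of the largest frequency. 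Hence these nonlinearities form a Cauchy sequence in a negative Sobolev space, and we identify the limit with $\Nc_3(v)+\Qc_3(v)$. Since $v_N(0)=\Pi_Nu^\omega\to u^\omega$, the limit has $v(0)=u^\omega$.

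\textbf{Membership in $\mathcal U_{u^\omega}(I)$.} We need to verify \eqref{complete-admissible-defect} for $v_N^\sharp=\Pi_Nv$. Because $\Pi_Nv$ is the limit of $\Pi_Nv_{N'}$ as $N'\to\infty$, we compute the defect of $\Pi_Nv_{N'}$: it is
\[
e_N=\Pi_N\bigl(\Nc_3(v_{N'})+\Qc_3(v_{N'})\bigr)-\Pi_N\bigl(\Nc_3(\Pi_Nv_{N'})+\Qc_3(\Pi_Nv_{N'})\bigr).
\]
Each term of this difference contains at least one input frequency above $N$ while the output frequency stays at most $N$. These are exactly the configurations handled by the $\Gamma$-condition and commutator estimates in the proof of Proposition \ref{prop:complete-stability}(i), which give $\|\eta_{2\tau}\mathcal Ie_N\|_{X^b}\lesssim N^{-\beta}$ uniformly in $N'$. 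The bound then passes to the limit $N'\to\infty$ by weak lower semicontinuity of the $X^b$ norm.

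\textbf{Uniqueness.} Let $w\in\mathcal U_{u^\omega}(I)$. For each $N$, the function $\Pi_Nw$ solves the forced equation \eqref{complete-forced-equation} with error $e_N$ controlled by \eqref{complete-admissible-defect}. Since $\Pi_Nw(0)=\Pi_Nu^\omega=v_N(0)$, the data difference vanishes. Proposition \ref{prop:complete-stability}(ii) therefore gives $\|\Pi_Nw-v_N\|_{X^b(I)}\le C N^{-\beta}$. Using the embedding $X^b\hookrightarrow C(I;L^2)\hookrightarrow C(I;H^s)$ and letting $N\to\infty$, we get $w=\lim_N v_N=v$.

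\textbf{Main obstacle.} The delicate point is that the constant $A$ in \eqref{complete-forced-data} is $C_w$. This constant depends on $w$, so $\tau$ in Proposition \ref{prop:complete-stability}(ii) would depend on $w$, while the good set $E_\tau$ must be chosen independently of $w$. I would handle this by replacing $N$ with a large dyadic $N_0(A)$: as in the proof of (ii), the factor $A$ is absorbed once $N$ is large, since the contraction constant $c_{A_1}(\tau)$ only needs the ball radius $A_1N^{-\beta}$ to be small. Uniqueness only requires the stability statement for all $N\ge N_0(A)$, so this suffices. A second, lesser concern is justifying that the $\Gamma$-condition bounds hold uniformly in $N'$ when estimating the defect; this needs the variant counting of Lemmas \ref{Gammacounting} and \ref{Gammacounting2}.
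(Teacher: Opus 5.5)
Your proposal follows the paper's proof step for step. It gets convergence from Proposition~\ref{prop:complete-local-cauchy}, membership in $\mathcal U_{u^\omega}(I)$ from multilinear and $\Gamma$-condition bounds on the high-frequency defect, and uniqueness from Proposition~\ref{prop:complete-stability}(ii) with zero data difference; your route through the defects of $\Pi_N v_{N'}$ plus lower semicontinuity is a cleaner way to make $e_N$ meaningful than the paper's formula involving $\Nc_3(v)$.

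One correction: the defect also contains interactions with two or three inputs above $N$, not only the one-high configurations from the proof of Proposition~\ref{prop:complete-stability}(i). So, as the paper does, you must remove one further exceptional set of the same size from $E_\tau$. Your handling of the $C_w$-dependence by taking $N\geq N_0(A)$ rather than shrinking $\tau$ is a sensible refinement of a point the paper treats loosely.
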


\begin{proof}
The convergence $v_N=\Psi^N_t\Pi_N u^\omega \to v$ is clear by Proposition \ref{prop:complete-local-cauchy}. Note that $E_{\tau}\subseteq H^s$ with $\mu_\alpha(E_\tau^c)< C_\theta e^{-c\tau^{-\theta}}$ corresponds to a set $\Omega_{\tau}\subseteq \Omega$ with $\mathbb{P}(\Omega_\tau^c)< C_\theta e^{-c\tau^{-\theta}}$ by $u^\omega$. Moreover, for every test function $\varphi\in C_c^\infty(I\times\mathbb T^2)$ and admissible $\omega$, the equation
\[
 \Pi_N(\Nc_3(v_N)+\Qc_3(v_N))
 =(i\partial_t-{\rm D}^{\alpha})v_N.
\]
and the convergence of $v_N$ in $C(I;H^s)$ imply
\[
 \left\langle\Pi_N(\Nc_3(v_N)+\Qc_3(v_N)),\varphi\right\rangle
 \longrightarrow
 \left\langle(i\partial_t-{\rm D}^{\alpha})v,\varphi\right\rangle.
\]
This limit defines the renormalized gauged nonlinearity and proves that the
limit solves the gauged equation \eqref{Eqn:gaugedlimit} in distributions.

Next, we show that the limit $v$ belongs to $\mathcal{U}_{u^\omega}(I)$ with high probability. Note that $v$ admits \eqref{ansatz1} together with
\eqref{induct1}--\eqref{induct3} and \eqref{induct6} on $E_\tau$.  Expanding the
high-frequency tails shows the desired error bound, by similar multilinear estimates used in Section \ref{esti} and the variant of $\Gamma$-condition, which remove an exceptional set of probability $C_\theta e^{-c\tau^{-\theta}}$. Indeed, the error in
\eqref{complete-admissible-defect} is
\[
 e_N=\Pi_N\big(\Nc_3(v)-\Nc_3(\Pi_Nv)\big)
     +\Pi_N\big(\Qc_3(v)-\Qc_3(\Pi_Nv)\big),
\]
where each term contains at least one frequency larger than $N$. Then $v\in \mathcal U_{u^\omega}(I)$ $\tau^{-1}$-certainly. By shrinking the admissible good set $E_\tau$, we have $v\in \mathcal{U}_{u^\omega}(I)$.

For uniqueness, take an arbitrary solution $\tilde{v}\in \mathcal{U}_{u^\omega}(I)$, and put $w_N=\Pi_N\tilde{v}$.
 \eqref{complete-admissible-defect} and Proposition
\ref{prop:complete-stability}(ii) give
\[
 \|w_N-\Psi_t^N\Pi_Nu^\omega\|_{X^b(I)}
 \lesssim_vN^{-\beta}.
\]
Passing to the limit and the embedding $X^b\hookrightarrow C(I;H^s)$ give $v=\tilde{v}$. In particular, there is only one solution in the class $\mathcal{U}_{u^\omega}(I)$ for each $u^\omega$. By definition, the limit $v$ is the unique solution with initial data $u^\omega$ in the class $\mathcal{U}_{u^\omega}(I)$.
\end{proof}

\begin{rem}\label{shrink-def}
To give the uniqueness in global setting, here we shrink the good set $E_\tau$ defined in Proposition \ref{prop:complete-goodsets} to make $\Psi_tv\in \mathcal{U}_{u^\omega}(I)$ for $v\in E_\tau$.
\end{rem}

Let $\Phi^N_t$ be the flow of the truncated Wick-ordered system. The finite-dimensional inverse gauge relation is $\Phi_t^N\Pi_Nu
 =e^{-2it(m_N(u)-\sigma_N)}\Psi_t^N\Pi_Nu$. Consequently, Lemma \ref{lem:complete-mass} gives on $E_\tau\cap \Sigma_{\rm mass}$
\begin{equation}\label{complete-inverse-gauge}
 \Phi_tu=e^{-2itX(u)}\Psi_tu.
\end{equation}
Moreover, \eqref{complete-local-cauchy} and
\eqref{complete-mass-rate} imply that, for every $0<\varepsilon_{\rm loc}<\min\left\{\frac{\varepsilon}{2},\beta_1\right\}$, the Wick ordered approximants satisfy
\begin{equation}\label{complete-Wick-local-cauchy}
 \|\Phi_t^N\Pi_Nu^\omega-\Phi_t^{N'}\Pi_{N'}u^\omega\|_{C(I;H^s)}
 \leq C(\omega,\tau)N^{-\varepsilon_{\rm loc}}
 \qquad(N\leq N').
\end{equation}
Indeed, the gauged approximants are uniformly bounded in $C(I;H^s)$ by
\eqref{complete-local-cauchy}, while
\[
 \sup_{t\in I}
 \left|e^{-2it(m_N-\sigma_N)}
       -e^{-2it(m_{N'}-\sigma_{N'})}\right|
 \lesssim_\tau N^{-\beta_1}
\]
by \eqref{complete-mass-rate}.
The finite-dimensional Wick ordered solutions therefore converge to the
right-hand side of \eqref{complete-inverse-gauge}. Passing to the limit in
their Duhamel formulas shows that this limit solves \eqref{WfNLS}. This
proves Theorem \ref{LWP}, including uniqueness in the inverse image of
$\mathcal U_{u^\omega}(I)$ under \eqref{complete-inverse-gauge}.

\subsection{Construction of the global flow}

The following construction is adapted from the finite-dimensional
globalization scheme in \cite[Proposition 6.1]{DNY2024}.
Let $D,T,K,A$ range over fixed countable sets of positive dyadic numbers,
with $K$ an integer and $K\gg T\gg D$.  Define
\begin{equation}\label{complete-F}
 F^N_{T,K}
 :=\bigcap_{|j|\leq K}
 \left(\Psi^N_{\frac{jT}{K}}\right)^{-1}
 E^N_{\frac{T}{K}},
\end{equation}
and
\begin{equation}\label{complete-G}
\begin{aligned}
 G^N_{T,K,A,D}
 :=\big\{v\in\Pi_NH^s:\ &\exists\,t_0\in[-D,D],\
 v'\in F^N_{T,K},\ v''\in\Pi_NL^2,\\
 &\Psi^N_{t_0}v=v'+v'',\quad
 \|v''\|_{L^2}\leq AN^{-\beta}\big\}.
\end{aligned}
\end{equation}
The existential time may be restricted to rational $t_0$ by continuity, so
the sets are Borel.  Put
\begin{equation}\label{complete-Sigma}
\Sigma_0
=\bigcup_D\ \bigcap_{T\gg D}\ \bigcup_{K\gg T,\ A\geq1}
\limsup_{N\to\infty}\Pi_N^{-1}G^N_{T,K,A,D},
\qquad
\Sigma=\Sigma_0\cap\Sigma_{\rm mass}.
\end{equation}

\begin{lem}\label{lem:complete-fullmeasure}
The set $\Sigma$ is rotation invariant, Borel, and
$\rho_\alpha(\Sigma^c)=0$.
\end{lem}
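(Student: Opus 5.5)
Rotation invariance and Borel measurability are handled first, and directly. Each $E_\tau^N$ is rotation invariant by Proposition \ref{prop:complete-goodsets}. The truncated flow $\Psi^N_t$ commutes with constant phase rotations $v\mapsto e^{i\vartheta}v$, since both $\Nc_3$ and $\Qc_3$ are gauge covariant. Hence $F^N_{T,K}$ and $G^N_{T,K,A,D}$ are rotation invariant; the condition $\|v''\|_{L^2}\le AN^{-\beta}$ is also rotation invariant. $\Sigma_{\rm mass}$ is rotation invariant by Lemma \ref{lem:complete-mass}. Countable unions, intersections and limsups preserve both properties. For Borel measurability I use that $\Psi^N_t$ is continuous on the finite-dimensional space $\Pi_N L^2$, and that the existential quantifier over $t_0$ can be replaced by one over rationals. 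Then $G^N$ is a countable union of images or preimages of Borel sets under continuous maps. More precisely, I would write $G^N$ as the set of $v$ with $\operatorname{dist}_{L^2}(\Psi^N_{t_0}v,F^N_{T,K})\le AN^{-\beta}$ (closure issues are absorbed by enlarging $A$). This makes $G^N$ Borel.

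For the measure statement it suffices, since $\rho_\alpha(\Sigma_{\rm mass}^c)=0$, to prove $\rho_\alpha(\Sigma_0^c)=0$. It is enough to show that for every $D$ there is a $D'\ge D$ such that, for each $T\gg D'$,
\[
\rho_\alpha\Big(\bigcup_{K,A}\limsup_N \Pi_N^{-1}G^N_{T,K,A,D'}\Big)\ge 1-\epsilon(D)
\]
with $\epsilon(D)\to 0$. Actually $D$ is harmless: I take $t_0=0$, $v''=0$, which shows $\Pi_N^{-1}F^N_{T,K}\subset \Pi_N^{-1}G^N_{T,K,A,D}$ for every $D$. So it suffices that for each fixed $T$,
\[
\rho_\alpha\Big(\bigcup_K\limsup_N\Pi_N^{-1}F^N_{T,K}\Big)=1 .
\]

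The key step is invariance. By Remark \ref{truncatedGibbs} and \cite[Proposition 2.4]{DNY2024}, $\rho^\circ_{\alpha,N}$ is invariant under $\Psi^N_t$, so
\[
\rho^\circ_{\alpha,N}\big((F^N_{T,K})^c\big)\le (2K+1)\,\rho^\circ_{\alpha,N}\big((E^N_{T/K})^c\big)\le (2K+1)C_\theta e^{-c(K/T)^{\theta}}
\]
by \eqref{complete-good-prob}. Lifting to the product measure, $\rho_{\alpha,N}(\Pi_N^{-1}(F^N_{T,K})^c)$ obeys the same bound. By Lemma \ref{lem:complete-TV},
\[
\rho_\alpha\big(\Pi_N^{-1}(F^N_{T,K})^c\big)\le (2K+1)C_\theta e^{-c(K/T)^\theta}+o_N(1).
\]
Fatou's lemma for sets, $\rho(\limsup_N A_N)\ge\limsup_N\rho(A_N)$, then gives
\[
\rho_\alpha\big(\limsup_N\Pi_N^{-1}F^N_{T,K}\big)\ge 1-(2K+1)C_\theta e^{-c(K/T)^\theta}.
\]
Letting $K\to\infty$ along the countable set makes the right side tend to $1$. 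Intersecting over countably many $T$ and then taking the union over $D$ gives full measure.

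The main obstacle I expect is the transfer from the truncated measures to $\rho_\alpha$. The events depend on $N$, so weak convergence alone is insufficient; this is why total-variation convergence (Lemma \ref{lem:complete-TV}) is used rather than weak convergence. A related point is that the $\limsup$ direction of Fatou's lemma is the correct one here. A secondary technical point is making the existential definition of $G^N$ Borel, which I would handle by the rational-time reduction and the continuity of $\Psi^N_t$ described above.
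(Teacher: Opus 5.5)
Your proposal is correct and is essentially the paper's proof. Both arguments combine finite-dimensional invariance of $\rho^\circ_{\alpha,N}$ under $\Psi^N_t$ with a union bound over the $2K+1$ grid times, transfer to $\rho_\alpha$ via Lemma \ref{lem:complete-TV} and the reverse Fatou inequality, let $K\to\infty$, and use $F^N_{T,K}\subset G^N_{T,K,A,D}$ (take $t_0=0$, $v''=0$). Your distance-function device for the Borel property is more careful than the paper's one-line assertion; strictly, it changes $G^N_{T,K,A,D}$ up to replacing $A$ by $2A$, which leaves the union over dyadic $A$ unchanged, so what it shows is that $\Sigma_0$ (and hence $\Sigma$) is Borel, which is all the lemma needs.
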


\begin{proof}
The rotation invariance and Borel property are implied by the definition.  Finite-dimensional invariance, \eqref{complete-good-prob}, and the union
bound give
\[
 \rho_{\alpha,N}^{\circ}(F^N_{T,K})
 \geq1-(2K+1)C_\theta e^{-c(K/T)^\theta}.
\]
Lemma \ref{lem:complete-TV} and Fatou's lemma imply
\[
 \rho_\alpha\left(\limsup_N\Pi_N^{-1}F^N_{T,K}\right)
 \geq1-(2K+1)C_\theta e^{-c(K/T)^\theta}.
\]
For fixed $T$, the right side tends to one as $K\to\infty$.  Since
$F^N_{T,K}\subset G^N_{T,K,A,D}$, \eqref{complete-Sigma} has full measure.
\end{proof}

\begin{prop}[Global Cauchy estimate]\label{prop:complete-global-cauchy}
For $u_{\rm in}\in\Sigma$ and every $T>0$,
\begin{equation}\label{complete-global-cauchy}
 \sup_{|t|\leq T}
 \|\Psi_t^N\Pi_Nu_{\rm in}
 -\Psi_t^{N'}\Pi_{N'}u_{\rm in}\|_{H^s}
 \leq C(u_{\rm in},T)N^{-\frac{\varepsilon}{2}}
\end{equation}
whenever $N\leq N'$.
\end{prop}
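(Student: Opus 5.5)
Unpacking the definition \eqref{complete-Sigma}, the proof starts from a time $D$ with $u_{\rm in}\in\bigcap_{T\gg D}\bigcup_{K,A}\limsup_N\Pi_N^{-1}G^N_{T,K,A,D}$. Given $T>0$, I would choose a dyadic $T'\gg D+T$ and then $K,A$ so that $\Pi_Nu_{\rm in}\in G^N_{T',K,A,D}$ holds for infinitely many $N$. Write $\tau=T'/K$. The first goal is to show that for such $N$ and every $N_1$ with $N\le N_1\le N'$, the truncated solution $\Psi^{N_1}_t\Pi_{N_1}u_{\rm in}$ stays, up to an $O(N_1^{-\beta})$ error in $L^2$ at each grid time $j\tau$, close to a trajectory whose grid values lie in the good sets $E^{N_1}_\tau$. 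Once this holds, the local Cauchy estimate (Proposition \ref{prop:complete-local-cauchy}) and stability (Proposition \ref{prop:complete-stability}) can be applied on each of the $2K+1$ short intervals.

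The key steps run as follows. (1) By the definition of $G^N$, there exist $t_0\in[-D,D]$, $v'\in F^N_{T',K}$ and a small $v''$ with $\Psi^N_{t_0}\Pi_Nu_{\rm in}=v'+v''$. Since $v'\in F^N_{T',K}$, each $\Psi^N_{j\tau}v'$ lies in $E^N_\tau$. I then apply Proposition \ref{prop:complete-stability}(ii) with $e_N=0$ and initial discrepancy $\|v''\|_{L^2}\le AN^{-\beta}$, iterating over the adjacent intervals $[j\tau,(j+1)\tau]$ as in Remark \ref{rem:complete-long-stability}. This gives $\|\Psi^N_{t}\Pi_Nu_{\rm in}-\Psi^N_{t-t_0}v'\|_{X^b}\le C_{A,K}N^{-\beta}$ on $[-T'+D,T'-D]\supset[-T,T]$. (2) I compare $N$ with $N'$ by a dyadic telescoping over $N\le M<N'$. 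At each grid time, the compatibility across cutoffs in Proposition \ref{prop:complete-goodsets} lets the solution at level $M$ serve as good data, and Proposition \ref{prop:complete-stability}(i) bounds $\Pi_M v_{2M}-v_M$ by $M^{-\beta}$. The free high-frequency part contributes $M^{s-s_0}\tau^{-C_0}$, as in the proof of Proposition \ref{prop:complete-local-cauchy}. Summing gives $C(u_{\rm in},T)N^{-\varepsilon/2}$ on each short interval, and propagating over $K$ intervals costs only a constant depending on $K,A,T$. (3) Step (2) only covers $N$ in the infinite subsequence. For arbitrary $N\le N'$, I would pick a subsequence element $\tilde N\ge N'$ and apply the triangle inequality through $\tilde N$ (for fixed $N$, the bound holds with a constant by finite-dimensional smoothness). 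Proposition \ref{prop:complete-stability}(i), used with $\Pi_N$ applied to the $\tilde N$-solution, then gives the bound with $N$ in place of $\tilde N$.

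The main obstacle is step (2): guaranteeing that the perturbed level-$M$ trajectory, and not only the level-$N$ one, still lands in the good sets at each grid time. My first attempt would be to use the forced-stability contraction to show that $\Psi^M_{j\tau}\Pi_Mu_{\rm in}$ differs from $\Pi_M\Psi^{N'}_{j\tau}$ of good data by $O(M^{-\beta})$ in $L^2$. Since $\beta>0$ and the $L^2$ perturbations are absorbed by the $X^b$ contraction radius, the $K$-step iteration closes provided $N$ is large depending on $K,A$. Choosing $N$ large in this way is possible because only the limsup is required.
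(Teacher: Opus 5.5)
There is a genuine gap in step (2). It is exactly the obstacle you flag at the end, and choosing $N$ large does not remove it.

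The condition $\Pi_Nu_{\rm in}\in G^N_{T',K,A,D}$ only provides good data $v'\in F^N_{T',K}$ at level $N$. The cutoff compatibility in Proposition~\ref{prop:complete-goodsets} runs only downward: good data at a level $\overline N$ yields the local structure and Proposition~\ref{prop:complete-stability} for the flows $\Psi^M_t\Pi_M(\cdot)$ with $M\le\overline N$, never for $M>\overline N$. So for $N<M\le N'$ there is no good reference solution at level $M$ near $\Psi^M_{j\tau}\Pi_Mu_{\rm in}$. Without it you cannot run forced stability or the local Cauchy estimate at that level. The missing information concerns the modes of $u_{\rm in}$ above $N$, not the size of an error, so no largeness of $N$ helps.

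Step (3) cannot rescue this. The triangle inequality through an admissible $\tilde N\ge N'$ needs a comparison between an arbitrary lower cutoff and the admissible upper one, which is the opposite of what step (2) provides. A single use of Proposition~\ref{prop:complete-stability}(i) does not give it either. Estimate (i) only compares against exactly good data on one short interval, whereas the level-$\tilde N$ trajectory of $u_{\rm in}$ is only an $O(\tilde N^{-\beta})$ perturbation of a trajectory whose grid values are good.

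The fix is to make the admissible cutoff the top level throughout, as the paper does.

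Given $N\le N'$, take $\overline N\ge N'$ with $\Pi_{\overline N}u_{\rm in}\in G^{\overline N}_{T_1,K,A,D}$, $T_1>2T+2D$, and write $\Psi^{\overline N}_{t_0}\Pi_{\overline N}u_{\rm in}=v'+v''$. Then induct over the grid intervals. On each interval, use (i) from the good data $\Psi^{\overline N}_{jT_1/K}v'$, which is valid at every lower cutoff by compatibility. Use (ii) at the lower level to carry the error accumulated so far. This gives the grid commutator bound $\sup_{|t|\le T_1}\|\Pi_N\Psi^{\overline N}_tv'-\Psi^N_t\Pi_Nv'\|_{L^2}\le C_{T_1,K}N^{-\beta}$ for all $N\le\overline N$. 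Combined with Proposition~\ref{prop:complete-local-cauchy} on each grid interval, it yields the Cauchy estimate for $v'$ for all $N\le N'\le\overline N$.

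Then transfer from $v'$ to $u_{\rm in}$ at each lower level:
\begin{enumerate}
\item Backward stability at level $\overline N$ gives $\|\Pi_{\overline N}u_{\rm in}-\Psi^{\overline N}_{-t_0}v'\|_{L^2}\lesssim\overline N^{-\beta}$.
\item Project by $\Pi_N$ and compare with $\Psi^N_{-t_0}\Pi_Nv'$ via the grid commutator bound.
\item Run forced stability forward at level $N$ to get $\sup_{|t|\le T}\|\Psi^N_t\Pi_Nu_{\rm in}-\Psi^N_{t-t_0}\Pi_Nv'\|_{L^2}\lesssim N^{-\beta}$.
\end{enumerate}
Your step (1) is the level-$\overline N$ instance of this last transfer. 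It must be carried out at every lower level, and that is where the grid commutator estimate is indispensable.
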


\begin{proof}
Choose $D,T_1,K,A$ in \eqref{complete-Sigma}, with
$T_1>2T+2D$, and an arbitrarily large admissible $\overline N$ such that
$\Pi_{\overline N}u_{\rm in}\in
G^{\overline N}_{T_1,K,A,D}$.  Then
\[
 \Psi^{\overline N}_{t_0}\Pi_{\overline N}u_{\rm in}
 =v'+v'',\quad
 v'\in F^{\overline N}_{T_1,K},\quad
 \|v''\|_{L^2}\leq A\overline N^{-\beta}.
\]
At each grid time $jT_1/K$, the upper-cutoff evolution of $v'$ lies in
$E^{\overline N}_{T_1/K}$. By the cutoff compatibility in Proposition
\ref{prop:complete-goodsets}, all lower-cutoff reference solutions satisfy
the same local bounds on the corresponding grid interval. Apply
\eqref{complete-commutator} on the next grid interval and
\eqref{complete-forced-stability} to propagate the accumulated error.
Induction in $j$ gives
\begin{equation}\label{complete-grid-commutator}
 \sup_{|t|\leq T_1}
 \|\Pi_N\Psi_t^{\overline N}v'
 -\Psi_t^N\Pi_Nv'\|_{L^2}
 \leq C_{T_1,K}N^{-\beta}.
\end{equation}
Applying Proposition \ref{prop:complete-local-cauchy} and Proposition \ref{prop:complete-stability} (i) on each grid interval, \eqref{complete-grid-commutator}
gives
\begin{equation}\label{complete-vprime-cauchy}
 \sup_{|t|\leq T_1}
 \|\Psi_t^N\Pi_Nv'-\Psi_t^{N'}\Pi_{N'}v'\|_{H^s}
 \leq C_{T_1,K}N^{-\frac{\varepsilon}{2}}.
\end{equation}

Then we change $v'$ to $u_{\rm in}$. Applying stability backward from
$t_0$ first gives
\begin{equation}\label{perturb-u}
\left\|\Pi_{\overline{N}}u_{\rm in}-\Psi^{\overline{N}}_{-t_0}v'\right\|_{L^2}\leq C_{T_1,K,A,D}\overline{N}^{-\beta}   
\end{equation}
Since $\overline N^{-\beta}\leq N^{-\beta}$, we have $\left\|\Pi_Nu_{\rm in}
 -\Pi_N\Psi_{-t_0}^{\overline N}v'\right\|_{L^2}
 \leq C_{T_1,K,A,D}\overline N^{-\beta}.$ The grid commutator estimate compares
$\Pi_N\Psi_{-t_0}^{\overline N}v'$ with
$\Psi_{-t_0}^N\Pi_Nv'$ up to $C_{T_1,K}N^{-\beta}$. Forced stability for
the lower-cutoff equation, now run forward from time $0$, therefore yields
\begin{equation}\label{complete-link}
 \sup_{|t|\leq T}
 \|\Psi_t^N\Pi_Nu_{\rm in}
 -\Psi_{t-t_0}^N\Pi_Nv'\|_{L^2}
 \leq C_{T_1,K,A,D}N^{-\beta}.
\end{equation}
Combining \eqref{complete-vprime-cauchy} with \eqref{complete-link} proves
\eqref{complete-global-cauchy} for
$N\leq N'\leq\overline N$.  Given fixed $N\leq N'$, we may choose an admissible large $\overline N\geq N'$, which finishes the global Cauchy estimates.
\end{proof}

The gauged limit $\Psi_tu_{\rm in}$ therefore exists in
$C([-T,T];H^s)$ for every $T$ and $u_{\rm in}\in \Sigma$.  By Lemma \ref{lem:complete-mass}, we have
\begin{equation}\label{global-inverse-gauge}
 \Phi_tu_{\rm in}
 =\lim_{N\to\infty}\Phi_t^N\Pi_Nu_{\rm in}
 =e^{-2itX(u_{\rm in})}\Psi_tu_{\rm in}.    
\end{equation}
Passing to the limit in the Duhamel formula gives the Wick ordered equation. Local uniqueness may be iterated on the grid intervals, giving global
uniqueness in the propagated structured class, which can be defined piecewise by gluing the local structured solution class $\mathcal{U}_{u_{\rm in}}(I)$.

\subsection{Group property}

We first verify the preservation of $\Sigma_0$. More precisely, $\Phi_{t_1}u_{\rm in}\in\Sigma_0$ for any $t_1\in\mathbb{R}$ and $u_{\rm in}\in\Sigma_0$. Since $\Sigma_0$ is rotation invariant, it suffices to show $\Psi_{t_1}u_{\rm in}\in \Sigma_0$. We still assume the decomposition at $t_0$ for fixed $D,T,K,A,\overline{N}$. Proposition \ref{prop:complete-stability} implies \eqref{perturb-u} and similarly 
 \begin{equation}\label{continue}
    \left\|\Psi_{t_1}^{\overline{N}}\Pi_{\overline{N}}u_{\rm in}-\Psi^{\overline{N}}_{t_1-t_0}v'\right\|_{L^2}\leq C_{T,K,A,D}\overline{N}^{-\beta}.
 \end{equation}
Fix a lower cutoff $N$ and send the admissible upper cutoff
$\overline N\to\infty$ in
\eqref{complete-grid-commutator} and \eqref{complete-link}. This gives the
stronger low-frequency comparison for $u_{\rm in}\in \Sigma$
\begin{equation}\label{complete-finite-limit}
 \sup_{|t|\leq T}
 \|\Psi_t^N\Pi_Nu_{\rm in}-\Pi_N\Psi_tu_{\rm in}\|_{L^2}
 \leq C(u_{\rm in},T)N^{-\beta}.
\end{equation}
Hence, we have
\begin{equation}\label{continue2}
   \left\|\Pi_{\overline{N}}\Psi_{t_1}u_{\rm in}-\Psi^{\overline{N}}_{t_1-t_0}v'\right\|_{L^2}\leq C_{T,K,A,D}\overline{N}^{-\beta}. 
\end{equation}
Then apply Proposition \ref{prop:complete-stability} (ii) to get $\left\|\Psi_{t_0-t_1}^{\overline{N}}\Pi_{\overline{N}}\Psi_{t_1}u_{\rm in}-v'\right\|_{L^2}\leq C_{T,K,A,D}\overline{N}^{-\beta}$, which shows that $\Pi_{\overline{N}}\Psi_{t_1}u_{\rm in}\in G^{\overline{N}}_{T,K,B,D_1}$ for the fixed $T,K,\overline{N}$ and $B=B(T,K,A,D)$, $D_1\gg D+|t_1|$. By definition, $u_{\rm in}\in\Sigma_0$ implies that $\Psi_{t_1}u_{\rm in}\in\Sigma_0$. 

Moreover, \eqref{complete-mass-rate} and the mass comparison below show that $\Psi_{t_1}u_{\rm in}\in\Sigma_{\rm mass}$ whenever $u_{\rm in}\in\Sigma$. Indeed, the finite-dimensional mass conservation and \eqref{complete-finite-limit} imply
\begin{equation}\label{m_N-difference}
 |m_N(\Psi_{t_1}u_{\rm in})-m_N(u_{\rm in})|
 \lesssim N^{-\beta}
 \big(\|\Pi_N\Psi_{t_1}u_{\rm in}\|_{L^2}
     +\|\Psi_{t_1}^N\Pi_Nu_{\rm in}\|_{L^2}\big)\lesssim
 N^{-\beta+\frac{2-\alpha}{2}}
 \longrightarrow0,  
\end{equation}
where the exponent is negative for $\alpha>\frac{29}{15}$. This ends the proof of $\Psi_{t_1}(\Sigma)=\Sigma$ and $\Phi_{t_1}(\Sigma)=\Sigma$. In particular, both $\Psi_{t_1+t_2}u_{\rm in}$ and $\Psi_{t_2}\Psi_{t_1}u_{\rm in}$ belong to $\Sigma$ for any $t_1,t_2$.

Furthermore, for fixed $t_1,t_2$, we may continue from \eqref{continue} and \eqref{continue2} to derive that both $\Psi^{\overline{N}}_{t_2}\Psi_{t_1}^{\overline{N}}\Pi_{\overline{N}}u_{\rm in}$ and $\Psi^{\overline{N}}_{t_2}\Pi_{\overline{N}}\Psi_{t_1}u_{\rm in}$ lie in the $\mathcal{O}_{u_{\rm in},t_1,t_2}(1)\overline{N}^{-\beta}$-neighborhood of $\Psi^{\overline{N}}_{t_1+t_2-t_0}v'$ in $L^2$ by stability. Then the group property of finite-dimensional flow map gives,
\[
 \|\Psi_{t_1+t_2}^N\Pi_Nu_{\rm in}
 -\Psi_{t_2}^N\Pi_N\Psi_{t_1}u_{\rm in}\|_{L^2}
 \leq C(u_{\rm in},t_1,t_2)N^{-\beta}.
\]
Passing to the limit yields $\Psi_{t_2}\Psi_{t_1}u_{\rm in}=\Psi_{t_1+t_2}u_{\rm in}$. Recall \eqref{complete-inverse-gauge}, note that the phase functional is conserved. More precisely, $X(\Psi_{t_1}u_{\rm in})=X(u_{\rm in})$. In fact, by \eqref{complete-phase-exponent}, \eqref{m_N-difference} together with
\eqref{complete-mass-rate} gives
\[
 |m_N(\Psi_tu_{\rm in})-\sigma_N-X(u_{\rm in})|\lesssim_{u_{\rm in},t}N^{-\beta_1},
\]
after decreasing $\beta_1$ slightly. Then the group property of $\Psi_t$ and \eqref{global-inverse-gauge} gives $\Phi_{t_2}\Phi_{t_1}u_{\rm in}=\Phi_{t_1+t_2}u_{\rm in}$.

\subsection{Invariance of the Gibbs measure}

The map $\Phi_t$ is Borel on $\Sigma$, being a pointwise limit of continuous
finite-dimensional maps.  We first justify the reduction to sets with
uniform constants, which is the analogue of
\cite[Proposition 6.2]{DNY2024}.  For fixed $t$, let $\mathfrak C_t(u)$ be the least
integer which bounds all constants needed in
\eqref{complete-finite-limit}, the iterated stability estimates on
$[-|t|-1,|t|+1]$, and the mass bounds
\eqref{complete-mass-rate}--\eqref{complete-L2-growth}.  The construction
from the countable good sets shows that $\mathfrak C_t$ is Borel and finite
on $\Sigma$.  Set
\[
 S_m(t)=\{u\in\Sigma:\mathfrak C_t(u)\leq m\}.
\]
Then $S_m(t)\uparrow\Sigma$.  Let
$\nu_t=(\Phi_{-t})_\#\rho_\alpha$.  Since $\Phi_t(\Sigma)=\Sigma$ and
$\rho_\alpha(\Sigma)=1$, we also have $\nu_t(\Sigma)=1$, and hence
$\nu_t(S_m(t))\to1$.

It is therefore enough to prove the following estimate
\begin{equation}\label{complete-invariance-compact}
 \rho_\alpha(\Phi_tE)\leq\rho_\alpha(E)
\end{equation}
for compact $E\subset S_m(t)$.  Indeed, if it holds for every such compact
set, inner regularity gives
\[
 \nu_t(A\cap S_m(t))\leq\rho_\alpha(A\cap S_m(t))
\]
for every Borel set $A$, by applying the compact estimate to compact subsets of
$A\cap S_m(t)$ and taking the supremum.  Letting $m\to\infty$ yields
$\nu_t(A)\leq\rho_\alpha(A)$.  Thus the estimates for compact sets with uniform constants imply the desired inequality for every Borel set.

We now fix compact $E\subset S_m(t)$.  All constants below are uniform.
The proof of Proposition \ref{prop:complete-global-cauchy}, with those
uniform constants, gives uniform convergence on $E$ in $H^s$ of the
continuous maps $\Phi_t^N\Pi_N$. Hence $\Phi_t\big|_E$ is continuous and
$\Phi_tE$ is compact. By \eqref{continue2} and Proposition \ref{prop:complete-stability}, we have $\left\| \Psi^N_{-t}\Pi_N\Psi_{t}u_{\rm in} -\Psi^N_{-t_0}v' \right\|_{L^2}\leq C_{E,t}N^{-\beta}$. Combining this with \eqref{perturb-u}, we obtain $\|\Psi_{-t}^N\Pi_N\Psi_tu-\Pi_Nu\|_{L^2}\leq C_{E,t}N^{-\beta}$. By \eqref{complete-mass-rate}, we can replace $\Psi_t$ and $\Psi^N_t$ with $\Phi_t$ and $\Phi^N_t$. More precisely, we obtain
\[
 \|\Phi_{-t}^N\Pi_N\Phi_tu-\Pi_Nu\|_{L^2}
 \leq C_{E,t}N^{-\beta_2},
\]
where $0<\beta_2<
 \min\left\{\beta,\beta_1-\frac{2-\alpha}{2}\right\}$. Denoting by $\mathscr B_{L^2}(R)$ the ball of radius $R$ centered at $0$ in $L^2$ , we therefore have the relation
\begin{equation}\label{complete-set-inclusion}
 \Pi_N\Phi_tE
 \subseteq
 \Phi_t^N\big(\Pi_NE+\mathscr B_{L^2}(C_{E,t}N^{-\beta_2})\big).
\end{equation}

Let $A_N=\Pi_N^{-1}
 \big(\Pi_NE+\mathscr B_{L^2}(C_{E,t}N^{-\beta_2})\big)$. By finite-dimensional invariance, \eqref{complete-set-inclusion}, and
Lemma \ref{lem:complete-TV},
\[
 \rho_\alpha(\Phi_tE)
 \leq\limsup_{N\to\infty}
 \rho_{\alpha,N}^{\circ}
 \big(\Pi_NE+\mathscr B_{L^2}(C_{E,t}N^{-\beta_2})\big)=\limsup_{N\to\infty}\rho_\alpha(A_N).
\]
Indeed, the first inequality follows from
$\Phi_tE\subseteq\Pi_N^{-1}(\Pi_N\Phi_tE)$, followed by
\eqref{complete-set-inclusion}; in both displayed comparisons the error is
$o_{N\to\infty}(1)$ by \eqref{complete-TV}.
It suffices to prove
\begin{equation}\label{complete-limsup-compact}
 \limsup_{N\to\infty}A_N\subseteq E.
\end{equation}
Indeed, if $u\in A_N$ for infinitely many $N$, choose $u_N\in E$ with
$\|\Pi_N(u-u_N)\|_{L^2}\leq C_{E,t}N^{-\beta_2}$.  Compactness gives, after a
subsequence, $u_N\to v\in E$ in $H^s$.  For every fixed Fourier mode,
the $L^2$ estimate and the $H^s$ convergence imply
$\widehat u(k)=\widehat v(k)$, so $u=v\in E$.

The reverse Fatou inequality for the bounded indicators
$\mathbf1_{A_N}$ now gives
\[
 \limsup_N\rho_\alpha(A_N)
 \leq\rho_\alpha(\limsup_NA_N)
 \leq\rho_\alpha(E).
\]
This proves \eqref{complete-invariance-compact}.  Replacing $t$ by $-t$ and
using the group property gives the reverse inequality.  The reduction above
then yields equality for every Borel set, completing the proof of Theorem
\ref{GWP}.

\bibliographystyle{plain}
\bibliography{mybib}

\begin{thebibliography}{10}

\bibitem{B94}
Jean Bourgain.
\newblock Periodic nonlinear {S}chr\"odinger equation and invariant measures.
\newblock {\em Comm. Math. Phys.}, 166(1):1--26, 1994.

\bibitem{B96}
Jean Bourgain.
\newblock Invariant measures for the {$2$}d-defocusing nonlinear {S}chr\"odinger equation.
\newblock {\em Comm. Math. Phys.}, 176(2):421--445, 1996.

\bibitem{B97}
Jean Bourgain.
\newblock Invariant measures for the {G}ross-{P}iatevskii equation.
\newblock {\em J. Math. Pures Appl. (9)}, 76(8):649--702, 1997.

\bibitem{BBNLS2D}
Jean Bourgain and Aynur Bulut.
\newblock Almost sure global well posedness for the radial nonlinear {S}chr\"odinger equation on the unit ball {I}: the 2{D} case.
\newblock {\em Ann. Inst. H. Poincar\'e{} C Anal. Non Lin\'eaire}, 31(6):1267--1288, 2014.

\bibitem{BBNLS3D}
Jean Bourgain and Aynur Bulut.
\newblock Almost sure global well-posedness for the radial nonlinear {S}chr\"odinger equation on the unit ball {II}: the 3{D} case.
\newblock {\em J. Eur. Math. Soc. (JEMS)}, 16(6):1289--1325, 2014.

\bibitem{BBNLW3D}
Jean Bourgain and Aynur Bulut.
\newblock Invariant {G}ibbs measure evolution for the radial nonlinear wave equation on the 3{D} ball.
\newblock {\em J. Funct. Anal.}, 266(4):2319--2340, 2014.

\bibitem{Bring23}
Bjoern Bringmann.
\newblock Invariant {G}ibbs measures for the three-dimensional wave equation with a {H}artree nonlinearity {II}: dynamics.
\newblock {\em J. Eur. Math. Soc. (JEMS)}, 26(6):1933--2089, 2024.

\bibitem{PLWick}
Giuseppe Da~Prato and Luciano Tubaro.
\newblock Wick powers in stochastic {PDE}s: an introduction.
\newblock In {\em Quantum and stochastic mathematical physics}, volume 377 of {\em Springer Proc. Math. Stat.}, pages 1--15. Springer, Cham, [2023] \copyright 2023.

\bibitem{DWWZ}
Yangkendi Deng, Han Wang, Yuzhao Wang, and Zehua Zhao.
\newblock On restricted-type {S}trichartz estimates and the applications, 2025.
\newblock arxiv:2508.18827.

\bibitem{DNY2021}
Yu~Deng, Andrea~R. Nahmod, and Haitian Yue.
\newblock Invariant {G}ibbs measure and global strong solutions for the {H}artree {NLS} equation in dimension three.
\newblock {\em J. Math. Phys.}, 62(3):Paper No. 031514, 39, 2021.

\bibitem{DNY2022}
Yu~Deng, Andrea~R. Nahmod, and Haitian Yue.
\newblock {Random tensors, propagation of randomness, and nonlinear dispersive equations}.
\newblock {\em Invent. Math.}, 228(2):539--686, 2022.

\bibitem{DNY2024}
Yu~Deng, Andrea~R. Nahmod, and Haitian Yue.
\newblock {Invariant {G}ibbs measures and global strong solutions for nonlinear {S}chr\"odinger equations in dimension two}.
\newblock {\em Ann. of Math. (2)}, 200(2):399--486, 2024.

\bibitem{FOSW2021}
Chenjie Fan, Yumeng Ou, Gigliola Staffilani, and Hong Wang.
\newblock 2{D}-defocusing nonlinear {S}chr\"odinger equation with random data on irrational tori.
\newblock {\em Stoch. Partial Differ. Equ. Anal. Comput.}, 9(1):142--206, 2021.

\bibitem{LO2023fwave}
Luigi Forcella and Oana Pocovnicu.
\newblock {Invariant Gibbs dynamics for two-dimensional fractional wave equations in negative Sobolev spaces}, 2025.
\newblock arxiv:2306.07857.

\bibitem{GlimmJaffe}
James Glimm and Arthur Jaffe.
\newblock {\em Quantum physics: A functional integral point of view}.
\newblock Springer-Verlag, New York, second edition, 1987.

\bibitem{GIP15}
Massimiliano Gubinelli, Peter Imkeller, and Nicolas Perkowski.
\newblock Paracontrolled distributions and singular {PDE}s.
\newblock {\em Forum Math. Pi}, 3:e6, 75, 2015.

\bibitem{GIP16}
Massimiliano Gubinelli, Peter Imkeller, and Nicolas Perkowski.
\newblock A {F}ourier analytic approach to pathwise stochastic integration.
\newblock {\em Electron. J. Probab.}, 21:Paper No. 2, 37, 2016.

\bibitem{Hairer13}
Martin Hairer.
\newblock Solving the {KPZ} equation.
\newblock {\em Ann. of Math. (2)}, 178(2):559--664, 2013.

\bibitem{Hairer14}
Martin Hairer.
\newblock Singular stochastic {PDE}s.
\newblock In {\em Proceedings of the {I}nternational {C}ongress of {M}athematicians---{S}eoul 2014. {V}ol. {IV}}, pages 49--73. Kyung Moon Sa, Seoul, 2014.

\bibitem{Hairer14reg}
Martin Hairer.
\newblock A theory of regularity structures.
\newblock {\em Invent. Math.}, 198(2):269--504, 2014.

\bibitem{Hairer15}
Martin Hairer.
\newblock Introduction to regularity structures.
\newblock {\em Braz. J. Probab. Stat.}, 29(2):175--210, 2015.

\bibitem{Count23}
Ralph Howard and Ognian Trifonov.
\newblock Bounding the number of lattice points near a convex curve by curvature.
\newblock {\em Funct. Approx. Comment. Math.}, 69(2):219--245, 2023.

\bibitem{KLS2012limit}
Kay Kirkpatrick, Enno Lenzmann, and Gigliola Staffilani.
\newblock On the continuum limit for discrete {NLS} with long-range lattice interactions.
\newblock {\em Comm. Math. Phys.}, 317(3):563--591, 2013.

\bibitem{Kup16}
Antti Kupiainen.
\newblock Renormalization group and stochastic {PDE}s.
\newblock {\em Ann. Henri Poincar\'e}, 17(3):497--535, 2016.

\bibitem{FQM}
Nick Laskin.
\newblock {\em Fractional quantum mechanics}.
\newblock World Scientific Publishing Co. Pte. Ltd., Hackensack, NJ, 2018.

\bibitem{Laskin00}
Nikolai Laskin.
\newblock Fractional quantum mechanics and {L}\'evy path integrals.
\newblock {\em Phys. Lett. A}, 268(4-6):298--305, 2000.

\bibitem{LRS}
Joel~L. Lebowitz, Harvey~A. Rose, and Eugene~R. Speer.
\newblock Statistical mechanics of the nonlinear {S}chr\"odinger equation.
\newblock {\em J. Statist. Phys.}, 50(3-4):657--687, 1988.

\bibitem{Wang2021}
Rui Liang and Yuzhao Wang.
\newblock Gibbs measure for the focusing fractional {NLS} on the torus.
\newblock {\em SIAM J. Math. Anal.}, 54(6):6096--6118, 2022.

\bibitem{Wang2024}
Rui Liang and Yuzhao Wang.
\newblock Gibbs dynamics for fractional nonlinear {S}chr\"odinger equations with weak dispersion.
\newblock {\em Comm. Math. Phys.}, 405(10):Paper No. 250, 69, 2024.

\bibitem{LTW23}
Ruoyuan Liu, Nikolay Tzvetkov, and Yuzhao Wang.
\newblock Existence, uniqueness, and universality of global dynamics for the fractional hyperbolic $\phi^4_3$-model, 2024.
\newblock arxiv:2311.00543.

\bibitem{OhPL21}
Tadahiro Oh, Philippe Sosoe, and Leonardo Tolomeo.
\newblock Optimal integrability threshold for {G}ibbs measures associated with focusing {NLS} on the torus.
\newblock {\em Invent. Math.}, 227(3):1323--1429, 2022.

\bibitem{Oh2015APA}
Tadahiro Oh and Laurent Thomann.
\newblock A pedestrian approach to the invariant {G}ibbs measures for the 2-{$d$} defocusing nonlinear {S}chr\"odinger equations.
\newblock {\em Stoch. Partial Differ. Equ. Anal. Comput.}, 6(3):397--445, 2018.

\bibitem{OTW2020}
Tadahiro Oh, Nikolay Tzvetkov, and Yuzhao Wang.
\newblock Solving the 4{NLS} with white noise initial data.
\newblock {\em Forum Math. Sigma}, 8:Paper No. e48, 63, 2020.

\bibitem{Simon.B}
Barry Simon.
\newblock {\em The {$P(\phi )\sb{2}$} {E}uclidean (quantum) field theory}.
\newblock Princeton Series in Physics. Princeton University Press, Princeton, NJ, 1974.

\bibitem{SS99}
Catherine Sulem and Pierre-Louis Sulem.
\newblock {\em The nonlinear {S}chr\"odinger equation: Self-focusing and wave collapse}, volume 139 of {\em Applied Mathematical Sciences}.
\newblock Springer-Verlag, New York, 1999.

\bibitem{Sun2019GibbsMD}
Chenmin Sun and Nikolay Tzvetkov.
\newblock Gibbs measure dynamics for the fractional {NLS}.
\newblock {\em SIAM J. Math. Anal.}, 52(5):4638--4704, 2020.

\bibitem{Sun2021}
Chenmin Sun and Nikolay Tzvetkov.
\newblock Refined probabilistic global well-posedness for the weakly dispersive {NLS}.
\newblock {\em Nonlinear Anal.}, 213:Paper No. 112530, 91, 2021.

\bibitem{weakuniversality}
Chenmin Sun, Nikolay Tzvetkov, and Weijun Xu.
\newblock {Weak universality results for a class of nonlinear wave equations}.
\newblock Online first, 2025.

\bibitem{MX22}
Mouhamadou Sy and Xueying Yu.
\newblock Global well-posedness and long-time behavior of the fractional {NLS}.
\newblock {\em Stoch. Partial Differ. Equ. Anal. Comput.}, 10(4):1261--1317, 2022.

\bibitem{Tzvetkov2006InvariantMF}
Nikolay Tzvetkov.
\newblock Invariant measures for the nonlinear {S}chr\"odinger equation on the disc.
\newblock {\em Dyn. Partial Differ. Equ.}, 3(2):111--160, 2006.

\bibitem{Tzvetkov2008InvariantMF}
Nikolay Tzvetkov.
\newblock Invariant measures for the defocusing nonlinear {S}chr\"odinger equation.
\newblock {\em Ann. Inst. Fourier (Grenoble)}, 58(7):2543--2604, 2008.

\bibitem{Wang2026fNLSsobolev}
Jiajun Wang.
\newblock {Polynomial growth of Sobolev norms of solutions of the fractional NLS equation on {$\mathbb{T}^d$}}, 2026.
\newblock arxiv:2603.24906.

\end{thebibliography}

\end{document}